\newtheorem{lem}{Lemma}[section]
\newtheorem{prop}[lem]{Proposition}
\newtheorem{thm}[lem]{Theorem}
\newtheorem{cor}[lem]{Corollary}
\newtheorem{conj}[lem]{Conjecture}
\newtheorem{rem}[lem]{Remark}
\newtheorem{prob}[lem]{Problem}
\newcommand{\la}{\lambda}
\def\R{{\mathbb{R}}}
\def\N{{\mathbb{N}}}
\def\Z{{\mathbb{Z}}}
\newcommand{\vertiii}[1]{{\left\vert\kern-0.25ex\left\vert\kern-0.25ex\left\vert #1
    \right\vert\kern-0.25ex\right\vert\kern-0.25ex\right\vert}}
\setlist[description]{leftmargin=\parindent,labelindent=0cm}
\numberwithin{equation}{section}
\begin{document}

\title[Detailed balance and invariant measures for discrete KdV- and Toda-type systems]{Detailed balance and invariant measures\\for discrete KdV- and Toda-type systems}

\author[D.~A.~Croydon]{David A. Croydon}
\address{Research Institute for Mathematical Sciences, Kyoto University, Kyoto 606-8502, Japan}
\email{croydon@kurims.kyoto-u.ac.jp}

\author[M.~Sasada]{Makiko Sasada}
\address{Graduate School of Mathematical Sciences, University of Tokyo, 3-8-1, Komaba, Meguro-ku, Tokyo, 153--8914, Japan}
\email{sasada@ms.u-tokyo.ac.jp}

\begin{abstract}
In order to study the invariant measures of discrete KdV- and Toda-type systems, this article focusses on models, discretely indexed in space and time, whose dynamics are deterministic and defined locally via lattice equations. A detailed balance criterion is presented that, amongst the measures that describe spatially independent and identically/alternately distributed configurations, characterizes those that are temporally invariant in distribution. A condition for establishing ergodicity of the dynamics is also given. These results are applied to various examples of discrete integrable systems, namely the ultra-discrete and discrete KdV equations, for which it is shown that the relevant invariant measures are of exponential/geometric and generalized inverse Gaussian form, respectively, as well as the ultra-discrete and discrete Toda lattice equations, for which the relevant invariant measures are found to be of exponential/geometric and gamma form. Ergodicity is demonstrated in the case of the KdV-type models. Links between the invariant measures of the different systems are presented, as are connections with stochastic integrable models and iterated random functions. Furthermore, a number of conjectures concerning the characterization of standard distributions are posed.
\end{abstract}

\keywords{Burke's property, detailed balance, discrete integrable system, ergodicity, integrable lattice equation, invariant measure, iterated random function, KdV equation, Toda lattice}
\subjclass[2010]{37K60 (primary), 37K10, 37L40, 60E05, 60J10 (secondary)}


\date{\today}

\maketitle

\setcounter{tocdepth}{2}
\tableofcontents
\newpage

\section{Introduction}

So as to capture the local dynamics of discrete KdV- and Toda-type systems, we consider a system of lattice equations with the following two-dimensional structure:\\
\centerline{\xymatrix@R-1pc@C-1pc{
&{\begin{array}{c}
   \vdots \\
   x_n^{t+2}
 \end{array}}\ar@{<-}[dd]&&{\begin{array}{c}
   \vdots \\
   x_{n+1}^{t+2}
 \end{array}}\ar@{<-}[dd]&\\
\cdots u_{n-1}^{t+1}\ar[rr]&\fcolorbox{black}{white}{\makebox[2em]{$F_{n}^{t+1}$\vphantom{$F_{n+1}^{t+1}$}}}&u_n^{t+1}\ar[rr]&\fcolorbox{black}{white}{\makebox[2em]{$F_{n+1}^{t+1}$}}&u_{n+1}^{t+1}\cdots\\
&x_n^{t+1}\ar@{<-}[dd]&&x_{n+1}^{t+1}\ar@{<-}[dd]&\\
\cdots u_{n-1}^t\ar[rr]&\fcolorbox{black}{white}{\makebox[2em]{$F_{n}^t$\vphantom{$F_{n+1}^{t+1}$}}}&u_n^t\ar[rr]&\fcolorbox{black}{white}{\makebox[2em]{$F_{n+1}^t$\vphantom{$F_{n+1}^{t+1}$}}}&u_{n+1}^t\cdots\\
&{\begin{array}{c}
   x_n^t\\
   \vdots
 \end{array}}&&{\begin{array}{c}
   x_{n+1}^t\\
   \vdots
   \end{array}}&}}
We will think of $n$ as the spatial coordinate, and $t$ as the temporal one. Moreover, the variables $(x^t_n)_{n\in\mathbb{Z}}$ will represent the configuration at time $t$, and $(u^t_n)_{n\in\mathbb{Z}}$ a collection of auxiliary variables through which the dynamics from time $t$ to $t+1$ are defined. As for the state spaces of the variables $(x^t_n,u^t_n)_{n,t\in\mathbb{Z}}$ and maps $(F^t_n)_{n,t\in\mathbb{Z}}$, we specialize to two cases:
\begin{description}
  \item[Type I (homogeneous) model] The variables $x_n^t$ take values in a common Polish space $\mathcal{X}_0$. Similarly, the variables $u_n^t$ take values in a common Polish space $\mathcal{U}_0$. Moreover, $F_n^t\equiv F$ for some involution $F:\mathcal{X}_0\times\mathcal{U}_0\rightarrow \mathcal{X}_0\times\mathcal{U}_0$.
  \item[Type II (alternating/bipartite) model] The variables $x_{n}^t$ take values in a Polish space $\mathcal{X}_0$ if $n+t=0$ (mod 2), and in Polish space $\tilde{\mathcal{X}}_0$ otherwise. Similarly, the variables $u_{n}^t$ take values in a Polish space $\tilde{\mathcal{U}}_0$ if $n+t=0$ (mod 2), and in Polish space ${\mathcal{U}}_0$ otherwise. Moreover, $F_n^t\equiv F_*$ for some bijection $F_*:\mathcal{X}_0\times\mathcal{U}_0\rightarrow \tilde{\mathcal{X}}_0\times\tilde{\mathcal{U}}_0$ if $n+t=0$ (mod 2), and $F_n^t\equiv F_*^{-1}$ otherwise.
\end{description}
This setting is rich enough to include a number of widely-studied discrete integrable systems, including the discrete and ultra-discrete KdV equations (which are examples of type I models), and the discrete and ultra-discrete Toda equations (which are examples of type II models). We highlight that these models are all important, fundamental examples of integrable systems that arise naturally within the Kadomtsev–Petviashvili hierarchy, which also includes the Korteweg-de Vries equation. See \cite{CSTkdv,TTud} for mathematical and physical background. As we will expand upon shortly, our interest will be in the evolution of such discrete integrable systems started from some random initial configuration. In particular, we give criteria for identifying spatially independent and identically/alternately-distributed (in the case of a type I/type II model, respectively) initial configurations that are distributionally invariant or ergodic in time under the dynamics of the system. These general results will be applied to each of the four aforementioned examples. Furthermore, in the latter part of the article, we discuss the relevance of our results to certain examples of stochastic integrable models, and to iterated random functions.

To give a more detailed description of our main results, let us proceed to define the dynamics associated with a type I/II model precisely. In particular, we start by letting $\mathcal{X}^*$ be the set of $(x_n)_{n\in\mathbb{Z}}$ in $\mathcal{X}_0^\mathbb{Z}$ for a type I model, or $(\mathcal{X}_0\times\tilde{\mathcal{X}}_0)^\mathbb{Z}$ for a type II model, for which there is a unique solution to the initial value problem:
\begin{equation}\label{initial}
\begin{cases}
F_{n}^t(x^{t}_n,u^t_{n-1})=(x^{t+1}_n, u^t_n),&\forall n,t \in \Z, \\
x^0_n =x_n,&\forall n \in \Z.
\end{cases}
\end{equation}
We then define a function $U$ on $\mathcal{X}^*$ by supposing $x=(x_n)_{n\in\Z}\mapsto (U_n(x))_{n\in\Z}:=(u_n^0)_{n\in\Z}$, where $(u_n^0)_{n\in\Z}$ is given by the unique solution of the initial value problem \eqref{initial} with $x^0_n=x_n$. For future convenience, we observe that $(U_n(x))_{n\in\Z}$ clearly solves
\begin{equation}\label{uniqueu}
\left(F_n^0\right)^{(2)}\left(x_n,U_{n-1}(x)\right)=U_{n}(x),\qquad \forall n\in\Z,
\end{equation}
where we use a superscript $(i)$ to represent the $i$th coordinate of a map. Finally, we define an operator $\mathcal{T}$ yielding the one time-step dynamics on $\mathcal{X}^*$
by supposing $\mathcal{T}(x)=(\mathcal{T}(x)_n)_{n\in\Z}$ is given by
\begin{equation}\label{dynamics}
\mathcal{T}(x)_n=
\begin{cases}
\left(F_n^0\right)^{(1)}\left(x_n,U_{n-1}(x)\right)=x^1_n,&\text{for a type I model,}\\
\left(F_{n+1}^0\right)^{(1)}\left(x_{n+1},U_{n}(x)\right)=x^1_{n+1},&\text{for a type II model,}\\
\end{cases}
\end{equation}
where $(x_n^1)_{n\in\Z}$ is given by the unique solution of the initial value problem \eqref{initial} with $x^0_n=x_n$. (The shift in the index $n$ is included in type II models to ensure that the elements of $x^1$ and $x^0$ that are in the spaces $\mathcal{X}_0$ and $\tilde{\mathcal{X}}_0$ are the same.) Note that we define the one time-step dynamics similarly on the set $\mathcal{X}^{\exists !}$ of configurations $(x_n)_{n\in\mathbb{Z}}$ for which there is a unique solution $(U_n(x))_{n\in\Z}$ to \eqref{uniqueu}. (NB.\ It is neither the case that $\mathcal{X}^{\exists !}\subseteq\mathcal{X}^{*}$ nor $\mathcal{X}^{*}\subseteq\mathcal{X}^{\exists !}$ in general, though on $\mathcal{X}^{*}\cap\mathcal{X}^{\exists !}$ the two definitions of $\mathcal{T}$ agree.)

Given that the global dynamics of the system arise from locally-defined maps, it is natural to ask whether it is possible to determine which measures supported on $\mathcal{X}^*$ are invariant under $\mathcal{T}$ based on local considerations. In our first result, we show that this is indeed the case for homogeneous/alternating product measures. Before stating the result, we introduce a notion of detailed balance in our setting.
\begin{description}
  \item[Detailed balance condition for a type I model] A pair of probability measures $(\mu,\nu)$ on $\mathcal{X}_0$ and $\mathcal{U}_0$ is said to satisfy the \emph{detailed balance condition} if
      \[F(\mu \times \nu)=\mu \times \nu,\]
      where we define $F(\mu\times\nu):=(\mu\times\nu)\circ F^{-1}$.
  \item[Detailed balance condition for a type II model] A quadruplet of probability measures ($\mu$,$\nu$, $\tilde{\mu}$,$\tilde{\nu}$) on $\mathcal{X}_0$,  $\mathcal{U}_0$, $\tilde{\mathcal{X}}_0$ and $\tilde{\mathcal{U}}_0$ is said to satisfy the \emph{detailed balance condition} if
      \[F_*(\mu \times \nu)=\tilde{\mu} \times \tilde{\nu}.\]
\end{description}
We then have the following characterization of independent and identically/alternately-distrib- uted configurations, which will be proved in Section \ref{generalsec}.

\begin{thm}[Detailed balance criteria for invariance]\label{detailedbalancenew}
\hspace{10pt}
\begin{enumerate}
  \item[(a)] \emph{Type I model.} Suppose $\mu$ is a probability measure on $\mathcal{X}_0$ and $\mu^{\Z}(\mathcal{X}^*)=1$. It is then the case that $\mathcal{T} \mu^{\Z}=\mu^{\Z}$ if and only if there exists a probability measure $\nu$ on $\mathcal{U}_0$ such that the pair $(\mu,\nu)$ satisfies the detailed balance condition. Moreover, when this holds, $\nu$ is the distribution of $U_{n}(x)$ for each $n$, where $x$ is distributed according to $\mu^{\Z}$.
  \item[(b)] \emph{Type II model.} Suppose $\mu$, $\tilde{\mu}$ are probability measures on $\mathcal{X}_0$, $\tilde{\mathcal{X}}_0$ and $(\mu \times \tilde{\mu})^{\Z}(\mathcal{X}^*)=1$. It is then the case that $\mathcal{T} (\mu \times \tilde{\mu})^{\Z}=(\mu \times \tilde{\mu})^{\Z}$ if and only if there exists probability measures $\nu$, $\tilde{\nu}$ on $\mathcal{U}_0$, $\tilde{\mathcal{U}}_0$, respectively, such that the quadruplet of probability measures $(\mu,\nu,\tilde{\mu},\tilde{\nu})$ satisfies the detailed balance condition. Moreover, when this holds, then $\nu$, $\tilde{\nu}$ are the distributions of $U_{2n-1}(x)$, $U_{2n}(x)$, respectively, for each $n$, where $x$ is distributed according to $(\mu \times \tilde{\mu})^{\Z}$.
\end{enumerate}
\end{thm}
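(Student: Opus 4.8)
The plan is to reduce the bi-infinite assertion to a ``single-cell'' computation fuelled by the detailed balance condition, and then to control the passage from finite windows to the whole lattice using the a.s.\ uniqueness of solutions that is built into the hypothesis $\mu^{\Z}(\mathcal{X}^*)=1$. I will carry out the argument for the type I model; the type II case is identical after replacing $F$ by the pair $(F_*,F_*^{-1})$, which makes the relevant transfer kernels alternate between two forms, and after keeping track of the two families of state spaces and of the index shift built into \eqref{dynamics}. Writing $G:=F^{(2)}$, the defining relation \eqref{uniqueu} reads $U_n(x)=G(x_n,U_{n-1}(x))$, so the auxiliary variables evolve in the spatial coordinate $n$ via a random recursion driven by $(x_n)$, while \eqref{dynamics} says $(\mathcal{T}(x)_n,U_n(x))=F(x_n,U_{n-1}(x))$. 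The elementary engine is then the following ``Burke-type'' lemma: detailed balance gives $F(\mu\times\nu)=\mu\times\nu$, so that a one-line induction on $k$ shows that if $Y_1,\dots,Y_k$ are i.i.d.\ $\mu$ and $V_0\sim\nu$ is independent of them, then setting $(Y_i',V_i):=F(Y_i,V_{i-1})$ one has $(Y_1',\dots,Y_k',V_k)\sim\mu^{\otimes k}\times\nu$, and moreover $V_i$ is independent of $(Y_{i+1},\dots,Y_k)$ for each $i$. In particular $\nu$ is a stationary distribution for the transition kernel $K\big((x,u),\cdot\big)=\mu\times\delta_{G(x,u)}$ of the Markov chain $Z_n:=(x_n,U_{n-1}(x))$.

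For the ``if'' direction I would argue as follows. Since $\mu\times\nu$ is $K$-stationary, Kolmogorov's extension theorem yields a two-sided stationary Markov chain $(Z_n)_{n\in\Z}$, $Z_n=(x_n,u_{n-1})$, with one-dimensional marginal $\mu\times\nu$; inspecting the form of $K$ (a fresh $\mu$-input is added at each step) shows that under this law the sequence $(x_n)_{n\in\Z}$ is i.i.d.\ $\mu$, that $u_n=G(x_n,u_{n-1})$ for all $n$, that each $u_n\sim\nu$, and that $u_m$ is independent of $(x_j)_{j>m}$. Thus $(u_n)_{n\in\Z}$ is a bi-infinite solution of \eqref{uniqueu} whose driving sequence has law $\mu^{\Z}$. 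Using $\mu^{\Z}(\mathcal{X}^*)=1$ and the attendant a.s.\ uniqueness of the solution of \eqref{initial} (equivalently, of \eqref{uniqueu} read along each time level), this coupling must agree almost surely with $\big(x,(U_n(x))_{n}\big)$; hence $(U_n(x))_{n}$ has law $\nu^{\Z}$ and, for each $n$, $U_{n-1}(x)$ is independent of $(x_j)_{j\ge n}$. Feeding this into the finite Burke lemma on the block $\{-N,\dots,N\}$, with incoming variable $U_{-N-1}(x)\sim\nu$ independent of $(x_{-N},\dots,x_N)$, shows $(\mathcal{T}(x)_{-N},\dots,\mathcal{T}(x)_N)$ is i.i.d.\ $\mu$; letting $N\to\infty$ gives $\mathcal{T}\mu^{\Z}=\mu^{\Z}$ together with the stated identification of $\nu$.

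For the ``only if'' direction, assume $\mathcal{T}\mu^{\Z}=\mu^{\Z}$ and let $\nu$ be the law of $U_0(x)$, which by spatial shift-invariance of $\mu^{\Z}$ and of the dynamics equals the law of every $U_n(x)$. The crux is to show that the pair $(x_n,U_{n-1}(x))$ is product-distributed; granting this, $F(\mu\times\nu)$ is the law of $F(x_n,U_{n-1}(x))=(\mathcal{T}(x)_n,U_n(x))$, whose first marginal is $\mu$ (by invariance) and whose second is $\nu$, and an independence-within-the-pair argument on the $\mathcal{T}(x)$ side forces this law to equal $\mu\times\nu$, which is detailed balance. The independence $x_n\perp U_{n-1}(x)$ is extracted from the ``one-sided'' nature of \eqref{uniqueu}: $U_{n-1}(x)$ is built out of $(x_m)_{m\le n-1}$ by reading \eqref{uniqueu} from the left, hence is independent of $x_n$ under $\mu^{\Z}$; and since $F$ is an involution, applying it to $(\mathcal{T}(x)_n,U_n(x))=F(x_n,U_{n-1}(x))$ gives $U_{n-1}(x)=G(\mathcal{T}(x)_n,U_n(x))$, so that $U_n(x)$ is built out of $(\mathcal{T}(x)_m)_{m\ge n+1}$ and is therefore independent of $\mathcal{T}(x)_n$ under $\mathcal{T}\mu^{\Z}=\mu^{\Z}$. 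Together these give $(\mathcal{T}(x)_n,U_n(x))\sim\mu\times\nu$, closing the argument.

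The step I expect to be the main obstacle is exactly the passage from the local/finite picture to the bi-infinite one: showing in the ``if'' direction that the stationary chain produced by abstract extension genuinely coincides with the canonical solution $(U_n(x))_n$, and in the ``only if'' direction that $U_{n-1}(x)$ really does depend only on $(x_m)_{m\le n-1}$ (with the mirror statement for $U_n(x)$ in terms of $\mathcal{T}(x)$). Both rely on the uniqueness of solutions of \eqref{initial}/\eqref{uniqueu}, and any non-uniqueness outside $\mathcal{X}^*$ is immaterial precisely because that set carries full $\mu^{\Z}$-measure by hypothesis; the same remarks, with $F_*$ and $F_*^{-1}$ in place of $F$ and the obvious bookkeeping of the alternating spaces, handle the type II case.
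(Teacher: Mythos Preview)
Your overall strategy matches the paper's: split into (i) a measurability statement saying $U_{n-1}(x)$ depends only on $(x_m)_{m\le n-1}$ (and a mirror statement on the $\mathcal{T}(x)$ side), and (ii) a Burke-type induction that propagates the detailed balance identity along a row. Your ``only if'' direction is essentially the paper's Theorem~\ref{detailedbalanceold}: the two independence statements you need are exactly Lemma~\ref{measurability}/Corollary~\ref{stationarymeasurability}, whose proofs do use uniqueness of \eqref{initial} on $\mathcal{X}^*$ in the way you indicate.

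There is, however, a genuine gap in your ``if'' direction. You build a two-sided stationary Markov chain $(x_n,u_{n-1})_{n\in\Z}$ at a \emph{single} time level and then write ``using $\mu^{\Z}(\mathcal{X}^*)=1$ and the attendant a.s.\ uniqueness of the solution of \eqref{initial} (equivalently, of \eqref{uniqueu} read along each time level), this coupling must agree almost surely with $(x,(U_n(x))_n)$''. These are \emph{not} equivalent: the paper notes explicitly that $\mathcal{X}^*\not\subseteq\mathcal{X}^{\exists!}$ in general, so membership in $\mathcal{X}^*$ guarantees uniqueness for the full two-dimensional initial value problem \eqref{initial} but \emph{not} for the one-level recursion \eqref{uniqueu}. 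Your $(u_n)$ is merely \emph{a} solution of \eqref{uniqueu}; you have not produced a full solution of \eqref{initial} containing it, so you cannot yet invoke uniqueness on $\mathcal{X}^*$ to identify $u_n$ with $U_n(x)$. And a one-level construction does not extend automatically: from $(x_n^0,u_n^0)$ you can define $x_n^1:=F^{(1)}(x_n^0,u_{n-1}^0)$, but to continue you need $(u_n^1)$ solving \eqref{uniqueu} for the new data $(x_n^1)$, which is exactly the problem you started with.

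The paper's remedy (Proposition~\ref{Burke}) is to abandon the one-level Markov chain and instead take i.i.d.\ \emph{boundary} data $(x_n^0)_{n\ge1}\sim\mu$ and $(u_0^t)_{t\ge0}\sim\nu$, fill in the entire quadrant $\{n\ge1,t\ge0\}$ deterministically via the lattice recursion, verify translation invariance there by your Burke induction, and then apply Daniell--Kolmogorov to obtain a full two-dimensional law supported on solutions of \eqref{initial} with marginal $(x_n^0)\sim\mu^{\Z}$. Now uniqueness on $\mathcal{X}^*$ forces this law to be $\mathbf{P}_{\mu^{\Z}}$, whence $u_{-1}^0\sim\nu$ and the identification of $\nu$ with the law of $U_{n}(x)$ follows. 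Your argument becomes correct once you replace the one-level stationary chain by this quadrant-plus-extension construction.
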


We remark that the above theorem does not in itself provide a truly local criteria for invariance of homogeneous/alternating product measures under $\mathcal{T}$. Indeed, the condition that $\mu^{\Z}(\mathcal{X}^*)=1$ or $(\mu \times \tilde{\mu})^{\Z}(\mathcal{X}^*)=1$ depends on knowledge of the global dynamics, and in particular a suitably accessible description of $\mathcal{X}^*$. We do not present a universal approach to this problem here. However, for the KdV- and Toda-type systems already mentioned, the existence and uniqueness of solutions to the initial value problem \eqref{initial} was studied in detail in \cite{CSTkdv}, where it was shown that the associated dynamics could be interpreted in terms of certain `Pitman-type transformations' of related path encodings of the configurations. In this article, we will incorporate as a key ingredient the results of \cite{CSTkdv} when applying Theorem \ref{detailedbalancenew} to these examples. (NB.\ A brief introduction to the results of \cite{CSTkdv} is presented in \cite{CS4}.)

To prove Theorem \ref{detailedbalancenew}, we proceed in two steps. Firstly, we establish a weaker version (see Theorem \ref{detailedbalanceold} below), in which the invariance of $\mu^\mathbb{Z}$ or $(\mu\times\tilde{\mu})^\mathbb{Z}$ under $\mathcal{T}$ is shown to be equivalent to the detailed balance condition holding with $\nu$, $\tilde{\nu}$ given by the relevant marginals of $(U_{n}(x))_{n\in\mathbb{Z}}$. Since it is not trivial to deduce the distribution of $U_{n}(x)$ from $\mu$ or $\mu\times\tilde{\mu}$ in general, the latter version of the result is far from straightforward to apply in examples. Towards dealing with this issue, we show that invariant measures on $\mathcal{X}^*$ of homogeneous/alternating product form induce stationary/alternating measures of $(x_n^t, u_n^t)_{n,t \in \Z}$ satisfying Burke's property (see Subsection \ref{burkesec} below), and moreover they are the only such measures satisfying this property. Namely, Burke's property is equivalent to the detailed balance condition $F(\mu \times \nu) = \mu \times \nu$ or $F_*(\mu \times \nu) = \tilde{\mu} \times \tilde{\nu}$. Combining this observation with Theorem \ref{detailedbalanceold} yields our main result, i.e.\ Theorem \ref{detailedbalancenew}. See Section \ref{generalsec}, where a sufficient condition for establishing ergodicity of such invariant measures for type I models is also given, for details.

The abstract results discussed above are applied to our concrete KdV- and Toda-type examples of discrete integrable systems in Sections \ref{kdvsec} and \ref{todasec}, respectively. In particular, we show that spatially independent and identically/alternately distributed configurations that are also temporally invariant are of exponential/geometric form for the ultra-discrete KdV equation, of generalized inverse Gaussian form for the discrete KdV equation, of exponential/geometric form for the ultra-discrete Toda lattice, and of gamma form for the discrete Toda lattice. Our proofs for checking detailed balance for the various models depends on some well-known characterizations of certain standard distributions, including the exponential, geometric, gamma and generalized inverse Gaussian distributions \cite{Cr,Fe1,Fe2,L,LW}. Let us also highlight that the lattice structure of the Toda examples is not immediately covered by the framework of this article, with each being based on a map with three inputs and three outputs. Nonetheless, in both the discrete and ultra-discrete cases, it is possible to describe a type II model for which the involution $F:\tilde{\mathcal{X}}_0 \times  \mathcal{X}_0 \times  \mathcal{U}_0 \to  \tilde{\mathcal{X}}_0 \times  \mathcal{X}_0 \times  \mathcal{U}_0 $ defined by
\begin{equation}\label{threeinvolution}
F(a,b,c):=\left(F_*^{(1)}(b,c), F_*^{-1}\left(a, F_*^{(2)}\left(b,c\right)\right)\right)
\end{equation}
gives the appropriate dynamics. For a general involution of this form, we show that invariance under $F$, i.e.\
\begin{equation}\label{3invar}
F\left( \tilde{\mu} \times \mu \times \nu\right)=\tilde{\mu} \times \mu \times \nu,
\end{equation}
is equivalent to the detailed balance condition for $F_*$, i.e.\ $F_*(\mu \times \nu) = \tilde{\mu} \times \tilde{\nu}$ for some $\tilde{\nu}$, and indeed that both these conditions are equivalent to
\begin{equation}\label{23inv}
F^{(2,3)}\left( \tilde{\mu} \times \mu \times \nu\right)=\mu \times \nu.
\end{equation}
The detailed balance solutions that we derive in our examples yield corresponding invariant measures of the form described above. Our results yield that these satisfy Burke's property, and we also explore ergodicity for the KdV (type I) models. Moreover, in Section \ref{linksec}, we discuss natural relationships between the detailed balance solutions/invariant measures of the systems in question, which are based on an ultra-discretization procedure, and a certain KdV-Toda correspondence. See Figure \ref{dis} below for a summary of these results.

Although in this article we restrict to the case when the maps are deterministic, it is also possible to consider stochastic models, in which the maps $F_n^t$ themselves are random. In Section \ref{stochsec}, we provide some comments on generalizations of our results to this setting, and present links with certain stochastic integrable (solvable) lattice models, specifically last passage percolation, random polymers and higher spin vertex models. We note in particular that the relation at \eqref{23inv} is closely related to Burke's property for two-dimensional stochastic solvable models in integrable probability.

Another strand of literature to which the present article connects is that regarding iterated random functions. Indeed, one can understand \eqref{uniqueu} as a map $U_{n-1}\mapsto U_n$ based on the random function $f_{n,x_n}:=(F_n^0)^{(2)}(x_n,\cdot)$. Such systems arise in many settings, and there are a number of important problems that arise for them, such as the $(x_m)_{m\leq n}$-measurability of $U_n$. Moreover, if $(x_n)_{n\in\mathbb{Z}}$ is an independent sequence, then $U_n$ is a Markov chain (homogeneous for type I models, and with alternating transition probabilities for type II models), and one can ask questions about corresponding invariant measures and ergodicity for this process (or suitable variations for type II models). We will discuss how our results can be understood in this context in Section \ref{irfsec}.

Finally, in Section \ref{oqsec}, we summarize some of the open problems that are left open by this study, and present some conjectures on the characterization of some standard distributions that arise naturally from this study. We also include an appendix containing definitions of some of the probability distributions that appear in earlier sections.

\section{Setting and abstract results}
\label{generalsec}

In this section, we prove the abstract results outlined in the introduction. We continue to apply the definitions of a type I/II model, the set $\mathcal{X}^*$ of configurations for which there exists a unique solution to the initial value problem \eqref{initial}, the function $U$, and the operator $\mathcal{T}$, as given there. In Subsection \ref{dbsec}, we prove the weaker version of Theorem \ref{detailedbalancenew} discussed in the introduction. Moreover, in the type II setting, we establish the characterization of solutions to the detailed balance condition in terms of the conditions at \eqref{3invar} and \eqref{23inv}. In Subsection \ref{burkesec}, we present our conclusions concerning Burke's theorem in the present context. These allow us to strengthen the relevant result in Subsection \ref{dbsec}, and thereby obtain Theorem \ref{detailedbalancenew}. As noted above, this provides our means for checking invariance of homogeneous/alternating product measures under $\mathcal{T}$ in examples. Finally, in Subsection \ref{ergsec}, we develop an argument for checking the ergodicity of such invariant measures under $\mathcal{T}$ for type I models.

\subsection{The detailed balance condition and invariance}\label{dbsec}

Recalling the definition of the detailed balance condition for type I/II models from the introduction, the first goal of this subsection is to prove the following variation on Theorem \ref{detailedbalancenew}, which provides a link between detailed balance solutions and invariant measures.

\begin{thm}\label{detailedbalanceold}
\hspace{10pt}
\begin{enumerate}
  \item[(a)] \emph{Type I model.} Suppose $\mu$ is a probability measure on $\mathcal{X}_0$ and $\mu^{\Z}(\mathcal{X}^*)=1$. Let $\nu$ be
the distribution of $U_{-1}(x)$, where $x$ is distributed according to $\mu^{\Z}$. It is then the case that $\mathcal{T} \mu^{\Z}=\mu^{\Z}$ if and only if the pair $(\mu,\nu)$ satisfies the detailed balance condition.
  \item[(b)] \emph{Type II model.} Suppose $\mu$, $\tilde{\mu}$ are probability measures on $\mathcal{X}_0$, $\tilde{\mathcal{X}}_0$ and $(\mu \times \tilde{\mu})^{\Z}(\mathcal{X}^*)=1$. Let $\nu$, $\tilde{\nu}$ be the distributions of $U_{-1}(x)$, $U_0(x)$, respectively, where $x$ is distributed according to $(\mu \times \tilde{\mu})^{\Z}$. It is then the case that $\mathcal{T} (\mu \times \tilde{\mu})^{\Z}=(\mu \times \tilde{\mu})^{\Z}$ if and only if the quadruplet of probability measures $(\mu,\nu,\tilde{\mu},\tilde{\nu})$ satisfies the detailed balance condition.
\end{enumerate}
\end{thm}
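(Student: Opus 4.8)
The plan is to exploit the deterministic, local structure of the dynamics to move the question of temporal invariance under $\mathcal{T}$ onto the single map $F$ (resp.\ $F_*$). I will treat the type I case in detail; the type II case is analogous, carrying the extra bookkeeping coming from the two species of variables and the index shift in \eqref{dynamics}. Throughout, fix $x$ distributed according to $\mu^{\Z}$ (which, by hypothesis, lies in $\mathcal{X}^*$ almost surely), set $u_n := U_n(x)$, and set $x^1_n := \mathcal{T}(x)_n$, so that by \eqref{initial} we have $F(x_n, u_{n-1}) = (x^1_n, u_n)$ for every $n\in\Z$.

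First I would establish the ``only if'' direction. Let $\nu$ be the law of $u_{-1}$ (equivalently, by stationarity in $n$ of the solution to \eqref{initial} under the shift-invariant law $\mu^{\Z}$, the law of $u_n$ for any fixed $n$). The key structural input is a spatial Markov/renewal property: because \eqref{uniqueu} expresses $u_n$ as a function of $(x_m, u_{n-1})$ and the construction of $U$ is shift-covariant, the pair $(x_n, u_{n-1})$ should have law $\mu\times\nu$ for each $n$, and moreover the sequence $(u_n)_{n\in\Z}$ is stationary. Concretely I would argue: the law of $u_{n-1}$ does not depend on $n$ (shift-invariance), and $x_n$ is independent of $u_{n-1}$ since $u_{n-1}$ is built from $(x_m)_{m\leq n-1}$ — this measurability/independence claim is the one genuinely delicate point and I return to it below. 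Granting it, $(x_n,u_{n-1})\sim \mu\times\nu$, so $(x^1_n,u_n) = F(x_n,u_{n-1})$ has law $F(\mu\times\nu)$. Now suppose $\mathcal{T}\mu^{\Z} = \mu^{\Z}$: then $(x^1_n)_{n\in\Z}\sim\mu^{\Z}$. Running the solution of \eqref{initial} one step up from the row at time $1$ (using that $x^1\in\mathcal{X}^*$, which follows from $\mathcal{T}$-invariance and $\mu^{\Z}(\mathcal{X}^*)=1$), the auxiliary variables produced at level $1$ are exactly the same $(u_n)_{n\in\Z}$; hence by the same spatial-stationarity argument applied to the configuration $x^1$, the pair $(x^1_n, u_n)$ has law $\mu\times(\text{law of }u_n) = \mu\times\nu$. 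Comparing the two expressions for the law of $(x^1_n,u_n)$ gives $F(\mu\times\nu) = \mu\times\nu$, i.e.\ detailed balance.

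Next the ``if'' direction. Suppose $(\mu,\nu)$ satisfies detailed balance, with $\nu$ the law of $U_{-1}(x)$ as above. Then for each $n$, since $(x_n,u_{n-1})\sim\mu\times\nu$, we get $(x^1_n,u_n) = F(x_n,u_{n-1})\sim F(\mu\times\nu)=\mu\times\nu$, so in particular $x^1_n\sim\mu$ for every $n$. To upgrade the one-dimensional marginals to the full product law $\mu^{\Z}$ I would show that $(x^1_n)_{n\in\Z}$ is an independent sequence: using $F(x_n,u_{n-1}) = (x^1_n,u_n)$ and the fact that $F$ is an involution (equivalently, a bijection with measurable inverse), one recovers $(x_n,u_{n-1})$ from $(x^1_n,u_n)$; combined with the chain structure $u_n = (F^{(2)})(x_n,u_{n-1})$ and the independence of $(x_n)_{n\in\Z}$, a direct computation of finite-dimensional distributions shows $(x^1_n)_{n\in\Z} \sim \mu^{\Z}$ and, simultaneously, that $(u_n)_{n\in\Z}$ is a stationary Markov chain with $u_n\sim\nu$. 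This also pins down that the measure $\nu$ featuring in the statement is forced, and (for type II) identifies $\nu,\tilde\nu$ as the laws of $U_{-1}(x),U_0(x)$.

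The main obstacle is the measurability and independence claim underlying ``$(x_n,u_{n-1})\sim\mu\times\nu$'': a priori $U_n(x)$ is defined only through global existence/uniqueness for \eqref{initial}, so it is not obvious that $U_{n-1}(x)$ is a function of $\{x_m : m\leq n-1\}$ alone, nor that the sequence $(u_n)$ is stationary. I expect to handle this by a standard approximation/monotone-class argument: on $\mathcal{X}^*$ one has the fixed-point relation \eqref{uniqueu}, and shift-covariance of the solution map gives stationarity of $(x_n,u_n)_{n\in\Z}$ outright; the ``past-measurability'' of $u_{n-1}$ is exactly the iterated-random-function picture flagged in the introduction (the map $U_{n-1}\mapsto U_n$ via $f_{n,x_n}=(F_n^0)^{(2)}(x_n,\cdot)$), and one can either invoke uniqueness in \eqref{uniqueu} directly or approximate $U_n$ by truncated dynamics. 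For type II the only additional work is to keep track of which coordinates live in $\mathcal{X}_0$ versus $\tilde{\mathcal{X}}_0$ (and correspondingly $\mathcal{U}_0$ versus $\tilde{\mathcal{U}}_0$), so that detailed balance appears in the asymmetric form $F_*(\mu\times\nu)=\tilde\mu\times\tilde\nu$, with $\nu$ the law of $U_{-1}(x)$ and $\tilde\nu$ the law of $U_0(x)$; the index shift in \eqref{dynamics} is precisely what makes the time-$1$ configuration again alternately $(\mu\times\tilde\mu)$-distributed rather than $(\tilde\mu\times\mu)$-distributed.
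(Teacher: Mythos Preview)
Your overall architecture matches the paper's, and your ``if'' direction is essentially right (the paper makes your ``direct computation of finite-dimensional distributions'' precise via the elementary sigma-algebra fact: if $\mathcal{G}_1\perp\mathcal{G}_2$ and $\sigma(\mathcal{G}_1\cup\mathcal{G}_2)\perp\mathcal{G}_3$, then $\mathcal{G}_1\perp\sigma(\mathcal{G}_2\cup\mathcal{G}_3)$; this is what lets you peel off $x^1_0$ from $(x^1_n)_{n\ge 1}$). Your identification of past-measurability of $u_{n-1}$ as the delicate point is also correct, though the paper resolves it differently from your proposed truncation/approximation: it argues directly from two-sided uniqueness on $\mathcal{X}^*$ that if two configurations agree on $(-\infty,m]$ then their solutions agree on $\{n\le m,\ t\ge 0\}$, since one can splice the two solutions together and invoke uniqueness.

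There is, however, a genuine gap in your ``only if'' argument. To get $F(\mu\times\nu)=\mu\times\nu$ you need $(x^1_0,u^0_0)\sim\mu\times\nu$, and in particular that $x^1_0$ and $u^0_0$ are independent. Your justification --- ``running the solution one step up from the row at time $1$, the auxiliary variables produced at level $1$ are exactly the same $(u_n)$'' --- is not correct: the auxiliary variables built from the configuration $x^1$ are $u^1_n=U_n(x^1)$, not $u^0_n$. Applying the past-measurability/spatial-stationarity argument to $x^1$ tells you that $x^1_0$ is independent of $u^1_{-1}$, which is not the statement you need. The paper closes this gap by proving a \emph{backward} measurability result: on $\mathcal{X}^*$, for $n\ge m+1$ and $t\le 0$, $X^t_n$ and $U^{t-1}_{n-1}$ are measurable with respect to $(x_k)_{k\ge m+1}$ (this is the time-reversed companion of the forward measurability you are using, proved by the same splicing-and-uniqueness trick applied to the symmetry $(x^t_n,u^t_n)\mapsto(x^{1-t}_{1-n},u^{-t}_{-n})$). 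Under $\mathcal{T}\mu^{\Z}=\mu^{\Z}$, one can shift the base row to $t=1$ and conclude that $u^0_0$ is measurable with respect to $(x^1_n)_{n\ge 1}$; since $x^1\sim\mu^{\Z}$ is i.i.d., this gives $x^1_0\perp u^0_0$ as required. Without this backward step your argument does not establish the independence of $x^1_0$ and $u^0_0$.
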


\begin{rem}\label{remrem}
Let $\mathcal{X}^{U}$ be a set of configurations $(x_n)_{n\in\mathbb{Z}}$ for which there is a solution $(U_n(x))_{n\in\Z}$ to \eqref{uniqueu} for which $U_n$ is a function of $(x_m)_{m \le n}$ for all $n$, and $U_n (x) = \theta^n U_0 (x) =U_0( \theta^n x)$  for a type I model, and $U_{2n}=\theta^{2n}U_0=U_0\theta^{2n}$, $U_{2n+1}=\theta^{2n}U_1=U_1\theta^{2n}$ for a type II model, where $\theta$ is the usual shift operator. Moreover, assume that $\mathcal{T}\mathcal{X}^{U}\subseteq \mathcal{X}^{U}$, and $\mathcal{R}\mathcal{X}^{U}=\mathcal{X}^{U}$, where $\mathcal{T}=\mathcal{T}^U$ depends on $U$ through \eqref{dynamics}, and $\mathcal{R}x_n:=x_{1-n}$ for a type I model and $\mathcal{R}x_n:=x_{-n}$ for a type II model. If $\mathcal{T}\mathcal{R}\mathcal{T}\mathcal{R}$ is the identity map on $\mathcal{X}^U$, then Theorem \ref{detailedbalanceold} holds when we replace $\mathcal{X}^*$ by $\mathcal{X}^U$. It might be easier to find a space $\mathcal{X}^U$ than $\mathcal{X}^*$ in some cases.
\end{rem}

Towards proving Theorem \ref{detailedbalanceold}, we start by setting out a lemma on the measurability of $x^t_n$ and $u^t_n$ in terms of the initial configuration $x_n$. This is stated in terms of functions $X^t_n$ and $U^t_n$ on $\mathcal{X}^*$ that are defined via the relation
\[\left(X^t_n (x), U^t_n (x)\right)=(x^t_n,u^t_n),\qquad \forall x\in\mathcal{X}^*,\:n,t\in\Z,\]
where $(x^t_n,u^t_n)_{n,t\in\Z}$ is the unique solution of \eqref{initial} with initial condition $x$.

\begin{lem}\label{measurability}
Let $m \in \Z$.
\begin{enumerate}
  \item [(a)] For any $n \le m$ and $t \ge 0$, $X^t_n$ and $U^{t}_n$ are measurable with respect to $(x_n)_{n \le m}$.
  \item [(b)] For any $n \ge m+1$ and $t \le 0$, $X^t_n$ and $U^{t-1}_{n-1}$ are measurable with respect to $(x_n)_{n \ge m+1}$.
\end{enumerate}
\end{lem}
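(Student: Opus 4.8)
I would derive part (b) from part (a) by a space--time reflection, and prove part (a) by reducing it to a single locality statement --- that solutions which start from configurations agreeing on a left half-line must agree on the associated forward quarter-plane --- and attacking that statement by a splicing argument that exploits the uniqueness built into $\mathcal{X}^*$.

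\textbf{Deducing (b) from (a).} Given a solution $(x^t_n,u^t_n)_{n,t}$ of \eqref{initial}, I would form the reflected array
\[\bar x^{\,t}_n:=x^{-t}_{-n},\qquad \bar u^{\,t}_n:=u^{-t-1}_{-n-1}.\]
A short computation shows that $(\bar x^{\,t}_n,\bar u^{\,t}_n)_{n,t}$ again solves a system of the form \eqref{initial}, with the \emph{same} local maps and with initial datum $\bar x_n=x_{-n}$: in the type I case this uses that $F$ is an involution, and in the type II case that $F_*^{-1}$ is literally the inverse of $F_*$, together with the fact that $(-n)+(-t-1)$ and $n+t$ have opposite parities, so that the roles of $F_*$, $F_*^{-1}$ and of the four state spaces are all preserved. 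Since $S\mapsto\bar S$ is an involutive bijection between solutions for $x$ and solutions for $(x_{-n})_n$, the set $\mathcal{X}^*$ is invariant under $x\mapsto(x_{-n})_n$, and $\bar S$ is the unique solution of the reflected problem whenever $S$ is that of the original; consequently $X^t_n\big((x_{-k})_k\big)=X^{-t}_{-n}(x)$ and $U^t_n\big((x_{-k})_k\big)=U^{-t-1}_{-n-1}(x)$. Applying part (a) to $(x_{-k})_k$ with its ``$m$'' taken to be $-m-1$, and relabelling $n\mapsto-n$, $t\mapsto-t$, then produces part (b) verbatim; the passage from $U^t_n$ in (a) to $U^{t-1}_{n-1}$ in (b) is precisely the index shift built into $\bar u$. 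So it remains to prove (a).

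\textbf{Proof of (a).} It suffices, for each fixed $k$, to show that the solutions of \eqref{initial} for any two configurations $x,x'\in\mathcal{X}^*$ that agree on all coordinates $\le k$ coincide on the quarter-plane $\mathcal{Q}_k:=\{(j,s):j\le k,\ s\ge0\}$: this says exactly that $X^s_j$ and $U^s_j$, for $j\le k$ and $s\ge0$, factor as functions on $\mathcal{X}^*$ through $(x_i)_{i\le k}$ (and since $\sigma((x_i)_{i\le n})\subseteq\sigma((x_i)_{i\le m})$ for $n\le m$, this yields the stated measurability for all $n\le m$). The measurability of the induced maps then follows from the Borel measurability of the solution map $x\mapsto(x^t_n,u^t_n)_{n,t}$, whose graph is Borel and projects bijectively onto $\mathcal{X}^*$, hence is a Borel isomorphism by Lusin--Souslin. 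My proposed device for the quarter-plane agreement is a \emph{splice}: define an array $(\hat x^t_n,\hat u^t_n)$ that coincides with the solution for $x'$ on $\mathcal{Q}_k$ and with the solution for $x$ off $\mathcal{Q}_k$; its time-$0$ slice is $x$ (the two configurations agree for $j\le k$ by hypothesis), so if $(\hat x^t_n,\hat u^t_n)$ solves \eqref{initial} then uniqueness of the solution for $x$ forces it to be that solution, whence the two solutions agree on $\mathcal{Q}_k$.

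\textbf{Main obstacle.} The splice is not legitimate as it stands, and making it so is the technical heart of the lemma. The equations $F^t_{k+1}$ straddling the cut read (with $x,u$ denoting the solution for $x$) $F^t_{k+1}(x^t_{k+1},\hat u^t_k)=(x^{t+1}_{k+1},u^t_{k+1})$, so $(\hat x,\hat u)$ is a genuine solution only if the $u$-column along the cut already agrees, i.e.\ $U^t_k(x')=U^t_k(x)$ for all $t$ --- which is part of what one is trying to prove. The underlying reason is that the half-line problem on $\{j\le k\}$ is not well posed in isolation: the spatial recursion \eqref{uniqueu}, read at any fixed time, carries no initial condition at $j=-\infty$, so the two-sided uniqueness defining $\mathcal{X}^*$ must genuinely be used. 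I would try to break this circularity either by running the splicing argument inside an induction on $t$, propagating the agreement of the cut-edges $U^t_k$ hand in hand with that of the $X$'s, or by showing that any solution of the quarter-plane problem on $\mathcal{Q}_k$ coming from a point of $\mathcal{X}^*$ is rigid --- equivalently, that it extends uniquely to a full solution; this rigidity step, which can also be cast in the language of backward iterations of the functions $f_{n,x_n}=(F^0_n)^{(2)}(x_n,\cdot)$, is where I expect the real work to lie. The type II case is handled identically, with $F$ replaced by $F_*$ and $F_*^{-1}$ in the positions dictated by the parity of $n+t$.
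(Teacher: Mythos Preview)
Your overall strategy --- a splicing argument for (a) and a space--time reflection for (b) --- is exactly the paper's. The reflection you write down is correct and matches the paper's symmetry $(x^t_n,u^t_n)\mapsto(x^{1-t}_{1-n},u^{-t}_{-n})$ up to an inessential shift of origin.

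The ``main obstacle'' you identify, however, is self-inflicted: it arises only because you insist that the spliced array coincide with the $x$-solution on the \emph{entire} complement of $\mathcal{Q}_k$, in particular on $\{n>k,\ t>0\}$. That forces the boundary equation at $n=k+1$ to see both $U^t_k(x')$ and $U^t_{k+1}(x)$ simultaneously, which indeed is circular. But you do not need agreement with the $x$-solution there; you only need the spliced array to be \emph{some} solution with time-$0$ slice equal to $x$. The paper's splice does exactly this: use the $x'$-solution on $\{n\le k,\ t\ge 0\}$, use the $x$-solution on $\{t<0\}$, use $x_n$ on $\{n>k,\ t=0\}$, and then \emph{freely} fill in $\{n>k,\ t>0\}$ by iterating the recursion $(\hat x^{t+1}_n,\hat u^t_n):=F^t_n(\hat x^t_n,\hat u^t_{n-1})$ outward from the cut. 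Every lattice equation is then satisfied by construction --- at the horizontal seam $t=0$ because $x'_n=x_n$ for $n\le k$, and at the vertical seam $n=k+1$ because those equations are being used as \emph{definitions}. The time-$0$ row of the splice is $x$, so by uniqueness on $\mathcal{X}^*$ the splice equals the $x$-solution everywhere; restricting to $\mathcal{Q}_k$ gives $X^t_n(x)=X^t_n(x')$ and $U^t_n(x)=U^t_n(x')$ there, which is the contradiction.

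So no induction on $t$, no rigidity statement, and no backward-iteration analysis is needed: once you stop over-constraining the upper-right quadrant of the splice, the argument is a one-liner.
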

\begin{proof}
(a) Suppose there exist ${x}=(x_n)_{n\in\Z}$ and $y=(y_n)_{n\in\Z}$ in $\mathcal{X}^*$ such that $x_n=y_n$ for all $n \le m$, but $X^t_n({x}) \neq X^t_n({y})$ or $U^{t}_n ({x}) \neq U^{t}_n  ({y})$ for some $n \le m$, $t \ge 0$. We then define:
\begin{equation*}
\left\{
  \begin{array}{ll}
    \bar{x}^t_n:= X^t_n({y}),\: \bar{u}^{t}_n:= U^{t}_n({y}), & {n \le m,\: t \ge 0;} \\
    \bar{x}^t_n:=x_n, & { n > m,\: t=0;} \\
    \bar{x}^t_n:= X^t_n({x}),\:\bar{u}^{t}_n:= U^{t}_n({x}), & {n\in \Z,\: t < 0.}
  \end{array}
  \right.
\end{equation*}
Moreover, for $n >m$, $t>0$, it is clear from the lattice structure that there is a unique solution to $(\bar{x}^t_n, \bar{u}^{t-1}_n) : =F_{n}^{t-1}( \bar{x}^{t-1}_{n}, \bar{u}^{t-1}_{n-1})$ that is consistent with the previous definitions. Recursively, we have that $(\bar{x}^t_n, \bar{u}^t_n)_{n,t\in\Z}$ solves \eqref{initial} with initial condition $x$. Since $\bar{x}^t_n \neq x^t_n$ or $\bar{u}^{t}_n \neq u^{t}_n$ for some $n \le m$, $t \ge 0$ by assumption, this contradicts the uniqueness of the solution of \eqref{initial} for ${x}\in \mathcal{X}^*$. Hence we conclude that $X^t_n$ and $U^t_n$ are measurable with respect to $(x_n)_{n \le m}$.\\
(b) Appealing to the symmetry of the map $(x^t_n,u^t_n) \to (x^{1-t}_{1-n}, u^{-t}_{-n})$, we can apply the same proof as for part (a).
\end{proof}

In the next lemma, we rephrase spatial/temporal invariance of the law of an initial configuration as invariance under appropriate shifts of the induced law on variables on the entire lattice. Specifically, for a probability measure $P$ supported on $\mathcal{X}^*$, we denote by $\mathbf{P}_P$ the probability distribution of $(x^t_n,u^t_n)_{n,t\in\Z}$, as defined by the initial value problem \eqref{initial}, for which the marginal of $(x^0_n)_{n\in\Z}$ is given by $P$. We define a spatial shift $\theta$ on lattice variables by setting
\[\theta \left((x^t_{n},u^t_{n})_{n,t\in\mathbb{Z}}\right):=\left(x^t_{n+1},u^t_{n+1}\right)_{n,t\in\Z}.\]
Slightly abusing notation, for elements $x\in\mathcal{X}^*$, we similarly suppose $\theta (x)_n=x_{n+1}$. The corresponding temporal shift $T$ is given by
\[T \left((x^t_{n},u^t_{n})_{n,t\in\mathbb{Z}}\right):=\left(x^{t+1}_{n},u^{t+1}_{n}\right)_{n,t\in\Z}.\]
Note that if we consider $T$ as the map on $\mathcal{X}^*$ given by $T(x)_n=x^1_n$, then the definition of the dynamics at \eqref{dynamics} means that, for $x\in\mathcal{X}^*$,
\[\left\{\begin{array}{ll}
      \mathcal{T}(x)=T(x), & \hbox{for a type I model;} \\
      \mathcal{T}(x)=\theta \circ T(x), & \hbox{for a type II model.}
    \end{array}\right.\]
NB. From this description, it is easy to see that $\mathcal{T}$ is a bijection, with inverse operation $\mathcal{T}^{-1}=\mathcal{R}\mathcal{T}\mathcal{R}$, where $\mathcal{R}$ is defined as in Remark \ref{remrem}.

\begin{lem}\label{translation} Let $P$ be a probability measure supported on $\mathcal{X}^*$.
\begin{enumerate}
\item[(a)] For a type I model, $\mathcal{T} P=P$ if and only if $T \mathbf{P}_P= \mathbf{P}_P$. Also, $\theta P=P$ if and only if $\theta \mathbf{P}_P= \mathbf{P}_P$.
\item[(b)] For a type II model, $\mathcal{T} P=P$ if and only if $\theta \circ T\mathbf{P}_P= \mathbf{P}_P$. Also, $\theta^2 P=P$ if and only if $\theta^2 \mathbf{P}_P= \mathbf{P}_P$.
\end{enumerate}
\end{lem}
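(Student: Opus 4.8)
The plan is to realize $\mathbf{P}_P$ as a pushforward of $P$ along the solution map of \eqref{initial}, and then to check that each shift operator in the statement intertwines with that map, so that invariance of $P$ under an operator on $\mathcal{X}^*$ becomes invariance of $\mathbf{P}_P$ under the corresponding operator on lattice configurations. Concretely, let $\Phi$ be the function $x\mapsto\bigl(X^t_n(x),U^t_n(x)\bigr)_{n,t\in\Z}$ on $\mathcal{X}^*$, taking values in the relevant space of lattice configurations, where $X^t_n$ and $U^t_n$ are as defined just before Lemma~\ref{measurability}. By that lemma, applied over all $m\in\Z$ (part~(a) handling $t\ge0$ and part~(b) handling $t\le 0$), every coordinate of $\Phi$ is measurable, so $\Phi$ is measurable, and by the definition of $\mathbf{P}_P$ we have $\mathbf{P}_P=P\circ\Phi^{-1}$. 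Writing $\pi_0$ for the (measurable) projection of a lattice configuration onto its time-$0$ slice $(x^0_n)_{n\in\Z}$, the fact that $\Phi(x)$ solves \eqref{initial} with $x^0_n=x_n$ gives $\pi_0\circ\Phi=\mathrm{id}_{\mathcal{X}^*}$, hence $P=\mathbf{P}_P\circ\pi_0^{-1}$; in particular, for probability measures $P,Q$ supported on $\mathcal{X}^*$ one has $\mathbf{P}_P=\mathbf{P}_Q$ if and only if $P=Q$.

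Next I would prove, for the relevant pairs of operators, an intertwining relation $\Phi\circ s=S\circ\Phi$ on $\mathcal{X}^*$, where $s$ acts on $\mathcal{X}^*$ and $S$ on lattice configurations: for part~(a), the pairs $(s,S)=(\theta,\theta)$ and $(s,S)=(\mathcal{T},T)$ (using $\mathcal{T}=T$ on $\mathcal{X}^*$ for a type I model); for part~(b), the pairs $(s,S)=(\theta^2,\theta^2)$ and $(s,S)=(\mathcal{T},\theta\circ T)$ (using $\mathcal{T}=\theta\circ T$ on $\mathcal{X}^*$ for a type II model). Each such relation follows from two observations. First, applying the relevant lattice shift $S$ to the solution $\Phi(x)$ attached to $x$ produces, after re-indexing the equations $F^t_n(x^t_n,u^t_{n-1})=(x^{t+1}_n,u^t_n)$, a solution of \eqref{initial} whose time-$0$ slice is $s(x)$; this re-indexing is legitimate precisely because the shift does not alter the relevant map, i.e.\ in the type I case $F^t_n\equiv F$ by homogeneity, while in the type II case $F^t_n$ depends only on $n+t\bmod 2$ and each of $\theta^2$ and $\theta\circ T$ preserves that parity (which is exactly why the extra spatial shift is built into $\mathcal{T}$ for type II models, and why one must use $\theta^2$ rather than $\theta$ there). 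Second, this solution is the \emph{unique} one with time-$0$ slice $s(x)$, i.e.\ $s(x)\in\mathcal{X}^*$: for $s=\mathcal{T}$ this is immediate since $\mathcal{T}$ is a bijection of $\mathcal{X}^*$ (as recorded in the discussion preceding the lemma), while for $s=\theta$ (type I) or $s=\theta^2$ (type II) it holds because $s$ admits an inverse shift $s^{-1}$ of the same kind, so any solution with time-$0$ slice $s(x)$ can be shifted by $s^{-1}$ to a solution with time-$0$ slice $x$, whose uniqueness forces that of the former. Thus each such $s$ maps $\mathcal{X}^*$ bijectively onto itself, and in particular $sP$ is again supported on $\mathcal{X}^*$.

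Finally, given $\Phi\circ s=S\circ\Phi$ with $s(\mathcal{X}^*)\subseteq\mathcal{X}^*$, pushing $P$ forward and using functoriality of the pushforward yields $S\mathbf{P}_P=\mathbf{P}_{sP}$; by the injectivity of $P\mapsto\mathbf{P}_P$ established in the first step, this equals $\mathbf{P}_P$ if and only if $sP=P$. Applying this with the four pairs $(s,S)$ listed above gives parts~(a) and~(b). I expect the only real obstacle to be the bookkeeping in the middle step — verifying that the shift operators preserve $\mathcal{X}^*$ and that the re-indexed configurations genuinely solve \eqref{initial} for the same system — but this is just a parity/homogeneity check; once it is in place, the rest is a formal change-of-variables argument.
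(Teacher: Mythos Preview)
Your argument is correct and follows essentially the same route as the paper: the key identity in both is $S\mathbf{P}_P=\mathbf{P}_{sP}$ (the paper states only the special case $T\mathbf{P}_P=\mathbf{P}_{\mathcal{T}P}$), from which the equivalence is immediate once one knows $P\mapsto\mathbf{P}_P$ is injective via the time-$0$ marginal. The paper's version is considerably terser, leaving implicit the intertwining, the parity check for type II, and the fact that the shifts preserve $\mathcal{X}^*$, all of which you have spelled out carefully.
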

\begin{proof}
(a) If $\mathcal{T} P=P$ or $T \mathbf{P}_P= \mathbf{P}_P$ holds, then $\mathcal{T}P(\mathcal{X}^*)=1$, and so $\mathbf{P}_{\mathcal{T}P}$ is well-defined. The claim then follows from the fact that $T\mathbf{P}_P= \mathbf{P}_{\mathcal{T}P}$. The same argument works for $\theta$.\\
(b) Again, the same argument works.
\end{proof}

Combining the previous two lemmas, we have the following.

\begin{cor}\label{stationarymeasurability}
Let $P$ be a probability measure supported on $\mathcal{X}^*$, and suppose $\mathcal{T}P=P$. It is then the case that there is a subset of two-dimensional configurations $(x^t_n,u^t_n)_{n,t\in\Z}$ such that, with probability one on this subset, for any $m,s\in\Z$:
\begin{enumerate}
\item[(a)] for any $n \le m$ and $t \ge s$, $X^t_n$ and $U^{t}_n$ are measurable with respect to $(x_n^s)_{n \le m}$;
\item[(b)] for any $n \ge m+1$ and $t \le s$, $X^t_n$ and $U^{t-1}_{n-1}$ are measurable with respect to $(x_n^s)_{n \ge m+1}$.
\end{enumerate}
\end{cor}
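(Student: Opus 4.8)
The plan is to deduce Corollary~\ref{stationarymeasurability} from Lemma~\ref{measurability} and Lemma~\ref{translation} by a ``transport of structure'' argument along the temporal shift $T$ (respectively $\theta\circ T$), exploiting that $\mathcal{T}P=P$ forces the induced two-dimensional law $\mathbf{P}_P$ to be $T$-invariant. Concretely, I would first note that by Lemma~\ref{translation}, $\mathcal{T}P=P$ implies $T\mathbf{P}_P=\mathbf{P}_P$ for a type~I model (and $\theta\circ T\,\mathbf{P}_P=\mathbf{P}_P$ for a type~II model), and in particular that the shifted measures $T^s\mathbf{P}_P$ all coincide with $\mathbf{P}_P$ for every $s\in\Z$. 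Under $\mathbf{P}_P$ the marginal of $(x^0_n)_{n\in\Z}$ is $P$, which is supported on $\mathcal{X}^*$; hence Lemma~\ref{measurability} applies and gives, $\mathbf{P}_P$-almost surely, a version of the configuration on which $X^t_n,U^t_n$ (for $n\le m$, $t\ge 0$) are determined by $(x^0_n)_{n\le m}$, and $X^t_n,U^{t-1}_{n-1}$ (for $n\ge m+1$, $t\le 0$) are determined by $(x^0_n)_{n\ge m+1}$, simultaneously for all $m\in\Z$ (the ``simultaneously'' coming from intersecting the countably many full-measure events indexed by $m$).

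The second step is to push this measurability statement ``up in time'' to an arbitrary base time $s\in\Z$. For this I would use that applying the temporal shift $T^{-s}$ to a configuration $(x^t_n,u^t_n)_{n,t\in\Z}$ sends the slice at time $s$ to the slice at time $0$, and $X^t_n\circ T^{-s}=X^{t-s}_n$, $U^t_n\circ T^{-s}=U^{t-s}_n$ in the obvious sense. Since $\mathbf{P}_P$ is $T$-invariant, the event ``the measurability conclusions of Lemma~\ref{measurability} hold relative to the time-$0$ slice'' has full measure if and only if its $T^{-s}$-preimage does; but that preimage is precisely the event ``the measurability conclusions hold relative to the time-$s$ slice'' (with $t\ge 0$, $t\le 0$ replaced by $t\ge s$, $t\le s$, and $(x_n)_{n\le m}$ replaced by $(x^s_n)_{n\le m}$, etc.). Intersecting over the countably many $s\in\Z$ (and the countably many $m\in\Z$ already handled) yields a single full-measure subset of two-dimensional configurations on which the conclusions (a) and (b) hold for all $m,s\in\Z$ simultaneously. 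For a type~II model one runs the same argument with $T$ replaced by $\theta\circ T$, noting that $\theta$ is a measurable bijection on the configuration space that merely relabels the spatial index, so it does not affect which $\sigma$-algebra a given variable is measurable with respect to after the corresponding relabelling; alternatively, since $\theta^2\mathbf{P}_P=\mathbf{P}_P$ and $(\theta\circ T)^2=\theta^2\circ T^2$, one can work with $T^2$ directly.

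The main obstacle, such as it is, is bookkeeping rather than mathematical depth: one must be careful that Lemma~\ref{measurability} is a statement about \emph{all} of $\mathcal{X}^*$ (it is a pointwise uniqueness-of-solution argument, not a statement holding only almost surely), so that the only almost-sure ingredient entering the proof is the fact that $\mathbf{P}_P$ concentrates on configurations whose time-$s$ slices lie in $\mathcal{X}^*$ for every $s$ — which is exactly what $T$-invariance of $\mathbf{P}_P$ (together with $P(\mathcal{X}^*)=1$) delivers. Once that is in hand, the passage from ``time $0$'' to ``time $s$'' is a formal change of variables under the measure-preserving map $T^s$, and the only quantifier subtlety is to take the countable intersection over $(m,s)\in\Z^2$ after, not before, invoking almost-sure statements. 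I expect the write-up to be short: cite Lemma~\ref{translation} for $T$-invariance, cite Lemma~\ref{measurability} for the base case, and spell out the $T^s$-transport together with the countable intersection.
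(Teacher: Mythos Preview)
Your proposal is correct and follows essentially the same route as the paper: use Lemma~\ref{translation} to deduce that under $\mathbf{P}_P$ every time-$s$ slice $(x^s_n)_{n\in\Z}$ lies in $\mathcal{X}^*$ almost surely, and then apply the pointwise Lemma~\ref{measurability} with $x^s$ playing the role of the initial condition via the identity $X^t_n(x)=X^{t-s}_n(x^s)$. The paper's write-up is terser (it states the functional identity directly rather than phrasing things as a $T^s$-transport of events), but the content is the same.
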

\begin{proof}
For type I models, it is possible to deduce from Lemma \ref{translation}(a) that $x^s=(x^s_n)_{n\in\Z} \in \mathcal{X}^*$ for all $s \in \Z$, $\mathbf{P}_P$-a.s. Since $X^t_n(x)=X^{t-s}_n(x^s)$ and $U^t_n(x)=U^{t-s}_n(x^s)$ when $x^s \in \mathcal{X}_*$, Lemma \ref{measurability} completes the proof. The same argument works for type II model.
\end{proof}

Before proceeding, we note the following consequence of the above measurability results, which is somewhat related to Burke's property, as will be introduced in the next subsection. The particular statement will not be used later, but we believe it is of independent interest to observe that we do not require spatial stationarity of the initial configuration to establish temporal independence of the random variables $(u^t_0)_{t\in\Z}$.

\begin{cor}
Let $P$ be a probability measure $P$ supported on $\mathcal{X}^*$, and suppose $(x_n)_{n\in\Z}$ is an independent sequence under $P$.
\begin{enumerate}
\item[(a)] For a type I model, if it holds that $\mathcal{T}P=P$, then $(u^t_0)_{t\in\Z}$ is an independent and identically distributed (i.i.d.) sequence under $\mathbf{P}_P$.
\item[(b)] For a type II model, if it holds that $\mathcal{T}P=P$, then $(u^t_0)_{t\in\Z}$ is an independent and alternately-distributed sequence under $\mathbf{P}_P$.
\end{enumerate}
\end{cor}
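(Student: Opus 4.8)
\emph{Proof proposal.} The plan is to treat the independence assertion first — this is the substantive content, and, as the preceding remark indicates, it uses no spatial stationarity of $P$ — and then to recover the (alternating) identity of the marginals from the temporal shift-invariance in Lemma \ref{translation}. For the independence it is enough to show that for every finite set of times $t_1<t_2<\cdots<t_k$ the variables $u^{t_1}_0,\dots,u^{t_k}_0$ are independent under $\mathbf{P}_P$. The two ingredients are Corollary \ref{stationarymeasurability} and the observation that, for each fixed time slice $s$, the sequence $(x^s_n)_{n\in\Z}$ is an independent family. For a type I model this is immediate, since $x^s=\mathcal{T}^s(x^0)$ has law $\mathcal{T}^sP=P$. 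For a type II model one first notes that $T_j\circ\theta=\theta\circ T_{j+1}$, where $T_j$ denotes the parity-dependent one-step map carrying level $j$ to level $j+1$, so that, by an easy induction, $\mathcal{T}^s=(\theta\circ T)^s=\theta^s\circ(T_{s-1}\circ\cdots\circ T_0)$; since $\mathcal{T}^sP=P$, the row $x^s=(T_{s-1}\circ\cdots\circ T_0)(x^0)$ is then a spatial translate of a configuration with law $P$, and so again has independent coordinates.

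The heart of the argument is the following splitting. Fix $t_1<\cdots<t_k$ and apply Corollary \ref{stationarymeasurability} at the time slice $s=t_k$. Part (a), taken with $m=0$, $n=0$ and $t=t_k$, shows that $u^{t_k}_0$ is measurable with respect to $(x^{t_k}_n)_{n\le 0}$, while part (b), taken with $m=0$, $n=1$ and any $t\le t_k$, shows that $u^{\tau}_0$ is measurable with respect to $(x^{t_k}_n)_{n\ge 1}$ for every $\tau\le t_k-1$; in particular this covers $u^{t_1}_0,\dots,u^{t_{k-1}}_0$. Since $(x^{t_k}_n)_{n\in\Z}$ is an independent family, the $\sigma$-algebras $\sigma((x^{t_k}_n)_{n\le 0})$ and $\sigma((x^{t_k}_n)_{n\ge 1})$ are independent, so $u^{t_k}_0$ is independent of $(u^{t_1}_0,\dots,u^{t_{k-1}}_0)$. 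Induction on $k$ then gives that $u^{t_1}_0,\dots,u^{t_k}_0$ are mutually independent, and as the times were arbitrary, $(u^t_0)_{t\in\Z}$ is an independent family. For the marginals, Lemma \ref{translation}(a) turns $\mathcal{T}P=P$ into $T\mathbf{P}_P=\mathbf{P}_P$ for a type I model, so $(u^t_0)_{t\in\Z}$ is stationary and hence i.i.d.; for a type II model Lemma \ref{translation}(b) turns $\mathcal{T}P=P$ into $\theta\circ T\,\mathbf{P}_P=\mathbf{P}_P$, and the same reasoning applied to this combined shift yields the claimed alternately-distributed structure (consistent with the period-two structure already present in the state spaces $\mathcal{X}_0$, $\tilde{\mathcal{X}}_0$).

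The step I expect to require the most care is the splitting, and in particular the observation that one must cut the spatial row at the \emph{latest} of the times $t_1,\dots,t_k$: it is precisely because Corollary \ref{stationarymeasurability} propagates information forwards-in-time-to-the-left and backwards-in-time-to-the-right that, at level $t_k$, the single variable $u^{t_k}_0$ is confined to the left half-line while all of the earlier variables $u^{t_j}_0$ ($j<k$) are confined to the independent right half-line. The only other point to watch is the type II bookkeeping — that $\theta\circ T$, rather than $T$, is the relevant symmetry, together with the attendant parity conventions for $\mathcal{X}^*$, $U$ and the maps $T_j$ — but this is routine given the set-up.
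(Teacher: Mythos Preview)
Your proposal is correct and follows essentially the same route as the paper's proof: the paper also splits the row at the latest time $t$, invoking Corollary~\ref{stationarymeasurability} to place $u^t_0$ on the left half-line $(x^t_n)_{n\le 0}$ and all earlier $(u^s_0)_{s<t}$ on the right half-line $(x^t_n)_{n>0}$, then uses the independence of the row (inherited from $\mathcal{T}P=P$). Your write-up is simply more explicit about the induction on $k$ and about the type~II parity bookkeeping, whereas the paper compresses both into ``the result follows'' and ``the proof is similar''; for the type~II shift you could also argue more directly that $\theta$ and $T$ commute at the level of the full lattice $(x^t_n,u^t_n)_{n,t}$, so $\mathcal{T}^s=\theta^s\circ T^s$ and hence $x^s$ is a spatial translate of a configuration with law $P$.
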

\begin{proof} (a) Since $u^t_0=U_0(x^t)$, it readily follows that the sequence $(u^t_0)_{t\in\Z}$ is stationary. As for the independence claim, we note that, by Corollary \ref{stationarymeasurability}, $u^t_0$ is a measurable function of $(x^t_{n})_{n\leq 0}$, and $(u^s_0)_{s<t}$ is a measurable function of $(x^t_{n})_{n>0}$. Since $(x^t_{n})_{n\leq 0}$ and $(x^t_{n})_{n>0}$ are independent, the result follows.\\
(b) The proof is similar.
\end{proof}

We are nearly read to prove Theorem \ref{detailedbalanceold}. As the final ingredient, we give an elementary lemma regarding independence of sigma-algebras.

\begin{lem}\label{independence}
Let $\mathcal{G}_1,\mathcal{G}_2,\mathcal{G}_3$ be sigma-algebras on a probability space. If $\mathcal{G}_1$ and  $\mathcal{G}_2$ are independent, and $\sigma ( \mathcal{G}_1 \cup  \mathcal{G}_2)$ and $\mathcal{G}_3$ are independent, then $\mathcal{G}_1$ and $\sigma ( \mathcal{G}_2 \cup  \mathcal{G}_3)$ are independent.
\end{lem}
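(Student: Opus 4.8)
The plan is a routine application of Dynkin's $\pi$--$\lambda$ theorem; write $\mathbb{P}$ for the underlying probability measure. First I would introduce the collection
\[\mathcal{C}:=\left\{A\cap B:\ A\in\mathcal{G}_2,\ B\in\mathcal{G}_3\right\},\]
and observe that it is a $\pi$-system: it contains $\Omega=\Omega\cap\Omega$, and $(A_1\cap B_1)\cap(A_2\cap B_2)=(A_1\cap A_2)\cap(B_1\cap B_2)\in\mathcal{C}$ since $\mathcal{G}_2,\mathcal{G}_3$ are $\sigma$-algebras. Taking $B=\Omega$ recovers all of $\mathcal{G}_2$ and taking $A=\Omega$ recovers all of $\mathcal{G}_3$, so $\sigma(\mathcal{C})=\sigma(\mathcal{G}_2\cup\mathcal{G}_3)$.

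Next I would fix an arbitrary $C\in\mathcal{G}_1$ and an arbitrary $A\cap B\in\mathcal{C}$, and check the factorization $\mathbb{P}(C\cap A\cap B)=\mathbb{P}(C)\,\mathbb{P}(A\cap B)$ directly. Since $C\cap A\in\sigma(\mathcal{G}_1\cup\mathcal{G}_2)$ and $B\in\mathcal{G}_3$, the hypothesis that $\sigma(\mathcal{G}_1\cup\mathcal{G}_2)$ and $\mathcal{G}_3$ are independent gives $\mathbb{P}(C\cap A\cap B)=\mathbb{P}(C\cap A)\,\mathbb{P}(B)$. The hypothesis that $\mathcal{G}_1$ and $\mathcal{G}_2$ are independent gives $\mathbb{P}(C\cap A)=\mathbb{P}(C)\,\mathbb{P}(A)$. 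Finally, since $\mathcal{G}_2\subseteq\sigma(\mathcal{G}_1\cup\mathcal{G}_2)$ is independent of $\mathcal{G}_3$, we have $\mathbb{P}(A)\,\mathbb{P}(B)=\mathbb{P}(A\cap B)$; combining the three identities yields the claim.

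Then I would, still with $C\in\mathcal{G}_1$ fixed, set
\[\mathcal{L}_C:=\left\{D\in\sigma(\mathcal{G}_2\cup\mathcal{G}_3):\ \mathbb{P}(C\cap D)=\mathbb{P}(C)\,\mathbb{P}(D)\right\},\]
and verify that $\mathcal{L}_C$ is a $\lambda$-system (Dynkin system): it contains $\Omega$; if $D_1\subseteq D_2$ lie in $\mathcal{L}_C$ then $\mathbb{P}(C\cap(D_2\setminus D_1))=\mathbb{P}(C\cap D_2)-\mathbb{P}(C\cap D_1)=\mathbb{P}(C)(\mathbb{P}(D_2)-\mathbb{P}(D_1))=\mathbb{P}(C)\,\mathbb{P}(D_2\setminus D_1)$; and if $D_k\uparrow D$ with each $D_k\in\mathcal{L}_C$ then $\mathbb{P}(C\cap D)=\lim_k\mathbb{P}(C\cap D_k)=\mathbb{P}(C)\lim_k\mathbb{P}(D_k)=\mathbb{P}(C)\,\mathbb{P}(D)$ by continuity of $\mathbb{P}$. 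By the previous paragraph $\mathcal{L}_C\supseteq\mathcal{C}$, so Dynkin's theorem gives $\mathcal{L}_C\supseteq\sigma(\mathcal{C})=\sigma(\mathcal{G}_2\cup\mathcal{G}_3)$. As $C\in\mathcal{G}_1$ was arbitrary, this is precisely the statement that $\mathcal{G}_1$ and $\sigma(\mathcal{G}_2\cup\mathcal{G}_3)$ are independent.

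There is no genuine obstacle in this argument; the only point that deserves a moment's care is the use of the independence of $\mathcal{G}_2$ and $\mathcal{G}_3$ (via $\mathcal{G}_2\subseteq\sigma(\mathcal{G}_1\cup\mathcal{G}_2)$) in order to rewrite $\mathbb{P}(C)\,\mathbb{P}(A)\,\mathbb{P}(B)$ as $\mathbb{P}(C)\,\mathbb{P}(A\cap B)$, and the standard bookkeeping needed to confirm that $\mathcal{L}_C$ is indeed a $\lambda$-system.
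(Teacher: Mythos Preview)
Your proof is correct and follows essentially the same approach as the paper: both verify the factorization $P(E_1\cap E_2\cap E_3)=P(E_1)P(E_2)P(E_3)$ for $E_i\in\mathcal{G}_i$ and then pass to the generated $\sigma$-algebra. The only difference is that the paper compresses the argument to one line and leaves the extension step implicit, whereas you spell out the Dynkin $\pi$--$\lambda$ details explicitly.
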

\begin{proof} Denoting by $P$ the probability measure on the relevant space, we have that, for any $E_i \in \mathcal{G}_i$, $i=1,2,3$, $P(E_1 \cap E_2 \cap E_3)= P(E_1 \cap E_2)P(E_3)= P(E_1) P(E_2)P(E_3)$. The result follows.
\end{proof}

\begin{proof}[Proof of Theorem \ref{detailedbalanceold}]
(a) Suppose $\mathcal{T} \mu^{\Z}=\mu^{\Z}$. By definition, we have that $x_0^0\sim\mu$ and $u_{-1}^0\sim\nu$. Moreover, by invariance under $\mathcal{T}$, we have that $x^1_0\sim\mu$. And, since $\theta \mu^{\Z}=\mu^{\Z}$, Lemma \ref{translation} yields that $\theta \mathbf{P}_{\mu^{\Z}}= \mathbf{P}_{\mu^{\Z}}$, and so the distribution of $u^0_0$ is also $\nu$. Now, by Corollary \ref{stationarymeasurability}, we have that $u_{-1}^0$ is a measurable function of $(x_{n}^0)_{n\leq -1}$, and $u_{0}^0$ is a measurable function of $(x_{n}^1)_{n\geq 1}$. In particular, it follows that $u_{-1}^0$ is independent of $x^0_0$, and $u_0^0$ is independent of $x_0^1$, i.e.\ it holds that $(x^0_0,u_{-1}^0)\sim\mu\times\nu$ and $(x^1_0,u_0^0)\sim\mu\times\nu$. Since $F(x_0^0, u_{-1}^0)=(x_0^1,u_0^0)$, we thus obtain that $\mu\times\nu$ satisfies the detailed balance condition in this case.

Next, suppose that $F(\mu \times \nu)=\mu \times \nu$. By Lemma \ref{measurability}, $u_{n-1}^0$ is measurable with respect to $(x_m^0)_{m \le n-1}$, so $x_n^0$ and $u_{n-1}^0$ are independent for all $n \in \Z$. By assumption $x_n^0\sim\mu$. Moreover, by assumption and the invariance $\theta \mathbf{P}_P=\mathbf{P}_P$ given by Lemma \ref{translation}, $u_{n-1}^0\sim\nu$. Hence the distribution of $x^1_n=F^{(1)}(x_n^0,u^1_{n-1})$ is $\mu$, and also $x^1_n$ and $u^0_n$ are independent, for all $n\in\Z$. Since, by Lemma \ref{measurability}, $u^0_0$ and $x^1_0$ are both measurable with respect to $(x_n)_{n \le 0}$, it follows from Lemma \ref{independence} that $x^1_0$ and $\sigma (u^0_0, x^0_1, x^0_2,x^0_3,\dots)$ are independent. Therefore, since $(x^1_n)_{n \ge 1}$ is measurable with respect to $\sigma (u^1_0, x^0_1, x^0_2,x^0_3,\dots)$, it must be the case that $x^1_0$ and $(x^1_n)_{n \ge 1}$ are independent. Finally, since $\theta \mathbf{P}_{\mu^{\Z}}= \mathbf{P}_{\mu^{\Z}}$ by Lemma \ref{translation}, we obtain that $(x^1_n)_{n\in\Z}$ is an i.i.d.\ sequence with marginal distribution $\mu$, and so $\mathcal{T}\mu^{\Z}=\mu^{\Z}$. \\
(b) Essentially the same argument as for part (a) applies.
\end{proof}

We complete the subsection by proving the alternative characterizations of the detailed balance condition for type II models that were presented in the introduction.

\begin{prop}\label{twothree}
Let $F_* :\mathcal{X}_0 \times  \mathcal{U}_0 \to  \tilde{\mathcal{X}}_0 \times  \tilde{\mathcal{U}}_0$ be a bijection, and define the involution $F : \tilde{ \mathcal{X}}_0 \times  \mathcal{X}_0 \times  \mathcal{U}_0 \to  \tilde{ \mathcal{X}}_0 \times  \mathcal{X}_0 \times  \mathcal{U}_0 $ as at \eqref{threeinvolution}. For a triplet of probability measures $(\mu, \nu, \tilde{\mu})$ on $\mathcal{X}_0$,  $\mathcal{U}_0$ and $\tilde{\mathcal{X}}_0$, the following three conditions are then equivalent.
\begin{enumerate}
\item[(a)] $F^{(2,3)}( \tilde{\mu} \times \mu \times \nu)=\mu \times \nu$.
\item[(b)] $F( \tilde{\mu} \times \mu \times \nu)=\tilde{\mu} \times \mu \times \nu$.
\item[(c)] There exists a probability measure $\tilde{\nu}$ on $\tilde{\mathcal{U}}_0$ such that the quadruplet of probability measures $(\mu,\nu,\tilde{\mu},\tilde{\nu})$ satisfies the detailed balance condition with respect to $F_*$.
\end{enumerate}
\end{prop}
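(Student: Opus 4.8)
The plan is to prove the cycle of implications (c) $\Rightarrow$ (b) $\Rightarrow$ (a) $\Rightarrow$ (c), using the explicit formula \eqref{threeinvolution} for $F$ throughout. The key bookkeeping device is to realize all the measures on a common probability space: let $(A,B,C)$ be a random triple with law $\tilde\mu\times\mu\times\nu$ on $\tilde{\mathcal X}_0\times\mathcal X_0\times\mathcal U_0$, set $(X',U'):=F_*(B,C)$ (so $X'=F_*^{(1)}(B,C)$ and $U'=F_*^{(2)}(B,C)$), and then set $(B'',C''):=F_*^{-1}(A,U')$. With this notation, \eqref{threeinvolution} reads $F(A,B,C)=(X',B'',C'')$ and the second-and-third-coordinate map is $F^{(2,3)}(A,B,C)=(B'',C'')$. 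Note that $B,C$ are independent with laws $\mu,\nu$, that $(B,C)$ and $A$ are independent, and that $A$ has law $\tilde\mu$; these facts, together with the bijectivity of $F_*$ and of $F_*^{-1}(\cdot,u)$ for fixed $u$, are the only structural inputs needed.

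\emph{(c) $\Rightarrow$ (b).} Assume $F_*(\mu\times\nu)=\tilde\mu\times\tilde\nu$, i.e.\ $(X',U')\sim\tilde\mu\times\tilde\nu$, in particular $X'\sim\tilde\mu$ and $U'\sim\tilde\nu$. Since $U'$ is a measurable function of $(B,C)$, it is independent of $A$; hence $(A,U')\sim\tilde\mu\times\tilde\nu$. Applying $F_*^{-1}$, which by the detailed balance hypothesis pushes $\tilde\mu\times\tilde\nu$ forward to $\mu\times\nu$ (bijectivity of $F_*$ gives $F_*^{-1}(\tilde\mu\times\tilde\nu)=\mu\times\nu$), we get $(B'',C'')\sim\mu\times\nu$. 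It remains to see that $X'$ is independent of $(B'',C'')$ with the right joint law. For this, observe $(X',A,U')$ determines $(X',B'',C'')=F(A,B,C)$, and run the following independence argument: $U'$ is $(B,C)$-measurable and $X'$ is $(B,C)$-measurable, while $A$ is independent of $(B,C)$; so by Lemma \ref{independence} (with $\mathcal G_1=\sigma(X')$, $\mathcal G_2=\sigma(U')$, $\mathcal G_3=\sigma(A)$, after checking $\sigma(X')\perp\sigma(U')$ — which follows because $F_*(\mu\times\nu)=\tilde\mu\times\tilde\nu$ makes $(X',U')$ a product, so its coordinates are independent), $X'$ is independent of $\sigma(U',A)\supseteq\sigma(B'',C'')$. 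Since $X'\sim\tilde\mu$ and $(B'',C'')\sim\mu\times\nu$, we conclude $F(\tilde\mu\times\mu\times\nu)=(X',B'',C'')\sim\tilde\mu\times\mu\times\nu$, which is (b). Because $F$ is an involution, (b) is symmetric in the two sides, so one also reads off along the way that $(X',U')$ being a product is forced.

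\emph{(b) $\Rightarrow$ (a).} This is immediate: (a) is just the statement that the marginal of $F(\tilde\mu\times\mu\times\nu)$ on the last two coordinates equals $\mu\times\nu$, which follows from (b) by projecting.

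\emph{(a) $\Rightarrow$ (c).} This is the main obstacle, since here we must \emph{produce} the measure $\tilde\nu$ and verify the full two-coordinate detailed balance identity from the weaker one-coordinate-pair identity $(B'',C'')\sim\mu\times\nu$. The natural candidate is $\tilde\nu:=$ law of $U'=F_*^{(2)}(B,C)$ (equivalently the $\tilde{\mathcal U}_0$-marginal of $F_*(\mu\times\nu)$); we must show $F_*(\mu\times\nu)=\tilde\mu\times\tilde\nu$, i.e.\ that $X'\sim\tilde\mu$ and $X'\perp U'$. To see this, reverse the computation: since $F_*^{-1}(\cdot,u)$ is, for each fixed $u\in\tilde{\mathcal U}_0$, a bijection $\tilde{\mathcal X}_0\to\mathcal X_0$, the pair $(A,U')$ is a bijective function of $(B'',C'',U')=(B'',C'',U')$ — indeed $A=F_*^{(1)}\big(F_*^{-1}(A,U')\big)$ wait, more simply: from $(B'',C'')=F_*^{-1}(A,U')$ we recover $(A,U')=F_*(B'',C'')$, so $U'$ is also $(B'',C'')$-measurable and $A=F_*^{(1)}(B'',C'')$. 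Now condition on $U'$: given $U'=u$, the hypothesis (a) says $(B'',C'')$ has law $\mu\times\nu$, and $A=F_*^{(1)}(B'',C'')$ together with $U'=F_*^{(2)}(B'',C'')$ means $(A,u)=F_*(B'',C'')$; but we also know $U'$ is a deterministic function of $(B,C)$ and $A\perp(B,C)$, so $A\perp U'$ and $A\sim\tilde\mu$. Combining, $(A,U')\sim\tilde\mu\times\tilde\nu$ and $(A,U')=F_*(B'',C'')$ with $(B'',C'')\sim\mu\times\nu$ gives $F_*(\mu\times\nu)=\tilde\mu\times\tilde\nu$. The one genuinely delicate point to get right is the conditioning/independence manipulation — disentangling that $A\perp U'$ uses $A\perp(B,C)$ and $U'\in\sigma(B,C)$, while $(B'',C'')\sim\mu\times\nu$ is exactly hypothesis (a) — so I would present this step carefully, possibly by writing out the pushforward identity $\int g(F_*(b'',c''))\,(\mu\times\nu)(db''dc'') = \int g(a,F_*^{(2)}(b,c))\,(\tilde\mu\times\mu\times\nu)(da\,db\,dc)$ for bounded measurable $g$ and reading off both the marginal in $a$ and the product structure.
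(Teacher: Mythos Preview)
Your proposal is correct and follows essentially the same route as the paper: the same cycle (c)$\Rightarrow$(b)$\Rightarrow$(a)$\Rightarrow$(c), the same random-variable realization, the same use of Lemma~\ref{independence} in (c)$\Rightarrow$(b), and in (a)$\Rightarrow$(c) the same key observation that $A\perp U'$ (because $U'\in\sigma(B,C)$ and $A\perp(B,C)$) so that $(A,U')\sim\tilde\mu\times\tilde\nu$, whence $F_*^{-1}(\tilde\mu\times\tilde\nu)=\mu\times\nu$ by~(a). The paper's write-up of (a)$\Rightarrow$(c) is cleaner: it goes straight to this last line without the detour through ``$X'\sim\tilde\mu$ and $X'\perp U'$'', the abandoned claim that $F_*^{-1}(\cdot,u)$ is a bijection for fixed~$u$ (which need not hold), and the conditioning discussion --- you should trim those in a final version.
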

\begin{proof}
{(b) $\Rightarrow$ (a)}: This is obvious.\\
{(c) $\Rightarrow$ (b)}: Let $X_0 \sim \mu$, $U_0 \sim \nu$, $\tilde{X}_0 \sim \tilde{\mu}$ be independent random variables, and define $(\tilde{X}'_0, \tilde{U}_0):=F_*(X_0,U_0)$. By (c), $(\tilde{X}'_0, \tilde{U}_0)\sim\tilde{\mu}\times\tilde{\nu}$. Moreover, by Lemma \ref{independence}, $ \tilde{X}_0, \tilde{X}'_0$ and $\tilde{U}_0$ are independent. Now, by definition, $F(\tilde{X}_0, X_0,U_0)=(\tilde{X}'_0, F_*^{-1}(\tilde{X}_0, \tilde{U}_0))$, and, by the detailed balance condition, $F_*^{-1}(\tilde{\mu} \times \tilde{\nu})=\mu \times \nu$, so (b) holds.\\
{(a) $\Rightarrow$ (c)}: Let $\tilde{\nu}:=F_*^{(2)}(\mu \times \nu)$, and $X_0 \sim \mu$, $U_0 \sim \nu$, $\tilde{X}_0 \sim \tilde{\mu}$ be independent random variables. Since $F^{(2,3)}(\tilde{X}_0, X_0,U_0)=F_*^{-1}(\tilde{X_0}, F_*^{(2)}(X_0,U_0))$ and the distribution of $(\tilde{X_0}, F_*^{(2)}(X_0,U_0))$ is $\tilde{\mu} \times \tilde{\nu}$, (a) implies $F_*^{-1}(\tilde{\mu} \times \tilde{\nu})=\mu \times \nu$.
\end{proof}

\subsection{Burke's property}\label{burkesec} Burke's theorem is a classical result in queueing theory, which states that, for an $M/M/1$ queue, the departure process at stationarity has the same law as the arrivals process, and that the departure process prior to a given time is independent of the current queue length \cite{Bq}. This result has been generalized to many settings, see Section \ref{stochsec} for discussion in the context of stochastic integrable systems in particular. In this subsection we present a definition of Burke's property for our model, and relate it to the study of the detailed balance condition and invariant homogeneous/alternating product measures. This allows us to complete the proof of Theorem \ref{detailedbalancenew}.

\begin{description}
  \item[Burke's property for a type I model] We say that a distribution supported on configurations $(x^t_{n},u^t_{n})_{n,t \in \Z}$ satisfying $F_{n}^t(x_n^{t},u_{n-1}^t)=(x_n^{t+1},u_n^t)$ satisfies \emph{Burke's property} if:
\begin{itemize}
\item the sequences $(x_n^0)_{n \geq1}$ and $(u_0^t)_{t \geq 0}$ are each i.i.d., and independent of each other;
\item the distribution of $(x^t_{n},u^t_{n})_{n,t \in \Z}$ is translation invariant, that is, for any $m,s \in \Z$,
    \[T^s\theta^m\left(\left(x^t_{n},u^t_{n}\right)_{n,t \in \Z}\right)\buildrel{d}\over{=}
   \left(x^t_{n},u^t_{n}\right)_{n,t \in \Z}.\]
\end{itemize}
\item[Burke's property for a type II model] We say that a distribution supported on configurations $(x^t_{n},u^t_{n})_{n,t \in \Z}$ satisfying $F_{n}^t(x_n^{t},u_{n-1}^t)=(x_n^{t+1},u_n^t)$ satisfies \emph{Burke's property} if:
\begin{itemize}
\item the sequences $(x_{2n}^0)_{n \geq1}$, $(x_{2n-1}^0)_{n \geq1}$, $(u_0^{2t})_{t \geq0}$ and $(u_0^{2t-1})_{t \geq 1}$ are each i.i.d., and independent of each other;
\item the distribution of $(x^t_{n},u^t_{n})_{n,t \in \Z}$ is translation invariant, that is, for any $m,s \in \Z$ such that $m+s=0$ (mod 2),
    \[T^s\theta^m\left(\left(x^t_{n},u^t_{n}\right)_{n,t \in \Z}\right)\buildrel{d}\over{=}
   \left(x^t_{n},u^t_{n}\right)_{n,t \in \Z}.\]
\end{itemize}
\end{description}
We make the obvious remark that, in the case of a type I model, if the distribution of $(x^t_{n},u^t_{n})_{n,t \in \Z}$ satisfies Burke's property, then $(x_n^t)_{n\in\Z}$ is i.i.d.\ for each $t \in \Z$, and $(u^t_n)_{t\in\Z}$ is i.i.d.\ for each $n\in\mathbb{N}$. A similar property holds for type II models.

In the main result of this subsection, we show that the existence of a solution to the detailed balance condition implies the existence of a distribution satisfying Burke's property. Moreover, in the case that the relevant marginal of this measure is supported on configurations for which \eqref{initial} has a unique solution, we are able to describe both the distributions of $x_n^t$ and $u_n^t$ in terms of the detailed balance solution.

\begin{prop}[Burke's property]\label{Burke}\hspace{10pt}
\begin{enumerate}
\item[(a)] Type I: If a pair of probability measures $(\mu,\nu)$ satisfies the detailed balance condition, then there exists a distribution supported on configurations $(x^t_{n},u^t_{n})_{n,t \in \Z}$ satisfying $F_{n}^t(x_n^{t},u_{n-1}^t)=(x_n^{t+1},u_n^t)$ for which Burke's property holds. Moreover, if it holds that $\mu^{\Z}(\mathcal{X}^*)=1$, then $u^0_{-1} \sim \nu$ and $\mathbf{P}_{\mu^\Z}$ satisfies Burke's property.
\item[(b)] Type II: If a quadruplet of probability measures $(\mu,\nu,\tilde{\mu}, \tilde{\nu})$ satisfies the detailed balance condition, then there exists a distribution supported on configurations $(x^t_{n},u^t_{n})_{n,t \in \Z}$ satisfying $F_{n}^t(x_n^{t},u_{n-1}^t)=(x_n^{t+1},u_n^t)$ for which Burke's property holds. Moreover, if it holds that $(\mu \times \tilde{\mu})^{\Z}(\mathcal{X}^*)=1$, then $u^{0}_{-1} \sim \nu$, $u^0_0 \sim \tilde{\nu}$, and $\mathbf{P}_{(\mu \times \tilde{\mu})^{\Z}}$ satisfies Burke's property.
\end{enumerate}
\end{prop}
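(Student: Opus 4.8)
The strategy is to construct, directly from a detailed balance solution and with no reference to $\mathcal{X}^*$, a random field on the whole lattice having Burke's property, and then, under the additional hypothesis, to identify it with $\mathbf{P}_{\mu^\Z}$ (resp.\ $\mathbf{P}_{(\mu\times\tilde\mu)^\Z}$). It is convenient to regard the box $F_n^t$ as attached to the lattice site $(n,t)$, with $x_n^t$ labelling the vertical edge entering it from below and $u_{n-1}^t$ the horizontal edge entering it from the left; a bi-infinite down-right lattice path then carries a well-defined sequence of crossing variables, each of $x$-type or $u$-type. The engine of the proof is an elementary \emph{corner-flip lemma}: for a type I model, replacing a concave corner of a down-right path at a box $F_n^t=F$ by the opposite convex corner amounts to replacing the ordered pair $(x_n^t,u_{n-1}^t)$ of crossing variables by $(x_n^{t+1},u_n^t)=F(x_n^t,u_{n-1}^t)$, leaving all other crossing variables unchanged, and — by the identity $F(\mu\times\nu)=\mu\times\nu$ together with Lemma \ref{independence} — if the crossing variables of the original path are independent with $x$-type marginals $\mu$ and $u$-type marginals $\nu$, then the same is true of the flipped path. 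For a type II model one uses $F_*$ or $F_*^{-1}$ according to the parity of $n+t$, invoking $F_*(\mu\times\nu)=\tilde\mu\times\tilde\nu$ and $F_*^{-1}(\tilde\mu\times\tilde\nu)=\mu\times\nu$, with ``identically'' replaced by ``alternately'' distributed.

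For the construction, for $k\ge1$ set $Q_k:=\{(n,t)\in\Z^2:n\ge-k,\ t\ge-k\}$ and let $\Phi_k$ be the field on $Q_k$ obtained by placing independent variables with the appropriate marginals on the crossing edges of the SW boundary of $Q_k$ and solving the relations $F_n^t(x_n^t,u_{n-1}^t)=(x_n^{t+1},u_n^t)$ forward. Any finite-dimensional marginal of $\Phi_k$ involves only finitely many boundary edges, and the induced law on the finitely many crossing edges of the SW boundary of any $Q_{k'}$ with $k'<k$ that are needed to determine it is, by finitely many applications of the corner-flip lemma, again the prescribed independent law; hence $\Phi_k|_{Q_{k'}}\stackrel{d}{=}\Phi_{k'}$, and Kolmogorov's extension theorem produces a field $\Phi$ on $\Z^2$ that satisfies the local relations and has the property (again checked on finite subsets) that the crossing variables of \emph{any} bi-infinite down-right path are independent with the prescribed marginals. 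Taking the path to be the SW boundary of the first quadrant yields the first bullet of Burke's property; taking the (straight) path at a fixed time level shows the time-$0$ slice of $\Phi$ is $\mu^\Z$ (resp.\ $(\mu\times\tilde\mu)^\Z$); and, since every box map is $F$ (resp.\ $F_*$/$F_*^{-1}$ with the fixed parity rule) regardless of position, the law of $\Phi$ on any window equals the pushforward, under the deterministic dynamics, of the prescribed i.i.d./alternating law on any down-right path lying to its SW, whence $T^s\theta^m\Phi\stackrel{d}{=}\Phi$ for all $m,s$ (with $m+s$ even in the type II case). This establishes the first assertion of each part.

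For the identification, applying the crossing-variable property to the SW boundary of the quadrant $\{n\ge0,\ t\ge0\}$ gives $u_{-1}^0\sim\nu$ (and, in the type II case, $u_0^0\sim\tilde\nu$). If moreover $\mu^\Z(\mathcal{X}^*)=1$, then the time-$0$ slice of $\Phi$ lies in $\mathcal{X}^*$ almost surely; since $\Phi$ solves \eqref{initial}, almost every realisation must coincide with the unique solution of \eqref{initial} driven by its own time-$0$ slice, so $\Phi=\mathbf{P}_{\mu^\Z}$ in distribution, and hence $\mathbf{P}_{\mu^\Z}$ has Burke's property with $u_{-1}^0\sim\nu$. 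The type II case is the same with $(\mu\times\tilde\mu)^\Z$ in place of $\mu^\Z$.

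I expect the only real work to lie in the corner-flip step: pinning down the geometry of down-right paths and their crossing variables, and verifying that a single flip preserves full joint independence and not merely the marginals (this is where Lemma \ref{independence} and the product structure of the detailed balance solution are used). The passage from finitely many boxes to $\Z^2$ is then painless, precisely because every finite-dimensional distribution is reached from $\partial Q_k$ by finitely many flips, so no infinite composition is involved; and the final identification is immediate once one observes that $\Phi$ satisfies exactly the bi-infinite-in-time relations defining $\mathcal{X}^*$.
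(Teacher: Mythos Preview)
Your proposal is correct and follows essentially the same approach as the paper: both use the detailed balance condition together with Lemma \ref{independence} to propagate independence across the lattice, invoke the Daniell--Kolmogorov extension theorem to pass to $\Z^2$, and appeal to uniqueness on $\mathcal{X}^*$ for the identification with $\mathbf{P}_{\mu^\Z}$. The only difference is packaging: the paper advances the boundary one full row (then one full column) at a time to obtain $T$- and $\theta$-invariance separately, whereas you phrase the same mechanism as a general ``corner-flip'' lemma on down-right paths and build the extension via nested quadrants $Q_k$; your formulation is the standard one in the integrable-probability literature and makes the translation invariance somewhat more transparent, but the underlying argument is identical.
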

\begin{proof}
(a) Let $(x_n^0,u_0^t)_{n\geq 1,t \geq 0}$ be independent random variables satisfying $x_n^0 \sim \mu$ and $u_0^t \sim \nu$. For $n, t \in \N$, define
\[\left(x_n^{t}, u_n^{t-1}\right):=F\left(x_{n}^{t-1}, u_{n-1}^{t-1}\right)\]
recursively. By induction and the detailed balance condition, one readily obtains that $x^1_n \sim \mu$, $u^0_n \sim \nu$ and $x^1_n$ and $u^{0}_n$ are independent for all $n \in \N$. Moreover, for any $n \in \N$, $x^1_n$ and $u^0_n$ are measurable with respect to $\sigma(u^0_0, x_1^0,x_2^0,\dots, x_n^0)$, and $(x_m^1)_{m \ge n+1}$ is measurable with respect to $\sigma(u^0_n, x_{n+1}^0,x_{n+2}^0,\dots)$. So, applying Lemma \ref{independence}, we find that $x^1_n$ and $(x_m^1)_{m \ge n+1}$ are independent. Hence $(x^1_n)_{n \in \N}$ is an i.i.d.\ sequence with the marginal $\mu$. Now, since $(x^1_n)_{n \in \N}$ is measurable with respect to $\sigma(u^0_0, (x_n^0)_{n \in \N})$, it further holds that $(x^1_n)_{n \in \N}$ and $(u^t_0)_{t \ge 1}$ are independent. Letting $y^t_n:=x^{t+1}_n$ and $v^t_n:=u^{t+1}_n$, we thus have that $(y_n^0,v_0^t)_{n\geq 1,t \geq0}$ are independent random variables satisfying $y_n^0 \sim \mu$, $v_0^t \sim \nu$ and
\[\left(y_n^t, v_n^{t-1}\right)=F\left(y_{n}^{t-1}, v_{n-1}^{t-1}\right)\]
for all $n,t \in \N$. In particular, $(x_n^t, u_n^t)_{n\geq 1,t \geq0}  \buildrel{d}\over{=}  (y_n^t, v_n^t)_{n\geq1,t\geq0}$, which implies
\[ \left(x_n^{t+1}, u_n^{t+1}\right)_{n\geq 1,t \geq0}\buildrel{d}\over{=}\left(x_n^t, u_n^t\right)_{n\geq 1,t \geq0}.\]
By the same argument, one can show that
\[ \left(x_{n+1}^{t}, u_{n+1}^{t}\right)_{n\geq 1,t \geq0}\buildrel{d}\over{=}\left(x_n^t, u_n^t\right)_{n\geq 1,t \geq0},\]
and so
\[ \left(x_{n+m}^{t+s}, u_{n+m}^{t+s}\right)_{n\geq 1,t \geq0}\buildrel{d}\over{=}\left(x_n^t, u_n^t\right)_{n\geq 1,t \geq0},\]
for any $m,s \in \N$. Finally, by constructing the distributions of $(x^t_n,u^t_n)_{n\geq k+1,t \ge k}$ for each $k \in \Z$ by translation, we can construct the distribution of $(x^t_n,u^t_n)_{n,t \in \Z}$ by applying the Daniell-Kolmogorov extension theorem, see \cite[Theorem 5.14]{Kall}, for example. (This is the one place in our arguments where we require the state spaces to be Polish.)
Moreover, if $\mu^{\Z}(\mathcal{X}^*)=1$, then there is a unique distribution of $(x^t_n,u^t_n)_{n,t \in \Z}$ that is supported on configurations satisfying $F_{n}^t(x_n^{t},u_{n-1}^t)=(x_n^{t+1},u_n^t)$ and with marginal $(x_n^0)_{n\in\Z} \sim \mu^{\Z}$. Hence it must be the one satisfying Burke's property, as constructed above.
In particular, $u^t_n \sim \nu$ for all $n,t \in \Z$. \\
(b) The same argument as for part (a) works.
\end{proof}

\begin{proof}[Proof of Theorem \ref{detailedbalancenew}] Combine Theorem \ref{detailedbalanceold} and Proposition \ref{Burke}.
\end{proof}

We conclude the subsection with a corollary that establishes, when the marginal of $(x_n^0)_{n\in\Z}$ is supported on $\mathcal{X}^*$, Burke's property is actually equivalent to the detailed balance condition. As with Theorem \ref{detailedbalancenew}, it readily follows from Theorem \ref{detailedbalanceold} and Proposition \ref{Burke}.

\begin{cor}\hspace{10pt}
\begin{enumerate}
\item[(a)] Type I: Suppose that $\mu$ is a probability measure on $\mathcal{X}_0$ such that $\mu^{\Z}(\mathcal{X}^*)=1$. Let $\nu$ be the distribution of $U_{-1}(x)$, where $x\sim\mu^{\Z}$. It is then the case that there exists a distribution of $(x_n^t, u_n^t)_{n,t\in\Z}$ satisfying $(x_n^0)_{n\in\Z} \sim \mu^{\Z}$ and Burke's property if and only if $(\mu,\nu)$ satisfies the detailed balance condition.
\item[(b)] Type II: Suppose that $\mu\times\tilde{\mu}$ is a probability measure on $\mathcal{X}_0\times\tilde{\mathcal{X}}_0$ such that $(\mu \times \tilde{\mu})^{\Z}(\mathcal{X}^*)=1$. Let $\nu$, $\tilde{\nu}$ be the distributions of $U_{-1}(x)$, $U_0(x)$, respectively, where $x\sim(\mu \times \tilde{\mu})^{\Z}$. It is then the case that if there exists a distribution of $(x_n^t, u_n^t)_{n,t\in\Z}$ satisfying $(x_n^0)_{n\in\Z} \sim (\mu \times \tilde{\mu})^{\Z}$ and Burke's property if and only if $(\mu,\nu,\tilde{\mu},\tilde{\nu})$ satisfies the detailed balance condition.
\end{enumerate}
\end{cor}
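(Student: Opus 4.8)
The strategy is to obtain the corollary by combining Theorem~\ref{detailedbalanceold} with Proposition~\ref{Burke}, exactly as was done for Theorem~\ref{detailedbalancenew}. I will spell out the type I case; the type II case follows the same lines, with $\mu^{\Z}$ replaced by $(\mu\times\tilde{\mu})^{\Z}$, the temporal shift $T$ replaced by $\theta\circ T$, and the single measure $\nu$ replaced by the pair $(\nu,\tilde{\nu})$ given by the laws of $U_{-1}(x)$ and $U_0(x)$.

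\textbf{The ``if'' direction} is immediate. If $(\mu,\nu)$ satisfies the detailed balance condition, then, because $\mu^{\Z}(\mathcal{X}^*)=1$, the second half of Proposition~\ref{Burke}(a) tells us that $\mathbf{P}_{\mu^{\Z}}$ satisfies Burke's property; and $\mathbf{P}_{\mu^{\Z}}$ has $(x_n^0)_{n\in\Z}\sim\mu^{\Z}$ by construction, so it is the sought-after distribution.

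\textbf{The ``only if'' direction} proceeds in three steps. Suppose $Q$ is a distribution of $(x_n^t,u_n^t)_{n,t\in\Z}$, supported on configurations obeying the lattice equations $F_n^t(x_n^t,u_{n-1}^t)=(x_n^{t+1},u_n^t)$, with $(x_n^0)_{n\in\Z}\sim\mu^{\Z}$ and satisfying Burke's property. First, I identify $Q$: since $\mu^{\Z}(\mathcal{X}^*)=1$, for $Q$-almost every realisation the slice $(x_n^0)_{n\in\Z}$ lies in $\mathcal{X}^*$, hence the entire configuration is the unique solution of the initial value problem \eqref{initial} for that slice, and therefore $Q=\mathbf{P}_{\mu^{\Z}}$. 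Second, the translation-invariance clause of Burke's property, applied with $m=0$, $s=1$, gives $T\mathbf{P}_{\mu^{\Z}}=\mathbf{P}_{\mu^{\Z}}$, so Lemma~\ref{translation}(a) yields $\mathcal{T}\mu^{\Z}=\mu^{\Z}$. Third, Theorem~\ref{detailedbalanceold}(a) --- whose standing hypotheses, namely $\mu^{\Z}(\mathcal{X}^*)=1$ and that $\nu$ is the law of $U_{-1}(x)$ for $x\sim\mu^{\Z}$, are precisely those of the corollary --- converts $\mathcal{T}\mu^{\Z}=\mu^{\Z}$ into the statement that $(\mu,\nu)$ satisfies the detailed balance condition, as required.

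I do not expect a genuine obstacle here, as the result is a repackaging of Theorem~\ref{detailedbalanceold} and Proposition~\ref{Burke}. The only point calling for attention is the bookkeeping that the measure $\nu$ (respectively the pair $\nu,\tilde{\nu}$ in the type II case) appearing in the detailed balance condition produced by those two results agrees with the one named in the hypotheses of the corollary; this is where the assumption $\mu^{\Z}(\mathcal{X}^*)=1$ (respectively $(\mu\times\tilde{\mu})^{\Z}(\mathcal{X}^*)=1$) is essential, both to force $Q$ to coincide with $\mathbf{P}_{\mu^{\Z}}$ (respectively $\mathbf{P}_{(\mu\times\tilde{\mu})^{\Z}}$) in the ``only if'' direction and to invoke the ``moreover'' parts of Proposition~\ref{Burke}(a),(b) in the ``if'' direction.
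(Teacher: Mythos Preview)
Your proposal is correct and follows essentially the same route as the paper: the ``if'' direction invokes the second part of Proposition~\ref{Burke}, and the ``only if'' direction identifies the distribution as $\mathbf{P}_{\mu^{\Z}}$ via uniqueness on $\mathcal{X}^*$, extracts $T\mathbf{P}_{\mu^{\Z}}=\mathbf{P}_{\mu^{\Z}}$ from Burke's translation invariance, applies Lemma~\ref{translation} to obtain $\mathcal{T}\mu^{\Z}=\mu^{\Z}$, and then appeals to Theorem~\ref{detailedbalanceold}. Your write-up is slightly more expansive than the paper's, but the logical structure is identical.
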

\begin{proof}
(a) The `if' part is shown in Proposition \ref{Burke}. We prove the `only if' part. Suppose that there exists a distribution of $(x_n^t, u_n^t)_{n,t\in\Z}$ satisfying $(x_n^0)_{n\in\Z} \sim \mu^{\Z}$ and Burke's property. Since $\mu^{\Z}(\mathcal{X}^*)=1$, the measure must be $\mathbf{P}_{\mu^{\Z}}$. By the second condition of Burke's property, $T\mathbf{P}_{\mu^{\Z}}=\mathbf{P}_{\mu^{\Z}}$ holds. Hence, by Lemma \ref{translation}, we must have that $\mathcal{T} \mu^{\Z}=\mu^{\Z}$ holds. Consequently, by Theorem \ref{detailedbalanceold}, the detailed balance condition holds.\\
(b) The same argument as for part (a) works.
\end{proof}

\subsection{Ergodicity}\label{ergsec}

We now turn our attention to the issue of ergodicity. In this part of the article, we consider only type I models. Our main result gives a sufficient condition for the ergodicity of $\mathcal{T}$ for i.i.d.\ invariant measures. To state the result, we introduce an involution $\check{F} : \mathcal{U}_0 \times \mathcal{X}_0 \to \mathcal{U}_0 \times \mathcal{X}_0$ by setting
\[\check{F}=\pi \circ F \circ \pi,\]
where $\pi(u,x):=(x,u)$. We consider $\check{F}$ the \emph{dual} of $F$.

\begin{thm}\label{ergodicthm}
Suppose we have a type I model, and that $\mu$ is a probability measure on $\mathcal{X}_0$ such that $\mu^{\Z}(\mathcal{X}^*)=1$ and $\mathcal{T}\mu^{\Z}=\mu^{\Z}$. If it holds that, for $\mathbf{P}_{\mu^{\Z}}$-a.e.\ $u_0=(u_0^t)_{t\in\mathbb{Z}}$, there exists at most one $x=(x^t)_t \in \mathcal{X}_0^\Z$ such that
\[\check{F}^{(2)}(u_0^t,x^{t})=x^{t+1},\qquad \forall t \in \Z,\]
then $\mu^{\Z}$ is ergodic under $\mathcal{T}$.
\end{thm}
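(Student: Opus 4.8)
The plan is to realise $(\mathcal{X}^*,\mu^{\Z},\mathcal{T})$ as a factor of a Bernoulli shift; the hypothesis is precisely what is needed to reconstruct the whole two-dimensional configuration from a single line of auxiliary variables. Since $\mathcal{T}\mu^{\Z}=\mu^{\Z}$, Theorem \ref{detailedbalancenew} provides a probability measure $\nu$ on $\mathcal{U}_0$ with $(\mu,\nu)$ in detailed balance, and Proposition \ref{Burke} then shows that $\mathbf{P}:=\mathbf{P}_{\mu^{\Z}}$ satisfies Burke's property. In particular $\mathbf{P}$ is invariant under the spatial shift $\theta$ and the temporal shift $T$, and for every $n$ the sequence $(u_n^t)_{t\in\Z}$ is i.i.d.\ with marginal $\nu$. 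Using Lemma \ref{measurability}, the map $x\mapsto(x_n^t,u_n^t)_{n,t\in\Z}$ is an isomorphism of measure-preserving systems between $\mathcal{T}$ acting on $\mu^{\Z}$ and $T$ acting on $\mathbf{P}$, with measurable inverse given by restriction to the row $t=0$; hence it suffices to prove that $T$ is ergodic for $\mathbf{P}$.

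The core step is to show that, $\mathbf{P}$-almost surely, the whole configuration $(x_n^t,u_n^t)_{n,t\in\Z}$ is a measurable function $\Phi$ of the single sequence $v:=(u_0^t)_{t\in\Z}$, where $\Phi$ intertwines the coordinate shift $\tau$ on $\mathcal{U}_0^{\Z}$ with $T$. Rewriting $F_n^t(x_n^t,u_{n-1}^t)=(x_n^{t+1},u_n^t)$ via $\check{F}$ gives the forward recursion $x_n^{t+1}=\check{F}^{(2)}(u_{n-1}^t,x_n^t)$ together with $u_n^t=\check{F}^{(1)}(u_{n-1}^t,x_n^t)$, and applying the involution property gives the backward recursion $x_n^t=\check{F}^{(2)}(u_n^t,x_n^{t+1})$, which after reversing time is again of the form appearing in the statement. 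By hypothesis, for $\mathbf{P}$-a.e.\ realisation the forward recursion driven by $v$ admits the true column $(x_1^t)_{t\in\Z}$ as its unique solution in $\mathcal{X}_0^{\Z}$; enlarging the exceptional set so that the collection of ``good'' driving sequences is shift- and reflection-invariant with full $\nu^{\otimes\Z}$-measure, and recalling that by Burke each $(u_n^t)_{t\in\Z}$ is i.i.d.\ $\nu$, the same uniqueness holds for every column and in both time directions simultaneously, off a single $\mathbf{P}$-null set. A measurable selection argument (Lusin--Novikov: a Borel set with at most countable sections is a countable union of Borel graphs) then shows that the solution is a Borel function $\psi$ of the driving sequence. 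Thus $(x_1^t)_t=\psi(v)$; substituting $(u_1^t)_t=\check{F}^{(1)}(u_0^t,x_1^t)$ and iterating the forward recursion reconstructs the columns $n\ge1$, while the backward recursion reconstructs $(x_0^t)_t$ from $v$, then $(u_{-1}^t)_t=\check{F}^{(1)}(u_0^t,x_0^{t+1})$, then $(x_{-1}^t)_t$, and so on for the columns $n\le0$. Every map used in this reconstruction commutes with shifting the time index, so $\Phi\circ\tau=T\circ\Phi$ holds $\nu^{\otimes\Z}$-a.s.

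Since $v\sim\nu^{\otimes\Z}$ under $\mathbf{P}$ and $\Phi_*\nu^{\otimes\Z}=\mathbf{P}$ with $\Phi$ intertwining $\tau$ and $T$, the system given by $T$ on $\mathbf{P}$ is a factor of the Bernoulli shift $\tau$ on $(\mathcal{U}_0^{\Z},\nu^{\otimes\Z})$; as the latter is ergodic (in fact mixing), so is $T$ on $\mathbf{P}$, and therefore $\mathcal{T}$ on $\mu^{\Z}$. I expect the reconstruction step to be the main obstacle: one must turn ``at most one solution'', together with the existence of the genuine column, into actual measurability of the solution in terms of the driving noise, and then chain this over all countably many columns and through the backward recursion while keeping the exceptional null sets under control. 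The remaining ingredients --- Burke's property, the identification with the two-dimensional picture, and ergodicity passing to factors of Bernoulli shifts --- are either already available or entirely standard.
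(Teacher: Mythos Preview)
Your proposal is correct and follows essentially the same route as the paper: both establish ergodicity by exhibiting the system as (a factor of, in fact isomorphic to) the Bernoulli shift $(\mathcal{U}_0^{\Z},\nu^{\Z},\tau)$ via the column $v=(u_0^t)_{t\in\Z}$, with the heart of the argument being the iterative column-by-column reconstruction from $v$ using the uniqueness hypothesis forwards and, after applying the involution and reflecting time, backwards. The paper packages this through an abstract lemma (Lemma~\ref{ergodic}) that reduces everything to showing the map $\Lambda:x\mapsto(u_0^t)_t$ is a.s.\ injective on a full-measure set, and simply defines $\tilde{\Lambda}$ as its set-theoretic inverse rather than invoking a selection theorem; the reconstruction and the handling of the reflection for $n\le 0$ are exactly as you outline, and your measurability concern is one the paper does not make explicit.
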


\begin{rem}\label{urem} We note that, by Theorem \ref{detailedbalanceold} and Proposition \ref{Burke}, under $\mathbf{P}_{\mu^{\Z}}$, $u_0=(u_0^t)_{t\in\mathbb{Z}}$ has law $\nu^{\Z}$, where $\nu$ is the distribution of $u_{-1}^0$ under $\mathbf{P}_{\mu^{\Z}}$. In particular, one could replace `$\mathbf{P}_{\mu^{\Z}}$-a.e.' with `$\nu^{\Z}$-a.e.' in the above statement.
\end{rem}

\begin{rem}
Under the assumptions of Theorem \ref{ergodicthm}, in addition to ergodicity, the same proof gives the measure-preserving transformation $\mathcal{T}$ is metrically isomorphic to a two-sided Bernoulli shift, cf.\ \cite{KO}.
\end{rem}

The proof of the above theorem will depend on the following lemma. For the statement of this, we define a function $\Lambda : \mathcal{X}^* \to \mathcal{U}_0^\Z$ by setting
\[\Lambda (x):= (u^t_0)_{t\in\Z},\]
where $(x_n^t,u_n^t)_{n,t\in\Z}$ is the unique solution of \eqref{initial} with initial condition $x$. Note that, as is consistent with the idea that $T$ is a temporal shift, we set $T((u^t_0)_{t\in\Z}):=(u^{t+1}_0)_{t\in\Z}$.

\begin{lem}\label{ergodic}
Let $P$ be a distribution on $\mathcal{X}^*$. Suppose there exists a set $\mathcal{U}^* \subseteq \mathcal{U}_0^\Z$ and a function
\[\tilde{\Lambda} : \mathcal{U}^* \to \mathcal{X}_0^\Z\]
such that $\Lambda P (\mathcal{U}^*)=1$ and $\tilde{\Lambda} \circ \Lambda$ is the identity map on the set $\{x\in \mathcal{X}^*:\: \Lambda(x)\in\mathcal{U}^*\}$. The following statements then hold.
\begin{enumerate}
\item[(a)] $P$ is invariant under $\mathcal{T}$ if and only if $\Lambda P$ is invariant under $T$.
\item[(b)] $P$ is invariant and ergodic under $\mathcal{T}$ if and only if $\Lambda P$ is invariant and ergodic under $T$.
\end{enumerate}
\end{lem}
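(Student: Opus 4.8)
The plan is to show that $\Lambda$ realizes a measure-theoretic isomorphism between the dynamical systems $(\mathcal{X}^*,P,\mathcal{T})$ and $(\mathcal{U}_0^\Z,\Lambda P,T)$, with $\tilde{\Lambda}$ playing the role of the almost-everywhere inverse, and then to invoke the (soft) fact that invariance and ergodicity transfer across such isomorphisms. The single structural ingredient is the intertwining relation
\[
\Lambda\circ\mathcal{T}=T\circ\Lambda\qquad\text{on }\mathcal{X}^*.
\]
To establish this I would use that, for a type I model, $\mathcal{T}(x)_n=x^1_n$ (by \eqref{dynamics}): fixing $x\in\mathcal{X}^*$ with unique solution $(x^t_n,u^t_n)_{n,t\in\Z}$ of \eqref{initial}, and using that all the maps $F^t_n$ coincide, the time-shifted array $(x^{t+1}_n,u^{t+1}_n)_{n,t\in\Z}$ solves \eqref{initial} with initial data $\mathcal{T}(x)$; uniqueness for $x$ forces this to be the unique solution for $\mathcal{T}(x)$, so that $\mathcal{T}(x)\in\mathcal{X}^*$ and $\Lambda(\mathcal{T}(x))=(u^{t+1}_0)_{t\in\Z}=T(\Lambda(x))$. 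Measurability of $\Lambda$ is immediate from Lemma \ref{measurability} (each coordinate of $\Lambda(x)$ is a measurable function of $x$). Given the intertwining, the two ``only if'' implications are the standard statements that the pushforward of an invariant (respectively, invariant and ergodic) measure under an equivariant measurable map is again invariant (respectively, invariant and ergodic): for invariance, $T(\Lambda P)=\Lambda(\mathcal{T}P)=\Lambda P$; for ergodicity, any $T$-invariant (modulo $\Lambda P$-null) set $D$ pulls back to the $\mathcal{T}$-invariant (modulo $P$-null) set $\Lambda^{-1}(D)$, whence $\Lambda P(D)=P(\Lambda^{-1}(D))\in\{0,1\}$.

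For the ``if'' implications the section $\tilde{\Lambda}$ is used to carry structure the other way. Writing $A:=\{x\in\mathcal{X}^*:\Lambda(x)\in\mathcal{U}^*\}$, the hypothesis $\Lambda P(\mathcal{U}^*)=1$ gives $P(A)=1$, and $\tilde{\Lambda}\circ\Lambda=\mathrm{id}$ on $A$. From this I would extract (i) $\tilde{\Lambda}(\Lambda P)=P$, because $(\tilde{\Lambda}\circ\Lambda)^{-1}(B)$ and $B$ agree on the full-measure set $A$; and (ii) $\Lambda\circ\tilde{\Lambda}=\mathrm{id}$ on $\Lambda(A)$, which is a set of full outer $\Lambda P$-measure (any measurable $F\supseteq\Lambda(A)$ has $\Lambda^{-1}(F)\supseteq A$). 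Next, using the hypothesis that $\Lambda P$ is $T$-invariant, the intertwining yields $\mathcal{T}^{-1}(A)=\Lambda^{-1}(T^{-1}\mathcal{U}^*)$, so that $P(\mathcal{T}^{-1}(A))=\Lambda P(T^{-1}\mathcal{U}^*)=\Lambda P(\mathcal{U}^*)=1$; thus $A$ is $\mathcal{T}$-almost-invariant as well. Combining (i), (ii), the $\mathcal{T}$-almost-invariance of $A$ and the intertwining through a short pointwise computation --- for $\Lambda P$-a.e.\ $u$ one has $\tilde{\Lambda}(u)\in\mathcal{X}^*$ and $\mathcal{T}(\tilde{\Lambda}(u))\in A$, hence $\mathcal{T}(\tilde{\Lambda}(u))=\tilde{\Lambda}(\Lambda(\mathcal{T}(\tilde{\Lambda}(u))))=\tilde{\Lambda}(T(\Lambda(\tilde{\Lambda}(u))))=\tilde{\Lambda}(T(u))$ --- I obtain the conjugacy
\[
\mathcal{T}\circ\tilde{\Lambda}=\tilde{\Lambda}\circ T\qquad\Lambda P\text{-a.e.}
\]

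With the conjugacy in hand, the ``if'' part of (a) reads $\mathcal{T}P=\mathcal{T}(\tilde{\Lambda}(\Lambda P))=(\mathcal{T}\circ\tilde{\Lambda})(\Lambda P)=(\tilde{\Lambda}\circ T)(\Lambda P)=\tilde{\Lambda}(T(\Lambda P))=\tilde{\Lambda}(\Lambda P)=P$, using (i) at the first and last steps and $T$-invariance of $\Lambda P$ at the penultimate one. For the ``if'' part of (b), invariance is already covered by (a), and for ergodicity I would take $C\subseteq\mathcal{X}^*$ with $\mathcal{T}^{-1}C=C$ modulo $P$-null sets, put $D:=\tilde{\Lambda}^{-1}(C)$, and check that $\Lambda^{-1}(D)=C$ modulo $P$-null sets (so $\Lambda P(D)=P(C)$) and that $D$ is $T$-invariant modulo $\Lambda P$-null sets, the latter because $T^{-1}D=(\tilde{\Lambda}\circ T)^{-1}C=(\mathcal{T}\circ\tilde{\Lambda})^{-1}C=\tilde{\Lambda}^{-1}(\mathcal{T}^{-1}C)=\tilde{\Lambda}^{-1}(C)=D$ modulo null sets; ergodicity of $\Lambda P$ then forces $P(C)=\Lambda P(D)\in\{0,1\}$.

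The routine ingredients --- the intertwining, the standard pushforward statements, and the closing chains of equalities --- are short. The point I expect to require genuine care, and hence the main obstacle, is the null-set bookkeeping in the ``if'' direction: one must exploit the \emph{hypothesis} on $\Lambda P$ (rather than the sought conclusion on $P$, which would be circular) to conclude that $A$ is $\mathcal{T}$-almost-invariant and that $\Lambda\circ\tilde{\Lambda}=\mathrm{id}$ holds $\Lambda P$-a.e.; it is exactly these two facts that upgrade $\Lambda$ from a mere measurable factor map to an honest measure-theoretic isomorphism of dynamical systems. A subsidiary point is that we should take $\tilde{\Lambda}$ to be measurable, which is automatic in the standard Borel setting (an injective Borel map has Borel image and Borel inverse there, by the Lusin--Souslin theorem) and harmless to assume otherwise.
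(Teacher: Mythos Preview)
Your proposal is correct and follows essentially the same approach as the paper: both establish that $\Lambda$ is a measure-theoretic isomorphism between $(\mathcal{X}^*,P,\mathcal{T})$ and $(\mathcal{U}_0^{\Z},\Lambda P,T)$ via the intertwining $\Lambda\circ\mathcal{T}=T\circ\Lambda$ and the section $\tilde{\Lambda}$, and then read off invariance and ergodicity. The one organisational difference is that the paper bypasses your outer-measure considerations by introducing $\mathcal{U}^{**}:=\Lambda(\mathcal{X}^*)\cap\mathcal{U}^*$ and checking directly that $\Lambda:\mathcal{X}^{**}\to\mathcal{U}^{**}$ is a genuine bijection with inverse $\tilde{\Lambda}$ (so $\Lambda\circ\tilde{\Lambda}=\mathrm{id}$ holds pointwise there, not merely a.e.), after which the null-set bookkeeping becomes slightly cleaner; your more explicit treatment of the almost-invariance of $A$ under $\mathcal{T}$ and of the measurability of $\tilde{\Lambda}$ is a useful complement to the paper's terser argument.
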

\begin{proof}
(a) Define $\mathcal{X}^{**}:=\{x\in \mathcal{X}^*:\: \Lambda(x)\in\mathcal{U}^*\}$ and $\mathcal{U}^{**}:=\Lambda(\mathcal{X}^*) \cap  \mathcal{U}^*$. We first check that $\Lambda : \mathcal{X}^{**} \to   \mathcal{U}^{**}$ is a bijection with inverse function $\tilde{\Lambda}$. Clearly $\Lambda(\mathcal{X}^{**})\subseteq   \mathcal{U}^{**}$. Moreover, by assumption, $\tilde{\Lambda} \circ \Lambda (x)=x$ for all $x \in \mathcal{X}^{**}$. Hence it remains to show that
\[\Lambda  \circ  \tilde{\Lambda} (u)=u,\qquad \forall u \in \mathcal{U}^{**}.\]
For any $u \in \mathcal{U}^{**} \subseteq \Lambda(\mathcal{X}^*)$, there exists $x_u \in \mathcal{X}^{**}$ such that $\Lambda(x_u)=u$. It follows that
\[\Lambda\circ\tilde{\Lambda}(u)=\Lambda\circ\tilde{\Lambda}\circ\Lambda(x_u)=\Lambda(x_u)=u,\]
as required. Next, since $P(\mathcal{X}^*)=\Lambda P(\mathcal{U}^*)=1$, we have that $P(\mathcal{X}^{**})=1$, and thus also $\Lambda P (\mathcal{U}^{**})=1$. Consequently, if $\mathcal{T}P=P$, then it $P$-a.s.\ holds that $x:=(x_n)_{n\in\Z}$ and $\mathcal{T}(x)$ take values in $\mathcal{X}^{**}$, and so
\[\Lambda(\mathcal{T}(x)) = (u^{t+1}_0)_{t\in\Z} =T\left((u^{t}_0)_{t\in\Z}\right) = T \Lambda (x).\]
It follows that $T\Lambda P=\Lambda \mathcal{T} P=\Lambda P$. On the other hand, if $T\Lambda P=\Lambda \mathcal{T} P=\Lambda P$, then it $\Lambda P$-a.s.\ holds that $u:=(u^t_0)_{t\in\Z}$ and $T(u)$ takes values in $\mathcal{U}^{**}$, and so
\[\tilde{\Lambda} (T(u))= \tilde{\Lambda} \left((u^{t+1}_0)_{t\in\Z}\right) = \mathcal{T} (x)= \mathcal{T} \tilde{\Lambda} (u).\]
Hence $\mathcal{T} P= \mathcal{T} \tilde{\Lambda}\Lambda P =\tilde{\Lambda}T\Lambda P=\tilde{\Lambda} \Lambda P=P$.\\
(b) By the proof of (a), for any subset $E \subseteq \mathcal{X}^{**}$, $ \Lambda (\mathcal{T}(E))=T (\Lambda( E))$, and so $\mathcal{T}E=E$ is equivalent to $T \Lambda E= \Lambda E$. The claim follows.
\end{proof}

\begin{rem}
The same result was shown in \cite{CS} in the setting of the box-ball system of finite box and/or carrier capacity.
\end{rem}

\begin{proof}[Proof of Theorem \ref{ergodicthm}]
As per Remark \ref{urem}, we know that $\Lambda(\mu^{\Z})=\nu^{\Z}$. Moreover, $\nu^{\Z}$ is clearly invariant and ergodic under $T$. Hence, by Lemma \ref{ergodic}, we only need to show the existence of a set $\mathcal{U}^* \subseteq \mathcal{U}_0^\Z$ and a function $\tilde{\Lambda} : \mathcal{U}^* \to \mathcal{X}_0^\Z$ such that $\nu^{\Z} (\mathcal{U}^*)=1$ and $\tilde{\Lambda} \circ \Lambda$ is the identity map on the set $\{x\in\mathcal{X}^*:\:\Lambda(x)\in\mathcal{U}^*\}$. To this end, let $\mathcal{U}^{*,0} \subseteq \mathcal{U}_0^\Z$ be the set of $u=(u^t)_{t\in\Z}$ such that there is at most one $x=(x^t)_{t\in\Z} \in \mathcal{X}_0^\Z$ satisfying
\[\check{F}^{(2)}(u^t,x^{t})=x^{t+1},\qquad\forall t \in \Z.\]
By assumption, $\nu^{\Z}( \mathcal{U}^{*,0})=1$. Since $\mathcal{R} \nu^{\Z} =\nu^{\Z}$, where $\mathcal{R} u^t := u^{-t}$, and $u_n:=(u_n^t)_{t\in\Z} \sim \nu^{\Z}$ under $\mathbf{P}_{\mu^{\Z}}$ for all $n$, it follows that
\begin{equation}\label{as}
\mathbf{P}_{\mu^{\Z}}\left(u_n \in \mathcal{U}^{*,0} \cap \mathcal{R} \mathcal{U}^{*,0},\forall n\in\Z\right)=1.
\end{equation}
Now, define $\mathcal{X}^{**}$ to be the set of $x \in \mathcal{X}^*$ such that $u_n(x) \in   \mathcal{U}^{*,0} \cap \mathcal{R} \mathcal{U}^{*,0}$ for all $n$, where $u_n=(u_n^t)_{t\in\Z}$ is given by the solution of the initial value problem \eqref{initial} with initial condition $x$. Moreover, set $\mathcal{U}^*:=\Lambda(\mathcal{X}^{**})$, and note that, by \eqref{as}, we have that $\nu^{\Z} ( \mathcal{U}^*)=1$. We next claim that for any $u \in \mathcal{U}^*$, there is a unique $x \in \mathcal{X}^{*}$ such that $\Lambda(x)=u$, and moreover that $x\in\mathcal{X}^{**}$. Indeed, if $x\in \mathcal{X}^{**}$, $x'\in\mathcal{X}^*$ and $\Lambda(x)=u=\Lambda(x')$, then
\[F\left(x_1^{t},u^t_0\right)=\left(x_1^{t+1}, {u}_1^{t}\right),  \qquad F\left({x'}_1^{t},{u'}^t_0\right)=\left({x'}_1^{t+1}, {u'}_1^t\right),\]
where $(x^t_n,u^t_n)_{n,t\in\Z}$ and $({x'}^t_n, {u'}^t_n)_{n,t\in\Z}$ are the solutions of the initial value problem \eqref{initial} with initial conditions $x$ and $x'$, respectively. Hence,
\[\check{F}^{(2)}\left(u^t_0, x_1^{t}\right)=x_1^{t+1}, \qquad \check{F}^{(2)}\left({u'}^t_0, {x'}_1^{t}\right)={x'}_1^{t+1}.\]
Since $(u^t_0)_{t\in\Z}=\Lambda(x)= \Lambda(x')= ({u'}^t_0)_{t\in\Z}$ is an element of $\mathcal{U}^{*,0}$, it must therefore be the case that $x_1^t={x'}_1^t$ for all $t \in \Z$. It moreover follows that $u_1^t={u'}_1^t$ for all $t \in \Z$. Since $x \in  \mathcal{X}^{**}$ implies $u_n^t \in \mathcal{U}^{*,0}$ for all $n$,
iterating this argument yields that $x_n^t={x'}_n^t$ for all $t \in \Z$ and $n \ge 0$. To deal with negative $n$, note that
\[F\left(x_{0}^{t},u^{t}_{-1}\right)=\left(x_0^{t+1}, {u}_0^t\right),  \qquad F\left({x'}_{0}^{t},{u'}^t_{-1}\right)=\left({x'}_0^{t+1}, {u'}_0^t\right),\]
is equivalent to
\[\left(x_{0}^{t},u^{t}_{-1}\right)=F\left(x_0^{t+1}, {u}_0^t\right),  \qquad \left({x'}_{0}^{t},{u'}^t_{-1}\right)=F\left({x'}_0^{t+1}, {u'}_0^t\right),\]
and so
\[\check{F}^{(2)}\left(u^t_0, x_0^{t+1}\right)=x_{0}^{t}, \qquad \check{F}^{(2)}\left({u'}^t_0, {x'}_0^{t+1}\right)={x'}_0^{t}.\]
Applying the reflection $\mathcal{R}$ thus yields
\[\check{F}^{(2)}\left(u^{-t}_0, x_0^{-t+1}\right)=x_{0}^{-t}, \qquad \check{F}^{(2)}\left({u'}^{-t}_0, {x'}_0^{-t+1}\right)={x'}_0^{-t}.\]
Since $\mathcal{R}u_0 \in \mathcal{U}^{*,0}$, this implies $x_0^t={x'}_0^t$ for all $t\in\Z$. Again, we can iterate this argument to conclude that $x^t_n={x'}^t_n$ for all $t,n \in \Z$, as desired. Hence the function $\tilde{\Lambda} :  \mathcal{U}^{*} \to \mathcal{X}^{**}$ given by $\Lambda(x)\mapsto x$ is well-defined, and $\tilde{\Lambda} \circ \Lambda (x)=x$ for all $x \in\mathcal{X}^{**}$. Moreover, we have from the above argument that $\mathcal{X}^{**}=\{x\in\mathcal{X}^*:\:\Lambda(x)\in\mathcal{U}^*\}$, and so the proof is complete.
\end{proof}

\section{Type I examples: KdV-type discrete integrable systems}\label{kdvsec}

Two important examples of discrete integrable systems are the discrete and ultra-discrete KdV equations, which are obtained from the original KdV equation by natural discretization and ultra-discretization procedures. See \cite{CSTkdv,TTud} and the references therein for background. Both are examples of type I systems, and the aim of this section is to explain how our general results for such can be applied to identify examples of invariant and ergodic measures for them.

\subsection{Ultra-discrete KdV equation}

\subsubsection{The model}

The (modified) ultra-discrete KdV equation incorporates two parameters, $J,K \in \R \cup \{ \infty \}$, and is based on the following lattice map:
\begin{align}
\lefteqn{F^{(J,K)}_{udK}(x,u)}\tag{udKdV}\label{UDKDV}\\
&:=\left(u-\max\{x+u-J,0\}+\max\{x+u-K,0\},x-\max\{x+u-K,0\}+\max\{x+u-J,0\}\right),\nonumber
\end{align}
where the variables $x$ and $u$ are $\R$ valued. When the variables are positive, one can think of $x$ as the amount of mass currently at a lattice site, which has capacity $J$. Moreover, $u$ represents the amount of mass that a `carrier', which has capacity $K$, is bringing to this site. Simultaneously, the carrier deposits what it can, i.e.\ $\min\{u,J-x\}$, and collects what it can, i.e.\ $\min\{x,K-u\}$. This leaves a mass of
\[x+\min\{u,J-x\}-\min\{x,K-u\}=\left(F^{(J,K)}_{udK}\right)^{(1)}(x,u)\]
at the site, and the carrier moves forward (rightwards) to the next lattice site carrying a mass of
\[u-\min\{u,J-x\}+\min\{x,K-u\}=\left(F^{(J,K)}_{udK}\right)^{(2)}(x,u);\]
one discrete time step of the lattice dynamics is given by a complete pass of the carrier from $n=-\infty$ to $n=+\infty$. We note that the original udKdV equation corresponds to setting $K=\infty$. We also highlight that if $J,K \in \N$ and we restrict the possible values of the variables so that $x \in \{0,1,\dots,J\}$ and $u \in \{0,1,\dots,K\}$, then the dynamics associated with $F^{(J,K)}_{udK}$ correspond to the box-ball system with box capacity $J$ and carrier capacity $K$, which we denote by BBS($J$,$K$).

\begin{rem}\label{symrem}
Similarly to the discussion for BBS($J$,$K$) in \cite{CS}, the map \eqref{UDKDV} admits various symmetries, including the following.
\begin{description}
\item[Involution] For any $(x,u)\in \mathbb{R}^2$, it holds that
  \begin{equation}\label{involution}
  F_{udK}^{(J,K)}\circ F_{udK}^{(J,K)}(x,u)=(x,u).
  \end{equation}
\item[Configuration-carrier duality] If $\pi(x,u):=(u,x)$, then
\begin{equation}\label{ccdual}
F^{(J,K)}_{udK} = \pi  \circ F_{udK}^{(K,J)} \circ \pi.
\end{equation}
\item[Empty space-particle duality] Suppose $J,K<\infty$. If $\sigma_{J,K}(x,u):=(J-x,K-u)$, then
\begin{equation}\label{spdual}
F^{(J,K)}_{udK} = \sigma_{J,K}  \circ F^{(J,K)}_{udK} \circ \sigma_{J,K}.
\end{equation}
\item[Shift invariance] If $r\in\mathbb{R}$, then for any $(x,u) \in \mathbb{R}^2$ it holds that
      \begin{equation}\label{shift}
       F_{udK}^{(J-2r,K-2r)}(x-r,u-r)=\left(\left(F_{udK}^{(J,K)}\right)^{(1)}(x,u)-r,\left(F_{udK}^{(J,K)}\right)^{(2)}(x,u)-r\right).
        \end{equation}
\item[Scale invariance] If $\lambda\in\mathbb{R}$, then for any $(x,u) \in \mathbb{R}^2$ it holds that
      \begin{equation}\label{scale}
       F_{udK}^{(\lambda J,\lambda K)}(\lambda x,\lambda u)=\lambda F_{udK}^{(J,K)}(x,u).
        \end{equation}
\end{description}
Note that, whilst we will not dwell on it here, the property \eqref{involution} implies that the time-reversal of the \eqref{UDKDV} system can be studied in exactly the same way as the original system. As for \eqref{ccdual}, this means that it will suffice to solve the detailed balance equation for $J\leq K$. Properties \eqref{spdual}, \eqref{shift} and \eqref{scale} yield corresponding relationships between solutions of the detailed balance equation for \eqref{UDKDV} of various parameters.
\end{rem}

\subsubsection{Detailed balance solutions}

We now address the detailed balance equation for \eqref{UDKDV}; as per Remark \ref{symrem}, it will be enough to do this for $J\leq K$. We give two results. The first, Proposition \ref{udkdvmeas} lists a number of solutions of the detailed balance equation. We highlight that the detailed balance equation was completely solved for the BBS($J$,$K$) in \cite{CS}, and the discrete part of the following result (i.e.\ (a)(ii)) is essentially a restatement of the result from that paper. We refer the reader to the appendix for definitions of the probability distributions that appear. Our second result, Proposition \ref{udkdvmeasall} shows, up to a technical condition, that these are all the solutions of the detailed balance equation in this setting.

\begin{prop}\label{udkdvmeas}
The following product measures $\mu\times\nu$ satisfy $F^{(J,K)}_{udK}(\mu\times\nu)=\mu\times\nu$.
\begin{enumerate}
  \item[(a)] Suppose $J,K \in \R \cup \{ \infty \}$.
             \begin{enumerate}
              \item[(i)] For $\lambda\in\mathbb{R}$ if $\max\{J,K\}<\infty$, or $\lambda>0$ if $\max\{J,K\}=\infty$, and finite $c<\min\{\frac{J}{2},\frac{K}{2}\}$,
\[\mu\times\nu=\mathrm{stExp}(\lambda,c,J-c)\times \mathrm{stExp}(\lambda,c,K-c).\]
\item[(ii)] For finite $c<\min\{\frac{J}{2},\frac{K}{2}\}$ and $m>0$ such that $c,J,K\in m\mathbb{Z}\cup\{\infty\}$,
\[\mu\times\nu=\mathrm{sstbGeo}\left(1-\theta,\frac{c}{m},\frac{J-c}{m},\kappa,m\right)\times\mathrm{sstbGeo}\left(1-\theta,\frac{c}{m},\frac{K-c}{m},\kappa,m\right),\]
where it is further supposed that: either $J-2c,K-2c\in m\mathbb{Z}\cup\{\infty\}$, $\theta\in(0,1)$, $\kappa=1$; or $J-2c,K-2c\in 2m\mathbb{Z}\cup\{\infty\}$, $\theta\in(0,1)$, $\kappa\in(0,\infty)\backslash\{1\}$; or $J-2c,K-2c\in m\mathbb{Z}$, $\theta\geq 1$, $\kappa=1$; or $J-2c,K-2c\in 2m\mathbb{Z}\cup\{\infty\}$, $\theta\geq 1$, $\kappa\in(0,\infty)\backslash\{1\}$.
\end{enumerate}
  \item[(b)] Suppose $J=K$. For any measure $m$ on $\mathbb{R}$,
\[\mu\times\nu=m\times m.\]
\item[(c)] Suppose $J<K$.
 \begin{enumerate}
 \item[(i)] For any measure $m$ supported on $(-\infty,\frac{J}{2}]$,
\[\mu\times\nu=m\times m.\]
\item[(ii)] For any measure $m$ supported on $[\frac{J}{2},K-\frac{J}{2}]$,
\[\mu\times\nu=\delta_{\frac{J}{2}} \times m,\]
where for $x\in\mathbb{R}$, $\delta_x$ is the probability measure placing all of its mass at $x$.
\item[(iii)] Suppose further that $K<\infty$. For any measure $m$ supported on $[\frac{J}{2},\infty)$,
\[\mu\times\nu=m\times (m +L),\]
where $L:=K-J$ and $(m+L)(A)=m (\{x-L:\: x \in A\})$.
\end{enumerate}
\end{enumerate}
\end{prop}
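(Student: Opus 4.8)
The plan is to exploit the single most important feature of the map \eqref{UDKDV}, namely that it conserves the sum $s:=x+u$: writing $a=\max\{x+u-J,0\}$ and $b=\max\{x+u-K,0\}$, the output coordinates $u-a+b$ and $x-b+a$ sum to $x+u$. Reducing to $J\le K$ via the configuration-carrier duality \eqref{ccdual}, one checks that $F^{(J,K)}_{udK}$ acts on each level line $\{x+u=s\}$ as a simple affine involution: the coordinate swap $(x,u)\mapsto(u,x)$ for $s\le J$; the shear $(x,u)\mapsto(J-x,\,2x+u-J)$ for $J\le s\le K$; and the shift-swap $(x,u)\mapsto(u-L,\,x+L)$, with $L:=K-J$, for $s\ge K$. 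In every case the action on the $x$-coordinate is an affine reflection of the relevant fibre onto itself. Since $s$ is conserved, disintegrating $\mu\times\nu$ over the law of $s$ shows that $F^{(J,K)}_{udK}(\mu\times\nu)=\mu\times\nu$ is equivalent to the conditional law of $x$ given $x+u=s$ being invariant under the corresponding fibrewise map for a.e.\ $s$; the marginal law of $s$ is then automatically preserved.

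I would dispatch parts (b) and (c) first, since in each the support of $\mu\times\nu$ lies within a single region and the fibrewise map is transparent. For (b) and (c)(i) the support sits in $\{s\le J\}$ (in (c)(i) because $m$ is carried by $(-\infty,J/2]$, so $x+u\le J$), where $F$ is the swap and the symmetric products $m\times m$ are manifestly fixed. For (c)(ii), $x\equiv J/2$ and $u\in[J/2,K-J/2]$ force $s\in[J,K]$, and the shear sends $(J/2,u)\mapsto(J/2,u)$, i.e.\ it is the identity on the support, so $\delta_{J/2}\times m$ is fixed. For (c)(iii), the supports force $s\ge K$, where the shift-swap gives $(x,u)\mapsto(u-L,x+L)$; since $u-L\sim m$ while $x+L\sim m+L$ and independence is preserved, $m\times(m+L)$ is returned. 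Each of these is a one-line check once the region has been identified.

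The substance is in (a)(i). The decisive point is that $\mathrm{stExp}(\lambda,c,J-c)$ and $\mathrm{stExp}(\lambda,c,K-c)$ are exponential densities with the \emph{common} rate $\lambda$, truncated to $[c,J-c]$ and $[c,K-c]$. Their product is proportional to $e^{-\lambda x}e^{-\lambda u}=e^{-\lambda s}$ along each level line, so the conditional law of $x$ given $x+u=s$ is \emph{uniform} on the set $\{x:\;x\in[c,J-c],\ s-x\in[c,K-c]\}$. It then remains to check, region by region, that the affine reflection of the $x$-coordinate carries this interval onto itself, which preserves the uniform law. Using $J\le K$ and $c<\min\{J/2,K/2\}$, one computes the conditional support to be $[c,s-c]$ for $s\le J$ (swapped by $x\mapsto s-x$), $[c,J-c]$ for $J\le s\le K$ (reflected about $J/2$ by $x\mapsto J-x$), and $[s-K+c,J-c]$ for $s\ge K$ (whose endpoints are interchanged by $x\mapsto s-x-L$); in each case the interval is invariant and its uniform law preserved. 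Combined with the automatic preservation of the law of $s$, this yields invariance, and the shift and scale invariances \eqref{shift}, \eqref{scale} can be used to normalize parameters and shorten the bookkeeping.

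I expect part (a)(ii) to be the main obstacle, and would treat it as the lattice analogue of (a)(i). Scaling by $m$ via \eqref{scale} reduces to a lattice of spacing one, and the same disintegration over $s=x+u$ applies; the conditional law of $x$ given $s$ is now a \emph{discrete}-uniform (when $\kappa=1$) law on a lattice interval, because geometric weights with common ratio $\theta$ satisfy $\theta^{x}\theta^{u}=\theta^{s}$ along fibres for every $\theta>0$ (so $\theta\in(0,1)$ and $\theta\ge1$ are handled at once, truncation ensuring normalizability). The fibrewise reflections again map the discrete conditional support onto itself, and the real difficulty is purely combinatorial: one must verify that the endpoints of these lattice intervals match under reflection, which is exactly what the divisibility hypotheses ($c,J,K\in m\Z\cup\{\infty\}$, and $J-2c,K-2c\in m\Z$ or $2m\Z$) are designed to guarantee. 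The role of $\kappa\ne1$ is to impose a two-coloured, parity-dependent weighting that the reflections preserve only when the interval widths are even, which explains the $2m\Z$ conditions; since the $\kappa=1$, $\theta\in(0,1)$ case recovers the BBS$(J,K)$ detailed-balance solution of \cite{CS}, I would cite that result and concentrate the new work on the remaining subcases.
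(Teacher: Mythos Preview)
Your proposal is correct and follows essentially the same route as the paper. The paper's proof of (a)(i) is compressed into one line---``$F^{(J,K)}_{udK}$ preserves mass and has Jacobian determinant one a.e., so it is straightforward''---which is exactly your disintegration argument in disguise: since the joint density is proportional to $e^{-\lambda(x+u)}$ on the rectangle $[c,J-c]\times[c,K-c]$, invariance reduces to checking that $F$ maps this rectangle to itself, and your fibrewise interval computations do precisely that. For (a)(ii) the paper likewise just cites \cite{CS}, and for (b) and (c) it omits the verification entirely, so your case-by-case treatment is more explicit than, but equivalent to, what the authors record.
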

\begin{proof}
Since $F^{(J,K)}_{udK}$ preserves mass, i.e.
\[\left(F_{udK}^{(J,K)}\right)^{(1)}(x,u)+\left(F_{udK}^{(J,K)}\right)^{(2)}(x,u)=x+u,\]
and the absolute value of the associated Jacobian determinant is equal to one (Lebesgue almost-everywhere), part (a)(i) is straightforward to check. As already noted, part (a)(ii) was proved in \cite{CS}. Parts (b) and (c) readily follow from the definition of $F^{(J,K)}_{udK}$, and so their proofs are omitted.
\end{proof}

\begin{prop}\label{udkdvmeasall}
\hspace{10pt}
\begin{enumerate}
  \item[(a)] Suppose $J=K$. It is then the case that the product measures given in Proposition \ref{udkdvmeas}(b) are the only solutions to $F^{(J,K)}_{udK}(\mu\times\nu)=\mu\times\nu$.
\item[(b)] Suppose $J < K$ and a product measure $\mu\times\nu$ satisfies $F^{(J,K)}_{udK}(\mu\times\nu)=\mu\times\nu$. It is then the case that one of the following statements hold.
 \begin{enumerate}
 \item[(i)] The product measure $\mu\times\nu$ is given in Proposition \ref{udkdvmeas}(c).
 \item[(ii)] There exists $c \in [-\infty, \frac{J}{2})$ such that
 \[\inf \mathrm{supp} (\mu)=\inf \mathrm{supp} (\nu)=c,\]
 \[\sup \mathrm{supp} (\mu)=J-c, \qquad \sup \mathrm{supp} (\nu)=K-c,\]
 where $\mathrm{supp} (\mu)$ and $\mathrm{supp} (\nu)$ are the support of $\mu$ and $\nu$, respectively.
 \end{enumerate}
Moreover, if (ii) holds and $\mu$ and $\nu$ have smooth (twice differentiable), strictly positive densities on the intervals $[c,J-c]$ and $[c,K-c]$ respectively, then they given by Proposition \ref{udkdvmeas}(a)(i). And, if (ii) holds and neither $\mathrm{supp}(\mu)$ nor $\mathrm{supp}(\nu)$ contains an accumulation point, then they are given by Proposition \ref{udkdvmeas}(a)(ii).
\end{enumerate}
\end{prop}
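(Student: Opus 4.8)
The plan is to recast the equation $F^{(J,K)}_{udK}(\mu\times\nu)=\mu\times\nu$ as a family of one-dimensional symmetry conditions. Since $F^{(J,K)}_{udK}$ preserves $s:=x+u$, one reads off from \eqref{UDKDV} that on each level set $\{x+u=s\}$ it acts in the $x$-variable as the reflection $x\mapsto 2\rho(s)-x$, where $\rho(s)=s/2$ for $s\le J$, $\rho(s)=J/2$ for $J\le s\le K$, and $\rho(s)=(s+J-K)/2$ for $s\ge K$. Consequently $F^{(J,K)}_{udK}(\mu\times\nu)=\mu\times\nu$ holds if and only if, for $\mathrm{Law}(X+U)$-a.e.\ $s$, the conditional law of $X$ given $X+U=s$ under $\mu\times\nu$ is symmetric about $\rho(s)$. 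Part (a) is then immediate: for $J=K$ we have $\rho\equiv s/2$, so $F^{(J,K)}_{udK}$ is the swap $(x,u)\mapsto(u,x)$ and $F^{(J,K)}_{udK}(\mu\times\nu)=\nu\times\mu$, which equals $\mu\times\nu$ exactly when $\mu=\nu$.

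For part (b), assume $J<K$; as $F^{(J,K)}_{udK}$ is a homeomorphism of $\mathbb{R}^2$, the closed set $\mathrm{supp}(\mu)\times\mathrm{supp}(\nu)$ is invariant under it. If this set lies in a single one of the three regions, we are done: on $\{s\le J\}$, $F^{(J,K)}_{udK}$ is the swap, giving $\mu=\nu$ with $\mathrm{supp}(\mu)\subseteq(-\infty,J/2]$ (Proposition \ref{udkdvmeas}(c)(i)); on $\{J\le s\le K\}$, the reflection $x\mapsto J-x$ together with independence of the coordinates of $F^{(J,K)}_{udK}(X,U)$ forces $\mu=\delta_{J/2}$ with $\mathrm{supp}(\nu)\subseteq[J/2,K-J/2]$ (Proposition \ref{udkdvmeas}(c)(ii)); on $\{s\ge K\}$ (necessarily $K<\infty$), $F^{(J,K)}_{udK}(x,u)=(u-L,x+L)$ with $L=K-J$ forces $\nu=\mu+L$ with $\mathrm{supp}(\mu)\subseteq[J/2,\infty)$ (Proposition \ref{udkdvmeas}(c)(iii)). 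Otherwise $\mathrm{supp}(\mu)\times\mathrm{supp}(\nu)$ meets at least two regions. Suppose first that it meets $\{s<J\}$. Applying the swap near the bottom-left corner forces $\inf\mathrm{supp}(\mu)=\inf\mathrm{supp}(\nu)=:c$ with $2c<J$, and then pushing the support points $(c,u)$ and $(x,c)$ through the region-$B$/region-$C$ formulas (using that first coordinates stay $\ge c$) forces $\mathrm{supp}(\mu)\subseteq[c,J-c]$ and $\mathrm{supp}(\nu)\subseteq[c,K-c]$. Running the same analysis on the measure $\sigma_{J,K}(\mu\times\nu)$, which is again invariant by \eqref{spdual} and meets $\{s<J\}$ precisely when $\mathrm{supp}(\mu)\times\mathrm{supp}(\nu)$ meets $\{s>K\}$, then upgrades these bounds to the equalities $\sup\mathrm{supp}(\mu)=J-c$, $\sup\mathrm{supp}(\nu)=K-c$, $c<J/2$ of (b)(ii) — unless in fact $\mathrm{supp}(\mu)\times\mathrm{supp}(\nu)\subseteq\{s\le J\}$, which returns us to case (i). The case where the support meets $\{s>K\}$ but not $\{s<J\}$ reduces to the former via \eqref{spdual} once more, and when $K=\infty$, $\mathrm{Law}(X+U)$ is unbounded and running the region-$B$ symmetry at arbitrarily large $s$ gives the same conclusion. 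This establishes the dichotomy.

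It remains, in case (b)(ii), to identify the measures under the stated regularity hypotheses. The region-$A$ symmetry at $s\le J$ forces $\mathrm{supp}(\mu)=\mathrm{supp}(\nu)\cap[c,J-c]$ and, in the smooth case, $f_\mu/f_\nu$ constant on $[c,J-c]$. Substituting this into the region-$B$ identity $f_\mu(x)f_\nu(s-x)=f_\mu(J-x)f_\nu(s-J+x)$ (valid for $x\in[c,J-c]$, $s\in[J,K]$) and differentiating its logarithm in $s$ and then in $x$ yields $\phi'(x)+\phi'(J-x)=2\phi'(s-x)$ for $\phi:=\log f_\nu$; fixing $x$ and varying $s$ forces $\phi'$ to be constant on $[J-x,K-x]$, and as $x$ ranges over $[c,J-c]$ these intervals cover $[c,K-c]$, so $\phi'$ is constant there. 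Hence $f_\nu$, and then $f_\mu$, is a shifted truncated exponential, which after normalising (and imposing integrability when $K=\infty$) is exactly Proposition \ref{udkdvmeas}(a)(i). In the discrete, accumulation-point-free case, the reflection symmetries instead force $\mathrm{supp}(\mu)$ and $\mathrm{supp}(\nu)$ to lie on a common arithmetic progression $c+m\mathbb{Z}$ filling $[c,J-c]$, $[c,K-c]$; the shift and scale symmetries \eqref{shift}--\eqref{scale}, via $x\mapsto(x-c)/m$, then turn the problem into the detailed balance equation for the box-ball-type map with integer parameters, whose product solutions were classified in \cite{CS}, and pulling back gives Proposition \ref{udkdvmeas}(a)(ii).

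I expect the support analysis of the multi-region case to be the main obstacle: the reflection conditions must be converted into the exact endpoint identities of (b)(ii) without assuming $\mathrm{supp}(\mu)$ and $\mathrm{supp}(\nu)$ are intervals, and in the discrete case one must additionally rule out gaps and irregular spacings before the classification of \cite{CS} becomes applicable. By contrast, once the support is pinned down, identifying the measures amounts to solving an elementary functional or difference equation and should be routine.
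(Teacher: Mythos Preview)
Your approach parallels the paper's closely---both exploit the piecewise form of $F^{(J,K)}_{udK}$ on the regions $\{s\le J\}$, $\{J\le s\le K\}$, $\{s\ge K\}$ to constrain the supports and then solve the resulting functional equations---and your use of the empty-space/particle duality $\sigma_{J,K}$ to symmetrise the lower- and upper-endpoint arguments is a pleasant shortcut relative to the paper's direct case analysis of $(a_1+b_1)$ and $(a_2+b_2)$. There are, however, two genuine gaps.

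First, in the support dichotomy you assert that if the support meets $\{s<J\}$ but is not contained in $\{s\le J\}$, the $\sigma_{J,K}$-analysis upgrades the bounds to the equalities of (b)(ii); but that analysis only applies when the original support also meets $\{s>K\}$, and you have not excluded the intermediate possibility $J<a_2+b_2\le K$ (writing $a_2=\sup\mathrm{supp}(\mu)$, $b_2=\sup\mathrm{supp}(\nu)$). The paper closes this by applying $F$ at the corner $(a_2,b_2)$, which in region $B$ forces $a_2=J/2$, and then checking that any $x\in\mathrm{supp}(\mu)$ with $x<J/2$ (such $x$ exists since $\inf\mathrm{supp}(\mu)=c<J/2$) paired with $b_2$ produces a point whose $F$-image leaves $\mathrm{supp}(\mu)\times\mathrm{supp}(\nu)$. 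Note also that $\sigma_{J,K}$ is unavailable when $K=\infty$, so that case needs a separate argument rather than the one-line appeal you give. Second, in the discrete case you rightly anticipate having to pin down the arithmetic-progression structure of the supports before invoking \cite{CS}, but you omit a prior and more substantial step: proving $c>-\infty$. Your support analysis explicitly allows $c=-\infty$, and the paper devotes real effort here, arguing (separately for $K<\infty$ and $K=\infty$) that $c=-\infty$ would force the probability weights along an infinite orbit of the region-$B$/region-$C$ maps to satisfy a two-sided geometric recursion $\mu(\{x+nL\})=\lambda^n\mu(\{x\})$ (or an analogue), which is incompatible with $\mu$ being a probability measure. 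Without this finiteness, the reduction to the BBS classification of \cite{CS} cannot begin.
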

\begin{proof}
(a) Since $F^{(J,J)}_{udK}(x,u)=(u,x)$, this part of the result is obvious.\\
(b) Let $a_1:=\inf \mathrm{supp} (\mu)$, $a_2:=\sup \mathrm{supp} (\mu)$,  $b_1:=\inf \mathrm{supp} (\nu)$, $b_2:=\sup \mathrm{supp} (\nu)$. Since
\[0 \le \max\{x+u-J,0\}-\max\{x+u-K,0\} \le L,\qquad \forall (x,u)\in\mathbb{R}^2,\]
where $L:=K-J$, $F^{(J,K)}_{udK}(x,u)=(y,v)$ implies $u-L \le y \le u$ and $x \le v \le x+L$. Thus it holds that
\[a_1 \le b_1 \le a_1+L,\qquad a_2 \le b_2 \le a_2 +L.\]
Also, by definition, it holds that:
\[F^{(J,K)}_{udK}(x,u)=\left\{\begin{array}{ll}
                               (u,x),  & \mbox{if }x+u \le J, \\
                               (J-x, u+2x-J), & \mbox{if }J \le x+u \le K, \\
                               (u-L,x+L), & \mbox{if } x+u \ge K,
                             \end{array}
\right.\]
and, in particular, $F^{(J,K)}_{udK}(x,u)$ is continuous with respect to $(x,u)$. We now consider three cases separately: (I) $a_1+b_1 < J$, (II) $J \le a_1+b_1 < K$, (III) $a_1+b_1 \ge K$.
\begin{enumerate}
\item[(I)] If $a_1+b_1 < J$, then $F^{(J,K)}_{udK}(a_1,b_1)=(b_1,a_1)$. This implies $a_1 \le b_1$, $b_1 \le a_1$, and so $a_1 =b_1 < \frac{J}{2}$.
\item[(II)] If $J \le a_1+b_1 < K$, then $F^{(J,K)}_{udK}(a_1,b_1)=(J-a_1, b_1+2a_1-J)$. Hence $a_1 \le J-a_1$, $b_1 \le b_1+2a_1-J$, which implies in turn that $a_1=\frac{J}{2}$ and $\frac{J}{2} \le b_1 < K-\frac{J}{2}$. If $a_1=a_2$, namely $\mu$ is the measure $\delta_{J/2}$, then $\nu$ must be concentrated on $[b_1, K-\frac{J}{2}]$. If $a_1 <a_2$, then there exist $\varepsilon >0$, $\varepsilon' \ge 0$ such that $a_1 +\varepsilon \in  \mathrm{supp} (\mu)$, $b_1+\varepsilon' \in \mathrm{supp}( \nu)$. In particular, we can take $\varepsilon'$ small enough so that $a_1+b_1+\varepsilon' < K$. If $a_1+b_1+\varepsilon+\varepsilon' \le K$, then, $J-(a_1+\varepsilon)= \frac{J}{2}-\varepsilon \in \mathrm{supp} (\mu)$, but this contradicts with the fact that $a_1 =\frac{J}{2}$. On the other hand, if $a_1+b_1+\varepsilon+\varepsilon'  >K$, then $b_1+\varepsilon'-L \in  \mathrm{supp} (\mu)$. However, $b_1+\varepsilon' -L <K-a_1-L=\frac{J}{2}$, which again contradicts with $a_1 =\frac{J}{2}$. Thus we have shown that it is not possible that $a_1<a_2$. Consequently, in this case, if $F^{(J,K)}_{udK}(\mu\times\nu)=\mu\times\nu$ holds, then $\mu=\delta_{\frac{J}{2}}$ and $\mathrm{supp} (\nu) \subseteq [\frac{J}{2}, K-\frac{J}{2}]$.

\item[(III)] If  $a_1+b_1 \ge K$, then $F^{(J,K)}_{udK}(x,u)=(u-L,x+L)$ for all $x,u \in [a_1,a_2] \times [b_1,b_2]$, so $F^{(J,K)}_{udK}(\mu\times\nu)=\mu\times\nu$ holds if and only if $\nu=\mu+L$.
\end{enumerate}
We next consider the corresponding three cases for the suprema of the support: (I') $a_2+b_2 \le J$, (II') $J < a_2+b_2 \le  K$, (III') $a_2+b_2 > K$. By a similar argument to above, we have the following.
\begin{enumerate}
\item[(I')] If $a_2+b_2 \le J$, then $F^{(J,K)}_{udK}(\mu\times\nu)=\mu\times\nu$ holds if and only if $\nu=\mu$.
\item[(II')]  If $J < a_2+b_2 \le K$ and $F^{(J,K)}_{udK}(\mu\times\nu)=\mu\times\nu$ holds, then $\mu=\delta_{\frac{J}{2}}$, $\mathrm{supp}(\nu)\subseteq [\frac{J}{2}, K-\frac{J}{2}]$.
\item[(III')] If $a_2+ b_2 > K$, then $b_2=a_2+L$ and $a_2 > \frac{J}{2}$.
\end{enumerate}
Putting together the above discussion, there are only four possible cases: (I''-1) $\mu=\nu$ and $a_2 \le \frac{J}{2}$; (I''-2) $\mu=\delta_{\frac{J}{2}}$, $\mathrm{supp} (\nu) \subseteq [\frac{J}{2}, K-\frac{J}{2}]$; (I''-3) $\mu + L = \nu$ and $a_1 \ge \frac{J}{2}$; (II'')  $a_1= b_1$, $a_2=b_2-L$ and $a_1 < \frac{J}{2}$, $a_2 > \frac{J}{2}$. The cases (I''-1), (I''-2), (I''-3) correspond to Proposition \ref{udkdvmeas}(c)(i), (ii), (iii), respectively. It remains to check that the case (II'') corresponds to part (b)(ii) of the current proposition. In this case, there exist $c_1, c_2>0$ such that $a_1=b_1=\frac{J}{2}-c_1$ and $a_2=b_2-L=\frac{J}{2}+c_2$. Suppose $c_1 > c_2$. Then, $a_1+b_2= J-c_1+c_2+L=K -c_1+c_2 < K$. If $J \le a_1+b_2 < K$, then $F^{(J,K)}_{udK}(a_1,b_2)=(J-a_1, b_2+2a_1-J)$, and so $J-a_1 \le a_2$. The latter inequality is equivalent to $c_1\leq c_2$, which contradicts $c_1 >c_2$. If $a_1+b_2 < J$, then $F^{(J,K)}_{udK}(a_1,b_2)=(b_2, a_1)$, which implies $b_2 \le a_2$. However, this contradicts $a_2=b_2-L$. Hence $c_1 \le c_2$. A similar argument allows one to deduce the reverse inequality, and thus we obtain $c_1=c_2$. In conclusion, letting $c=\frac{J}{2}-c_1$, we obtain the desired result.

To complete the proof, we study the special cases where $\mu$ and $\nu$ have densities, or they are discrete. Let $f_{\mu}$, $f_{\nu}$ be densities of $\mu$ and $\nu$. For $x \in [c, J-c]$ and $ u \in [c,K-c]$, we then have that
\[f_{\mu}(x)f_{\nu}(u)=\left\{\begin{array}{ll}
                                f_{\mu}(u)f_{\nu}(x), & \mbox{if }x+u \le J,\\
                                f_{\mu}(J-x)f_{\nu}(u+2x-J), & \mbox{if }J \le x+u \le K, \\
                                f_{\mu}(u-L)f_{\nu}(x+L), & \mbox{if }x+u \ge K.
                              \end{array}
\right.\]
Letting $h_{\mu}(x):=\log f_{\mu}(x)$ and $h_{\nu}(u):=\log f_{\nu}(u)$ and taking derivatives of the relation
\[h_{\mu}(x)+h_{\nu}(u)=h_{\mu}(J-x)+h_{\nu}(u+2x-J)\]
with respect to $x$ first and then with respect to $u$, for $(x,u)$ satisfying $J \le x+u \le K$, we have $h''_{\nu}(u+2x-J)=0$. For any $v \in [c, K-c]$, by letting $\varepsilon:=\frac{v-c}{K-2c} \in [0,1]$ and
\[x =c+ \varepsilon (J-2c),\qquad u =c+ (1-\varepsilon)(J-2c)+\varepsilon (K-J),\]
we have $x \in [c,J-c]$, $u \in [c, K-c]$, $J \le x+u \le K$ and $v=u+2x-J$, so $h''_{\nu}(v)=0$ for all $v \in [c, K-c]$. Therefore, there exists $\lambda \in \R$ such that $h'_{\nu}(u)=\lambda$ for all $u \in [c,K-c]$. Also, by taking the derivative of $h_{\mu}(x)+h_{\nu}(u)=h_{\mu}(u)+h_{\nu}(x)$ with respect to $x$, for $(x,u)$ satisfying $x+u \le J$, we have $h'_{\mu}(x)=h'_{\nu}(x)$. Since for any $x \in [c,J-c]$, by taking $u=J-x$, we have $u \in [c,K-c]$ and $x+u \le J$, hence $h'_{\mu}(x)=\lambda$ for all $x \in [c,J-c]$. Therefore, since $\mu$ and $\nu$ are probability measures, $c$ must be finite. Moreover, if $K= \infty$, then $\lambda$ must be positive.

Finally, we consider the case where $\mu$ and $\nu$ are discrete. We first prove that $c>- \infty$.
If $c=-\infty$, then for any $x \in \mathrm{supp}( \mu)$, there exists $u \in \mathrm{supp}(\nu)$ such that
$x+u \le J$, and vice versa. Since for such $x,u$ we have $F_{udK}^{(J,K)}(x,u)=(u,x)$, we conclude that $\mathrm{supp} (\mu) = \mathrm{supp} (\nu)$.  Moreover, by noting $\mu(\{x\})\nu(\{u\})=\mu (\{u\})\nu(\{x\})$ for $x + u \le J$, it is an elementary exercise to check that $\mu=\nu$. Next, note that if $K< \infty$, then for any $x+u \ge K$ with $x,u \in \mathrm{supp}(\mu)$, it holds that $x+L,u-L \in \mathrm{supp} (\mu)$
and
\[\frac{\mu(\{x\})}{\mu(\{x+L\})}=\frac{\mu(\{u-L\})}{\mu(\{u\})}.\]
However, since $\sup \mathrm{supp} (\mu) = \infty$, this implies that for any $x \in \mathrm{supp} (\mu)$, $x+nL \in \mathrm{supp} (\mu)$ for all $n \in \Z$ and $\mu(\{x+nL\})=\mu(\{x\}) \la^{n}$ for some $\la \neq 0$, which can not happen since $\mu$ is a probability measure. Similarly, if $K= \infty$, then for any $x, n, m$ satisfying $\frac{J}{2}+nx, \frac{J}{2}+mx \in \mathrm{supp} (\mu)$ and $n+m \ge 0$, we have $\frac{J}{2}-nx$, $\frac{J}{2}+(2n+m)x \in \mathrm{supp} (\mu)$ and
\[\frac{\mu(\{\frac{J}{2}+nx\})}{\mu(\{\frac{J}{2}+(2n+m)x\})}=\frac{\mu(\{\frac{J}{2}-nx\})}{\mu(\{\frac{J}{2}+mx\})}.\]
In particular, applying this relation with $x \ge 0$ satisfying $\frac{J}{2}+x \in \mathrm{supp}(\mu)$ with $n=m=1$, we have $\frac{J}{2}-x, \frac{J}{2}+3x \in \mathrm{supp} (\mu)$. Iterating this argument yields
$ \frac{J}{2}+(2n+1)x \in \mathrm{supp} (\mu)$ for all $n \in \Z$, and
\[\frac{\mu(\{\frac{J}{2}+x\})}{\mu(\{\frac{J}{2}-x\})}=\frac{\mu(\{\frac{J}{2}+(2n+1)x\})}{\mu(\{\frac{J}{2}+(2n-1)x\})}\]
for all $n \ge 0$. Moreover, since for $n \le -1$,
\[\frac{\mu(\{\frac{J}{2}+(2n-1)x\})}{\mu(\{\frac{J}{2}+(2n+1)x\})} = \frac{\mu(\{\frac{J}{2}-(2n-1)x\})}{\mu(\{\frac{J}{2}-(2n-3)x\})},\]
we have $\mu(\{\frac{J}{2}+(2n+1)x\})=\mu(\{\frac{J}{2}+x\}))\la^n$ for all $n\in\mathbb{Z}$, for some $\la \neq 0$. Again, this can not happen since $\mu$ is a probability measure. We can therefore conclude that $c>-\infty$.

First suppose $K<\infty$. We then have $\mathrm{supp} (\mu) =\{x_0,x_1,\dots,x_n\}$ for some $n$ with $c=x_0 <x_1<\dots<x_n=J-c$, and $\mathrm{supp} (\mu)\subset c+b\mathbb{Z}$ for some $b>0$. Additionally, $\mathrm{supp} (\nu)  =\{u_0,u_1,\dots, u_{m}\}$ for some $m$ with $c=u_0 <u_1<\dots<u_m=K-c$, and $\mathrm{supp} (\nu)\subset c+b'\mathbb{Z}$ for some $b'>0$. By a similar argument to the previous paragraph, it is possible to check that, for an appropriate choice of $b$, one may take $b'=b$, and moreover $x_i=c+ib$, $u_i=c+ib$ for each $i$. Hence, by making the change of variables $x \to x-c, u \to u-c, J \to J-2c, K \to K-2c$, we can apply \cite[Lemma 4.5]{CS} to complete the proof. To establish the result when $K=\infty$, one can proceed in the same way to check that $\mathrm{supp} (\nu)\cap[c,J-c]=\mathrm{supp}(\mu)=(c+b\mathbb{Z})\cap[c,J-c]$ for some $b>0$, and then use the identity $\mu(\{c\})\nu(\{J+x\})=\mu(\{J-c\})\nu(\{x+2c\})$ for $x\geq -c$ to derive the full support of $\nu$, from which point one can again apply \cite[Lemma 4.5]{CS} to obtain the desired result.
\end{proof}

\subsubsection{Invariant measures}

Much of the hard work for identifying invariant product measures for \eqref{UDKDV} has now been done. Indeed, up to the technical restriction of Proposition \ref{udkdvmeasall}, Theorem \ref{detailedbalancenew} tells us that the marginals of invariant product measures must be described within the statement of Proposition \ref{udkdvmeas} (as $\mu$ in the case $J\leq K$, and $\nu$ in the case $J\geq K$).

We start by restricting our attention to $J\leq K$. The reason for this is that it allows us to apply the approach of \cite{CKST,CS,CSTkdv}, which provides a description of the dynamics in terms of certain Pitman-type transformations of path encodings of configurations, to give an explicit set upon which the initial value problem \eqref{initial} has a unique solution. In particular, we will now consider the initial value problem \eqref{initial} with $F_{n,t}=F^{(J,K)}_{udK}$ for all $n,t$, where $J \le K$. For $J=K$, we set $\mathcal{X}^*_{J,K}:=\mathbb{R}^\mathbb{Z}$. For $J<K=\infty$, we take
\[\mathcal{X}^*_{J,K}:=\left\{ (x_n)_{n\in\mathbb{Z}}:\: \lim_{|n| \to \infty}\frac{\sum_{k=1}^n \left(J-2x_k\right)}{n} >0\right\},\]
where for $n<0$, the sum $\sum_{k=1}^n$ should be interpreted as $-\sum^{0}_{n+1}$, and, for $J<K <\infty$,
\[\mathcal{X}^*_{J,K}:=\left\{ (x_n)_{n\in\mathbb{Z}}:\: \limsup_{n \to \pm \infty}\left|\sum_{k=1}^n \left(J-2x_k\right)\right|=\infty \right\}.\]
From results of \cite{CS,CSTkdv}, we then have the following.

\begin{lem}\label{ivpudkdv}
Suppose $J \le K$. If $(x_n)_{n\in\mathbb{Z}} \in \mathcal{X}^*_{J,K}$, then there exists a unique solution of \eqref{initial} with $F_{n,t}=F^{(J,K)}_{udK}$ for all $n,t$.
\end{lem}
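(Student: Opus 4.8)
\emph{The plan} is to deduce the lemma from the path-encoding and Pitman-transform description of the \eqref{UDKDV} dynamics developed in \cite{CSTkdv} (cf.\ also \cite{CKST} and, in the box-ball setting, \cite{CS}), treating the degenerate case $J=K$ directly. In that case $F^{(J,J)}_{udK}(x,u)=(u,x)$, so the relations in \eqref{initial} force $u^t_n=x^t_n$ and $x^{t+1}_n=x^{t}_{n-1}$; hence for every $(x_n)_{n\in\mathbb{Z}}\in\mathcal{X}^*_{J,K}=\mathbb{R}^\mathbb{Z}$ the array $x^t_n=u^t_n:=x_{n-t}$ is the unique solution of \eqref{initial}, and there is nothing more to prove. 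The rest of the argument concerns $J<K$.

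\emph{Set-up.} Following \cite{CSTkdv,CKST}, I would encode $(x_n)_{n\in\mathbb{Z}}$ by the path $S=(S_n)_{n\in\mathbb{Z}}$ with $S_0=0$ and increments $S_n-S_{n-1}=J-2x_n$, so that $\mathcal{X}^*_{J,K}$ corresponds to the class of paths with strictly positive asymptotic slope at both ends when $K=\infty$, and to the class of paths $S$ with $\limsup_{n\to+\infty}|S_n|=\limsup_{n\to-\infty}|S_n|=\infty$ when $K<\infty$. Under this encoding the spatial recursion $u_n=(F^{(J,K)}_{udK})^{(2)}(x_n,u_{n-1})$ for the carrier becomes the defining recursion of a Pitman-type transform of $S$ when $K=\infty$, and of a two-sided (doubly reflected) Skorokhod-type transform when $K<\infty$. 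The two facts to import from \cite{CSTkdv} (and, for the box-ball case, \cite{CS}) are: (i) on the path classes above the transform is well-defined, i.e.\ the recursion $u_n=(F^{(J,K)}_{udK})^{(2)}(x_n,u_{n-1})$ has a unique solution $(u_n)_{n\in\mathbb{Z}}$, so that $\mathcal{X}^*_{J,K}\subseteq\mathcal{X}^{\exists !}$ and $\mathcal{T}$ is well-defined on $\mathcal{X}^*_{J,K}$ (the admissible carrier value being pinned down by the asymptotics of $S$: the competing boundary conditions at $n=-\infty$ are washed out by the positive drift, resp.\ the phase of the doubly reflected path is fixed by two-sided unboundedness); and (ii) the transform maps each of these path classes to itself, i.e.\ $\mathcal{T}(\mathcal{X}^*_{J,K})\subseteq\mathcal{X}^*_{J,K}$.

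\emph{Assembling the solution.} The defining conditions of $\mathcal{X}^*_{J,K}$ are visibly invariant under the reflection $\mathcal{R}$ of Remark~\ref{remrem} (which, for a type I model, sends $x_n$ to $x_{1-n}$), so $\mathcal{R}(\mathcal{X}^*_{J,K})=\mathcal{X}^*_{J,K}$; combining this with (ii) and the identity $\mathcal{T}^{-1}=\mathcal{R}\mathcal{T}\mathcal{R}$ (valid here by the involution property \eqref{involution}), one gets $\mathcal{T}^t(\mathcal{X}^*_{J,K})\subseteq\mathcal{X}^*_{J,K}$ for every $t\in\mathbb{Z}$. Hence, for $x\in\mathcal{X}^*_{J,K}$, every slice $x^t:=\mathcal{T}^t(x)$ lies in $\mathcal{X}^*_{J,K}\subseteq\mathcal{X}^{\exists !}$, so the carrier variables $(u^t_n)_{n\in\mathbb{Z}}$ at each time are uniquely determined by (i), and the array $(x^t_n,u^t_n)_{n,t\in\mathbb{Z}}$ built from these slices solves \eqref{initial}. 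For uniqueness, if $(\bar x^t_n,\bar u^t_n)$ is any solution with $\bar x^0=x$, then $\bar x^0\in\mathcal{X}^{\exists !}$ forces $\bar u^0=u^0$ and hence $\bar x^1=x^1\in\mathcal{X}^{\exists !}$; inducting forwards in $t$, and using the time-reversal symmetry $(x^t_n,u^t_n)\mapsto(x^{1-t}_{1-n},u^{-t}_{-n})$ (cf.\ the proof of Lemma~\ref{measurability}) together with $\mathcal{R}$-invariance to induct backwards, yields $(\bar x^t_n,\bar u^t_n)=(x^t_n,u^t_n)$ for all $n,t$, as required.

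\emph{Main obstacle.} The substantive input is point (ii) --- stability of the path class under the Pitman/two-sided transform, i.e.\ preservation of the asymptotic slope when $K=\infty$ and of two-sided unboundedness when $K<\infty$ --- which is exactly where one leans on the analysis of \cite{CSTkdv}; without it the dynamics cannot be iterated consistently over all $t\in\mathbb{Z}$. The remaining effort is essentially bookkeeping: matching the precise normalisation of the path encoding and the transform used in \cite{CSTkdv} to the explicit map \eqref{UDKDV}, so that (i) and (ii) can be quoted verbatim.
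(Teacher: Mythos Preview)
Your proposal is correct and follows essentially the same approach as the paper: the paper's proof simply observes that $J=K$ is trivial since $F^{(J,J)}_{udK}(x,u)=(u,x)$, and then cites \cite[Theorem~2.1]{CSTkdv} for $J<K=\infty$ and \cite{CS} for $J<K<\infty$ (noting that the box-ball argument extends verbatim to real-valued variables). Your write-up is more explicit in that you sketch the path-encoding content of those references and spell out how the one-step uniqueness and stability of the path class assemble into the full two-dimensional solution via forward/backward iteration of $\mathcal{T}$, but the substance and the cited inputs are the same.
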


\begin{proof}
In the case $J =K$, we have $ F^{(J,K)}_{udK}(x,u)=(u,x)$, and so the result is clear. In the case $J<K=\infty$, the result is given by \cite[Theorem 2.1]{CSTkdv}. For the case $J <K <\infty$, the result is given for BBS($J$,$K$) in \cite{CS}, i.e.\ for $J,K \in \N$ and $x \in \{0,1,2,\dots, J\}$, $u \in \{0,1,2,\dots, K\}$. The same proof applies in the more general case.
\end{proof}

To handle the case $\infty=J>K$, we consider the set
\begin{equation}\label{sync1}
\mathcal{X}^{!}_{J,K}:=\left\{ (x_n)_{ n\in\mathbb{Z}}:\: \limsup_{n \to -\infty}\mathbf{1}_{\{x_n +x_{n+1} \le K \}}=1 \right\},
\end{equation}
and for $\infty >J > K$, the set
\begin{equation}\label{sync2}
\mathcal{X}^{!}_{J,K}:=\left\{ (x_n)_{ n\in\mathbb{Z}}:\: \limsup_{n \to -\infty}\mathbf{1}_{\{x_n +x_{n+1} \le K \}\cup\{x_n +x_{n+1} \ge 2J-K \}}=1 \right\}.
\end{equation}
The subsequent result gives that if we start from a configuration within these sets, then it is not possible to give multiple definitions for the one time-step dynamics.

\begin{lem}\label{udkdvlem}
Suppose $J >K$. If $(x_n)_{ n\in\mathbb{Z}} \in \mathcal{X}^{!}_{J,K}$, then there exists at most one sequence $(u_n)_{ n\in\mathbb{Z}} $ such that
\begin{equation}\label{relation}
\left(F^{(J,K)}_{udk}\right)^{(2)}(x_n, u_{n-1})=u_n,\qquad \forall n\in\mathbb{Z}.
\end{equation}
\end{lem}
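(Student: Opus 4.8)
The plan is to exploit the explicit piecewise-linear form of the second coordinate of $F^{(J,K)}_{udK}$ together with a contraction estimate driven by the defining property of $\mathcal{X}^{!}_{J,K}$. Write $L:=J-K\in(0,\infty]$, and for fixed $x\in\mathbb{R}$ set $g_x(u):=\bigl(F^{(J,K)}_{udK}\bigr)^{(2)}(x,u)$. Expanding the maxima in \eqref{UDKDV} according to whether $x+u$ is at most $K$, between $K$ and $J$, or at least $J$, one obtains
\[
g_x(u)=\begin{cases} x, & x+u\le K,\\ K-u, & K\le x+u\le J,\\ x-L, & x+u\ge J,\end{cases}
\]
with the obvious reading that only the first two cases occur when $J=\infty$. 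The first step is to record two elementary consequences: $g_x$ is non-expansive (its slope is $0$ or $-1$), and its range is contained in $[x-L,x]$. In particular, any sequence $(u_n)_{n\in\mathbb{Z}}$ solving \eqref{relation} automatically satisfies $x_n-L\le u_n\le x_n$ for every $n$.

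Now suppose $(u_n)_{n\in\mathbb{Z}}$ and $(u'_n)_{n\in\mathbb{Z}}$ both solve \eqref{relation}, and put $d_n:=|u_n-u'_n|$. Since $u_n=g_{x_n}(u_{n-1})$, $u'_n=g_{x_n}(u'_{n-1})$, and $g_{x_n}$ is non-expansive, the sequence $(d_n)_{n\in\mathbb{Z}}$ is non-increasing; hence $d_m=0$ for some $m$ implies $d_n=0$ for all $n\ge m$. The crux is to show that $d_{n+1}=0$ whenever $x_n+x_{n+1}\le K$ or $x_n+x_{n+1}\ge 2J-K$. In the former case, the bound $u_n\le x_n$ gives $u_n+x_{n+1}\le x_n+x_{n+1}\le K$, so $g_{x_{n+1}}(u_n)=x_{n+1}$; likewise $g_{x_{n+1}}(u'_n)=x_{n+1}$, whence $u_{n+1}=u'_{n+1}=x_{n+1}$. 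In the latter case (which forces $J<\infty$), the bound $u_n\ge x_n-L$ gives $u_n+x_{n+1}\ge x_n+x_{n+1}-L\ge 2J-K-L=J$, so $g_{x_{n+1}}(u_n)=x_{n+1}-L=g_{x_{n+1}}(u'_n)$, and again $u_{n+1}=u'_{n+1}$. Since $(x_n)_{n\in\mathbb{Z}}\in\mathcal{X}^{!}_{J,K}$, the relevant event in \eqref{sync1} (if $J=\infty$) or \eqref{sync2} (if $J<\infty$) holds along a sequence $n_k\to-\infty$, so $d_{n_k+1}=0$ for every $k$; combined with the monotonicity of $(d_n)_{n\in\mathbb{Z}}$, this yields $d_n=0$ for all $n\in\mathbb{Z}$, i.e.\ $(u_n)_{n\in\mathbb{Z}}=(u'_n)_{n\in\mathbb{Z}}$.

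I do not anticipate any serious obstacle: the only points requiring care are bookkeeping, namely the correct reduction of \eqref{UDKDV} to the three-case formula for $g_x$ (and checking it remains valid, with $\pm\infty$ read in the obvious way, when $J=\infty$ and hence $L=\infty$), and the observation that the a priori bounds $x_n-L\le u_n\le x_n$ are available precisely because they follow immediately from relation \eqref{relation} itself. Conceptually, the mechanism is simply that the two flat pieces of $g_x$ are exactly the regimes in which the post-step carrier value is independent of the pre-step one, and the set $\mathcal{X}^{!}_{J,K}$ is designed so that this synchronising event occurs infinitely often as $n\to-\infty$.
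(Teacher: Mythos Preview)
Your proof is correct and follows essentially the same approach as the paper: both identify that $u_n\le x_n$ (and $u_n\ge x_n-L$ when $J<\infty$), use this to show that the synchronising conditions $x_n+x_{n+1}\le K$ or $x_n+x_{n+1}\ge 2J-K$ force $u_{n+1}$ to equal $x_{n+1}$ or $x_{n+1}-L$ regardless of $u_n$, and then conclude via the infinitely-often occurrence of these events as $n\to-\infty$. The only cosmetic difference is that you invoke the non-expansiveness of $g_x$ to propagate $d_{n_k+1}=0$ forward, whereas the paper simply notes that the forward recursion $u_n=g_{x_n}(u_{n-1})$ is deterministic, so agreement at one index trivially forces agreement at all later ones; your Lipschitz observation is correct but not needed.
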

\begin{proof}
We first prove that if $x_n+x_{n+1} \le K$, then $u_{n+1}=x_{n+1}$. Since $J >K$ and
\[u_n=x_n-\max\{x_n+u_{n-1}-K, 0\}+\max\{x_n+u_{n-1}-J, 0\},\]
it must hold that $u_n \le x_n$. Hence $x_{n+1}+u_n \le x_{n+1}+x_n \le K$, and so
\[u_{n+1}=x_{n+1}-\max\{x_{n+1}+u_n-K, 0\}+\max\{x_{n+1}+u_n-J, 0\}=x_{n+1}.\]
Similarly, if $\infty >J > K$ and $x_n+x_{n+1} \ge 2J-K$, then $u_{n+1}=x_{n+1}-L$, where $L=J-K$. Indeed, since $u_n\geq x_n-L$, in this case we have that $x_{n+1}+u_n\geq 2J-K-L=J$, and the result follows. As a consequence, if $(x_n)_{ n\in\mathbb{Z}} \in \mathcal{X}^{!}_{J,K}$, then there exists a sequence $n_k \downarrow -\infty$ such that $u_{n_k}$ is determined by $x_{n_k}$. For $n \notin \{n_k:k\geq 1\}$, the relation \eqref{relation} means that $u_n$ is uniquely defined by $u_{n_k}$ such that $n_k < n $ and $(x_m)_{n_k+1 \le m \le n}$, and so the proof is complete.
\end{proof}

Putting together Theorem \ref{detailedbalancenew}, Proposition \ref{udkdvmeas}, Lemma \ref{ivpudkdv} and Lemma \ref{udkdvlem}, we complete this section by describing a number of invariant product measures for \eqref{UDKDV}. We write $\mathcal{T}_{udK}^{(J,K)}$ for the dynamics given by $F_{udK}^{(J,K)}$, as defined at \eqref{dynamics}.

\begin{thm}\label{udkdvinv} The product measure $\mu^\mathbb{Z}$ satisfies $\mathcal{T}_{udK}^{(J,K)}\mu^{\Z}=\mu^{\Z}$ for the following measures $\mu$.
\begin{enumerate}
  \item[(a)] Suppose $J=K$. Any measure $\mu$ on $\mathbb{R}$.
  \item[(b)] Suppose $J<K$. Excluding $\mu=\delta_{J/2}$, any measure $\mu$ given by Proposition \ref{udkdvmeas}(a) or Proposition \ref{udkdvmeas}(c).
  \item[(c)] Suppose $J>K$. Excluding $\mu=\delta_{K/2}$ and $\mu=\delta_{J-K/2}$, any measure $\mu$ given by Proposition \ref{udkdvmeas}(a) or supported on $(-\infty,\frac{K}{2}]$ or $[J-\frac{K}{2},\infty)$.
\end{enumerate}
\end{thm}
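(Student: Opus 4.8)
The plan is to deduce every claim from Theorem~\ref{detailedbalancenew}(a): for a measure $\mu$ on the stated list one must (i) produce a probability measure $\nu$ with $F^{(J,K)}_{udK}(\mu\times\nu)=\mu\times\nu$, and (ii) verify $\mu^{\Z}(\mathcal{X}^*)=1$. Step~(i) is essentially free, since parts~(a) and~(c) of Proposition~\ref{udkdvmeas} list precisely these $\mu$'s together with their detailed-balance partners. The parameter ranges in part~(a) are symmetric in $J$ and $K$, so they cover both $J\le K$ and $J\ge K$; the additional families of part~(c) are stated for $J<K$, and the case $J>K$ is reached from them via the configuration--carrier duality \eqref{ccdual}, which sends a solution $\nu\times\mu$ of $F^{(K,J)}_{udK}$ to the solution $\mu\times\nu$ of $F^{(J,K)}_{udK}$. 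Thus the real work is Step~(ii), and this is where the exclusions in the statement originate.

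The case $J=K$ is immediate: $F^{(J,J)}_{udK}$ is the transposition $(x,u)\mapsto(u,x)$, so $U_n(x)=x_n$, $\mathcal{X}^*=\R^{\Z}$, and $\mathcal{T}$ is the spatial shift, under which every product measure is invariant (matching Proposition~\ref{udkdvmeas}(b) through Theorem~\ref{detailedbalancenew}(a)). For $J<K$, Lemma~\ref{ivpudkdv} gives $\mathcal{X}^*_{J,K}\subseteq\mathcal{X}^*$, so it suffices to show $\mu^{\Z}(\mathcal{X}^*_{J,K})=1$ for each $\mu$ allowed in part~(b). When $K=\infty$ this reduces, by the strong law of large numbers for the i.i.d.\ sequence $(J-2x_k)_k$ (which has finite mean for each listed $\mu$), to the inequality (mean of $\mu$) $<J/2$; for the truncated exponential of Proposition~\ref{udkdvmeas}(a)(i) this holds because a strictly decreasing density on $[c,J-c]$ has mean below the midpoint $J/2$, and an analogous monotonicity argument or direct computation handles the geometric family~(a)(ii) and the part~(c)(i) measures supported on $(-\infty,J/2]$, with $\mu=\delta_{J/2}$ necessarily excluded. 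When $K<\infty$ the set $\mathcal{X}^*_{J,K}$ only asks for $\limsup_{n\to\pm\infty}|\sum_{k=1}^n(J-2x_k)|=\infty$, which holds for any non-degenerate $\mu$ by recurrence/transience of the random walk $\sum_k(J-2x_k)$ (the strong law when the $\mu$-mean differs from $J/2$, and oscillation of a non-degenerate mean-zero walk when it equals $J/2$; in the part~(c) families the walk is even monotone), again with the sole exception $\mu=\delta_{J/2}$.

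For $J>K$, Lemma~\ref{ivpudkdv} is unavailable, so instead we use the set $\mathcal{X}^{!}_{J,K}$ of \eqref{sync1}--\eqref{sync2}. On it, Lemma~\ref{udkdvlem} gives uniqueness of the solution $(U_n(x))_n$ of \eqref{uniqueu} (equivalently \eqref{relation}), and its proof, which builds $(U_n(x))_n$ from the synchronization points $n_k\downarrow-\infty$, also yields existence, $(x_m)_{m\le n}$-measurability and shift-covariance of $U_n$, so that $\mathcal{X}^{!}_{J,K}\subseteq\mathcal{X}^{\exists !}$. Taking $\mathcal{X}^U:=\mathcal{X}^{!}_{J,K}\cap\mathcal{R}\mathcal{X}^{!}_{J,K}$, the hypotheses of Remark~\ref{remrem} hold ($\mathcal{T}\mathcal{R}\mathcal{T}\mathcal{R}=\mathrm{id}$ because $\mathcal{T}^{-1}=\mathcal{R}\mathcal{T}\mathcal{R}$, and the remaining conditions follow from the synchronization-point construction together with the involution \eqref{involution}), so Theorem~\ref{detailedbalanceold} and Proposition~\ref{Burke}, hence the conclusion of Theorem~\ref{detailedbalancenew}, apply with $\mathcal{X}^*$ replaced by $\mathcal{X}^U$. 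It remains to check $\mu^{\Z}(\mathcal{X}^U)=1$ for the $\mu$ of part~(c): by the second Borel--Cantelli lemma applied to independent events along even indices (as $n\to-\infty$ and as $n\to+\infty$), it suffices that $(\mu\times\mu)(\{x_0+x_1\le K\})>0$ or $(\mu\times\mu)(\{x_0+x_1\ge2J-K\})>0$, which holds for $\mu$ supported on $(-\infty,K/2]$ (first probability $=1$), for $\mu$ supported on $[J-K/2,\infty)$ (second $=1$), and for the exponential/geometric families of Proposition~\ref{udkdvmeas}(a), which live on $[c,J-c]$ with $c<K/2$ and hence charge $[c,K/2]$. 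The measures $\delta_{K/2}$ and $\delta_{J-K/2}$, as well as measures supported strictly inside $(K/2,J-K/2)$, are excluded because for them this domain argument fails (or, for the Dirac masses, because invariance is trivial and the machinery is superfluous).

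I expect the main obstacle to be Step~(ii): verifying, family by family across Proposition~\ref{udkdvmeas} and separately in the $K=\infty$ and $K<\infty$ regimes (which have genuinely different descriptions of $\mathcal{X}^*_{J,K}$), that $\mu^{\Z}$ charges the relevant domain of unique solvability---this is exactly what pins down the parameter restrictions and the list of excluded degenerate measures. A secondary, more bookkeeping-type difficulty is the $J>K$ case, where Theorem~\ref{detailedbalancenew} cannot be quoted verbatim and must be routed through Remark~\ref{remrem} with the surrogate domain $\mathcal{X}^U$ extracted from the synchronization structure of Lemma~\ref{udkdvlem}.
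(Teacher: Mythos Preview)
Your treatment of parts (a) and (b) matches the paper: $J=K$ is trivial, and for $J<K$ one combines the detailed balance pairs of Proposition~\ref{udkdvmeas} with Lemma~\ref{ivpudkdv}, checking $\mu^{\Z}(\mathcal{X}^*_{J,K})=1$ via the law of large numbers when $K=\infty$ and via nondegeneracy of the walk when $K<\infty$.

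For part (c), your route diverges from the paper's and contains a gap. You try to invoke Remark~\ref{remrem} with $\mathcal{X}^U=\mathcal{X}^{!}_{J,K}\cap\mathcal{R}\mathcal{X}^{!}_{J,K}$, but you do not verify the hypothesis $\mathcal{T}\mathcal{X}^U\subseteq\mathcal{X}^U$, and it is not clear this holds: if $x_n+x_{n+1}\le K$ at a synchronization point, a short computation (using $u_n\le x_n$ and $u_{n+1}=x_{n+1}$ from the proof of Lemma~\ref{udkdvlem}) gives $x^1_{n+1}=u_n$ and $x^1_{n+2}=F^{(1)}(x_{n+2},x_{n+1})$, and there is no reason their sum should lie in $(-\infty,K]\cup[2J-K,\infty)$. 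Your justification of $\mathcal{T}\mathcal{R}\mathcal{T}\mathcal{R}=\mathrm{id}$ via ``$\mathcal{T}^{-1}=\mathcal{R}\mathcal{T}\mathcal{R}$'' is also circular here, since that identity is proved in the paper on $\mathcal{X}^*$ and presupposes $\mathcal{T}$ is a bijection on the domain in question.

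The paper's argument for (c) is quite different and sidesteps all of this. It \emph{bootstraps from part~(b) via Burke's property and configuration--carrier duality}: working still with $J<K$ and a $\mu$ from (b), Proposition~\ref{Burke} gives that under $\mathbf{P}_{\mu^{\Z}}$ each column $(u_n^t)_{t\in\Z}$ is i.i.d.\ with marginal $\nu$ (the detailed balance partner of $\mu$). One then checks only the elementary condition $\nu((-\infty,\tfrac{J}{2}]\cup[K-\tfrac{J}{2},\infty))>0$, which ensures $\nu^{\Z}(\mathcal{X}^!_{K,J})=1$; Lemma~\ref{udkdvlem} (applied with the roles of $J$ and $K$ swapped, via \eqref{ccdual}) then says $(u_n^t)_{t\in\Z}$ uniquely determines $(u_{n+1}^t)_{t\in\Z}$, and this one-step map is precisely $\mathcal{T}^{(K,J)}_{udK}$. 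Since successive columns have the same law $\nu^{\Z}$, one concludes $\mathcal{T}^{(K,J)}_{udK}\nu^{\Z}=\nu^{\Z}$; relabelling $J\leftrightarrow K$ gives the statement. This never requires that any synchronization set be preserved by the dynamics, nor any of the other hypotheses of Remark~\ref{remrem}.
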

\begin{proof}
(a) The case $J=K$ is obvious.\\
(b) In the case $J<K=\infty$, for one of the measures $\mu^\mathbb{Z}$ from Proposition \ref{udkdvmeas} to satisfy $\mathcal{T}\mu^{\Z}=\mu^{\Z}$, it will suffice to check that $\mu^{\Z}(\mathcal{X}^*_{J,K})=1$. For this, the law of large numbers tells us that it is sufficient for $\int x\mu(dx)<J/2$. The measures given in the statement of the theorem are readily checked to satisfy this requirement. Finally, for $J<K<\infty$, it will again be enough to determine measures $\mu^\mathbb{Z}$ from Proposition \ref{udkdvmeas} that satisfy $\mu^{\Z}(\mathcal{X}^*_{J,K})=1$. The latter constraint simply rules out the trivial measure $\mu=\delta_{J/2}$, and so the result readily follows.\\
(c) Let us continue for the moment to suppose that $J<K$. We will appeal to the configuration-carrier duality of \eqref{ccdual} to prove the result, and as a first step we take $\mu$ to be one of the measures identified in part (b). If $(x_n^t,u_n^t)_{n,t\in\mathbb{Z}}$ is given by solving the initial value problem \eqref{initial} with initial condition $(x_n)_{n\in\mathbb{Z}}\sim\mu^\mathbb{Z}$, it then readily follows from Proposition \ref{Burke} that, for each $n\in\mathbb{Z}$, $(u_n^t)_{t\in\mathbb{Z}}$ is i.i.d., with marginal given by the corresponding $\nu$ from Proposition \ref{udkdvmeas}. Now, as long as $\nu((-\infty,\frac{J}{2}]\cup [K-\frac{J}{2},\infty))>0$, then it is clear that $\nu^\mathbb{Z}(\mathcal{X}_{K,J}^!)=1$. This means that, $\mu^\mathbb{Z}$-a.s., $(u_n^t)_{t\in\mathbb{Z}}$ uniquely determines $(x_{n+1}^t,u_{n+1}^t)_{t\in\mathbb{Z}}$, with $u_{n+1}=\mathcal{T}_{udK}^{(K,J)}u_{n}$, where $\mathcal{T}_{udK}^{(K,J)}$ represents the dynamics given by $F_{udK}^{(K,J)}$ (cf.\ the proof of Theorem \ref{ergodicthm}). In particular, we have demonstrated that $\mathcal{T}_{udK}^{(K,J)}\nu^\mathbb{Z}=\nu^\mathbb{Z}$. Reversing the role of $J$ and $K$ gives the result.
\end{proof}

\subsubsection{Ergodicity}

Finally, we study the ergodicity of the operator $\mathcal{T}_{udK}^{(J,K)}$. The next result is an immediate application of Theorem \ref{ergodicthm}, together with the observations we made in the proof of Theorem \ref{udkdvinv}, and so we simply state the conclusion.

\begin{thm} Suppose $J\leq K$. Let $\mu\times \nu$ be a product measure satisfying $F_{udK}^{(J,K)}(\mu\times \nu)=\mu\times \nu$, as given by Proposition \ref{udkdvmeas}, with $\mu\neq \delta_{J/2}$ and $\nu((-\infty,\frac{J}{2}]\cup [K-\frac{J}{2},\infty))>0$. It is then the case that $\mu^{\Z}$ is ergodic under $\mathcal{T}_{udK}^{(J,K)}$, and  $\nu^{\Z}$ is ergodic under $\mathcal{T}_{udK}^{(K,J)}$.
\end{thm}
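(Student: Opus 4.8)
The plan is to deduce the first assertion directly from Theorem~\ref{ergodicthm} applied to the $(J,K)$-model, and the second by identifying the $(K,J)$-dynamics on $\nu^\Z$ with a factor of the spatial shift on $(\mathcal{X}^*_{J,K},\mu^\Z)$, as is essentially already done in the proof of Theorem~\ref{udkdvinv}(c). The key structural input is the configuration-carrier duality \eqref{ccdual}: since $\check F=\pi\circ F\circ\pi$ and $F^{(J,K)}_{udK}=\pi\circ F^{(K,J)}_{udK}\circ\pi$, one has $\check F^{(J,K)}_{udK}=F^{(K,J)}_{udK}$, so the hypothesis of Theorem~\ref{ergodicthm} that needs checking is that, for $\mathbf{P}_{\mu^\Z}$-a.e.\ $u_0=(u_0^t)_{t\in\Z}$, there is at most one $x=(x^t)_t\in\R^\Z$ with $\left(F^{(K,J)}_{udK}\right)^{(2)}(u_0^t,x^t)=x^{t+1}$ for every $t$.

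By Remark~\ref{urem}, $u_0\sim\nu^\Z$ under $\mathbf{P}_{\mu^\Z}$. Assume first $J<K$. The hypothesis $\nu((-\infty,\tfrac{J}{2}]\cup[K-\tfrac{J}{2},\infty))>0$ forces at least one of $\nu((-\infty,\tfrac{J}{2}])$, $\nu([K-\tfrac{J}{2},\infty))$ to be positive, and a second Borel--Cantelli argument along the disjoint index pairs $\{-2k,-2k+1\}$, $k\ge1$ (exactly as in the proof of Theorem~\ref{udkdvinv}(c)) then gives $\nu^\Z(\mathcal{X}^!_{K,J})=1$; since $K>J$, Lemma~\ref{udkdvlem} applied with the parameter pair $(K,J)$ delivers the required at-most-one statement. (When $J=K$ it holds trivially, because $\check F^{(J,J)}_{udK}$ forces $x^{t+1}=u_0^t$, and $\mathcal{X}^*_{J,J}=\R^\Z$.) As $\mathcal{T}^{(J,K)}_{udK}\mu^\Z=\mu^\Z$ and $\mu^\Z(\mathcal{X}^*_{J,K})=1$ both hold by Theorem~\ref{udkdvinv} (whose proof establishes the latter, and which applies since $\mu\ne\delta_{J/2}$), Theorem~\ref{ergodicthm} yields that $\mu^\Z$ is ergodic under $\mathcal{T}^{(J,K)}_{udK}$.

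For the second assertion, recall from the proof of Theorem~\ref{udkdvinv}(c) that, on the event $\{u_0\in\mathcal{X}^!_{K,J}\}$ of full $\mathbf{P}_{\mu^\Z}$-measure, the carrier rows $u_n:=(u_n^t)_{t\in\Z}$ of the $(J,K)$-model started from $\mu^\Z$ satisfy $u_n\sim\nu^\Z$ and $u_{n+1}=\mathcal{T}^{(K,J)}_{udK}u_n$; in particular $\mathcal{T}^{(K,J)}_{udK}\nu^\Z=\nu^\Z$. With $\Lambda(x):=(u_0^t)_{t\in\Z}$ as in Theorem~\ref{ergodicthm}, the translation-covariance of $U$ (cf.\ Remark~\ref{remrem}) gives $\Lambda(\theta x)=(u_1^t)_{t\in\Z}=u_1$, so $\Lambda$ is a measurable map intertwining the spatial shift $\theta$ on $(\mathcal{X}^*_{J,K},\mu^\Z)$ with $\mathcal{T}^{(K,J)}_{udK}$ on $(\R^\Z,\nu^\Z)$, and $\Lambda_*\mu^\Z=\nu^\Z$ by Proposition~\ref{Burke}. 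Hence $(\R^\Z,\nu^\Z,\mathcal{T}^{(K,J)}_{udK})$ is a factor of the ergodic---indeed, being i.i.d., mixing---system $(\mathcal{X}^*_{J,K},\mu^\Z,\theta)$, and is therefore ergodic. (Alternatively, since \eqref{ccdual} makes $(\nu,\mu)$ a detailed-balance solution for $F^{(K,J)}_{udK}$, one may simply exchange the roles of $J$ and $K$ throughout the argument for the first assertion.)

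The only step I expect to require genuine care is the verification of the dual uniqueness hypothesis of Theorem~\ref{ergodicthm}: one must transport it, via \eqref{ccdual} and Remark~\ref{urem}, into a statement about carrier trajectories of the $(K,J)$-model, and then recognise that the condition $\nu((-\infty,\tfrac{J}{2}]\cup[K-\tfrac{J}{2},\infty))>0$ is precisely what places $\nu^\Z$ on $\mathcal{X}^!_{K,J}$ so that Lemma~\ref{udkdvlem} can be invoked. Everything else is bookkeeping with the symmetries recorded in Remark~\ref{symrem} and with the map $\Lambda$ produced in the proof of Theorem~\ref{ergodicthm}.
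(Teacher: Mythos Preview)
Your argument is correct and aligns with the paper's approach. The paper's proof is a single sentence stating that the result is ``an immediate application of Theorem~\ref{ergodicthm}, together with the observations we made in the proof of Theorem~\ref{udkdvinv}''. Your treatment of the first assertion --- using $\check F^{(J,K)}_{udK}=F^{(K,J)}_{udK}$, Remark~\ref{urem}, the condition on $\nu$ to place $\nu^{\Z}$ on $\mathcal{X}^!_{K,J}$, and then Lemma~\ref{udkdvlem} to verify the dual-uniqueness hypothesis of Theorem~\ref{ergodicthm} --- is exactly the intended unpacking.

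For the second assertion you give somewhat more than the paper does: rather than appealing again to Theorem~\ref{ergodicthm} (which would require knowing $\nu^{\Z}(\mathcal{X}^*)=1$ for the $(K,J)$ model, a point the paper does not explicitly address), you exhibit $(\nu^{\Z},\mathcal{T}^{(K,J)}_{udK})$ as a factor of the Bernoulli shift $(\mu^{\Z},\theta)$ via $\Lambda$, using the identity $u_{n+1}=\mathcal{T}^{(K,J)}_{udK}u_n$ established in the proof of Theorem~\ref{udkdvinv}(c). This is a clean and fully rigorous route that sidesteps having to verify well-posedness of the bi-infinite $(K,J)$ initial value problem directly; it is consistent with the ``observations'' the paper alludes to, and arguably makes the second claim more transparent than the paper's one-line justification. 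Your parenthetical alternative (swap $J$ and $K$ and rerun the first argument) would also work in principle, but needs the extra input that $\nu^{\Z}$ sits inside the appropriate $\mathcal{X}^*$, which your factor argument neatly avoids.
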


\subsection{Discrete KdV equation}

\subsubsection{The model}

Our next model, the (modified) discrete KdV equation also incorporates two parameters, in this case given by $\alpha,\beta \ge 0$, and is based on the following lattice map:
\begin{align}
F^{(\alpha,\beta)}_{dK}(x,u)=\left(\frac{u(1+\beta xu)}{1+\alpha xu}, \frac{x(1+\alpha xu)}{1+\beta xu}\right),\tag{dKdV}\label{DKDV}
\end{align}
where we now assume the variables $x$ and $u$ are $(0,\infty)$ valued. We note that $F^{(\alpha,\beta)}_{dK}$ satisfies the Yang-Baxter relation, and may be derived from the 3d-consistency condition of the discrete potential KdV equation or the discrete BKP equation, see \cite{PTV, KNW}. Moreover, if $\beta=0$, then $F^{(\alpha,\beta)}_{dK}$ gives the discrete KdV equation.

\begin{rem}
Similarly to Remark \ref{symrem}, the lattice map \eqref{DKDV} admits a number of symmetries.
\begin{description}
\item[Involution] For any $(x,u)\in (0,\infty)^2$, it holds that
  \[F^{(\alpha,\beta)}_{dK}\circ F^{(\alpha,\beta)}_{dK}(x,u)=(x,u).\]
\item[Configuration-carrier duality] If $\pi(x,u):=(u,x)$, then
\[F^{(\alpha,\beta)}_{dK} = \pi  \circ F^{(\beta,\alpha)}_{dK} \circ \pi.\]
\item[Empty space-particle duality] Suppose  $\alpha,\beta>0$. If $\sigma_{\alpha,\beta}(x,u):=(\frac{1}{\alpha x},\frac{1}{\beta u})$, then
\[F^{(\alpha,\beta)}_{dK} = \sigma_{\alpha,\beta}  \circ F^{(\alpha,\beta)}_{dK} \circ \sigma_{\alpha,\beta}.\]
\item[Scale invariance] If $\lambda>0$, then for any $(x,u) \in  (0,\infty)^2$ it holds that
       \[F^{(\lambda^{-2}\alpha,\lambda^{-2}\beta)}_{dK} (\lambda x,\lambda u)=\lambda F^{(\alpha,\beta)}_{dK}(x,u).\]
\end{description}
We note that scale invariance in this setting corresponds to the shift invariance of \eqref{UDKDV}.
\end{rem}

\subsubsection{Detailed balance solutions}

For \eqref{DKDV}, we are unable to characterize the solutions of the detailed balance equation, even up to a technical condition as we did for \eqref{UDKDV}. Nonetheless, we are able to describe a family of solutions based on the GIG distribution. As we explain in Section \ref{linksec}, this family naturally corresponds to the stExp solutions of the \eqref{UDKDV} detailed balance equation, as presented in Proposition \ref{udkdvmeas}.

\begin{prop}\label{dkdvmeas}
The following product measures $\mu\times\nu$ satisfy $F^{(\alpha,\beta)}_{dK}(\mu\times\nu)=\mu\times\nu$.
\begin{enumerate}
\item[(a)] For any $\lambda\in\mathbb{R}$ if $\alpha\beta>0$, or $\lambda>0$ if $\alpha\beta=0$, and $c>0$,
\[\mu\times\nu=\mathrm{GIG}(\lambda,c\alpha,c)\times \mathrm{GIG}(\lambda,c\beta,c).\]
\item[(b)] Suppose $\alpha=\beta$. For any measure $m$ on $(0,\infty)$,
\[\mu\times\nu=m\times m.\]
\end{enumerate}
In the case $\alpha\beta=0$, there are no other non-trivial (i.e.\ non-Dirac measure) solutions to the detailed balance equation.
\end{prop}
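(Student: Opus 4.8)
The plan is to dispose of parts (a) and (b) by direct computation and then concentrate on the final assertion. For (b), $F^{(\alpha,\alpha)}_{dK}(x,u)=(u,x)$, so detailed balance reduces to $\mu=\nu$; for (a), one uses that $F^{(\alpha,\beta)}_{dK}$ preserves the product $p:=xu$ and acts on each hyperbola $\{xu=p\}$, parametrised by its first coordinate $x$, as the involution $x\mapsto \kappa(p)/x$ with $\kappa(p)=\frac{p(1+\beta p)}{1+\alpha p}$; invariance of a product measure is then equivalent to the conditional law of the first coordinate given $p$ being invariant under this reflection for a.e.\ $p$, which a short computation (the powers of $x$ cancel and the two exponential terms swap) confirms for the stated GIG pair. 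For the last assertion, by the configuration--carrier duality it suffices to treat $\beta=0$; the case $\alpha=\beta=0$ is covered by (b) (then $F^{(0,0)}_{dK}$ is the transposition, with only the solutions $m\times m$), so I may assume $\alpha>0$ and, by scale invariance, normalise $\alpha=1$. A preliminary observation removes degenerate marginals: if $\mu=\delta_{x_0}$, the first coordinate of $F^{(1,0)}_{dK}(x_0,U)$, namely $U/(1+x_0U)$, must be a.s.\ equal to $x_0$, forcing $U$ to be a.s.\ constant, so $\nu$ is Dirac too; symmetrically, if $\nu$ is Dirac so is $\mu$.

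The key step is to recognise detailed balance, when $\beta=0$, as an instance of the Matsumoto--Yor independence property. With $(Y,V):=F^{(1,0)}_{dK}(X,U)$ for independent $X\sim\mu$, $U\sim\nu$, one computes $Y^{-1}=X+U^{-1}$ and $V^{-1}=X^{-1}-\bigl(X+U^{-1}\bigr)^{-1}$; hence, writing $Z:=U^{-1}$ (independent of $X$), the pair $(Y,V^{-1})$ is precisely the image of $(X,Z)$ under the Matsumoto--Yor map $(x,z)\mapsto\bigl((x+z)^{-1},\,x^{-1}-(x+z)^{-1}\bigr)$. Detailed balance says exactly that $Y\sim\mu$, $V\sim\nu$ and $Y,V$ are independent --- i.e.\ that the two components of the Matsumoto--Yor transform of $(X,Z)$ are independent. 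Since $X$ and $Z$ are non-degenerate by the previous paragraph, the characterisation of the Matsumoto--Yor property (see \cite{Cr,L,LW}) forces $X$ to have a GIG law and $Z=U^{-1}$ a gamma law. The forward Matsumoto--Yor property then identifies the laws of $Y$ and $V^{-1}$ explicitly in terms of those of $X$ and $Z$, and imposing $Y\stackrel{d}{=}X$, $V\stackrel{d}{=}U$ forces the two GIG parameters that are otherwise free to differ to coincide; undoing the normalisation $\alpha=1$ yields exactly the family of part (a), with $\lambda$ restricted to the range in which the degenerate (inverse-gamma) factor is a genuine probability measure, namely $\lambda>0$.

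A self-contained alternative mirrors the proof of Proposition \ref{udkdvmeasall}. Using the level-set description (with $\kappa(p)=\frac{p}{1+\alpha p}$ when $\beta=0$), detailed balance is equivalent to the conditional law of $X$ given $XU=p$ being, in the variable $\log X$, symmetric about $\tfrac12\log\frac{p}{1+\alpha p}$ for $\pi$-a.e.\ $p$, with $\pi$ the law of $XU$. A support analysis in the style of Proposition \ref{udkdvmeasall} then shows that a non-Dirac solution has marginals with strictly positive densities on $(0,\infty)$, and for such the symmetry relation becomes a functional equation for the densities $g_\mu,g_\nu$; taking logarithms and differentiating first in $x$ and then in $p$ forces $\log g_\mu$ and $\log g_\nu$ to be affine combinations of $\log x$, $x$ and $1/x$ --- that is, GIG densities --- with the coefficients constrained exactly as in part (a). In either approach I expect the hard part to be the same: promoting an arbitrary non-Dirac detailed balance solution to one satisfying the regularity needed for the Matsumoto--Yor characterisation (or for the differentiation step). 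The leverage should come from the fact that the reflection centre $\tfrac12\log\frac{p}{1+\alpha p}$ varies smoothly and injectively with $p$, so that the single product measure $\mu\times\nu$ encodes an uncountable family of reflection symmetries --- a rigidity strong enough to preclude atoms, singular parts and gaps in the supports, exactly as in the treatment of the ultra-discrete KdV equation.
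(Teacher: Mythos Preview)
Your approach is essentially the same as the paper's: part (b) is identical, and for the uniqueness claim when $\alpha\beta=0$ the paper does exactly what you do---rewrites $F^{(\alpha,0)}_{dK}$ so that the output pair is the Matsumoto--Yor transform of $(\alpha X,U^{-1})$ and then invokes the Letac--Wesolowski characterisation \cite{LW}. For part (a) the paper takes a slightly more direct route than your level-set argument: it observes that the Jacobian of $F^{(\alpha,\beta)}_{dK}$ has absolute value one and then checks the density identity using the two conserved quantities $xu=yv$ and $\alpha x+x^{-1}+\beta u+u^{-1}=\alpha y+y^{-1}+\beta v+v^{-1}$, so no conditioning is needed.

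One correction: your closing worry about ``promoting an arbitrary non-Dirac detailed balance solution to one satisfying the regularity needed for the Matsumoto--Yor characterisation'' is unfounded. The Letac--Wesolowski theorem \cite{LW} (restated as Theorem~\ref{GIG} in the paper) assumes only that $X$ and $Y$ are non-trivial, independent, $(0,\infty)$-valued random variables---no density, smoothness, or support hypotheses---so once you have ruled out Dirac marginals (which you do correctly), the characterisation applies immediately. Your ``self-contained alternative'' via differentiation would indeed need such regularity, but since the LW route does not, that alternative is unnecessary here.
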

\begin{proof}
(a) To verify the claim, given that absolute value of the associated Jacobian determinant of $F_{dK}^{(\alpha,\beta)}$ is equal to one, it suffices to check that the following relation between joint densities:
\[x^{-\lambda-1}e^{-c\alpha x-cx^{-1}} u^{-\lambda-1}e^{-c\beta u-cu^{-1}} = y^{-\lambda-1}e^{-c\alpha y-cy^{-1}} v^{-\lambda-1}e^{-c\beta v-c v^{-1}},\]
where $y=\frac{u(1+\beta xu)}{1+\alpha xu}$ and $v=\frac{x(1+\alpha xu)}{1+\beta xu}$. This is a simple consequence of the identities $xu=yv$ and $\alpha x+ x^{-1} + \beta u+ u^{-1}=\alpha  y+y^{-1}+\beta  v+v^{-1}$, which can be checked directly. \\
(b) Since $F^{(\alpha,\alpha)}_{dK}(x,u)=(u,x)$, the result is obvious.\\
For the final part of the result, suppose $\alpha>0=\beta$. In this case, the map of interest becomes
\[F^{(\alpha,0)}_{dK}(x,u)=\left(\frac{u}{1+\alpha xu}, x(1+\alpha xu)\right)=\left(\frac{1}{\alpha x +u^{-1}}, \alpha^{-1}\left(\frac{1}{\alpha x}-\frac{1}{\alpha x +u^{-1}}\right)^{-1}\right).\]
Now, in \cite[Theorem 4.1]{LW}, it is shown that if $X$ and $Y$ are strictly positive independent random variables such that at least one of $X$ and $Y$ has a non-trivial distribution, and $(X+Y)^{-1}$ and $X^{-1}-(X+Y)^{-1}$ are also independent, then $X$ must have a generalized inverse Gaussian distribution and $Y$ must have a gamma distribution with related parameters. (NB. This result builds on \cite{LS}.) Considering the form of the map $F^{(\alpha,0)}_{dK}$ as given above, and applying \cite[Theorem 4.1]{LW} with $X=\alpha x$, $Y=u^{-1}$ yields the result.
\end{proof}

\subsubsection{Invariant measures}

We now show how the measures of Proposition \ref{dkdvmeas} yield invariant product measures for $\mathcal{T}_{dK}^{\alpha,\beta}$, that is, the operator describing the \eqref{DKDV} dynamics. Apart from the trivial case $\alpha=\beta$, we restrict our attention to the case when $\alpha\beta=0$. (We list the case $\alpha\beta>0$ amongst the open problems in Section \ref{oqsec}.) The reason for this is that it will allow the application of the path encoding results from \cite{CSTkdv} concerning the initial value problem \eqref{initial}. In particular, consider the latter problem with $F_{n,t}=F^{(\alpha,\beta)}_{dK}$ for all $n,t$, where $\alpha>0$ and $\beta=0$. Letting
\[\mathcal{X}^*_{\alpha}:=\left\{ (x_n)_{n\in\mathbb{Z}}\in (0,\infty)^{\Z}:\: \lim_{|n| \to \infty}\frac{\sum_{k=1}^n (- \log \alpha-2 \log x_k)}{n} >0  \right\},\]
we have the following result (see \cite[Theorem 2.2]{CSTkdv}).

\begin{lem}\label{dkdvsol} Suppose $\alpha>0$. If $(x_n)_{n\in\mathbb{Z}} \in \mathcal{X}^*_{\alpha}$, then there exists a unique solution of \eqref{initial} with $F_{n,t}=F^{(\alpha,0)}_{dK}$ for all $n,t$.
\end{lem}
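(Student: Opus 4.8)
The plan is to obtain this as a direct consequence of \cite[Theorem 2.2]{CSTkdv}, in exactly the same way that the ultra-discrete statement, Lemma \ref{ivpudkdv}, was obtained from \cite[Theorem 2.1]{CSTkdv}. The substance of the argument is therefore not a fresh construction but a translation: one must recognise that the set $\mathcal{X}^*_{\alpha}$ written here is precisely the set on which the path-encoding results of \cite{CSTkdv} guarantee a unique carrier process and hence a unique space-time solution of \eqref{initial} with $F_{n,t}=F^{(\alpha,0)}_{dK}$.

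First I would recall the path encoding of \cite{CSTkdv} adapted to \eqref{DKDV} with $\beta=0$: to a configuration $(x_n)_{n\in\mathbb{Z}}\in(0,\infty)^{\mathbb{Z}}$ one associates the walk $(S_n)_{n\in\mathbb{Z}}$ with $S_0:=0$ and increments $S_n-S_{n-1}:=-\log\alpha-2\log x_n$, with the usual convention $\sum_{k=1}^{n}=-\sum_{k=n+1}^{0}$ for $n<0$. Under this correspondence, membership of $\mathcal{X}^*_{\alpha}$ is equivalent to $\lim_{|n|\to\infty}S_n/n>0$, i.e.\ to the walk $S$ having strictly positive asymptotic slope as $|n|\to\infty$ (so $S_n\to+\infty$ linearly as $n\to+\infty$ and $S_n\to-\infty$ linearly as $n\to-\infty$). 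This is exactly the drift regime in which the discrete (geometric) analogue of Pitman's transformation used in \cite{CSTkdv} is well-defined: the carrier variables are recovered from $S$ by a convergent supremum/infimum-type formula, and are uniquely pinned down.

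Next I would carry out the bookkeeping step: checking that the recursion \eqref{uniqueu} for the auxiliary variables $(U_n)_{n\in\mathbb{Z}}$ associated with $F^{(\alpha,0)}_{dK}$ coincides, after passing to logarithms, with the carrier recursion of \cite{CSTkdv}, and that iterating the resulting transformation in the time direction reconstructs the full solution $(x_n^t,u_n^t)_{n,t\in\mathbb{Z}}$ of \eqref{initial}; existence and uniqueness of the carrier on the positive-drift set then propagates, via the deterministic local update $F^{(\alpha,0)}_{dK}$, to existence and uniqueness of the whole space-time solution. The only genuine obstacle is the equivalence of the two descriptions of $\mathcal{X}^*_{\alpha}$: one must confirm that strict positivity of the drift at \emph{both} ends is necessary (if the drift vanishes or has the wrong sign at some end, the relevant extremum in the carrier formula is either not attained or infinite, so the carrier fails to be unique or fails to exist) and sufficient (in which case the reflection formula converges and is uniquely determined). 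Since precisely this matching was performed in \cite{CSTkdv}, I would conclude by simply invoking \cite[Theorem 2.2]{CSTkdv}, as in the ultra-discrete case.
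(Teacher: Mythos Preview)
Your proposal is correct and matches the paper's approach exactly: the paper does not give a self-contained proof but simply cites \cite[Theorem 2.2]{CSTkdv}, just as you conclude by doing. Your additional discussion of the path encoding and the drift condition is accurate context, but the paper itself records the lemma as a direct quotation of that result with no further argument.
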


In the case $\alpha=0$, $\beta>0$, we consider the set
\[\mathcal{X}^{\exists!}_{\beta}:=\left\{ (x_n)_{n\in\mathbb{Z}} \in (0,\infty)^{\Z}:\: \sum_{n=-\infty}^0x_n^{-1}=\lim_{n\rightarrow\infty}-S_{-n}=\infty,\:\lim_{n\rightarrow-\infty}(\log x_n)S_{n}^{-1}=0 \right\},\]
where $S_{-n}:=\sum_{m=-n+1}^0(- \log \beta-2 \log x_m)$. The parallel to Lemma \ref{udkdvlem} that we apply in the discrete setting is the following.

\begin{lem}\label{contfracsol}
Suppose $\alpha=0$, $\beta>0$. If $(x_n)_{n\in\mathbb{Z}} \in \mathcal{X}^{\exists!}_{\beta}$, then there exists precisely one sequence $(u_n)_{n\in\mathbb{Z}}\in (0,\infty)^{\Z}$ such that
\begin{equation}\label{one-step}
\left(F^{(0,\beta)}_{dK}\right)^{(2)}(x_n, u_{n-1})=u_n, \qquad \forall n\in\mathbb{Z},
\end{equation}
which is explicitly given by the infinite continued fraction
\[u_n= \frac{1}{\sqrt{\beta}} \frac{1}{(\sqrt{\beta} x_{n})^{-1}+ \frac{1}{(\sqrt{\beta} x_{n-1})^{-1}+ \dots}}.\]
\end{lem}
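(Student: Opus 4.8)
\textit{Proof proposal.} The plan is to convert the lattice relation \eqref{one-step} into a scalar backward recursion of continued-fraction type and then invoke the convergence theory of continued fractions with positive partial denominators. First I would compute, from the definition of \eqref{DKDV} with $\alpha=0$, that $\bigl(F^{(0,\beta)}_{dK}\bigr)^{(2)}(x,u)=x/(1+\beta xu)$, so that \eqref{one-step} says $u_n^{-1}=x_n^{-1}+\beta u_{n-1}$ for all $n$. Setting $w_n:=\sqrt{\beta}\,u_n$ and $a_n:=(\sqrt{\beta}\,x_n)^{-1}$ turns this into $w_n=(a_n+w_{n-1})^{-1}$ with $a_n\in(0,\infty)$ and, crucially, $\sum_{k\le 0}a_k=\beta^{-1/2}\sum_{k\le 0}x_k^{-1}=\infty$ by the definition of $\mathcal{X}^{\exists!}_\beta$. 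Thus it suffices to prove that there is exactly one positive bi-infinite $(w_n)$ with $w_n=(a_n+w_{n-1})^{-1}$ and that it equals the continued fraction $1/(a_n+1/(a_{n-1}+\cdots))$; the displayed formula for $u_n$ then follows by substituting back.

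Next I would fix $n$ and, for each $m<n$, consider the map $\phi_{m,n}\colon[0,\infty]\to[0,\infty]$ sending a boundary value $t=w_m$ to the resulting value $w_n$, obtained by composing the M\"obius maps $t\mapsto(a_k+t)^{-1}$ for $k=m+1,\dots,n$. Each of these is a monotone homeomorphism of $[0,\infty]$, hence so is $\phi_{m,n}$, and $I_m:=\phi_{m,n}([0,\infty])$ is a closed interval whose endpoints $\phi_{m,n}(0)$ and $\phi_{m,n}(\infty)$ are two consecutive convergents of the continued fraction with partial denominators $a_n,a_{n-1},\dots$. Since $\phi_{m',n}=\phi_{m,n}\circ\phi_{m',m}$ and $\phi_{m',m}([0,\infty])\subseteq[0,\infty]$, the intervals are nested, $I_{m'}\subseteq I_m$ for $m'\le m$, and a direct check of the first few convergents shows that, for $m$ sufficiently negative, $I_m$ lies in a fixed compact subinterval of $(0,1/a_n)$. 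Now any solution $(w_n)$ satisfies $w_n=\phi_{m,n}(w_m)$ with $w_m\in(0,\infty)$, so $w_n\in\bigcap_{m<n}I_m$; conversely, choosing any point of this intersection (nonempty by compactness) and extending it upward by $t\mapsto(a_{n+1}+t)^{-1}$ and downward by its inverse $t\mapsto t^{-1}-a_n$ --- which lands in $(0,\infty)$ because the chosen point is strictly below $1/a_n$, and which preserves membership in the corresponding intervals at every site --- produces a bi-infinite positive solution. Hence existence holds unconditionally, and uniqueness is equivalent to $\bigcap_{m<n}I_m$ being a singleton.

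To finish I would show $|I_m|\to 0$ as $m\to-\infty$. The length $|I_m|$ is exactly the gap between two consecutive convergents, namely $1/(q_{j-1}q_j)$ in terms of the continuant denominators $q_j$ (with $q_0=1$ and $q_j=a_{n-j+1}q_{j-1}+q_{j-2}$); since $\sum_{k\le n}a_k=\infty$, the recursion $q_{j+1}=q_{j-1}+a_{n-j}q_j$ together with a short parity argument forces $q_jq_{j+1}\to\infty$, so the gap vanishes. (Equivalently, this is the classical Seidel--Stern convergence criterion for continued fractions with positive terms.) Therefore $\bigcap_{m<n}I_m=\{\bar w_n\}$ with $\bar w_n=1/(a_n+1/(a_{n-1}+\cdots))$, and translating back via $u_n=\beta^{-1/2}\bar w_n$, $a_k=(\sqrt{\beta}\,x_k)^{-1}$, yields the displayed continued-fraction formula.

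The only genuine obstacle is this last step --- the quantitative convergence of the continued fraction --- and the hypothesis $\sum_{k\le 0}x_k^{-1}=\infty$ built into $\mathcal{X}^{\exists!}_\beta$ is precisely what is needed for it; the reduction in the first paragraph is purely algebraic, and the nesting and monotonicity bookkeeping in the second is routine. (The remaining conditions defining $\mathcal{X}^{\exists!}_\beta$, involving the partial sums $S_n$, are not needed for this lemma but are convenient for the subsequent identification of invariant measures.)
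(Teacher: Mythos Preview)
Your argument is correct, but the uniqueness half takes a genuinely different route from the paper. For existence both you and the paper rewrite the recursion as $\sqrt{\beta}\,u_n=1/((\sqrt{\beta}\,x_n)^{-1}+\sqrt{\beta}\,u_{n-1})$ and invoke the Seidel--Stern criterion via $\sum_{k\le 0}x_k^{-1}=\infty$. For uniqueness, however, the paper does \emph{not} use the nested-interval/convergent-gap argument: instead it observes that any two positive solutions satisfy
\[
|u_n-\tilde u_n|=\beta\,u_n\tilde u_n\,|u_{n-1}-\tilde u_{n-1}|\le \beta x_n^2\,|u_{n-1}-\tilde u_{n-1}|,
\]
(using $u_n,\tilde u_n\le x_n$), iterates to get $|u_n-\tilde u_n|\le\prod_{k=m}^n(\beta x_k^2)\cdot x_{m-1}$, and then sends $m\to-\infty$; it is precisely here that the paper uses the extra conditions on the partial sums $S_n$ in the definition of $\mathcal{X}^{\exists!}_\beta$. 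Your approach is more economical in that it proves uniqueness from the single hypothesis $\sum_{k\le 0}x_k^{-1}=\infty$, while the paper's contraction bound is more elementary in that it avoids continuant denominators and the Seidel--Stern machinery altogether. One small correction to your closing parenthetical: the $S_n$ conditions are not there merely ``for the subsequent identification of invariant measures'' --- in the paper they are used in the proof of this very lemma, for the uniqueness step you have managed to bypass.
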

\begin{proof}
The relation  \eqref{one-step} can be written as
\[u_n=\frac{x_n}{1+\beta x_n u_{n-1}},\]
which is equivalent to
\[\sqrt{\beta} u_n= \frac{1}{(\sqrt{\beta} x_n)^{-1}+\sqrt{\beta} u_{n-1}}.\]
Hence, the sequence defined by
\[\sqrt{\beta}  u_n= \frac{1}{(\sqrt{\beta} x_{n})^{-1}+ \frac{1}{(\sqrt{\beta} x_{n-1})^{-1}+ \dots}}\]
satisfies \eqref{one-step}. Indeed, the condition $\sum_{n=-\infty}^0x_n^{-1}=\infty$ ensures that the infinite continued fraction converges in $(0,\infty)$ (see \cite[Chapter 8]{Loya}, for example). Suppose that we have another solution $(\tilde{u}_n)_{n\in\mathbb{Z}}\in (0,\infty)^{\Z}$ to \eqref{one-step}. It is then the case that
\[\left|u_n-\tilde{u}_n\right|=u_n\tilde{u}_n\left|u_{n}^{-1}-\tilde{u}_{n}^{-1}\right|=\beta u_{n}\tilde{u}_{n}\left|u_{n-1}-\tilde{u}_{n-1}\right|\leq\beta x_{n}^2\left|u_{n-1}-\tilde{u}_{n-1}\right| .\]
Iterating this, we find that for any $m\leq n$,
\[\left|u_n-\tilde{u}_n\right|\leq \prod_{k=m}^n(\beta x_k^2)\times x_{m-1}=\exp\left(\sum_{k=m}^n(\log \beta+2\log x_k)+\log x_{m-1}\right).\]
Taking the limit as $m\rightarrow-\infty$, the defining properties of $\mathcal{X}^{\exists!}_{\beta}$ imply that $u_n=\tilde{u}_n$, as desired.
\end{proof}

Arguing as for Theorem \ref{udkdvinv}, we have that Theorem \ref{detailedbalancenew}, Proposition \ref{dkdvmeas} and Lemmas \ref{dkdvsol} and \ref{contfracsol} yield the subsequent result. For the proof of part (c) of the result, the one additional useful observation is that if $\mu=GIG(\lambda,c\alpha,c)$ and $\nu=IG(\lambda,c)$, then $2\int \log(x)\mu(dx)\leq 2\int \log(x)\nu(dx)$ (this ensures that the given condition is enough to ensure that both marginals of the solution to the relevant detailed balance equation satisfy the required logarithmic moment bound).

\begin{thm} The product measure $\mu^\mathbb{Z}$ satisfies $\mathcal{T}_{dK}^{(\alpha,\beta)}\mu^{\Z}=\mu^{\Z}$ for the following measures $\mu$.
\begin{enumerate}
  \item[(a)] Suppose $\alpha=\beta$. Any measure $\mu$ on $\mathbb{R}$.
  \item[(b)] Suppose $\alpha>0$, $\beta=0$. The measure $\mu=GIG(\lambda,c\alpha,c)$ for any parameters $\lambda,c>0$ such that $2\int \log(x)\mu(dx)<-\log\alpha$.
  \item[(c)] Suppose $\alpha=0$, $\beta>0$. The measure $\mu=IG(\lambda,c)$ for any parameters $\lambda,c>0$ such that $2\int \log(x)\mu(dx)<-\log\beta$.
\end{enumerate}
\end{thm}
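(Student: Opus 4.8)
The plan is to handle the three parts by the same pattern used for Theorem \ref{udkdvinv}, reducing part (c) to part (b) via configuration--carrier duality. Part (a) should simply be stated: when $\alpha=\beta$ one has $F^{(\alpha,\alpha)}_{dK}(x,u)=(u,x)$, so \eqref{initial} always has the unique solution $x^t_n=x_{n-t}$; hence $\mathcal{X}^*=(0,\infty)^{\Z}$, the operator $\mathcal{T}^{(\alpha,\alpha)}_{dK}$ is the spatial shift, and every i.i.d.\ product measure on $(0,\infty)$ is invariant.

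For part (b) I would apply Theorem \ref{detailedbalancenew}(a) to $F^{(\alpha,0)}_{dK}$. Proposition \ref{dkdvmeas}(a) supplies the detailed-balance partner $\nu=GIG(\lambda,0,c)=IG(\lambda,c)$ of $\mu=GIG(\lambda,c\alpha,c)$, so, via Lemma \ref{dkdvsol}, it only remains to check $\mu^{\Z}(\mathcal{X}^*_{\alpha})=1$. Since $GIG(\lambda,c\alpha,c)$ has exponentially small tails at both ends of $(0,\infty)$ (here using $\alpha>0$), its logarithmic moments are finite, and the strong law of large numbers shows that the defining limit of $\mathcal{X}^*_{\alpha}$ equals $-\log\alpha-2\int\log(x)\,\mu(dx)$ almost surely; this is positive precisely under the stated hypothesis, and then Theorem \ref{detailedbalancenew}(a) gives $\mathcal{T}^{(\alpha,0)}_{dK}\mu^{\Z}=\mu^{\Z}$.

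For part (c) I would use $F^{(0,\beta)}_{dK}=\pi\circ F^{(\beta,0)}_{dK}\circ\pi$, mimicking the proof of Theorem \ref{udkdvinv}(c). Given $\mu=IG(\lambda,c)$ with $2\int\log(x)\,\mu(dx)<-\log\beta$, put $\bar\mu:=GIG(\lambda,c\beta,c)$; the observation recorded before the theorem gives $2\int\log(x)\,\bar\mu(dx)\le 2\int\log(x)\,\mu(dx)<-\log\beta$, so part (b) (with $\alpha$ replaced by $\beta$) yields $\mathcal{T}^{(\beta,0)}_{dK}\bar\mu^{\Z}=\bar\mu^{\Z}$. Solving \eqref{initial} for $F^{(\beta,0)}_{dK}$ from $(x_n)_{n}\sim\bar\mu^{\Z}$, Proposition \ref{Burke}(a) (using $\bar\mu^{\Z}(\mathcal{X}^*_\beta)=1$ from part (b)) shows $\mathbf{P}_{\bar\mu^{\Z}}$ has Burke's property, so each carrier column $(u^t_n)_{t\in\Z}$ is i.i.d.\ with marginal $\mu$. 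The duality identity $(F^{(\beta,0)}_{dK})^{(1)}(x,u)=(F^{(0,\beta)}_{dK})^{(2)}(u,x)$ converts the lattice relation linking columns $n$ and $n+1$ into an instance of \eqref{one-step}, with configuration $(u^t_n)_t$ and auxiliary sequence $(x^{t+1}_{n+1})_t$, and also gives $(F^{(0,\beta)}_{dK})^{(1)}(u^t_n,x^t_{n+1})=u^t_{n+1}$; hence, once the auxiliary sequence in \eqref{one-step} is known to be unique, $\mathcal{T}^{(0,\beta)}_{dK}\big((u^t_n)_t\big)=(u^t_{n+1})_t$ (the bookkeeping here is exactly as in the proof of Theorem \ref{ergodicthm}). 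Both columns have law $\mu^{\Z}$ by the spatial translation invariance in Burke's property, so $\mathcal{T}^{(0,\beta)}_{dK}\mu^{\Z}=\mu^{\Z}$ follows.

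The step I expect to be the real work is verifying $\mu^{\Z}(\mathcal{X}^{\exists!}_{\beta})=1$ for $\mu=IG(\lambda,c)$, which is where the hypothesis $2\int\log(x)\,\mu(dx)<-\log\beta$ is genuinely used: $\sum_{n\le 0}x_n^{-1}=\infty$ is automatic for an infinite sum of i.i.d.\ strictly positive variables, and the two remaining defining conditions come from the strong law of large numbers for the random walk with increments $-\log\beta-2\log x_m$ (whose drift has the correct sign exactly under the hypothesis), together with a Borel--Cantelli argument giving $\log x_n=o(|n|)$, valid since $IG(\lambda,c)$ has polynomial right tail of exponent $\lambda>0$ and super-exponentially small mass near $0$. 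The other place that needs care is keeping the index shifts in the duality straight, so that the carrier columns of the $F^{(\beta,0)}_{dK}$ lattice really do match configurations evolved by $\mathcal{T}^{(0,\beta)}_{dK}$; this is the same point handled in the proofs of Theorems \ref{ergodicthm} and \ref{udkdvinv}(c).
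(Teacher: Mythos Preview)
Your proposal is correct and follows essentially the same route as the paper. The paper's proof is extremely terse (``Arguing as for Theorem \ref{udkdvinv}, we have that Theorem \ref{detailedbalancenew}, Proposition \ref{dkdvmeas} and Lemmas \ref{dkdvsol} and \ref{contfracsol} yield the subsequent result''), and you have correctly reconstructed the intended argument: part (a) is the trivial shift case; part (b) combines Proposition \ref{dkdvmeas}(a), Lemma \ref{dkdvsol}, and the law of large numbers check for $\mathcal{X}^*_\alpha$; and part (c) is the duality argument from Theorem \ref{udkdvinv}(c), transported via Burke's property and Lemma \ref{contfracsol}, with the logarithmic moment inequality $2\int\log x\,\bar\mu(dx)\le 2\int\log x\,\mu(dx)$ being precisely the ``one additional useful observation'' the paper flags.
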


\subsubsection{Ergodicity}

Regarding the ergodicity of $\mathcal{T}_{dK}^{(\alpha,\beta)}$, combining the results of the previous section with Theorem \ref{ergodicthm} gives the next result.

\begin{thm} Suppose $\alpha>0$, $\beta=0$. Let $\mu\times \nu$ be a product measure satisfying $F_{dK}^{(\alpha,0)}(\mu\times \nu)=\mu\times \nu$, as given by Proposition \ref{dkdvmeas} (i.e.\ $\mu\times\nu = GIG(\lambda,c\alpha,c)\times IG (\lambda,c)$. If it holds that $2\int \log(x)\nu(dx)<-\log\alpha$, it is then the case that $\mu^{\Z}$ is ergodic under $\mathcal{T}_{dK}^{(\alpha,0)}$, and  $\nu^{\Z}$ is ergodic under $\mathcal{T}_{dK}^{(0,\alpha)}$.
\end{thm}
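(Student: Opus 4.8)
The plan is to deduce the first assertion from Theorem~\ref{ergodicthm} and the second by exhibiting $\Lambda$ as a measure-preserving factor map from $(\mu^{\Z},\theta)$ onto $(\nu^{\Z},\mathcal{T}_{dK}^{(0,\alpha)})$. Set $\mu=GIG(\lambda,c\alpha,c)$, $\nu=IG(\lambda,c)$, and record three preliminary facts. (i) By Proposition~\ref{dkdvmeas}(a) together with Theorem~\ref{detailedbalanceold} and Proposition~\ref{Burke}, $(\mu,\nu)$ solves the detailed balance equation for $F^{(\alpha,0)}_{dK}$, and under $\mathbf{P}_{\mu^{\Z}}$ one has $u^t_n\sim\nu$ for all $n,t$; in particular $\Lambda$ pushes $\mu^{\Z}$ forward to $\nu^{\Z}$. (ii) The remark preceding the dKdV invariance theorem gives $2\int\log(x)\,\mu(dx)\le 2\int\log(x)\,\nu(dx)<-\log\alpha$, so that theorem yields $\mathcal{T}_{dK}^{(\alpha,0)}\mu^{\Z}=\mu^{\Z}$, and, since the defining inequality of $\mathcal{X}^{*}_{\alpha}$ is (by the strong law of large numbers) exactly $2\int\log(x)\,\mu(dx)<-\log\alpha$, also $\mu^{\Z}(\mathcal{X}^{*}_{\alpha})=1$. (iii) The same computation applied to $\nu$ gives $\nu^{\Z}(\mathcal{X}^{\exists!}_{\alpha})=1$: the series $\sum_{t\le 0}(u^t)^{-1}$ is a sum of i.i.d.\ strictly positive terms and so a.s.\ infinite; by the strong law and the moment bound, the partial sums $S_{-n}=\sum_{m=-n+1}^{0}(-\log\alpha-2\log u^{m-1})$ diverge linearly to $+\infty$; and, since $\int|\log x|\,\nu(dx)<\infty$ for the inverse-gamma law, $\log(u^{n-1})/|n|\to 0$ a.s., whence $(\log u^{n-1})S_n^{-1}\to 0$.

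To prove $\mu^{\Z}$ ergodic under $\mathcal{T}_{dK}^{(\alpha,0)}$, I would check the hypothesis of Theorem~\ref{ergodicthm}. Configuration--carrier duality gives $\check{F}=\pi\circ F^{(\alpha,0)}_{dK}\circ\pi=F^{(0,\alpha)}_{dK}$, hence $\check{F}^{(2)}(u,x)=u/(1+\alpha ux)$, and (by Remark~\ref{urem}, using $u^0_{-1}\sim\nu$) the condition to be verified is that for $\nu^{\Z}$-a.e.\ $(u^t)_{t\in\Z}$ there is at most one $(x^t)_{t\in\Z}\in(0,\infty)^{\Z}$ with
\[
x^{t+1}=\frac{u^t}{1+\alpha u^t x^t},\qquad\forall\,t\in\Z.
\]
Writing $n:=t+1$, this is precisely the recursion of Lemma~\ref{contfracsol} with $\beta:=\alpha$ and driving sequence $(u^{n-1})_{n\in\Z}$, for which that lemma produces a unique solution whenever the driving sequence lies in $\mathcal{X}^{\exists!}_{\alpha}$; as membership of $\mathcal{X}^{\exists!}_{\alpha}$ is a tail condition, this coincides $\nu^{\Z}$-a.s.\ with the event in (iii). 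Theorem~\ref{ergodicthm} then gives the ergodicity of $\mu^{\Z}$ under $\mathcal{T}_{dK}^{(\alpha,0)}$.

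For the second assertion, I would show that $\Lambda\colon(\mathcal{X}^{*}_{\alpha},\mu^{\Z},\theta)\to(\mathcal{X}^{\exists!}_{\alpha},\nu^{\Z},\mathcal{T}_{dK}^{(0,\alpha)})$ is a measure-preserving factor map; ergodicity of $\nu^{\Z}$ under $\mathcal{T}_{dK}^{(0,\alpha)}$ then follows since a factor of an ergodic system is ergodic, $\mu^{\Z}$ being trivially ergodic under the spatial shift $\theta$. By (i), $\Lambda\mu^{\Z}=\nu^{\Z}$, and by (iii), $\mathcal{T}_{dK}^{(0,\alpha)}$ is $\nu^{\Z}$-a.s.\ well defined; the crux is the intertwining $\mathcal{T}_{dK}^{(0,\alpha)}\circ\Lambda=\Lambda\circ\theta$, valid $\mu^{\Z}$-a.s. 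To see it, apply $\pi$ to the lattice relation $F^{(\alpha,0)}_{dK}(x^t_1,u^t_0)=(x^{t+1}_1,u^t_1)$ to obtain $F^{(0,\alpha)}_{dK}(u^t_0,x^t_1)=(u^t_1,x^{t+1}_1)$ for all $t$; thus $(x^{t+1}_1)_{t\in\Z}$ solves \eqref{uniqueu} for the $(0,\alpha)$-model with configuration $(u^t_0)_{t\in\Z}=\Lambda(x)$, and since $\Lambda(x)\in\mathcal{X}^{\exists!}_{\alpha}$ for $\mu^{\Z}$-a.e.\ $x$, Lemma~\ref{contfracsol} makes this the unique such solution, so that $\mathcal{T}_{dK}^{(0,\alpha)}(\Lambda(x))=\bigl((F^{(0,\alpha)}_{dK})^{(1)}(u^t_0,x^t_1)\bigr)_{t\in\Z}=(u^t_1)_{t\in\Z}=\Lambda(\theta x)$, the last equality because $\theta x$ has carrier-at-origin sequence $(u^t_1)_{t\in\Z}$. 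The invariance $\mathcal{T}_{dK}^{(0,\alpha)}\nu^{\Z}=\Lambda\theta\mu^{\Z}=\Lambda\mu^{\Z}=\nu^{\Z}$ is then automatic, and the factor argument concludes.

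The result is essentially a corollary of Theorem~\ref{ergodicthm} and Lemma~\ref{contfracsol}; the two points that actually require care are fact (iii) (and its counterpart in (ii)) and the intertwining of the previous paragraph. Fact (iii) is where the continued-fraction contraction estimate behind Lemma~\ref{contfracsol} is combined with the strong law of large numbers, and where the standing hypothesis $2\int\log(x)\,\nu(dx)<-\log\alpha$ is genuinely used---it forces the partial sums $S_{-n}$ to diverge at linear rate, which is exactly what kills the error term in that estimate. Everything else---identifying $\check{F}$ via duality, and unwinding the lattice relation through $\pi$---is routine.
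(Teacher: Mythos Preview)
Your proof is correct and matches the paper's intended approach. The paper's own proof is just the one-line remark that the result follows by ``combining the results of the previous section with Theorem~\ref{ergodicthm}''; your write-up makes explicit exactly what this means, namely that the hypothesis of Theorem~\ref{ergodicthm} for the $(\alpha,0)$ system reduces via duality to Lemma~\ref{contfracsol}, and that the second conclusion comes from the intertwining $\Lambda\circ\theta=\mathcal{T}_{dK}^{(0,\alpha)}\circ\Lambda$ (which is the dKdV analogue of the observation in the proof of Theorem~\ref{udkdvinv}(c)).

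One small remark: for Part~2 you argue by exhibiting $(\nu^{\Z},\mathcal{T}_{dK}^{(0,\alpha)})$ as a \emph{factor} of the Bernoulli shift $(\mu^{\Z},\theta)$. This is perfectly sufficient for ergodicity, but note that the proof of Theorem~\ref{ergodicthm} (via Lemma~\ref{ergodic}) actually shows $\Lambda$ is a measure-theoretic \emph{isomorphism} on a full-measure set, so the same intertwining in fact yields that $(\nu^{\Z},\mathcal{T}_{dK}^{(0,\alpha)})$ is isomorphic to a Bernoulli shift, not merely a factor of one. This is a slightly stronger conclusion that comes for free from what you have already assembled.
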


\section{Type II examples: Toda-type discrete integrable systems}\label{todasec}

The type II examples that we study arise from two other important discrete integrable systems, namely the discrete and ultra-discrete Toda equations. Again, see [7, 15] and the references therein for background. As in the previous section, our aim is to identify solutions of the corresponding detailed balance equations and invariant measures. For type II systems, we do not have a strategy for checking ergodicity.

\subsection{Ultra-discrete Toda equation}

\subsubsection{The model} The ultra-discrete Toda equation is described as follows:
    \begin{equation}
    \begin{cases}
    Q_{n}^{t+1}=\min \{U_{n}^t,E_n^t\}, \\
    E_{n}^{t+1}=Q_{n+1}^t+E_{n}^t-Q_{n}^{t+1},\\
    U_{n+1}^t=U_{n}^t+Q_{n+1}^t-Q_{n}^{t+1},
    \end{cases}
    \tag{udToda}\label{UDTODA}
\end{equation}
where $(Q_n^t,E_n^t,U_n^t)_{n,t\in\mathbb{Z}}$ take values in $\mathbb{R}$. We summarise this evolution as $(Q_{n}^{t+1},E_n^{t+1},U_{n+1}^t)=F_{udT}(Q_{n+1}^t,E_n^t,U_{n}^t)$, highlighting that $F_{udT}$ is an involution on $\mathbb{R}^3$, and represent the lattice structure diagrammatically as
    \begin{equation}\label{qelattice}
    \xymatrix@C-15pt@R-15pt{ & Q_n^{t+1} & E_n^{t+1}& \\
            U_n^{t} \ar[rrr] & && U_{n+1}^t.\\
             & E_n^{t}\ar[uu]& Q_{n+1}^{t}\ar[uu]&}
    \end{equation}
Whilst this system might not immediately appear to link with \eqref{UDKDV} or the BBS, we note that if we restrict to non-negative integer-valued variables, and view $Q_{n}^{t}$ as the length of the $n$th interval containing balls, $E_n^t$ as the length of the $n$th empty interval (at time $t$), and $U_n^t$ as the carrier load at the relevant lattice location, then the dynamics of these variables coincides with that given by the BBS. (In the case of infinite balls, there is an issue of how to enumerate the intervals.) Moreover, although the lattice structure at \eqref{qelattice} does not immediately fit into our general framework, it is possible to decompose the single map $F_{udT}$ with three inputs and three outputs into two maps $F_{udT^*}$ and $F_{udT^*}^{-1}$, each with two inputs and two outputs:
\[\hspace{40pt}\xymatrix@C-15pt@R-15pt{\boxed{F_{udT^*}\vphantom{F_{udT^*}^{-1}}} & \min\{b,c\} &\boxed{F_{udT^*}^{-1}}& a+\max\{b-c,0\}&\\
c \ar[rr] && b-c\ar[rr]&& a-\min\{b-c,0\},\\
& b \ar[uu]&&a\ar[uu]&}\]
where we generically take $(a,b,c)=(Q_{n+1}^t,E_n^t,U_{n}^t)$. Including the additional lattice variables, we can thus view the system as type II locally-defined dynamics, as defined in the introduction, with the maps alternating between the bijection $F_{udT^*} : \R^2 \to \R^2$ and its inverse, which are given explicitly by
\[F_{udT^*}(x,u)=\left(\min\{x,u\},x-u\right), \quad  F_{udT^*}^{-1}(x,u)=\left(x+\max\{u,0\},x-\min\{u,0\}\right).\]
Note that the decomposition of $F_{udT}$ into $F_{udT^*}$ and $F_{udT^*}^{-1}$ is not unique. The form of $F_{udT^*}$ chosen here is slightly simpler than the corresponding map in \cite{CSTkdv} (see also \cite{CS4}), since we do not need to satisfy the additional constraint that yields a ‘Pitman-type transformation map’.

\subsubsection{Detailed balance solutions} For $F_{udT^*}$, we are able to completely solve the detailed balance equation, see Proposition \ref{UDTODAmeas}. In the subsequent result, Proposition \ref{udTodainv}, we show how this yields a complete solution to the corresponding problem for $F_{udT}$.

\begin{prop}\label{UDTODAmeas} The following measures $\mu,\nu,\tilde{\mu},\tilde{\nu}$ satisfy $F_{udT^*}(\mu \times \nu)=\tilde{\mu}\times \tilde{\nu}$.
\begin{enumerate}
\item[(a)] For any $\lambda_1, \lambda_2 >0$ and $c \in \R$,
\[\mu=\mathrm{sExp}(\la_1,c),\quad \nu=\mathrm{sExp}(\la_2,c),\quad \tilde{\mu} =\mathrm{sExp}(\la_1+\la_2,c),\quad \tilde{\nu}= \mathrm{AL} (\la_1,\la_2).\]
\item[(b)] For any $\theta_1, \theta_2  \in (0,1)$, $M \in \Z$ and $m \in (0,\infty)$,
\[\mu=\mathrm{ssGeo}(1-\theta_1,M,m),\quad\nu=\mathrm{ssGeo}(1-\theta_2,M, m),\]
\[\tilde{\mu}=\mathrm{ssGeo}(1-\theta_1\theta_2,M,m),\quad\tilde{\nu}= \mathrm{sdAL}(1-\theta_1,1-\theta_2,m).\]
\item[(c)] For any $c_1,c_2\in\mathbb{R}$ and measure $m$ supported on $[0,\infty)$,
\begin{enumerate}
\item[(i)] $\mu=\delta_{c_1}$, $\nu=\delta_{c_2}$, $\tilde{\mu}=\delta_{\min\{c_1,c_2\}}$, $\tilde{\nu}=\delta_{c_1-c_2}$,
\item[(ii)] $\mu=\delta_{c_1}$, $\nu=m(\cdot -c_1)$, $\tilde{\mu}=\delta_{c_1}$, $\tilde{\nu}=m(-\cdot)$,
\item[(iii)] $\mu=m(\cdot -c_1)$, $\nu=\delta_{c_1}$, $\tilde{\mu}=\delta_{c_1}$, $\tilde{\nu}=m$.
\end{enumerate}
NB.\ Case (c)(i) is contained in cases (c)(ii) and (c)(iii).
\end{enumerate}
It is further the case that there are no other quadruples of probability measures $(\mu,\nu,\tilde{\mu},\tilde{\nu})$ that satisfy $F_{udT^*}(\mu \times \nu)=\tilde{\mu}\times \tilde{\nu}$.
\end{prop}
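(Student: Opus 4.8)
The plan is to transfer the pushforward identity into a statement about two independent real random variables and then apply classical distributional characterizations. Writing $X\sim\mu$ and $U\sim\nu$ for independent random variables, the identity $F_{udT^*}(\mu\times\nu)=\tilde\mu\times\tilde\nu$ holds precisely when $Y:=\min\{X,U\}$ and $Z:=X-U$ are independent, with respective laws $\tilde\mu$ and $\tilde\nu$. Since $\tilde\mu$ and $\tilde\nu$ are then forced to be these marginals, it suffices to show that $(\mu,\nu)$ must be one of the pairs appearing in (a), (b) or (c). I would also record the shift-equivariance $F_{udT^*}(x+s,u+s)=(\min\{x,u\}+s,\,x-u)$, which shows the class of solutions is stable under a common translation of $\mu$ and $\nu$, and so allows the left endpoint of the supports to be normalised in the main case.

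Next I would dispose of the degenerate cases. If $\mu=\delta_{c_1}$, then $Z=c_1-U$ determines $U$, and hence determines $Y=\min\{c_1,U\}$, so independence of $Y$ and $Z$ forces $Y$ to be almost surely constant; this occurs only if $U\ge c_1$ almost surely, giving case (c)(ii) with $\nu$ a translate of a measure on $[0,\infty)$, or if $U$ is itself degenerate, giving case (c)(i). The case $\nu=\delta_{c_2}$ is symmetric, since then $Z=X-c_2$ determines $X$, and yields case (c)(iii) or (c)(i). Before turning to the remaining case I would also note that if $\mu$ and $\nu$ are both non-degenerate, then necessarily $P(X>U)>0$ and $P(X<U)>0$: if, say, $P(X>U)=0$, then $Y=X$ almost surely, so $X$ is independent of $X-U$, and a routine characteristic-function argument (the resulting multiplicative Cauchy equation forces $\phi_X(t)=e^{iat}$) shows $X$ is degenerate.

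It then remains to classify the solutions for which $\mu$ and $\nu$ are both non-degenerate. Here I would invoke the classical characterizations of the exponential and geometric distributions via independence of the minimum and the difference of two independent variables (Ferguson, \cite{Fe1,Fe2}; see also \cite{Cr,L}): under these hypotheses, $\mu$ and $\nu$ must either both be shifted exponential distributions with a common left endpoint, or both be geometric distributions on a common lattice. After normalising the shift using the equivariance above, a direct computation of the laws of $Y=\min\{X,U\}$ and $Z=X-U$ — a shifted exponential with the summed rate together with an asymmetric Laplace law in the first case, and a scaled and shifted geometric together with a discrete asymmetric Laplace law in the second — identifies the quadruple as exactly family (a) or (b), completing the classification. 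I expect the main obstacle to be reducing the non-degenerate case precisely to the hypotheses of these characterizations: in particular, ruling out measures whose support is bounded below but is neither an interval nor a translate of a lattice, and verifying that the infima of $\mathrm{supp}\,\mu$ and $\mathrm{supp}\,\nu$ must coincide. If the cited results are not stated in enough generality, this step can instead be carried out directly: considering the events $\{Z>z\}$ and $\{Z<-z\}$ for $z>0$ produces the multiplicative relations $\bar F_X(u+z)=c(z)\bar F_X(u)$ (for $\nu$-a.e.\ $u$) and its mirror image, with $\bar F_X(\cdot)=P(X>\cdot)$ and $c(z)=P(Z>z)/P(Z>0)$, and analysing these on the supports forces the exponential or geometric form and the agreement of left endpoints.
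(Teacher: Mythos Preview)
Your proposal is correct and follows essentially the same approach as the paper: both reduce the uniqueness claim to the classical characterization of exponential and geometric laws via independence of $\min\{X,U\}$ and $X-U$ (Ferguson \cite{Fe1,Fe2}, Crawford \cite{Cr}), with the degenerate cases handled separately. The paper's proof is simply more terse, citing these references directly (including \cite[Theorem~1]{Fe1} for the trivial cases in (c)) and leaving the verification of (a)--(c) as a direct computation, whereas you spell out the degenerate-case analysis and sketch a self-contained alternative for the characterization step.
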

\begin{proof}
The first part follows by direct computation. The uniqueness claim relies on a well-known fact \cite{Fe1,Fe2,Cr} about exponential and geometric distributions. Namely, suppose that $X$ and $Y$ are two non-constant, independent random variables. It is then the case that $\min\{X,Y\}$ and $X-Y$ are independent if and only if $X$ and $Y$ are
sExp-distributed random variables with the same location parameter or ssGeo-distributed random variables with the same location and scale parameters. The trivial solutions of part (c) are covered by \cite[Theorem 1 (and the following comment)]{Fe1}.
\end{proof}

By construction, we have that
\[F_{udT}(a,b,c) = \left( F_{udT^*}^{(1)} (b,c), F_{udT^*}^{-1} \left(a,F_{udT^*}^{(2)} (b,c)\right)\right).\]
This enables us to deduce from Propositions \ref{twothree} and \ref{UDTODAmeas} the subsequent result.

\begin{prop}\label{udTodainv}
The following product measures $\tilde{\mu}\times\mu\times\nu$ satisfy $F_{udT}(\tilde{\mu}\times\mu\times\nu)=\tilde{\mu}\times\mu\times\nu$.
\begin{enumerate}
\item[(a)] For any $\lambda_1, \lambda_2 >0$ and $c \in \R$,
\[\tilde{\mu}\times\mu\times\nu=\mathrm{sExp}(\la_1+\la_2,c) \times \mathrm{sExp}(\la_1,c) \times \mathrm{sExp}(\la_2,c).\]
\item[(b)] For any $\theta_1, \theta_2  \in (0,1)$, $M \in \Z$ and $m \in (0,\infty)$,
\[\tilde{\mu}\times\mu\times\nu=\mathrm{ssGeo}(1-\theta_1\theta_2,M,m) \times \mathrm{ssGeo}(1-\theta_1,M,m) \times \mathrm{ssGeo}(1-\theta_2,M, m).\]
\item[(c)] For any $c_1,c_2\in\mathbb{R}$ and measure $m$ supported on $[0,\infty)$,
\begin{enumerate}
\item[(i)] $\tilde{\mu}\times\mu\times \nu=\delta_{\min\{c_1,c_2\}}\times\delta_{c_1}\times\delta_{c_2}$,
\item[(ii)] $\tilde{\mu}\times\mu\times \nu=\delta_{c_1}\times\delta_{c_1}\times m(\cdot -c_1)$,
\item[(iii)] $\tilde{\mu}\times\mu\times \nu=\delta_{c_1}\times m(\cdot -c_1)\times \delta_{c_1}$.
\end{enumerate}
NB.\ Again, case (c)(i) is contained in cases (c)(ii) and (c)(iii).
\end{enumerate}
Moreover, if a product measure is invariant under $F_{udT}$, then it must be one of the above.
\end{prop}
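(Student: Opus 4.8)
The plan is to obtain this as a direct consequence of Proposition \ref{twothree} together with the complete classification in Proposition \ref{UDTODAmeas}, exactly as the sentence preceding the statement anticipates. The first step is to observe that $F_{udT}$ is an involution on $\mathbb{R}^3$ of precisely the form \eqref{threeinvolution} with $F_*=F_{udT^*}$ and $\mathcal{X}_0=\tilde{\mathcal{X}}_0=\mathcal{U}_0=\tilde{\mathcal{U}}_0=\mathbb{R}$; this is the identity $F_{udT}(a,b,c)=\bigl(F_{udT^*}^{(1)}(b,c),\,F_{udT^*}^{-1}(a,F_{udT^*}^{(2)}(b,c))\bigr)$ recorded above. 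Hence Proposition \ref{twothree} applies to the triplet $(\mu,\nu,\tilde{\mu})$, and its equivalence of conditions (b) and (c) tells us that $F_{udT}(\tilde{\mu}\times\mu\times\nu)=\tilde{\mu}\times\mu\times\nu$ holds if and only if there is a probability measure $\tilde{\nu}$ on $\mathbb{R}$ for which the quadruple $(\mu,\nu,\tilde{\mu},\tilde{\nu})$ satisfies the detailed balance condition $F_{udT^*}(\mu\times\nu)=\tilde{\mu}\times\tilde{\nu}$.

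For the forward direction I would run through the cases (a), (b) and (c)(i)--(iii) of Proposition \ref{UDTODAmeas} in turn: in each case an explicit $\tilde{\nu}$ (an $\mathrm{AL}$, $\mathrm{sdAL}$, or Dirac/transported measure) is exhibited for which $F_{udT^*}(\mu\times\nu)=\tilde{\mu}\times\tilde{\nu}$, so the implication (c)$\Rightarrow$(b) of Proposition \ref{twothree} shows the associated triple $(\tilde{\mu},\mu,\nu)$ is invariant under $F_{udT}$. Reading off $\tilde{\mu},\mu,\nu$ from each case, with the same parameter ranges, then reproduces exactly the list in parts (a), (b), (c) of the present proposition; one also notes, as in Proposition \ref{UDTODAmeas}, that case (c)(i) is already subsumed by (c)(ii) and (c)(iii).

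For the converse, suppose $\tilde{\mu}\times\mu\times\nu$ is invariant under $F_{udT}$. By the implication (b)$\Rightarrow$(c) of Proposition \ref{twothree} (concretely, one may take $\tilde{\nu}:=F_{udT^*}^{(2)}(\mu\times\nu)$) there is a $\tilde{\nu}$ such that $(\mu,\nu,\tilde{\mu},\tilde{\nu})$ satisfies the detailed balance condition for $F_{udT^*}$. The completeness assertion of Proposition \ref{UDTODAmeas} --- that there are no quadruples other than those listed --- then forces $(\mu,\nu,\tilde{\mu},\tilde{\nu})$, hence in particular the triple $(\tilde{\mu},\mu,\nu)$, to belong to one of the enumerated families. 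This yields the final sentence of the statement.

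Since the substantive work --- solving the two-variable detailed balance equation for $F_{udT^*}$, and establishing the abstract equivalence of the three invariance conditions --- has already been carried out in Propositions \ref{UDTODAmeas} and \ref{twothree}, I do not anticipate a genuine obstacle here. The only care required is bookkeeping: verifying that $F_{udT}$ really is the involution built from $F_{udT^*}$ via \eqref{threeinvolution}, and matching the parameters of the listed quadruples against those of the listed triples without dropping or duplicating a case.
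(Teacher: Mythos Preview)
Your proposal is correct and follows essentially the same route as the paper's proof. The only cosmetic difference is that for the uniqueness direction the paper spells out the implication (b)$\Rightarrow$(c) of Proposition \ref{twothree} concretely---defining $W:=F_{udT^*}^{(2)}(Y,Z)$ and observing directly that $F_{udT^*}(Y,Z)\buildrel{d}\over{=}(X,W)$ with $X,W$ independent---rather than simply citing the proposition; your invocation of Proposition \ref{twothree} is equally valid and arguably cleaner.
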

\begin{proof}
The first part follows directly from Propositions \ref{twothree} and \ref{UDTODAmeas}. To show uniqueness, let $X,Y$ and $Z$ be independent random variables satisfying
\[F_{udT}(X,Y,Z) \buildrel{d}\over{=}(X,Y,Z).\]
Let $W:=F_{udT^*}^{(2)} (Y,Z)$, then, by assumption,
\[F_{udT^*}^{-1}(X, W) =F_{udT^*}^{-1}\left(X,  F_{udT^*}^{(2)} (Y,Z)\right)  \buildrel{d}\over{=} (Y,Z).\]
Hence $ F_{udT^*}(Y,Z) \buildrel{d}\over{=} (X, W)$.
Since $X,Y,Z$ are independent, $X$ and $W$ are independent. Therefore the marginals of $(Y,Z,X,W)$ must be given by one of the collections $(\mu,\nu,\tilde{\mu},\tilde{\nu})$ described in Proposition \ref{UDTODAmeas}.
\end{proof}

\subsubsection{Invariant measures} The initial value problem for the ultra-discrete Toda equation that we consider is: for $(Q^0,E^0)\in(\mathbb{R}^2)^\mathbb{Z}$, find $(Q^t_n,E^t_n,U^t_n)_{n,t\in\mathbb{Z}}$ such that \eqref{UDTODA} holds for all $n,t$. This was solved in \cite{CSTkdv} for initial conditions in the set
\[\mathcal{X}_{udT}:=\left\{(Q,E)\in(\mathbb{R}^2)^\mathbb{Z}\::\:
 \begin{array}{l}
   \lim_{n \to \infty}\frac{\sum_{m=1}^{n}(Q_m-E_m)}{n} =  \lim_{n \to \infty}\frac{\sum_{m=1}^{n}(Q_m-E_m)+Q_{n+1}}{n} <0,\\
\lim_{n \to -\infty}\frac{\sum_{m=1}^{n}(Q_m-E_m)}{n}=  \lim_{n \to -\infty}\frac{\sum_{m=1}^{n}(Q_m-E_m)+E_{n}}{n} <0
 \end{array}\right\}.\]
In particular, the subsequent result was established.

\begin{lem}[{\cite[Theorem 2.3]{CSTkdv}}]\label{udtodasol} If $(Q^0,E^0)\in\mathcal{X}_{udT}$, then there exists a unique collection $(Q^t_n,E^t_n,U^t_n)_{n,t\in\mathbb{Z}}$ such that \eqref{UDTODA} holds for all $n,t$.
\end{lem}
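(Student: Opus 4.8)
Since the statement is quoted from \cite[Theorem 2.3]{CSTkdv}, I only indicate the strategy. The plan is to construct the solution one time step at a time. Eliminating $Q^1_n=\min\{U_n,E_n\}$ from the last line of \eqref{UDTODA} at time $0$ gives the carrier recursion
\[U_{n+1}=Q_{n+1}+\max\{U_n-E_n,0\},\qquad n\in\Z,\]
and I would define the one time-step map $\mathcal T$ on configurations $(Q,E)$ by taking the carrier row $(U_n)_{n\in\Z}$ to be the canonical, left-measurable solution of this recursion (given by a Pitman-type transform, below) and then reading off $(Q^1,E^1)$ from the first two lines of \eqref{UDTODA}. Existence of a full solution for $(Q^0,E^0)\in\mathcal X_{udT}$ then follows once $\mathcal T$ is shown to be well defined on $\mathcal X_{udT}$ with $\mathcal T(\mathcal X_{udT})\subseteq\mathcal X_{udT}$: the solution is $\big(\mathcal T^{\,t}(Q^0,E^0)\big)_{t\ge 0}$ together with its backward iterates, the latter making sense because $F_{udT}$ is an involution, so that $\mathcal T^{-1}=\mathcal R\mathcal T\mathcal R$ in the notation of Remark~\ref{remrem}.

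For the canonical carrier, encode the configuration by the path $S_n:=\sum_{k=1}^n(Q^0_k-E^0_k)$, using the convention $\sum_{k=1}^n:=-\sum_{k=n+1}^0$ for $n<0$. Unrolling the recursion gives the running-infimum (Pitman-type) formula
\[U_n=Q^0_n+S_{n-1}-\inf_{m\le n-1}S_m,\]
which is left-measurable in $(Q^0,E^0)$ and solves the recursion. The two conditions defining $\mathcal X_{udT}$ are precisely what this construction needs: the condition as $n\to-\infty$ forces $S_m\to+\infty$, so the infimum is attained at a finite index and $U_n$ is finite and well defined for every $n$; and the condition as $n\to+\infty$, together with the sublinear growth of $Q_{n+1}$ and $E_n$ that is built into the definition, is what lets one verify that the updated pair $(Q^1,E^1)$ again satisfies both averaged estimates, i.e.\ $\mathcal T(\mathcal X_{udT})\subseteq\mathcal X_{udT}$. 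The bijectivity of $\mathcal T$ on $\mathcal X_{udT}$ then follows from the involution identity and the corresponding reflected statement.

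Uniqueness is the delicate point. One must show that in \emph{any} full solution the carrier row at each time coincides with the canonical one above. The subtlety — which is why this does not reduce, as in Lemma~\ref{contfracsol}, to a one-dimensional contraction — is that the single-step carrier recursion can admit extra solutions carrying ``surplus load from $-\infty$'' (one sees this already for very simple configurations), but any such choice produces a row violating the left-hand drift condition of $\mathcal X_{udT}$, and such a row cannot be continued consistently to all later times; conversely, a genuine bi-infinite-in-time solution forces the washout of any surplus, pinning the carrier to the Pitman transform. Together with $\mathcal T(\mathcal X_{udT})\subseteq\mathcal X_{udT}$ this shows the whole array $(Q^t_n,E^t_n,U^t_n)_{n,t\in\Z}$ is determined by $(Q^0,E^0)$, hence unique.

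The step I expect to be the main obstacle is this control of the carrier as $n\to-\infty$ on both counts — showing (for existence) that the running infimum in the Pitman transform is attained, and showing (for uniqueness) that no surplus carrier load is admissible in a full solution — together with the verification that the conditions defining $\mathcal X_{udT}$ propagate under one time step, so that the forward and backward iteration can actually be performed. The remaining ingredients — the precise form of the one time-step map, the unrolling of the recursion, and the assembly of the two-dimensional array — are routine, and the full details are in \cite{CSTkdv}.
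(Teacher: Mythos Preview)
The paper does not prove this lemma at all; it simply cites \cite[Theorem~2.3]{CSTkdv} and moves on. Your sketch is consistent with the approach taken in \cite{CSTkdv} (and indeed with the discussion later in Section~\ref{irfsec} of the present paper, where the same Pitman-type running-maximum formula $U_0=Q_0+\max\{0,\theta_1,\theta_1+\theta_2,\dots\}$ with $\theta_i=Q_{-i}-E_{-i}$ is written down explicitly and attributed to \cite{CSTkdv}).
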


In the case when a unique solution to \eqref{UDTODA} exists, it makes sense to define the dynamics of the system similarly to \eqref{dynamics}, i.e.\ set
\[\mathcal{T}_{udT}(Q^0,E^0):=(Q^1,E^1).\]
In what is the main result of this section, we characterize invariant product measures for the resulting evolution.

\begin{thm}\label{udtodathm} Suppose that $(Q^0_n,E^0_n)_{n\in\mathbb{Z}}$ is an i.i.d.\ sequence with marginal given by $\tilde{\mu}\times {\mu}$, where one of the following holds:
\begin{enumerate}
\item[(a)] for some $\lambda_1, \lambda_2 >0$ and $c \in \R$,
\[\tilde{\mu}\times\mu=\mathrm{sExp}(\la_1+\la_2,c) \times \mathrm{sExp}(\la_1,c);\]
\item[(b)] for some $\theta_1, \theta_2  \in (0,1)$, $M \in \Z$ and $m \in (0,\infty)$,
\[\tilde{\mu}\times\mu=\mathrm{ssGeo}(1-\theta_1\theta_2,M,m) \times \mathrm{ssGeo}(1-\theta_1,M,m);\]
\item[(c)] for some $c\in\mathbb{R}$ and measure $m$ supported on $[c,\infty)$ with $m\neq \delta_{c}$,
\[\tilde{\mu}\times\mu=\delta_{c}\times m(\cdot -c).\]
\end{enumerate}
It is then the case that $\mathcal{T}_{udT}(Q^0,E^0)\buildrel{d}\over=(Q^0,E^0)$. Moreover, there are no other non-trivial measures such that $(Q^0_n,E^0_n)_{n\in\mathbb{Z}}$ is an i.i.d.\ sequence, with $Q_n^0$ independent of $E_n^0$, and $\mathcal{T}_{udT}(Q^0,E^0)\buildrel{d}\over=(Q^0,E^0)$.
\end{thm}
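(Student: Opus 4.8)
The plan is to recognise the \eqref{UDTODA} dynamics as a type II model in the sense of Section \ref{generalsec} and then to read off the statement from Theorem \ref{detailedbalancenew}(b) together with the complete classification of detailed balance solutions for $F_{udT^*}$ in Proposition \ref{UDTODAmeas}. Under the decomposition $F_{udT}(a,b,c)=\big(F_{udT^*}^{(1)}(b,c),F_{udT^*}^{-1}(a,F_{udT^*}^{(2)}(b,c))\big)$ recorded above, the sequence $(Q^0_n,E^0_n)_{n\in\mathbb{Z}}$ is encoded by a bi-infinite alternating configuration of $x$-variables, with the $E$'s lying in $\mathcal{X}_0$, the $Q$'s in $\tilde{\mathcal{X}}_0$, the single bijection $F_*$ equal to $F_{udT^*}$, and the carrier loads $U^0_n$ together with the intermediate variables playing the role of the $u$-variables; the spatial shift built into the type II operator $\mathcal{T}$ of \eqref{dynamics} is precisely what makes $\mathcal{T}$ agree with $\mathcal{T}_{udT}$ in the $(Q,E)$-parametrisation. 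Since \eqref{initial} for this model is equivalent to \eqref{UDTODA}, the associated set $\mathcal{X}^*$ is exactly the set of initial data admitting a (full, two-sided-in-time) solution of \eqref{UDTODA}, which by Lemma \ref{udtodasol} contains $\mathcal{X}_{udT}$. With this dictionary in place, everything reduces to checking the hypothesis $(\mu\times\tilde\mu)^{\mathbb{Z}}(\mathcal{X}^*)=1$ and applying Theorem \ref{detailedbalancenew}(b).

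For the invariance assertion I would first argue that, in each of (a)--(c), the i.i.d.\ law $(\tilde\mu\times\mu)^{\mathbb{Z}}$ is supported on $\mathcal{X}_{udT}$. By the strong law of large numbers (still valid, with a possibly infinite limit, when $\mu$ has infinite mean) the Cesàro limits in the definition of $\mathcal{X}_{udT}$ equal $\mathbb{E}[Q^0_1-E^0_1]=\int x\,\tilde\mu(dx)-\int x\,\mu(dx)$ almost surely, while the boundary corrections $Q^0_{n+1}/n$, $E^0_n/n$ do not affect these limits (in (a),(b) all the variables have finite first moment so these corrections tend to $0$; in (c) one has $Q^0_m\equiv c\le E^0_m$, which makes the relevant partial sums monotone and renders the corrections harmless). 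It therefore suffices to note $\int x\,\tilde\mu(dx)<\int x\,\mu(dx)$ in each case, which is immediate from the explicit means --- e.g.\ $c+\tfrac{1}{\lambda_1+\lambda_2}<c+\tfrac{1}{\lambda_1}$ in (a), the analogous inequality in (b), and $\tilde\mu=\delta_{c}$ versus $\mu\neq\delta_c$ supported on $[c,\infty)$ in (c). Granted $(\mu\times\tilde\mu)^{\mathbb{Z}}(\mathcal{X}^*)=1$, one then exhibits the detailed balance partners supplied by Proposition \ref{UDTODAmeas}: $\nu=\mathrm{sExp}(\lambda_2,c)$, $\tilde\nu=\mathrm{AL}(\lambda_1,\lambda_2)$ in (a); $\nu=\mathrm{ssGeo}(1-\theta_2,M,m)$, $\tilde\nu=\mathrm{sdAL}(1-\theta_1,1-\theta_2,m)$ in (b); and $\nu=\delta_{c}$, $\tilde\nu=m$ from Proposition \ref{UDTODAmeas}(c)(iii) in (c). Theorem \ref{detailedbalancenew}(b) then yields $\mathcal{T}_{udT}(Q^0,E^0)\buildrel{d}\over=(Q^0,E^0)$.

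For the uniqueness assertion, suppose $(Q^0_n,E^0_n)_{n\in\mathbb{Z}}$ is i.i.d.\ with $Q^0_n$ independent of $E^0_n$, marginal $\tilde\mu\times\mu$, and $\mathcal{T}_{udT}(Q^0,E^0)\buildrel{d}\over=(Q^0,E^0)$. For $\mathcal{T}_{udT}$ to be applicable the law must be supported on the domain of $\mathcal{T}_{udT}$, which as just noted is exactly $\mathcal{X}^*$, so $(\mu\times\tilde\mu)^{\mathbb{Z}}(\mathcal{X}^*)=1$ and Theorem \ref{detailedbalancenew}(b) provides probability measures $\nu,\tilde\nu$ for which $(\mu,\nu,\tilde\mu,\tilde\nu)$ satisfies the detailed balance condition for $F_{udT^*}$. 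The uniqueness part of Proposition \ref{UDTODAmeas} then forces $(\mu,\tilde\mu)$ to be of the form in part (a), (b) or (c)(iii) of that proposition --- i.e.\ one of cases (a), (b), (c) of the present theorem --- unless $\mu$ and $\tilde\mu$ are both Dirac masses, in which case the solution is trivial. (Alternatively, one may avoid the Burke-type strengthening and argue directly: by Lemma \ref{measurability}/Corollary \ref{stationarymeasurability} the carrier load $U^0_n$ is measurable with respect to the $x$-variables at sites to the left of those carrying $E^0_n$ and $Q^0_{n+1}$, hence independent of $(Q^0_{n+1},E^0_n)$, while the time-reversed measurability statement gives $U^0_{n+1}$ independent of $(Q^1_n,E^1_n)$; since $F_{udT}(Q^0_{n+1},E^0_n,U^0_n)=(Q^1_n,E^1_n,U^0_{n+1})$ and both sides carry product laws by time-stationarity, $\tilde\mu\times\mu\times\nu$ is invariant under $F_{udT}$, and one appeals to the uniqueness part of Proposition \ref{udTodainv}.)

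The only step I expect to require genuine care is the set-up in the first paragraph: confirming that $\mathcal{T}_{udT}$ really is the operator $\mathcal{T}$ of \eqref{dynamics} for the stated type II model --- in particular that the assignment of $E$'s to $\mathcal{X}_0$, $Q$'s to $\tilde{\mathcal{X}}_0$, and the alternation of $F_{udT^*}$, $F_{udT^*}^{-1}$ are arranged so that the $(Q,E)$-labelling is preserved from one time step to the next --- and that the set $\mathcal{X}^*$ of that model coincides with the solvability set of Lemma \ref{udtodasol}. Once this bookkeeping is done and the membership in $\mathcal{X}_{udT}$ is verified (with the minor care noted for case (c)), the remainder is a routine application of Theorem \ref{detailedbalancenew}(b) and Proposition \ref{UDTODAmeas}.
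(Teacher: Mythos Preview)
Your proposal is correct and follows essentially the same approach as the paper: verify via the law of large numbers that each listed law is supported on $\mathcal{X}_{udT}$ (hence on $\mathcal{X}^*$ by Lemma \ref{udtodasol}), then apply Theorem \ref{detailedbalancenew}(b) together with Proposition \ref{UDTODAmeas}. The paper's own proof is a three-sentence sketch saying exactly this, and your write-up simply fills in the details (the explicit mean comparisons, the detailed balance partners $\nu,\tilde\nu$, and the care needed in case (c)); your alternative uniqueness argument via Proposition \ref{udTodainv} is also valid but not used in the paper.
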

\begin{proof} If $(Q^0_n,E^0_n)_{n\in\mathbb{Z}}$ is an i.i.d.\ sequence with marginal $\tilde{\mu}\times {\mu}$ of one of the given forms, then it is a simple application of the law of large numbers to check that, $(\tilde{\mu}\times {\mu})^\mathbb{Z}$-a.s., $(Q^0,E^0)\in\mathcal{X}_{udT}$. It readily follows from Lemma \ref{udtodasol} that, $(\tilde{\mu}\times {\mu})^\mathbb{Z}$-a.s., the corresponding type II lattice equations have a unique solution with initial condition $(x_n)_{n\in\mathbb{Z}}$, where $x_{2n}:=E^0_n$ and $x_{2n+1}:=Q^0_{n+1}$. Thus we can apply Theorem \ref{detailedbalancenew} and Proposition \ref{UDTODAmeas} to deduce the result.
\end{proof}

\subsection{Discrete Toda equation}

\subsubsection{The model} The discrete Toda equation is given by:
    \begin{equation}
    \begin{cases}
    I_n^{t+1}=J_n^t+U_n^t,\\
    J_n^{t+1}={I_{n+1}^{t}J_n^{t}}(I_n^{t+1})^{-1},\\
    U_{n+1}^t={I_{n+1}^{t}U_n^{t}}(I_n^{t+1})^{-1}.
    \end{cases}
    \tag{dToda}\label{DTODA}
    \end{equation}
    Here, the variables $(I_n^t,J_n^t,U_n^t)_{n,t\in\mathbb{Z}}$ take values in $(0,\infty)$, and we can summarise the above dynamics by $(I_{n}^{t+1},J_n^{t+1},U_{n+1}^t)=F_{dT}(I_{n+1}^t,J_n^t,U_{n}^t)$, where $F_{dT}$ is an involution on $(0,\infty)^3$. Similarly to \eqref{qelattice}, in this case we have a lattice structure
    \[\xymatrix@C-15pt@R-15pt{ &I_n^{t+1}&J_n^{t+1} &\\
            U_n^{t} \ar[rrr]& & & U_{n+1}^{t},\\
     & J_n^t \ar[uu]&I_{n+1}^t\ar[uu]&}\]
     which can be decomposed into two maps, $F_{dT^*}$ and $F_{dT^*}^{-1}$, as follows:
     \[\xymatrix@C-15pt@R-15pt{\boxed{F_{dT^*}\vphantom{F_{udT^*}^{-1}}} & b+c &\boxed{F_{dT^*}^{-1}}& \frac{ab}{b+c}&\\
            c \ar[rr] &&\frac{b}{b+c} \ar[rr]&& \frac{ac}{b+c},\\
             & b \ar[uu]&&a\ar[uu]&}\]
     where we generically take $(a,b,c)=(I_{n+1}^t,J_n^t,U_{n}^t)$. So, again including the additional lattice variables, we can view the system as type II locally-defined dynamics, as defined in the introduction, with the maps alternating between the bijection $F_{dT^*}:(0,\infty)^2\rightarrow(0,\infty)^2$ and its inverse, which are given explicitly by:
\[F_{dT^*}(x,y)=\left(x+y,\frac{x}{x+y}\right), \qquad F_{dT^*}^{-1}(x,y)=\left(xy,x(1-y)\right).\]
As in the ultra-discrete case, we note that the decomposition of $F_{dT}$ into $F_{dT^*}$ and $F_{dT^*}^{-1}$ is not unique, with the form of $F_{dT^*}$ chosen here being slightly simpler than the corresponding map in \cite{CSTkdv} (see also \cite{CS4}).

\subsubsection{Detailed balance solutions} As in the ultra-discrete case, we are also able to completely solve the detailed balance equation for $F_{dT^*}$, see Proposition \ref{DTODAmeas}. In the subsequent result, Proposition \ref{dTodainv}, we apply this to deduce a complete solution to the corresponding problem for $F_{dT}$.

\begin{prop}\label{DTODAmeas} The following measures $\mu,\nu,\tilde{\mu},\tilde{\nu}$ satisfy $F_{dT^*}(\mu \times \nu)=\tilde{\mu}\times \tilde{\nu}$.

\begin{enumerate}
\item[(a)] For any $\lambda_1, \lambda_2 >0$ and $c>0$,
\[\mu=\mathrm{Gam}(\la_1,c),\quad \nu=\mathrm{Gam}(\la_2,c),\quad \tilde{\mu} =\mathrm{Gam}(\la_1+\la_2,c),\quad \tilde{\nu}= \mathrm{Be} (\la_1,\la_2).\]
\item[(b)] For any $c_1,c_2\in(0,\infty)$, $\mu=\delta_{c_1}$, $\nu=\delta_{c_2}$, $\tilde{\mu}=\delta_{c_1+c_2}$, $\tilde{\nu}=\delta_{c_1/(c_1+c_2)}$.
\end{enumerate}
It is further the case that there are no other quadruples of probability measures $(\mu,\nu,\tilde{\mu},\tilde{\nu})$ that satisfy $F_{dT^*}(\mu \times \nu)=\tilde{\mu}\times \tilde{\nu}$.
\end{prop}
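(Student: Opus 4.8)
The plan is to follow the same template as the proof of Proposition \ref{UDTODAmeas}: verify the listed families by direct computation, and then obtain the uniqueness statement by invoking a classical characterization of the gamma distribution, namely Lukacs' theorem. For the ``if'' direction, part (b) is immediate from the explicit form of $F_{dT^*}$. For part (a), I would write the density of $\mu\times\nu$ as proportional to $x^{\lambda_1-1}y^{\lambda_2-1}e^{-c(x+y)}$ on $(0,\infty)^2$ and perform the change of variables $(x,y)\mapsto(s,w):=(x+y,\,x/(x+y))$, whose Jacobian has absolute value $s$; the density then transforms into something proportional to $s^{\lambda_1+\lambda_2-1}e^{-cs}\,w^{\lambda_1-1}(1-w)^{\lambda_2-1}$ on $(0,\infty)\times(0,1)$, which is exactly the density of $\mathrm{Gam}(\lambda_1+\lambda_2,c)\times\mathrm{Be}(\lambda_1,\lambda_2)$. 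This establishes $F_{dT^*}(\mu\times\nu)=\tilde{\mu}\times\tilde{\nu}$ in case (a).

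For the uniqueness statement, let $X\sim\mu$ and $Y\sim\nu$ be independent $(0,\infty)$-valued random variables with $F_{dT^*}(\mu\times\nu)=\tilde{\mu}\times\tilde{\nu}$. By definition of $F_{dT^*}$, the pair $(X+Y,\,X/(X+Y))$ has law $\tilde{\mu}\times\tilde{\nu}$, so $X+Y$ and $X/(X+Y)$ are independent. If both $X$ and $Y$ are non-degenerate, then $X+Y$ and $X/(X+Y)$ are themselves non-degenerate, and Lukacs' characterization of the gamma distribution (see \cite{L}) forces $X$ and $Y$ to be gamma-distributed with a common scale parameter, giving case (a). If, on the other hand, $Y=\delta_{c_2}$ for some $c_2>0$, then $X/(X+c_2)=1-c_2/(X+c_2)$ is a strictly increasing, hence injective, deterministic function of $X+c_2$; since two random variables, one of which is an injective Borel function of the other, can be independent only if they are a.s.\ constant, $X$ must be a.s.\ constant, and we are in case (b). The symmetric argument covers the case $\mu=\delta_{c_1}$, and the case where both $\mu$ and $\nu$ are degenerate is trivially in case (b).

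The one point requiring care is the invocation of Lukacs' theorem: one should use the version that makes no smoothness or moment assumptions on the laws of $X$ and $Y$, only positivity and non-degeneracy, so that no extra regularity hypotheses leak into the statement of the proposition. I expect this, together with the elementary bookkeeping of the degenerate sub-cases, to be the only real obstacle; the density verification in part (a) and the injective-function argument for the degenerate case are routine.
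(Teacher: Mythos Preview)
Your proposal is correct and follows essentially the same approach as the paper: direct computation for the ``if'' direction, Lukacs' characterization \cite{L} for the non-degenerate uniqueness case, and an elementary argument for the degenerate case. The only cosmetic difference is that the paper disposes of the degenerate case by invoking the fact that $X$ and $1/X$ are independent only if $X$ is constant, whereas you phrase the same idea via the injectivity of $s\mapsto 1-c_2/s$; these are equivalent observations.
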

\begin{proof}
The first part follows by direct computation. The uniqueness relies on a well-known fact \cite{L} about gamma distributions. Namely, suppose that $X$ and $Y$ are two non-constant, independent, positive random variables. Then $X+Y$ and $\frac{X}{X+Y}$ are independent if and only if $X$ and $Y$ are gamma-distributed random variables with the same scale parameter. Applying the fact that $X$ and $1/X$ are independent if and only if $X$ is a constant random variable, the trivial solutions of part (b) are readily checked to be the only other option.
\end{proof}

In this case, by construction, we have that
\[F_{dT}(a,b,c) := \left( F_{dT^*}^{(1)} (b,c), F_{dT^*}^{-1} \left(a,F_{dT^*}^{(2)} (b,c)\right)\right).\]
This enables us to deduce from Propositions \ref{twothree} and \ref{DTODAmeas} the following result.

\begin{prop}\label{dTodainv}
The following product measures $\tilde{\mu}\times\mu\times\nu$ satisfy $F_{dT}(\tilde{\mu}\times\mu\times\nu)=\tilde{\mu}\times\mu\times\nu$.
\begin{enumerate}
\item[(a)] For any $\lambda_1, \lambda_2 >0$ and $c >0$,
\[\tilde{\mu}\times\mu\times\nu=\mathrm{Gam}(\la_1+\la_2,c) \times \mathrm{Gam}(\la_1,c) \times \mathrm{Gam}(\la_2,c).\]
\item[(b)] For any $c_1,c_2\in(0,\infty)$,
\[\tilde{\mu}\times\mu\times\nu=\delta_{c_1+c_2} \times \delta_{c_1} \times \delta_{c_2}.\]
\end{enumerate}
Moreover, if a product measure is invariant under $F_{dT}$, then it must be one of the above.
\end{prop}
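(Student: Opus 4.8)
The plan is to follow exactly the route used for Proposition \ref{udTodainv}, replacing $F_{udT^*}$ by $F_{dT^*}$ throughout. The existence half is immediate: by construction $F_{dT}$ is the involution built from the bijection $F_{dT^*}$ precisely as in \eqref{threeinvolution}, so Proposition \ref{twothree} applies and tells us that a product measure $\tilde{\mu}\times\mu\times\nu$ is invariant under $F_{dT}$, i.e.\ \eqref{3invar} holds, if and only if there exists $\tilde{\nu}$ such that $(\mu,\nu,\tilde{\mu},\tilde{\nu})$ satisfies the detailed balance condition $F_{dT^*}(\mu\times\nu)=\tilde{\mu}\times\tilde{\nu}$. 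Feeding in the detailed balance solutions catalogued in Proposition \ref{DTODAmeas}(a),(b) — namely $(\mathrm{Gam}(\lambda_1,c),\mathrm{Gam}(\lambda_2,c),\mathrm{Gam}(\lambda_1+\lambda_2,c),\mathrm{Be}(\lambda_1,\lambda_2))$ and the Dirac case — then yields the two families in the statement. One only has to keep track of the slots: in \eqref{3invar} the product is written $\tilde{\mu}\times\mu\times\nu$ with detailed balance pair $(\mu,\nu)$, so the $\mathrm{Gam}(\lambda_1+\lambda_2,c)$ (resp.\ $\delta_{c_1+c_2}$) factor sits in the $\tilde{\mu}$ position, matching Proposition \ref{dTodainv}(a) (resp.\ (b)).

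For the uniqueness half, suppose $X,Y,Z$ are independent with $F_{dT}(X,Y,Z)\buildrel{d}\over{=}(X,Y,Z)$, and set $W:=F_{dT^*}^{(2)}(Y,Z)$. From the identity $F_{dT}(a,b,c)=\bigl(F_{dT^*}^{(1)}(b,c),F_{dT^*}^{-1}(a,F_{dT^*}^{(2)}(b,c))\bigr)$ together with the assumed invariance we get $F_{dT^*}^{-1}(X,W)\buildrel{d}\over{=}(Y,Z)$, hence $F_{dT^*}(Y,Z)\buildrel{d}\over{=}(X,W)$. Since $X$ is independent of $(Y,Z)$ and $W$ is a measurable function of $(Y,Z)$, the random variables $X$ and $W$ are independent (this is the place one invokes Lemma \ref{independence}, or simply the factorization of the joint law). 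Therefore $(Y,Z,X,W)$ is a quadruple of independent random variables whose laws satisfy $F_{dT^*}(\mu\times\nu)=\tilde{\mu}\times\tilde{\nu}$, and by the uniqueness part of Proposition \ref{DTODAmeas} this quadruple must be one of those listed there. Reading off the marginals of $Y$, $Z$ and $X$ then identifies $\mu\times\nu\times\tilde{\mu}$, equivalently $\tilde{\mu}\times\mu\times\nu$, as one of the measures in (a) or (b), completing the argument.

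I expect no serious obstacle: all of the analytic content is already contained in the characterization of gamma distributions (Lukacs' theorem, via \cite{L}) used to prove Proposition \ref{DTODAmeas}, and the structural reduction through Proposition \ref{twothree} is purely formal. The only points requiring care are bookkeeping — matching the ordering of the slots of $(\tilde{\mu},\mu,\nu)$ with those of $(\mu,\nu,\tilde{\mu},\tilde{\nu})$ in Proposition \ref{DTODAmeas} — and confirming the independence of $X$ and $W$, which holds because $X$ is independent of $(Y,Z)$ and $W$ is a function of $(Y,Z)$. One could alternatively verify invariance by a direct computation using $F_{dT^*}(x,y)=(x+y,x/(x+y))$ and the beta–gamma algebra, but the route through Propositions \ref{twothree} and \ref{DTODAmeas} is cleaner and is the exact analogue of the proof already given in the ultra-discrete case.
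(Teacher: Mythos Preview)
Your proposal is correct and follows essentially the same route as the paper, which simply says ``the proof is the same as that of Proposition \ref{udTodainv}'' --- exactly the template you reproduce. One small wording slip: you call $(Y,Z,X,W)$ ``a quadruple of independent random variables,'' but of course $W$ is a function of $(Y,Z)$; what you actually need (and correctly argue) is only that $Y\perp Z$ and $X\perp W$, which is precisely the input to Proposition \ref{DTODAmeas}.
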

\begin{proof}
The proof is same as that of Proposition \ref{udTodainv}.
\end{proof}

\subsubsection{Invariant measures} The initial value problem for the discrete Toda equation that we consider is: for $(I^0,J^0)\in((0,\infty)^2)^\mathbb{Z}$, find $(I^t_n,J^t_n,U^t_n)_{n,t\in\mathbb{Z}}$ such that \eqref{DTODA} holds for all $n,t$. This was solved in \cite{CSTkdv} for initial conditions in the set
\begin{align*}
\lefteqn{\mathcal{X}_{dT}:=}\\
&\left\{(I,J)\in((0,\infty)^2)^\mathbb{Z}\::\:
 \begin{array}{l}
   \lim_{n \to \infty}\frac{\sum_{m=1}^{n}(\log J_m-\log I_m)}{n} =  \lim_{n \to \infty}\frac{\sum_{m=1}^{n}(\log J_m-\log I_m)-\log I_{n+1}}{n} <0,\\
\lim_{n \to -\infty}\frac{\sum_{m=1}^{n}(\log J_m-\log I_m)}{n}=  \lim_{n \to -\infty}\frac{\sum_{m=1}^{n}(\log J_m-\log I_m)-\log J_{n}}{n} <0
 \end{array}\right\}.
 \end{align*}
In particular, the following result was established.

\begin{lem}[{\cite[Theorem 2.5]{CSTkdv}}] If it holds that $(I^0,J^0)\in\mathcal{X}_{dT}$, then there exists a unique collection $(I^t_n,J^t_n,U^t_n)_{n,t\in\mathbb{Z}}$ such that \eqref{DTODA} holds for all $n,t$.
\end{lem}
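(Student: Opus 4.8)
This lemma is precisely \cite[Theorem 2.5]{CSTkdv}, so the plan is to recall the argument from there. The strategy is to reduce the two-dimensional initial value problem to an iterated one-time-step problem. Since $F_{dT}$ is an involution, a backward time step is a forward time step for the spatially reflected configuration, so it suffices to show that: (i) for $(I^t,J^t)$ in an appropriate class there is a unique carrier sequence $(U^t_n)_{n\in\Z}\in(0,\infty)^\Z$ satisfying the $n$-recursion forced by \eqref{DTODA}, which then determines $(I^{t+1},J^{t+1})$ via the first two relations of \eqref{DTODA}; and (ii) the class $\mathcal{X}_{dT}$ is preserved by this map, so the construction can be iterated to produce, and only produce, the collection $(I^t_n,J^t_n,U^t_n)_{n,t\in\Z}$.

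For step (i), one isolates the carrier recursion: \eqref{DTODA} forces $U^t_{n+1}=I^t_{n+1}U^t_n(J^t_n+U^t_n)^{-1}$, and writing $P_n:=(U^t_n)^{-1}$ turns this into the affine relation $P_{n+1}=(J^t_n/I^t_{n+1})P_n+(I^t_{n+1})^{-1}$. Iterating backwards and sending the base index to $-\infty$ gives the candidate $P_n=\sum_{j\le n-1}(I^t_{j+1})^{-1}\prod_{k=j+1}^{n-1}(J^t_k/I^t_{k+1})$, i.e.\ an infinite continued fraction for $U^t_n$ in $(I^t_k,J^t_k)_{k\le n}$, in the spirit of Lemma \ref{contfracsol}. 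The negativity of $\lim_{n\to-\infty}n^{-1}\sum_{m=1}^n(\log J_m-\log I_m)$ makes the products $\prod_{k=j+1}^{n-1}(J^t_k/I^t_{k+1})$ decay exponentially as $j\to-\infty$, while the companion requirement that $\log I_n$ and $\log J_n$ be $o(|n|)$ at $\pm\infty$ keeps the prefactors from spoiling this; hence the series converges in $(0,\infty)$. Uniqueness among positive sequences follows because the difference $D_n:=P_n-P_n'$ of two solutions satisfies $D_{n+1}=(J^t_n/I^t_{n+1})D_n$ exactly, so $D_n$ is a fixed multiple of $\prod_{k}(J^t_k/I^t_{k+1})$; the growth conditions, together with the a priori bound $U^t_n<I^t_n$ (immediate from the recursion) and positivity of $P_n$, force that multiple to be zero.

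Step (ii) is the main obstacle. The useful identity is $\log J^{t+1}_n-\log J^t_n=\log U^t_{n+1}-\log U^t_n$ (and the dual one for $I$), obtained by eliminating $I^{t+1}_n$ from \eqref{DTODA}; summing it over $m=1,\dots,n$ shows that $\sum_{m=1}^n(\log J^{t+1}_m-\log I^{t+1}_m)$ differs from $\sum_{m=1}^n(\log J^t_m-\log I^t_m)$ only by boundary terms of the form $2\log U^t_{n+1}-\log I^t_{n+1}$ (plus fixed constants). To conclude that the defining Cesàro limits of $\mathcal{X}_{dT}$ are unchanged and still negative, one therefore needs $n^{-1}(2\log U^t_{n+1}-\log I^t_{n+1})\to 0$ as $n\to\pm\infty$, i.e.\ quantitative control of the carrier profile near $\pm\infty$ rather than merely its pointwise definition. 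This asymptotic bookkeeping is precisely where the path-encoding/Pitman-transform machinery of \cite{CSTkdv} does the heavy lifting: it realises the one-step dynamics as a transformation of an associated multiplicative path that manifestly preserves the relevant asymptotic slopes, from which (i) and (ii) — and hence existence and uniqueness on the whole lattice — follow at once.
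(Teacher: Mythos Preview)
The paper does not prove this lemma: it is stated purely as a citation of \cite[Theorem 2.5]{CSTkdv}, with no argument given. Your proposal correctly recognises this and offers a sketch of the cited proof; the one-step carrier recursion you derive (the affine iteration $P_{n+1}=(J^t_n/I^t_{n+1})P_n+(I^t_{n+1})^{-1}$) and its backward solution are accurate, and you are right that the preservation of $\mathcal{X}_{dT}$ under the dynamics is where the path-encoding machinery of \cite{CSTkdv} is genuinely needed rather than being recoverable by the elementary manipulations you outline. Since both you and the paper ultimately defer to \cite{CSTkdv} for the substance, there is nothing further to compare.
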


As in the ultra-discrete case, in the case when a unique solution to \eqref{DTODA} exists, it makes sense to define the dynamics of the system similarly to \eqref{dynamics}, i.e.\ set
\[\mathcal{T}_{dT}(I^0,J^0):=(I^1,J^1).\]
In what is the main result of this section, we characterize invariant product measures for the resulting evolution.

\begin{thm} Suppose that $(I^0_n,J^0_n)_{n\in\mathbb{Z}}$ is an i.i.d.\ sequence with marginal given by $\tilde{\mu}\times {\mu}$, where the following holds: for some $\lambda_1, \lambda_2 >0$ and $c \in \R$,
\[\tilde{\mu}\times\mu=\mathrm{Gam}(\la_1+\la_2,c) \times \mathrm{Gam}(\la_1,c).\]
It is then the case that $\mathcal{T}_{dT}(I^0,J^0)\buildrel{d}\over=(I^0,J^0)$.  Moreover, there are no other non-trivial measures such that $(I^0_n,J^0_n)_{n\in\mathbb{Z}}$ is an i.i.d.\ sequence, with $I_n^0$ independent of $J_n^0$,  and $\mathcal{T}_{dT}(I^0,J^0)\buildrel{d}\over=(I^0,J^0)$.
\end{thm}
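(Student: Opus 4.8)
The plan is to transcribe the proof of Theorem~\ref{udtodathm} to the discrete setting, with \eqref{DTODA} and Proposition~\ref{DTODAmeas} playing the roles of \eqref{UDTODA} and Proposition~\ref{UDTODAmeas}. First I would check that an i.i.d.\ sequence $(I^0_n,J^0_n)_{n\in\mathbb{Z}}$ with marginal $\mathrm{Gam}(\lambda_1+\lambda_2,c)\times\mathrm{Gam}(\lambda_1,c)$ lies in $\mathcal{X}_{dT}$ almost surely. Since the logarithm of a gamma random variable is integrable, the strong law of large numbers gives that $n^{-1}\sum_{m=1}^{n}(\log J^0_m-\log I^0_m)$ converges, as $n\to\pm\infty$, to $\mathbb{E}[\log J^0_1]-\mathbb{E}[\log I^0_1]$, while the correction terms $n^{-1}\log I^0_{n+1}$ and $n^{-1}\log J^0_n$ tend to $0$; hence both pairs of limits in the definition of $\mathcal{X}_{dT}$ exist and coincide, and it remains only to see that the common value is strictly negative. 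This holds because $\mathbb{E}[\log X]$ for $X\sim\mathrm{Gam}(\lambda,c)$ equals the digamma function of the shape parameter $\lambda$ plus a term depending only on the (common) parameter $c$, and the digamma function is strictly increasing, so $\lambda_1<\lambda_1+\lambda_2$ yields $\mathbb{E}[\log J^0_1]<\mathbb{E}[\log I^0_1]$. Consequently \cite[Theorem 2.5]{CSTkdv} applies to give that, almost surely, \eqref{DTODA} has a unique solution; via the decomposition of $F_{dT}$ into $F_{dT^*}$ and $F_{dT^*}^{-1}$ recorded above, this is precisely the statement that the type II initial value problem \eqref{initial} has a unique solution for the configuration $x$ given by $x_{2n}:=J^0_n$, $x_{2n+1}:=I^0_{n+1}$, i.e.\ $(\mu\times\tilde{\mu})^{\mathbb{Z}}(\mathcal{X}^*)=1$ with $\mu=\mathrm{Gam}(\lambda_1,c)$ the law of the even-indexed coordinates and $\tilde{\mu}=\mathrm{Gam}(\lambda_1+\lambda_2,c)$ that of the odd-indexed ones.

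With this established, I would invoke Theorem~\ref{detailedbalancenew}(b): after unwinding the index shift built into the type II dynamics at \eqref{dynamics}, the map $\mathcal{T}_{dT}$ acting on $(I^0,J^0)$ corresponds to $\mathcal{T}$ acting on $x$, so invariance of $(\tilde{\mu}\times\mu)^{\mathbb{Z}}$ under $\mathcal{T}_{dT}$ is equivalent to the existence of probability measures $\nu,\tilde{\nu}$ with $F_{dT^*}(\mu\times\nu)=\tilde{\mu}\times\tilde{\nu}$. Proposition~\ref{DTODAmeas}(a) exhibits such a pair, namely $\nu=\mathrm{Gam}(\lambda_2,c)$ and $\tilde{\nu}=\mathrm{Be}(\lambda_1,\lambda_2)$, which proves the asserted invariance. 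For the converse, if $(I^0_n,J^0_n)_{n\in\mathbb{Z}}$ is any i.i.d.\ sequence with $I^0_n$ independent of $J^0_n$ whose law is $\mathcal{T}_{dT}$-invariant, then (writing $\mu,\tilde{\mu}$ for the laws of $J^0_n,I^0_n$) Theorem~\ref{detailedbalancenew}(b) forces the existence of $\nu,\tilde{\nu}$ with $F_{dT^*}(\mu\times\nu)=\tilde{\mu}\times\tilde{\nu}$; by the classification in Proposition~\ref{DTODAmeas}, the quadruple $(\mu,\nu,\tilde{\mu},\tilde{\nu})$ must be either the gamma solution of part (a) or the degenerate Dirac solution of part (b), and discarding the latter as trivial gives the claimed uniqueness.

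I expect the only genuinely delicate point to be the bookkeeping in the second paragraph: matching the decomposed type II variables and maps to the discrete Toda variables with the correct parity and index shift, so that $\mathcal{T}$ really does implement $\mathcal{T}_{dT}$, and, in the converse direction, observing that the hypothesis ``$\mathcal{T}_{dT}(I^0,J^0)\buildrel{d}\over{=}(I^0,J^0)$'' presupposes $(I^0,J^0)\in\mathcal{X}_{dT}$ almost surely, so that $\mathbf{P}_{(\mu\times\tilde{\mu})^{\mathbb{Z}}}$ is well-defined and Theorem~\ref{detailedbalancenew}(b) may legitimately be applied. Everything else is a direct transcription of the ultra-discrete Toda argument, the computations reducing to the Jacobian and algebraic identities already used in the proof of Proposition~\ref{DTODAmeas} together with the law of large numbers.
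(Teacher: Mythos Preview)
Your proposal is correct and follows exactly the approach the paper takes: the paper's entire proof is the single sentence ``The proof is same as that of Theorem~\ref{udtodathm},'' and you have faithfully transcribed that argument to the discrete setting, supplying the law-of-large-numbers check for $\mathcal{X}_{dT}$ (with the digamma monotonicity making the strict inequality explicit) and then invoking Theorem~\ref{detailedbalancenew}(b) together with Proposition~\ref{DTODAmeas}. The bookkeeping you flag as the only delicate point is precisely what the paper leaves implicit.
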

\begin{proof} The proof is the same as that of Theorem \ref{udtodathm}.
\end{proof}

\section{Links between discrete integrable systems}\label{linksec}

In this section, we explain how the well-known links between the systems \eqref{UDKDV}, \eqref{DKDV}, \eqref{UDTODA} and \eqref{DTODA} extend to invariant measures. Our results are summarised in Figure \ref{dis}.

\begin{figure}
\centerline{\xymatrix{\framebox(150,45){\parbox{140pt}{\centering{Discrete KdV $(\alpha,\beta)$: $\mathrm{GIG}(\lambda,c\alpha,c)\times$\\$\mathrm{GIG}(\lambda,c\beta,c)$}}}\ar[dddd]|<<<<<<<<<<<<<<<<<{\mbox{
\begin{tabular}{c}
  \small{\emph{Ultra-discretization:}} \\
  $\lambda(\varepsilon) = \lambda\varepsilon$\\
  $c(\varepsilon)=e^{c/\varepsilon}$\\
  $\alpha(\varepsilon)=e^{-J/\varepsilon}$\\
  $\beta(\varepsilon)=e^{-K/\varepsilon}$
\end{tabular}}}&\hspace{30pt}&\framebox(150,45){\parbox{140pt}{\centering{Discrete Toda:\\ $\mathrm{Gam}(\la_1+\la_2,c)\times$\\$\mathrm{Gam}(\la_1,c) \times \mathrm{Gam}(\la_2,c)$ }}}\ar[dddd]|<<<<<<<<<<<<<<<<<{\mbox{
\begin{tabular}{c}
  \small{\emph{Ultra-discretization:}} \\
 $\lambda_1(\varepsilon) = \lambda_1\varepsilon$\\
 $\lambda_2(\varepsilon) = \lambda_2\varepsilon$\\
  $c(\varepsilon)=e^{c/\varepsilon}$\\
\end{tabular}}}\ar@{-->}@<-8ex>[ll]^{\mbox{\begin{tabular}{c}
  \small{\emph{Self-convolution:}} \\
 $\beta=0$,\\
 $(\lambda,c\sqrt{\alpha})\leftrightarrow(\lambda_2,c)$
\end{tabular}}}\\
&&\\
&&\\
&&\\
\framebox(150,45){\parbox{140pt}{\centering{Ultra-discrete KdV $(J,K)$: $\mathrm{stExp}(\lambda,c,J-c)\times \mathrm{stExp}(\lambda,c,K-c)$}}}\ar@{<--}@<0ex>[rr]^{\mbox{\begin{tabular}{c}
  \small{\emph{Self-convolution:}} \\
 $K=\infty$,\\
 $(\lambda,c-\frac{J}{2})\leftrightarrow(\lambda_2,c)$
\end{tabular}}}&&\framebox(150,45){\parbox{140pt}{\centering{Ultra-discrete Toda:\\$\mathrm{sExp}(\la_1+\la_2,c)\times$\\ $\mathrm{sExp}(\la_1,c) \times \mathrm{sExp}(\la_2,c)$}}}}}
\caption{Links between some of the product invariant measures of Propositions \ref{udkdvmeas}, \ref{dkdvmeas}, \ref{udTodainv} and \ref{dTodainv}, as discussed in Section \ref{linksec}. In particular, the two solid arrows are essentially given by the weak convergence statements of Proposition \ref{udprop}, see Remark \ref{udrem}. The two dashed arrows indicate how particular conditionings of the invariant measures for the Toda-type systems give rise to the invariant measures for the KdV-type systems, see Subsection \ref{kdvtodalinksubsec} for details.}\label{dis}
\end{figure}
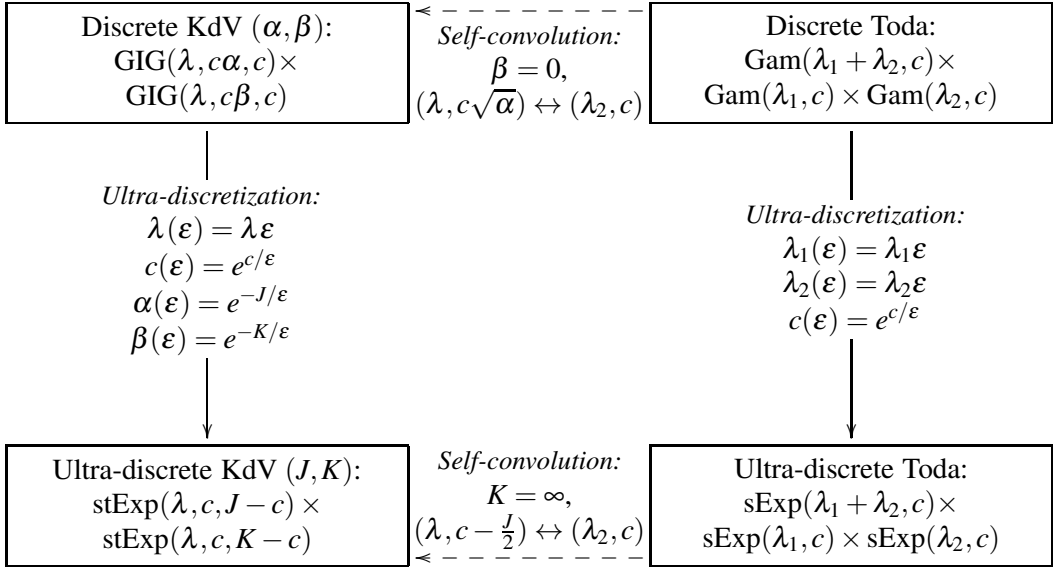

\subsection{Ultra-discretization}

The systems \eqref{UDKDV} and \eqref{UDTODA} arise as ultra-discrete limits of \eqref{DKDV} and \eqref{DTODA}, respectively. In particular, it is straightforward to check that if
\[x=\lim_{\varepsilon \downarrow 0}\varepsilon \log x(\varepsilon),\qquad u=\lim_{\varepsilon \downarrow 0}\varepsilon \log u(\varepsilon),\]
\[J=\lim_{\varepsilon \downarrow 0}-\varepsilon \log \alpha(\varepsilon),\qquad K=\lim_{\varepsilon \downarrow 0}-\varepsilon \log \beta(\varepsilon),\]
then
\begin{equation}\label{udlimkdv}
\lim_{\varepsilon \downarrow 0}\varepsilon \log \left(F_{dK}^{(\alpha(\varepsilon),\beta(\varepsilon))}\right)^{(i)}(x(\varepsilon),u(\varepsilon)) =\left(F_{udK}^{(J,K)}\right)^{(i)}(x,u),\qquad i=1,2.
\end{equation}
Similarly, if
\[a=\lim_{\varepsilon \downarrow 0}-\varepsilon \log a(\varepsilon),\qquad b=\lim_{\varepsilon \downarrow 0}-\varepsilon \log b(\varepsilon),\qquad c=\lim_{\varepsilon \downarrow 0}-\varepsilon \log c(\varepsilon),\]
then
\begin{equation}\label{udlimtoda}
\lim_{\varepsilon \downarrow 0}-\varepsilon \log F_{dT}^{(i)}(a(\varepsilon),b(\varepsilon),c(\varepsilon)) =F_{udT}^{(i)}(a,b,c),\qquad i=1,2,3.
\end{equation}
As a consequence of the following proposition, we have that making corresponding changes of parameters for certain invariant measures for $F_{dK}^{(\alpha,\beta)}$ and $F_{dT}$ yields invariant measures for $F_{udK}^{(J,K)}$ and $F_{udT}$ (see Remark \ref{udrem}).

\begin{prop}\label{udprop}\hspace{10pt}
\begin{enumerate}
\item[(a)] Suppose that $X(\varepsilon)\sim \mathrm{GIG}(\varepsilon\lambda,c(\varepsilon)\alpha(\varepsilon),c(\varepsilon))$, where $c(\varepsilon):=e^{c/\varepsilon}$ and $\alpha(\varepsilon):=e^{-L/\varepsilon}$, for some $L\in\mathbb{R}\cup\{\infty\}$, $\lambda\in\mathbb{R}$ if $L<\infty$,  $\lambda>0$ if $L=\infty$, and $c<L/2$. It then holds that
\[\lim_{\varepsilon \downarrow 0}\varepsilon \log X(\varepsilon)=X\]
in distribution, where $X\sim \mathrm{stExp}(\lambda,c,L-c)$.
\item[(b)] Suppose that $X(\varepsilon)\sim \mathrm{Gam} (\varepsilon\lambda,c(\varepsilon))$, where $c(\varepsilon):=e^{c/\varepsilon}$, for some $\lambda>0$ and $c\in\mathbb{R}$. It then holds that
\[\lim_{\varepsilon \downarrow 0}-\varepsilon \log X(\varepsilon)=X\]
in distribution, where $X\sim \mathrm{sExp}(\lambda,c)$.
\end{enumerate}
\end{prop}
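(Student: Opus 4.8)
The plan is to reduce both parts to a statement about weak convergence of explicit densities, and then to invoke Scheffé's lemma. I begin with part (a). Recall from the proof of Proposition \ref{dkdvmeas} that $\mathrm{GIG}(\varepsilon\lambda, c(\varepsilon)\alpha(\varepsilon), c(\varepsilon))$ has a density on $(0,\infty)$ proportional to $x^{-\varepsilon\lambda-1}\exp(-c(\varepsilon)\alpha(\varepsilon)x - c(\varepsilon)x^{-1})$. Substituting $c(\varepsilon)=e^{c/\varepsilon}$ and $\alpha(\varepsilon)=e^{-L/\varepsilon}$ (so that $c(\varepsilon)\alpha(\varepsilon)=e^{(c-L)/\varepsilon}$) and applying the change of variables $y=\varepsilon\log x$, a routine computation — in which the Jacobian factor $\tfrac1\varepsilon e^{y/\varepsilon}$ combines with the $x^{-\varepsilon\lambda-1}$ term — shows that $Y(\varepsilon):=\varepsilon\log X(\varepsilon)$ has a density on $\mathbb{R}$ proportional to
\[
h_\varepsilon(y) := \exp\!\left(-\lambda y - e^{(y+c-L)/\varepsilon} - e^{(c-y)/\varepsilon}\right),
\]
with the middle term simply absent in the case $L=\infty$ (where $\alpha(\varepsilon)\equiv 0$).

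Next I would identify the pointwise limit. As $\varepsilon\downarrow 0$, for every $y\notin\{c,L-c\}$ one has $e^{(y+c-L)/\varepsilon}\to 0$ if $y<L-c$ and $\to\infty$ if $y>L-c$, and $e^{(c-y)/\varepsilon}\to 0$ if $y>c$ and $\to\infty$ if $y<c$. Hence $h_\varepsilon(y)\to h(y):=e^{-\lambda y}\mathbf{1}_{(c,L-c)}(y)$ for Lebesgue-a.e.\ $y$, and $h$ is, up to normalization, exactly the density of $\mathrm{stExp}(\lambda,c,L-c)$; note that when $L=\infty$ the integrability of $h$ forces $\lambda>0$, consistent with the hypothesis.

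The substantive step is to pass from pointwise convergence of the \emph{unnormalized} densities to convergence of the normalized ones, i.e.\ to show that the normalizing constants $Z_\varepsilon:=\int_{\mathbb{R}} h_\varepsilon(y)\,dy$ converge to $Z:=\int_{\mathbb{R}} h(y)\,dy\in(0,\infty)$. For this I would exhibit an integrable majorant valid for all $\varepsilon\le 1$: on the bounded interval $[c,L-c]$ one has $h_\varepsilon(y)\le e^{-\lambda y}$, which is bounded there; for $y\le c$, using $e^{(c-y)/\varepsilon}\ge e^{c-y}$ gives $h_\varepsilon(y)\le e^{-\lambda y}\exp(-e^{c-y})$, integrable over $(-\infty,c]$; and for $y\ge L-c$, symmetrically $h_\varepsilon(y)\le e^{-\lambda y}\exp(-e^{y+c-L})$, integrable over $[L-c,\infty)$ (this region being empty when $L=\infty$, where instead $h_\varepsilon(y)\le e^{-\lambda y}$ with $\lambda>0$ suffices on $[c,\infty)$). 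Dominated convergence then yields $Z_\varepsilon\to Z$, hence $h_\varepsilon/Z_\varepsilon\to h/Z$ a.e., and since both are probability densities, Scheffé's lemma gives $L^1$ — in particular distributional — convergence of $Y(\varepsilon)$ to $\mathrm{stExp}(\lambda,c,L-c)$.

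Part (b) follows by the same scheme and is slightly simpler: $\mathrm{Gam}(\varepsilon\lambda,c(\varepsilon))$ has density on $(0,\infty)$ proportional to $w^{\varepsilon\lambda-1}e^{-c(\varepsilon)w}$, and the change of variables $y=-\varepsilon\log w$ shows $-\varepsilon\log X(\varepsilon)$ has density proportional to $h_\varepsilon(y):=\exp(-\lambda y - e^{(c-y)/\varepsilon})$, which converges a.e.\ to $e^{-\lambda y}\mathbf{1}_{(c,\infty)}(y)$, the density of $\mathrm{sExp}(\lambda,c)$ (integrable since $\lambda>0$). The majorant is $e^{-\lambda y}\exp(-e^{c-y})$ on $(-\infty,c]$ and $e^{-\lambda y}$ on $[c,\infty)$, so dominated convergence and Scheffé's lemma again conclude. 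The only genuinely delicate point in either part is this uniform integrable domination at the endpoints and at infinity — in particular, for part (a) with $L<\infty$ and $\lambda<0$, $e^{-\lambda y}$ alone fails to be integrable near $+\infty$, so the super-exponential cutoff $\exp(-e^{(y+c-L)/\varepsilon})$, bounded below using $\varepsilon\le 1$, is essential; everything else is routine.
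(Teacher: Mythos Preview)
Your proof is correct and follows essentially the same approach as the paper: both compute the density of $\varepsilon\log X(\varepsilon)$ via the change of variables $y=\varepsilon\log x$, identify the pointwise limit $e^{-\lambda y}\mathbf{1}_{(c,L-c)}(y)$, and then use a convergence theorem to pass the limit through the integral. The only cosmetic difference is that you construct a single global integrable majorant (valid for $\varepsilon\le 1$) and finish via Scheff\'e's lemma, whereas the paper handles the interior $[c,L-c]$ by dominated convergence and the two tails separately by monotone convergence; your packaging is arguably cleaner, but the substance is the same.
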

\begin{proof} (a) Write $Y(\varepsilon):=\varepsilon \log X(\varepsilon)$. By making a standard change of variables, we see that this has density proportional to
\[f_\varepsilon(y):=e^{-\lambda y}\exp\left(-c(\varepsilon)\alpha(\varepsilon)e^{y/\varepsilon}-c(\varepsilon)e^{-y/\varepsilon}\right),\qquad y\in\mathbb{R}.\]
Observe that, for $y\in(c,L-c)$, we have that $f_\varepsilon(y)\rightarrow e^{-\lambda y}$. Hence, by the dominated convergence theorem, for any compact interval $I\subseteq [c,L-c]$, we have
\[\int_I f_\varepsilon(y)dy\rightarrow \int_Ie^{-\lambda y}dy.\]
Moreover, if $L<\infty$, the monotone convergence theorem yields that
\[\int_{L-c}^\infty f_\varepsilon(y)dy\leq\int_{L-c}^\infty e^{-\lambda y} \exp\left(-e^{(y-L+c)/\varepsilon}\right)dy\rightarrow0.\]
Similarly,
\[\int_{-\infty}^c f_\varepsilon(y)dy\leq\int_{-\infty}^c e^{-\lambda y} \exp\left(-e^{-(y-c)/\varepsilon}\right)dy\rightarrow0.\]
Combining the previous three limits, the result readily follows.\\
(b) Writing $Z(\varepsilon):=-\varepsilon \log X(\varepsilon)$, we find that $Z(\varepsilon)$ has density proportional to
\[g_\varepsilon(y):=e^{-\lambda y}\exp\left(-c(\varepsilon)e^{-y/\varepsilon}\right),\qquad y\in\mathbb{R}.\]
Given this, arguing similarly to the first part of the proof gives the desired conclusion.
\end{proof}

\begin{rem}\label{udrem} Applying Proposition \ref{dkdvmeas}(a), \eqref{udlimkdv} and Proposition \ref{udprop}(a) gives another proof of Proposition \ref{udkdvmeas}(a)(i). Similarly, applying Proposition \ref{dTodainv}, \eqref{udlimtoda} and Proposition \ref{udprop}(b) gives another proof of Proposition \ref{udTodainv}.
\end{rem}

\subsection{KdV-Toda correspondence}\label{kdvtodalinksubsec}

In \cite{CSTkdv}, a correspondence was established between one time-step solutions of the ultra-discrete Toda equation with a particular symmetry and solutions of the ultra-discrete KdV equation (with $K=\infty$), and similarly for the discrete models. Here, we use these relations to connect invariant measures for the various systems.

\subsubsection{Ultra-discrete case} To describe the story in the ultra-discrete case, first observe that the $F_{udT}$ preserves the space $\{(a,b,c)\in\mathbb{R}^3:\:a+b=0\}$. In particular, we have that
\[F_{udT}(-a, a,b)=\left(\min\{a,b\}, -\min\{a,b\}, b-a-\min\{a,b\}\right).\]
Combining the first two coordinates, we introduce an involution $K_{udT}:\mathbb{R}^2\rightarrow\mathbb{R}^2$ by setting $K_{udT}(a,b):=(F_{udT}^{(2)}(-a,a,b),F_{udT}^{(3)}(-a,a,b))$, or equivalently,
\[K_{udT}(a,b)=\left(-\min\{a,b\},b-{a}-{\min\{a,b\}}\right).\]
Moreover, we note that this is simply a change of coordinates from $F_{udK}^{(J,\infty)}$. Indeed, if $A^{(J)}(x,u):=(\frac{J}{2}-x,u-\frac{J}{2})$, then we have that
\[F_{udK}^{(J,\infty)} = (A^{(J)})^{-1} \circ K_{udT}  \circ A^{(J)}.\]
The above sequence of operations incorporates the `self-convolution' procedure of \cite[Section 6, and Proposition 6.5 in particular]{CSTkdv}, with the reverse procedure from $F_{udK}^{(J,\infty)}$ to $F_{udT}(-a, a,b)$ involving the `splitting' operation of \cite[Section 6]{CSTkdv}. NB.\ The presentation of this article differs by a unimportant factor of 2 from that of \cite{CSTkdv}, where such a factor was needed to define a `Pitman-type transformation map'. Now, it is an elementary exercise to check that the invariant measure $\mathrm{stExp}(\lambda,c,J-c)\times \mathrm{sExp}(\lambda,c)$ (with $\lambda>0$ and $c\leq J/2$) for $F_{udK}^{(J,\infty)}$ of Proposition \ref{udkdvmeas} corresponds to the following invariant measure for $K_{udT}$:
\begin{equation}\label{abdist}
\mathrm{stExp}\left(-{\lambda},c-\frac{J}{2},\frac{J}{2}-c\right)\times \mathrm{sExp}\left(\lambda,c-\frac{J}{2}\right),
\end{equation}
Returning to the coordinates of the \eqref{UDTODA} system, this gives that if $(A,B)$ has the above distribution, then $(-A,A,B)$ is invariant for $F_{udT}$. We note that this solution relates to the product invariant measure of Proposition \ref{udTodainv}. Indeed, it is readily checked that if $(A,B,C)\sim \mathrm{sExp}(\la_1+\la_2,c) \times \mathrm{sExp}(\la_1,c) \times \mathrm{sExp}(\la_2,c)$ with $\lambda_1,\lambda_2>0$ and $c<0$, then
\begin{equation}\label{eps}
\left(A,B,C\right)\:\vline\:\left\{|A+B|\leq \varepsilon\right\}\:\rightarrow \left(-\tilde{A},\tilde{A},\tilde{B}\right),
\end{equation}
in distribution as $\varepsilon\rightarrow 0$, where $(\tilde{A},\tilde{B})\sim \mathrm{stExp}(-\lambda_2,c,-c)\times \mathrm{sExp}(\la_2,c)$. Since it holds that $a+b=F^{(1)}_{udT}(a,b,c)+F^{(2)}_{udT}(a,b,c)$, the left-hand side of \eqref{eps}  has a distribution that is invariant under $F_{udT}$, and thus the continuous mapping theorem implies that so does the right-hand side. Reparameterising gives that $(\tilde{A},\tilde{B})$ has distribution as at \eqref{abdist}, which establishes that, in the case $K=\infty$, Proposition \ref{udkdvmeas}(a)(i) can alternatively be obtained as a consequence of Proposition \ref{udTodainv}.

\subsubsection{Discrete case}
The discrete case is similar to the ultra-discrete one. Indeed, $F_{dT}$ preserves the space $\{(a,b,c)\in(0,\infty)^3:\:ab=1\}$, with
\[F_{dT}\left(a^{-1}, a,b\right)=\left(a+b, (a+b)^{-1},\frac{b}{a(a+b)}\right).\]
In this case, we introduce an involution $K_{dT}:(0,\infty)^2\rightarrow(0,\infty)^2$ by setting
\[K_{dT}(a,b):=\left(F_{dT}^{(2)}(-a,a,b),F_{dT}^{(3)}(-a,a,b)\right)=\left(\frac{1}{a+b},\frac{b}{a(a+b)}\right),\]
and note that if $A^{(\alpha)}(x,u):=(x\sqrt{\alpha},\frac{1}{u\sqrt{\alpha}})$, then
\[F^{(\alpha,0)}_{dK} = (A^{(\alpha)})^{-1} \circ K_{dT}  \circ A^{(\alpha)}.\]
Again, these operations essentially describe the self-convolution procedure of \cite[Section 6]{CSTkdv}, with the reverse procedure from $F^{(\alpha,0)}_{dK}$ to $F_{dT}(a^{-1}, a,b)$ involving the splitting procedure of \cite[Section 6]{CSTkdv}. The invariant measure $\mathrm{GIG}(\lambda,c\alpha,c)\times \mathrm{IG}(\lambda,c)$ (with $\lambda,c>0$) for $F^{(\alpha,0)}$ of Proposition \ref{dkdvmeas} corresponds to the following invariant measure for $K_{dT}$:
\begin{equation}\label{dist0}
\mathrm{GIG}\left(\lambda,c\sqrt{\alpha},c\sqrt{\alpha}\right)\times \mathrm{Gam}\left(\lambda,{c}{\sqrt{\alpha}}\right).
\end{equation}
Hence, if $(A,B)$ has the above distribution, then the law of $(A^{-1},A,B)$ is invariant under $F_{dT}$. Moreover, it is possible to check that the solution relates to the product invariant measure of Proposition \ref{dTodainv}. For, if $(A,B,C)\sim
\mathrm{Gam}(\la_1+\la_2,c) \times \mathrm{Gam}(\la_1,c) \times \mathrm{Gam}(\la_2,c)$, then one may verify that
\begin{equation}\label{eps2}
\left(A,B,C\right)\:\vline\:\left\{|AB-1|\leq \varepsilon\right\}\:\rightarrow \left(\tilde{A}^{-1},\tilde{A},\tilde{B}\right),
\end{equation}
in distribution as $\varepsilon\rightarrow 0$, where $(\tilde{A},\tilde{B})\sim  \mathrm{GIG}(\la_2,c,c)\times  \mathrm{Gam}(\la_2,c)$. Since it holds that $ab=F^{(1)}_{dT}(a,b,c)F^{(2)}_{dT}(a,b,c)$, the left-hand side of \eqref{eps2}  has a distribution that is invariant under $F_{dT}$, and thus the continuous mapping theorem implies that so does the right-hand side. Reparameterising gives that $(\tilde{A},\tilde{B})$ has distribution as at \eqref{dist0}, which establishes that, in the case $\beta=0$, Proposition \ref{dkdvmeas}(a) can alternatively be obtained as a consequence of Proposition \ref{dTodainv}.

\section{Connection to stochastic integrable models}\label{stochsec}

In this section, we discuss links between our framework and results, and studies on stochastic integrable models. To expand slightly, stochastic two-dimensional lattice integrable (explicitly solvable) models have been intensively studied in recent years in the context of KPZ universality. These include last passage percolation with exponential/geometric weights, the log-gamma, strict-weak, beta, and inverse-beta directed random polymer models, and also higher spin vertex models. An important common property of these systems is that they admit stationary measures that satisfy an appropriate version of Burke's property. We will describe how the arguments of Subsection \ref{burkesec} can be extended to cover the stochastic setting, and explain how this applies in a number of examples. We highlight that we are able to make explicit connections between the Toda-type systems of Section \ref{todasec} and certain polymer models. This part of the study is continued in \cite{CSrp}, wherein the techniques of this article are used to explore the stationary solutions of random polymer models and their zero-temperature limits.

A typical setting for the stochastic models of interest here is the following: for a given boundary condition $(X^0_n,U^t_0)_{n\geq 1,t\geq 0}$, where the $X^0_n$ are random variables taking values in a space $\mathcal{X}_0$ and the $U^t_0$ are random variables taking values in a space $\mathcal{U}_0$, the random variables $(X^t_n,U^t_n)_{n\geq 1,t\geq 0}$ are defined recursively via the equations:
\begin{equation}\label{stochasticmodel}
\left(X_n^{t+1},U_{n}^t\right)=R\left(\tilde{X}_n^t, X^{t}_n,U^t_{n-1}\right)
\end{equation}
where $R: \tilde{\mathcal{X}}_0 \times \mathcal{X}_0 \times  \mathcal{U}_0 \to  \mathcal{X}_0 \times  \mathcal{U}_0$ is a deterministic function, and $(\tilde{X}_n^t)_{n\geq 1,t\geq 0}$ are i.i.d.\ random variables, independent of $(X^0_n,U^t_0)_{n\geq 1,t\geq 0}$. In particular, for a given realization of the variables $(\tilde{X}_n^t)_{n\geq 1,t\geq 0}$, we have a two-dimensional system of equations of the form described in the first sentence of the article with $F_n^t=R(\tilde{X}_n^t,\cdot,\cdot)$. For these models, we define the following notion of Burke's property.

\begin{description}
  \item[Burke's property for a stochastic model] We say that the distribution of the random variables $(X^t_n,U^t_n)_{n\geq 1,t\geq 0}$ satisfies \emph{Burke's property} if:
      \begin{itemize}
      \item the sequences $(X_n^0)_{n \geq1}$ and $(U_0^t)_{t \geq 0}$ are each i.i.d., and independent of each other;
      \item the distribution of $(X^t_{n},U^t_{n})_{n\geq 1,t\geq 0}$ is translation invariant, that is, for any $m,s \in \Z_+$,
    \[\left(X^{s+t}_{m+n},U^{s+t}_{m+n}\right)_{n\geq 1,t\geq 0}\buildrel{d}\over{=}\left(X^t_{n},U^t_{n}\right)_{n\geq 1,t\geq 0}.\]
\end{itemize}
\end{description}
\bigskip

By applying the same argument as that used to prove Proposition \ref{Burke}, we can obtain the following.

\begin{prop}[Burke's property for a stochastic model]
Suppose $\tilde{\mu}, \mu, \nu$ are probability measures on $\tilde{\mathcal{X}}_0, \mathcal{X}_0, \mathcal{U}_0$ respectively satisfying \begin{equation}\label{rdb}
R(\tilde{\mu} \times \mu \times \nu) = \mu \times \nu.
\end{equation}
If $(X^0_n)_{n \geq 1}, (U^t_0)_{t \geq0}, \{\tilde{X}^t_n\}_{n\geq 1,t\geq 0}$ are independent random variables whose marginals are $\mu, \nu$ and $\tilde{\mu}$ respectively, then the random variables $(X^t_n,U^t_n)_{n\geq 1,t\geq0}$ defined by the relation \eqref{stochasticmodel} satisfy Burke's property for a stochastic model.
\end{prop}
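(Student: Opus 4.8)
The plan is to mimic exactly the inductive argument from the proof of Proposition \ref{Burke}(a), replacing the single deterministic involution $F$ by the random maps $F_n^t = R(\tilde X_n^t,\cdot,\cdot)$, with the role of the detailed balance condition $F(\mu\times\nu)=\mu\times\nu$ now played by the relation \eqref{rdb}, namely $R(\tilde\mu\times\mu\times\nu)=\mu\times\nu$. First I would set up the recursion: given the independent inputs $(X_n^0)_{n\geq1}$, $(U_0^t)_{t\geq0}$, $(\tilde X_n^t)_{n\geq1,t\geq0}$ with the prescribed marginals, define $(X_n^t,U_n^t)_{n\geq1,t\geq0}$ via \eqref{stochasticmodel}. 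The base step is to show, by induction on $n$, that for each $n\geq1$ the triple $(\tilde X_n^0, X_n^0, U_{n-1}^0)$ is distributed as $\tilde\mu\times\mu\times\nu$ and is independent of the ``future'' inputs $(\tilde X_m^t)_{m>n \text{ or } t>0}$ and $(U_0^t)_{t\geq1}$; then \eqref{rdb} gives $(X_n^1,U_n^0)\sim\mu\times\nu$, and the independence of $\tilde X_n^0$ from $(\tilde X_m^0,X_m^0)_{m>n}$-generated data lets one push the induction forward. The key measurability bookkeeping is the same as in Proposition \ref{Burke}: $X_n^1$ and $U_n^0$ are measurable with respect to $\sigma(U_0^0, (X_m^0,\tilde X_m^0)_{1\le m\le n})$, while $(X_m^1)_{m\geq n+1}$ is measurable with respect to $\sigma(U_n^0, (X_m^0,\tilde X_m^0)_{m\geq n+1})$, so Lemma \ref{independence} yields that $X_n^1$ is independent of $(X_m^1)_{m\geq n+1}$, hence $(X_n^1)_{n\geq1}$ is i.i.d.\ with marginal $\mu$; and since $(X_n^1)_{n\geq1}$ is measurable with respect to $\sigma(U_0^0,(X_n^0,\tilde X_n^0)_{n\geq1})$, it is independent of $(U_0^t)_{t\geq1}$.

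With this in hand, I would run the shift argument verbatim from Proposition \ref{Burke}: setting $Y_n^t:=X_n^{t+1}$, $V_n^t:=U_n^{t+1}$, and noting that $(Y_n^0,V_0^t)_{n\geq1,t\geq0}$ together with the time-shifted noise $(\tilde X_n^{t+1})_{n\geq1,t\geq0}$ satisfy the same hypotheses as the original data and obey the same recursion $(Y_n^{t+1},V_n^t)=R(\tilde X_n^{t+1},Y_n^t,V_{n-1}^t)$, one concludes $(X_n^{t+1},U_n^{t+1})_{n\geq1,t\geq0} \buildrel d\over= (X_n^t,U_n^t)_{n\geq1,t\geq0}$, i.e.\ temporal shift invariance. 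Spatial shift invariance $(X_{n+1}^t,U_{n+1}^t)_{n\geq1,t\geq0}\buildrel d\over=(X_n^t,U_n^t)_{n\geq1,t\geq0}$ follows in the same way from the base-step independence structure together with the fact that $(X_{n+1}^0,U_1^t,\tilde X_{n+1}^t)$ has the same joint law as the original inputs. Composing, one gets joint invariance under $T^s\theta^m$ for all $m,s\in\mathbb{Z}_+$, which is precisely the second bullet of Burke's property for a stochastic model; the first bullet is the hypothesis on the inputs, which survives unchanged. Since the model is only indexed by $n\geq1$, $t\geq0$, no Daniell--Kolmogorov extension (and hence no Polish-space assumption) is needed here, unlike in Proposition \ref{Burke}.

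The main obstacle, and the only place where real care is required, is the independence propagation in the base step: one must check that $\tilde X_n^0$ remains independent of the sigma-algebra generated by everything needed to define $(X_m^1,U_m^0)_{m\geq n}$ and the later-time noise, so that \eqref{rdb} can be applied ``freshly'' at each site. This is handled exactly as in Proposition \ref{Burke} by repeated application of Lemma \ref{independence}, using that the $(\tilde X_n^t)$ are i.i.d.\ and independent of the boundary data --- the presence of the extra noise coordinate does not break the argument because each $\tilde X_n^t$ is used exactly once, at the lattice point $(n,t)$. Everything else is a routine transcription of the Proposition \ref{Burke} proof, and I would simply state that ``the same argument as that used to prove Proposition \ref{Burke} applies, with \eqref{rdb} in place of the detailed balance condition and with the noise variables $(\tilde X_n^t)$ carried along,'' spelling out only the independence step above.
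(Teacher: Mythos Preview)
Your proposal is correct and matches the paper's approach exactly: the paper's proof consists of the single sentence ``By applying the same argument as that used to prove Proposition \ref{Burke}, we can obtain the following,'' and you have faithfully unpacked what that argument is in the stochastic setting, including the correct observation that no Daniell--Kolmogorov extension is needed on the quadrant.
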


Just as in the deterministic case, it is also possible to consider inhomogeneous stochastic models. For the purposes of the subsequent discussion, in this direction we suppose that $X^t_n$ are $\mathcal{X}_n$-valued random variables, $U^t_n$ are $\mathcal{U}_t$-valued random variables, $\tilde{X}^t_n$ are $\tilde{\mathcal{X}}^t_n$ random variables, and there exists a sequence of deterministic functions
\[ R_{n,t} : \tilde{\mathcal{X}}^t_n \times \mathcal{X}_n \times \mathcal{U}_t \to \mathcal{X}_n \times \mathcal{U}_t.\]
Given a (random) boundary condition $(X^0_n,U^t_0)_{n\geq1,t\geq0}$, we then define $(X^t_n,U^t_n)_{n\geq1,t\geq0}$ by
\begin{equation}\label{stochasticmodelin}
(X_n^{t+1},U_n^t)=R_{n,t}(\tilde{X}_n^t, X^{t}_n,U^t_{n-1}).
\end{equation}
For such dynamics, we define Burke's property as follows.

\begin{description}
  \item[Burke's property for an inhomogeneous stochastic model] We say that the distribution of the random variables $(X^t_n,U^t_n)_{n\geq 1,t\geq 0}$ satisfies \emph{Burke's property} if there exist a sequence of probability measures $(\mu_n)_{n\geq 1}$, with $\mu_n$ supported on $\mathcal{X}_n$, and $(\nu_t)_{t\geq0}$, with $\nu_t$ supported on $\mathcal{U}_t$ such that:
      \begin{itemize}
      \item $X^t_n \sim \mu_n$ for all $n\geq 1,t\geq 0$;
      \item $U^t_n \sim \nu_t$ for all $n\geq 1,t\geq 0$;
      \item for any $m,s \in \Z_+$, $(X^{s}_{m+n})_{n \ge1}, (U^{s+t}_{m})_{ t \ge 0}$ are independent random variables.
      \end{itemize}
\end{description}
\bigskip

The above notion of Burke's property was discussed in \cite{IMS} in the study of the stochastic higher spin six vertex model introduced in \cite{CP} (see Subsection \ref{hvsec} below). We can prove the following in the same way as the homogeneous case.

\begin{prop}[Burke's property for an inhomogeneous stochastic model]\label{Burkein}
Suppose $\tilde{\mu}^t_n$, $\mu_n$, $\nu_t$ are probability measures on $\tilde{\mathcal{X}}_n^t$, $\mathcal{X}_n$, $\mathcal{U}_t$, respectively, satisfying
\[R_{n,t}(\tilde{\mu}^t_n \times \mu_n \times \nu_t) = \mu_n \times \nu_t.\]
If $(X^0_n)_{n \geq1}$, $(U^t_0)_{t \geq0}$, $(\tilde{X}^t_n)_{n\geq1,t\geq0}$ are independent random variables whose marginals are $\mu_n$, $\nu_t$ and $\tilde{\mu}^t_n$, respectively, then the random variables $(X^t_n,U^t_n)_{n\geq 1,t\geq0}$ defined by
the relation \eqref{stochasticmodelin} satisfy Burke's property for an inhomogeneous stochastic model.
\end{prop}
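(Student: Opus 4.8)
The plan is to reproduce the proof of Proposition~\ref{Burke}(a) essentially verbatim, the only new features being that each cell map now carries an extra independent input and that the marginals are allowed to vary along the two axes. First I would fix the construction: let $(X_n^0)_{n\ge1}$, $(U_0^t)_{t\ge0}$ and $(\tilde X_n^t)_{n\ge1,t\ge0}$ be mutually independent with the stated marginals $\mu_n$, $\nu_t$, $\tilde\mu^t_n$, and let $(X_n^t,U_n^t)_{n\ge1,t\ge0}$ be defined by iterating \eqref{stochasticmodelin}, cell by cell over the quadrant $\{n\ge1,t\ge0\}$. Cell $(n,t)$ consumes $(\tilde X_n^t,X_n^t,U_{n-1}^t)$ and produces $(X_n^{t+1},U_n^t)$, and the hypothesis $R_{n,t}(\tilde\mu^t_n\times\mu_n\times\nu_t)=\mu_n\times\nu_t$ is precisely the analogue of the detailed balance condition $F(\mu\times\nu)=\mu\times\nu$, already incorporating the average over the noise variable. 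Note that here there is no question of existence or uniqueness of a \emph{global} solution: the quadrant is built up directly, and consequently no appeal to the Daniell--Kolmogorov extension theorem — and hence no topological assumption on the state spaces — is required, in contrast to Proposition~\ref{Burke}.

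The core is a double induction. The outer induction is on the time level $t\ge 0$, with inductive hypothesis $(H_t)$: the random variables $(X_n^t)_{n\ge1}$, together with the as-yet-unused data $(U_0^s)_{s\ge t}$ and $(\tilde X_n^s)_{n\ge1,\,s\ge t}$, are mutually independent, with marginals $\mu_n$, $\nu_s$, $\tilde\mu^s_n$, respectively. The base case $(H_0)$ is the hypothesis on the input data. For the step $(H_t)\Rightarrow(H_{t+1})$ one runs a secondary induction on the spatial coordinate $n$, exactly as in Proposition~\ref{Burke}(a): from $(X_1^t,U_0^t)\sim\mu_1\times\nu_t$ the relation $R_{1,t}(\tilde\mu^t_1\times\mu_1\times\nu_t)=\mu_1\times\nu_t$ gives $(X_1^{t+1},U_1^t)\sim\mu_1\times\nu_t$; then $(X_2^t,U_1^t)\sim\mu_2\times\nu_t$ since $X_2^t$ is part of the frontier and hence independent of everything on which $U_1^t$ depends; and so on. As in Proposition~\ref{Burke} one records along the way that $X_n^{t+1}$ and $U_n^t$ are measurable with respect to $\sigma(U_0^t,\tilde X_1^t,X_1^t,\dots,\tilde X_n^t,X_n^t)$, whereas $(X_m^{t+1})_{m\ge n+1}$ is measurable with respect to $\sigma(U_n^t,\tilde X_{n+1}^t,X_{n+1}^t,\tilde X_{n+2}^t,X_{n+2}^t,\dots)$, so that repeated application of Lemma~\ref{independence} shows that $(X_n^{t+1})_{n\ge1}$ has product law $\bigotimes_n\mu_n$ and is independent of the leftover data $(U_0^s)_{s\ge t+1}$ and $(\tilde X_n^s)_{n\ge1,\,s\ge t+1}$; this is $(H_{t+1})$. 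Assembling the facts produced in the course of all these steps gives $X_n^t\sim\mu_n$ and $U_n^t\sim\nu_t$ for all $n,t$, and, tracking the independences, that for every $m,s$ the configuration $(X_{m+n}^s)_{n\ge1}$ to the right of site $m$ is independent of the carrier column $(U_m^{s+t})_{t\ge0}$ — which is exactly the displayed Burke property for an inhomogeneous stochastic model.

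The one genuinely delicate point, just as in Proposition~\ref{Burke}, is the sigma-algebra bookkeeping: the inductive statements must be set up so that after each cell is processed the updated frontier still enjoys precisely the joint independence (of the newly produced variable, the remaining frontier, and the unused noise and boundary data) needed to feed the next cell. This is why Lemma~\ref{independence} is invoked locally, at each cell, rather than in a single sweep. Once the invariants $(H_t)$ are phrased correctly, each individual verification is an immediate consequence of the balance relation $R_{n,t}(\tilde\mu^t_n\times\mu_n\times\nu_t)=\mu_n\times\nu_t$ together with Lemma~\ref{independence}, so no substantive computation remains.
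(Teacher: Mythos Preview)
Your proposal is correct and follows exactly the approach the paper intends: the paper's own proof is simply the sentence ``We can prove the following in the same way as the homogeneous case,'' and you have spelled out precisely that adaptation of the proof of Proposition~\ref{Burke}(a), correctly augmenting the inductive frontier with the unused noise variables $(\tilde X_n^s)$ and correctly noting that the Daniell--Kolmogorov step is unnecessary on the quadrant. The only place you are slightly terse is the final clause (``tracking the independences'') for the general $(m,s)$ case of the Burke conclusion; to make this explicit, one simply reruns the same argument with the roles of $n$ and $t$ exchanged, as in the paper's ``By the same argument'' line in the proof of Proposition~\ref{Burke}(a).
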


The type I and type II models considered in the earlier deterministic part of the article can be understood as special cases of the stochastic models in the following ways.
\begin{itemize}
  \item Firstly, the local dynamics of a type I model clearly match those of a homogeneous stochastic model for which the map $R$ at \eqref{stochasticmodel} does not depend on $\tilde{X}_{n}^t$. More generally, one could apply Proposition \ref{Burkein} to study an inhomogeneous deterministic model. For example, if we set $F_{n,t}=F^{J_n,K_t}_{udK}$, then we have that $\mu_n=\mathrm{stExp}(\lambda, c, J_n-c)$ and $\nu_t=\mathrm{stExp}(\lambda, c, K_t-c)$ satisfy
\[F^{J_n,K_t}_{udK}\left(\mu_n\times\nu_t\right)=\mu_n\times\nu_t,\]
and so there is a distribution on $(x^t_n,u^t_n)_{n\geq 1,t \geq 0}$ that satisfies the inhomogeneous version of Burke's property.
  \item Secondly, to connect to type II models, we observe that the condition $R(\tilde{\mu} \times \mu \times \nu) = \mu \times \nu$ at \eqref{rdb} matches the condition on $F$ of Proposition \ref{twothree}(a). Hence, if $R$ is given by a map
      \begin{equation}\label{rmap}
      R(a,b,c)=R_*^{-1}\left(a, R_*^{(2)}(b,c)\right),
      \end{equation}
      where $R_* : \mathcal{X}_0 \times \mathcal{U}_0 \to \tilde{\mathcal{X}}_0 \times \tilde{\mathcal{U}}_0$ is a bijection (i.e.\ similarly to \eqref{threeinvolution} with $R=F^{(2,3)}$ and $R_*=F_*$), then the detailed balance condition for the type II model given by $R_*$ is equivalent to \eqref{rdb}. Consequently, for any type II model, we can construct stochastic counterpart by \eqref{rmap}, and the detailed balance condition for $F_*=R_*$ implies the existence of distributions satisfying Burke's property both for the deterministic and stochastic models. Note that the configuration for the deterministic model is $(x^t_n, u^t_n)_{n,t \in \Z}$, where $x^t_n \in \mathcal{X}_0$, $u^t_n  \in\tilde{ \mathcal{U}}_0$ for $n+t =0$ (mod 2), and $x^t_n \in \tilde{\mathcal{X}}_0$, $u^t_n  \in {\mathcal{U}}_0$ for $n+t=1$ (mod 2), whereas, for the stochastic model, $(X^t_n, U^t_n)_{n\geq 1,t\geq 0}$ satisfies $X^t_n  \in \mathcal{X}_0$ and $U^t_n  \in \mathcal{U}_0$  for all $n,t$.
\end{itemize}
We next proceed to discuss a number of examples of stochastic integrable systems. In particular, we will observe that \[R_{DLPP}=F_{udT}^{(2,3)},\qquad R_{RPs}=F_{dT}^{(2,3)},\]
where $R_{DLPP}$ is the function $R$ for directed last passage percolation, and $R_{RPs}$ is the function $R$ for the directed polymer with site weights (see Subsections \ref{dlppsec} and \ref{drpsec}, respectively). We will further see that $R_{RPe}$, the function $R$ for the directed polymer with edge weights, can also written in terms of a bijection $R_*=R_{RPe^*}$. For the latter model, the solutions of the detailed balance equation were, up to a regularity condition, characterized in \cite{CN}.

\subsection{Directed last passage percolation in two dimensions}\label{dlppsec} In the study of directed last passage percolation on $\N^2$, a key quantity of interest is the partition function
\[Z_{n,m}=\max_{\pi : (1,1) \to (n,m)}\left\{\sum_{(k,\ell) \in \pi}X_{k,\ell}\right\},\qquad m,n\in\mathbb{N},\]
where the maximum is taken over `up-right paths' $\pi : (1,1) \to (n,m)$ on $\N^2$, and $(X_{n,m})_{n,m \in \N}$ are i.i.d.\ random variables. One readily sees that this partition function satisfies the following recursion:
\begin{equation}\label{LLP}
Z_{n,m}=X_{n,m}+\max\left\{Z_{n-1,m}, Z_{n,m-1}\right\}.
\end{equation}
By setting $U_{n,m}:=Z_{n,m}-Z_{n-1,m}$ and $V_{n,m}:=Z_{n,m}-Z_{n,m-1}$, the recursive equation at \eqref{LLP} can be rewritten as
\[R_{DLPP}(X_{n,m}, U_{n,m-1},V_{n-1,m})=(U_{n,m}, V_{n,m}),\]
where
\[R_{DLPP}\left(a,b,c\right)=\left(a+b-\min\{b,c\}, a+c-\min\{b,c\}\right).\]
In particular, $R_{DLPP}=F_{udT}^{(2,3)}$, and we obtain from Proposition \ref{twothree} that
\[R_{DLPP}(\tilde{\mu} \times \mu \times \nu)= \mu \times \nu\qquad\Leftrightarrow\qquad F_{udT^*}(\mu \times \nu)= \tilde{\mu} \times \tilde{\nu}\mbox{ for some }\nu.\]
Apart from trivial solutions, we have from Proposition \ref{UDTODAmeas} that the above identities imply that $\tilde{\mu}$ is a (possibly scaled and shifted) exponential/geometric distribution; note that when $(X_{n,m})_{n,m\in\mathbb{N}}$ has an i.i.d.\ exponential/geometric distribution, the directed last passage percolation model is known to be exactly solvable. Moreover, the solution of the detailed balance equation for $F_{udT^*}$ of Proposition \ref{UDTODAmeas} further yields the existence of the stationary distribution $U_{n,m} \sim \mathrm{Exp}(\la_1)$, $V_{n,m}  \sim \mathrm{Exp}(\la_2)$, $X_{n,m} \sim \mathrm{Exp}(\la_1+\la_2)$ and its geometric distribution version, cf. \cite{BCS}.

\subsection{Directed random polymer with site weights}\label{drpsec} For this model, which is a positive temperature version of directed last passage percolation, the partition function is given by
\[Z_{n,m}=\sum_{\pi : (1,1) \to (n,m)}\left\{\prod_{(k,\ell) \in \pi}X_{k,\ell}\right\},\qquad m,n\in\mathbb{N},\]
where the sum is taken over `up-right paths' $\pi : (1,1) \to (n,m)$ on $\N^2$, and $(X_{n,m})_{n,m \in \N}$ are i.i.d.\ random variables. In this case, we have a recursive equation for the partition function of the form
\begin{equation}\label{RP}
Z_{n,m}=X_{n,m}\left(Z_{n-1,m}+Z_{n,m-1}\right).
\end{equation}
Letting $U_{n,m}=Z_{n,m}/Z_{n-1,m}$, $V_{n,m}=Z_{n,m}/Z_{n,m-1}$, the recursive equation \eqref{RP} can be rewritten as
\[R_{RPs}\left(X_{n,m}^{-1}, U_{n,m-1}^{-1},V_{n-1,m}^{-1}\right)=\left(U_{n,m}^{-1}, V_{n,m}^{-1}\right),\]
where
\[R_{RPs}\left(a,b,c\right)=\left(\frac{ab}{b+c}, \frac{ac}{b+c}\right).\]
We thus see that $R_{RPs}=F_{dT}^{(2,3)}$, and we obtain from Proposition \ref{twothree} that
\[R_{RPs}(\tilde{\mu} \times \mu \times \nu)= \mu \times \nu\qquad\Leftrightarrow\qquad F_{dT^*}(\mu \times \nu)= \tilde{\mu} \times \tilde{\nu}\mbox{ for some }\nu.\]
From Proposition \ref{DTODAmeas}, we have that the only non-trivial solution to these equations has $\tilde{\mu}$ being a gamma distribution, and, similarly to the comment in the previous example, it is of note to observe that when $(X_{n,m})_{n,m\in\mathbb{N}}$ has an i.i.d.\ inverse gamma distribution, the model is exactly solvable. Furthermore, it also follows from Proposition \ref{DTODAmeas} that we have the existence of a stationary distribution with $U_{n,m}^{-1} \sim \mathrm{Gam}(\la_1,c)$, $V_{n,m}^{-1}  \sim \mathrm{Gam}(\la_2,c)$, $X_{n,m}^{-1} \sim \mathrm{Gam}(\la_1+\la_2,c)$, cf. \cite{S,Serr}.

\subsection{Directed random polymer with edge weights} Similarly to the previous subsection, the model we next consider has partition function
\[  Z_{n,m}=\sum_{\pi : (0,0) \to (n,m)}\left\{\prod_{e_i \in \pi}Y_{e_i}\right\},\qquad m,n\in\mathbb{N},\]
where again the sum is taken over `up-right paths' $\pi : (1,1) \to (n,m)$ on $\N^2$, and
\[Y_{e_i}:=
\left\{
  \begin{array}{ll}
    X_{k,l},&\hbox{if } e_i=((k-1,\ell), (k,\ell)),\\
    h(X_{k,\ell}), & \hbox{if } e_i=((k,\ell-1), (k,\ell)),
  \end{array}
\right.\]
where $h$ is a positive function on $\R_+$, and $(X_{n,m})_{n,m \in \N}$ are i.i.d.\ random variables. This partition function satisfies
\begin{equation}\label{RPe}
Z_{n,m}=X_{n,m}Z_{n-1,m}+h(X_{n,m})Z_{n,m-1},
\end{equation}
and by letting $U_{n,m}:=Z_{n,m}/Z_{n-1,m}$, $V_{n,m}:= Z_{n,m}/Z_{n,m-1}$, the recursive equation \eqref{RPe} can be rewritten as
\[R_{RPe}\left(X_{n,m}, U_{n,m-1},V_{n-1,m}\right)=\left(U_{n,m}, V_{n,m}\right),\]
where
\[R_{RPe}\left(a,b,c\right)=\left(a+\frac{h(a)b}{c},h(a)+\frac{a c}{b}\right).\]
Note that, whilst in the previous example we wrote $R_{RPs}$ in terms of $(X_{n,m}^{-1}, U_{n,m-1}^{-1},V_{n-1,m}^{-1})$ in order to fit closely with the map $F_{dT^*}$, here we write $R_{RPe}$ in terms of $(X_{n,m}, U_{n,m-1},V_{n-1,m})$ to better fit the discussion in \cite{CN}. In particular, in \cite{CN}, up to technical conditions, the authors characterize distributions $\tilde{\mu}$, $\mu$ and $\nu$ such that $R_{RPe}(\tilde{\mu} \times \mu \times \nu)=\mu \times \nu$. To expand on this, under the assumptions of \cite{CN}, whenever $b$ and $c$ are in the support of $\mu \times \nu$, the function $H_s(a):=as+h(a)$, where $s=\frac{c}{b}$, has an inverse function $H_s^{-1}$ on the support of $\tilde{\mu}$. It follows that the function
\[R_{RPe^*}(x,u)=\left(H^{-1}_{\frac{u}{x}}(u),\frac{u}{x}\right),\]
is a bijection (on the support of $\mu\times \nu$), with inverse function given by
\[R_{RPe^*}^{-1}(x,u)=\left(\frac{1}{u}H_u(x),H_u(x)\right)=\left(x+\frac{h(x)}{u}, h(x)+xu\right),\]
and putting these together yields
\[R_{RPe}\left(a,b,c\right)= R_{RPe^*}^{-1}\left(a, R_{RPe^*}^{(2)}(b,c)\right).\]
Hence the condition $R_{RPe}(\tilde{\mu} \times \mu \times \nu)=\mu \times \nu$ is equivalent to $R_{*, RPe}(\mu \times \nu)=\tilde{\mu} \times \tilde{\nu}$ for some $\tilde{\nu}$, and also to
\[F_{RPe}(\tilde{\mu} \times \mu \times \nu)=\tilde{\mu} \times \mu \times \nu,\]
where
\[F_{RPe}(a,b,c)= \left(R_{RPe^*}^{(1)}(b,c),  R_{RPe^*}^{-1}\left(a, R_{*,RPe}^{(2)}(b,c)\right)\right).\]
In \cite{CN}, the authors show that Burke's property holds for the directed random polymer with edge weights only if $h(x)=Ax+B$ for some $A,B \in \R$ such that $\max\{A,B\} >0$. NB.\ In this case, the above map is of the form
\[\xymatrix@C-15pt@R-15pt{ &\frac{(a-B)b}{Ab+c}&\frac{a(Ab+c)}{c}+\frac{Bb}{c}&\\
           c\ar[rrr]& & & \frac{a(Ab+c)}{b}+B.\\
     & b\ar[uu]&a\ar[uu]&}\]
Moreover, they characterize all distributions satisfying Burke's property. Up to linear transformations, these fall into one of the following four classes:
\begin{description}
  \item[Inverse gamma] For $A=1$, $B=0$, i.e.\ $h(x)=x$,
\[U_{n,m} \sim \mathrm{IG}(\la_1,c),\qquad V_{n,m}  \sim \mathrm{IG}(\la_2,c),\qquad X_{n,m} \sim \mathrm{IG}(\la_1+\la_2,c);\]
  \item[Gamma] For $A=0$, $B=1$, i.e.\ $h(x)=1$,
\[U_{n,m} \sim \mathrm{Gam}(\la_1+\la_2,c),\qquad V_{n,m}  \sim \mathrm{Be}^{-1}(\la_1,\la_2),\qquad X_{n,m} \sim \mathrm{Gam}(\la_2,c);\]
  \item[Beta] For $A=-1$, $B=1$, i.e.\ $h(x)=1-x$,
\[U_{n,m} \sim \mathrm{Be}(\la_1+\la_2,\la_3),\qquad V_{n,m}^{-1}  \sim \mathrm{Be}(\la_1,\la_2),\qquad X_{n,m} \sim \mathrm{Be}(\la_2,\la_3);\]
  \item[Inverse beta] For $A=1$, $B=-1$, i.e.\ $h(x)=x-1$,
\[U_{n,m}^{-1} \sim \mathrm{Be}(\la_1,\la_3),\qquad (V_{n,m}+1) ^{-1} \sim \mathrm{Be}(\la_2,\la_1+\la_3) ,\qquad X_{n,m}^{-1} \sim \mathrm{Be}(\la_1+\la_2,\la_3).\]
\end{description}
To obtain the results in the cases $h(x)=x$ and $h(x)=1$, the well-known characterization of gamma distributions from \cite{L} was applied, cf.\ our argument characterising the invariant measures for the discrete Toda lattice. (Note that if $h(x)=x$, then, up to inversion of the variables, the dynamics of $R_{RPe}$ matches that of $R_{RPs}$.) In the cases $h(x)=1-x$ and $h(x)=x-1$, a similar result for the beta distribution is used, see \cite{SW}.

\begin{rem}
The equation \eqref{RPe} with $h(x)=1-x$ corresponds to a recursion equation for the distribution function of the random walk in a Beta-distributed random environment, as studied in \cite{BI}. Specifically, the environment of the latter model is given by an i.i.d.\ collection of $\mathrm{Be}(\alpha,\beta)$ random variables $(B_{n,t})_{n\in\mathbb{Z},t\geq 0}$, and conditional on this, the process $(Y_t)_{t\geq 0}$ is the (discrete-time) Markov chain with transition probabilities given by
\[\mathbf{P}^{B}\left(Y_{t+1}=n+1\:\vline\:Y_t=n\right)=B_{n,t}=1-\mathbf{P}^{B}\left(Y_{t+1}=n-1\:\vline\:Y_t=n\right).\]
It is readily checked that $\tilde{Z}(t,n):=P^B(Y_t\geq t-2n+2)$ satisfies
\[\tilde{Z}(t,n)=B_{t,n}\tilde{Z}(t-1,n)+(1-B_{t,n})\tilde{Z}(t-1,n-1).\]
Reparameterising by setting $Z_{n,m}:=\tilde{Z}(n+m,m)$, $X_{n,m}:=B_{n+m,m}$, we obtain \eqref{RPe}.
\end{rem}

\subsection{Higher spin vertex models}\label{hvsec} In this subsection, we explain how Proposition \ref{Burkein} applies to higher spin vertex models. The state spaces for such models are given by $\mathcal{X}_0:=\{0,1,2,\dots,\}$, and $\mathcal{U}_0:=\{0,1,\dots, J\}$ for some $J \in \N$. In the case $J=1$, the dynamics of the model are given by the probabilities
\begin{align*}
\mathbf{P}\left((X^{t+1}_n, U^t_n)=(i,0)\:\vline\: (X^{t}_n, U^t_{n-1}=(i,0)\right)&=\frac{1+\alpha q^i}{1+\alpha}=:c_{i,0}, \\
\mathbf{P}\left((X^{t+1}_n, U^t_n)=(i-1,1)\:\vline\: (X^{t}_n, U^t_{n-1}=(i,0)\right)&=\frac{\alpha(1- q^i)}{1+\alpha},\\
\mathbf{P}\left((X^{t+1}_n, U^t_n)=(i+1,0)\:\vline\: (X^{t}_n, U^t_{n-1}=(i,1)\right)&=\frac{1-\nu q^i}{1+\alpha}=:c_{i,1}, \\
\mathbf{P}\left((X^{t+1}_n, U^t_n)=(i,1)\:\vline\: (X^{t}_n, U^t_{n-1}=(i,1)\right)&=\frac{\alpha+ \nu q^i}{1+\alpha},
\end{align*}
for an appropriate choice of $\alpha, \nu, q$, see \cite{CP} for details. For simplicity, we consider the case $\alpha \ge 0$ and $\nu,q\in [0,1)$. If
\[R_{HSV}^{\alpha,\nu,q}\left(u,i,j\right):=\left(i+j -\mathbf{1}_{\{u \ge c_{i,j}\}}, \mathbf{1}_{\{u \ge c_{i,j}\}}\right),\]
and $(\tilde{X}_n^t)_{n\geq 1,t\geq 0}$ is an i.i.d.\ collection of uniform random variables on $(0,1)$, we then have that
\[\left(X^{t+1}_n, U^t_n\right)=R_{HSV}^{\alpha,\nu,q}  \left(\tilde{X}_n^t, X^{t}_n, U^t_{n-1}\right).\]
By direct computation, one can check that
\[R_{HSV}^{\alpha,\nu,q}\left( \mathrm{Uni} (0,1) \times\mathrm{qNB}\left(\nu,\frac{p}{\alpha}\right) \times  \mathrm{qNB}\left(q^{-1},-qp\right)\right)=\mathrm{qNB}\left(\nu,\frac{p}{\alpha}\right) \times  \mathrm{qNB}\left(q^{-1},-qp\right)
\]
for any $0 \le p \le \alpha$, where $\mathrm{Uni} (0,1)$ is the uniform distribution on $(0,1)$,  and qNB is a $q$-negative binomial distribution (see the appendix for details). Note in particular that $\mathrm{qNB}(q^{-1},-qp)$ is a Bernoulli distribution with parameter $\frac{p}{1+p}$. In \cite{IMS}, the authors introduce a change of parameters from $(\nu, \alpha, p)$ to $(s, \xi, u, v)$ with $0 \le s <1$, $\xi >0$, $u <0$, $0 \le v < s\xi$, so that $\alpha= -s\xi u$, $\nu=s^2$, $p=-uv$. With this, we have
\[R_{HSV}^{-s\xi u,s^2,q} \left( \mathrm{Uni} (0,1) \times \mathrm{qNB}\left(s^2,\frac{v}{s\xi}\right) \times  \mathrm{qNB}\left(q^{-1},quv\right)\right)=\mathrm{qNB}\left(s^2,\frac{v}{s\xi}\right) \times  q\mathrm{NB}\left(q^{-1},quv\right).\]
Moreover, in \cite{IMS}, the parameters $(s, \xi, u)$ are allowed to be inhomogeneous, so that $s=s_n$, $\xi=\xi_n$ and $u=u_t$. To align with this framework, we set $R_{n,t}=R_{HSV}^{-s_n\xi_n u_t,s_n^2,q}$. It then follows that, for any fixed $0 \le v < \inf_n{s_n\xi_n}$,
\[R_{n,t}  \left( \mathrm{Uni} (0,1) \times \mu_n \times  \nu_t\right)=  \mu_n \times  \nu_t,\]
where $\mu_n=\mathrm{qNB}(s_n^2,\frac{v}{s_n\xi_n})$, $\nu_t=\mathrm{qNB}(q^{-1},qu_tv)$.

For more general $J \in \N$, the model is defined by a fusion operation, see \cite{CP}. This gives the stochastic matrix
\[\mathbf{P}\left((X^{t+1}_n, U^t_n)=(i',j')\:\vline\: (X^{t}_n, U^t_{n-1}=(i,j)\right)=\mathbf{1}_{\{i'+j'=i+j\}}p_{i',j'}\]
for $i,i' \in \{0,1,2,\dots,\}$, $j,j' \in\{0,1,\dots,J\}$, and so there exists $R_{HSV}^{J,\alpha,\nu,q}$ such that
\[(X^{t+1}_n, U^t_n)=R_{HSV}^{J, \alpha,\nu,q}  \left(\tilde{X}_n^t, X^{t}_n, U^t_{n-1}\right)\]
with $(\tilde{X}_n^t)_{n\geq 1,t\geq0}$ i.i.d.\ uniform random variables on $(0,1)$. Noting that a random variable $X \sim q\mathrm{NB}(q^{-J},-q^Jp)$ can be written as $X=Y_1+Y_2+ \dots Y_J$, where $Y_i \sim q\mathrm{NB}(q^{-1},-q^ip)=\mathrm{Ber}(\frac{q^{i-1}p}{1+p})$ (see Proposition 2.3 of \cite{IMS}), the fusion procedure gives that
\[R_{HSV}^{J, \alpha,\nu,q} \left( \mathrm{Uni} (0,1) \times \mathrm{qNB}\left(\nu,\frac{p}{\alpha}\right) \times  \mathrm{qNB}\left(q^{-J},-q^Jp\right)\right)=\mathrm{qNB}\left(\nu,\frac{p}{\alpha}\right) \times  \mathrm{qNB}\left(q^{-J},-q^{J}p\right).\]
In \cite{IMS}, the inhomogeneous version was also studied in the same way as above. Namely, for $R_{n,t}=R_{HSV}^{J, -s_n\xi_n u_t,s_n^2,q}$ for $n,t \in \N^2$, for any fixed $0 \le v < \inf_n{s_n\xi_n}$,
\[R_{n,t}  \left( \mathrm{Uni} (0,1) \times \mu_n \times  \nu_t\right)=  \mu_n \times  \nu_t,\]
where $\mu_n=\mathrm{qNB}(s_n^2,\frac{v}{s_n\xi_n})$, $\nu_t=\mathrm{qNB}(q^{-J},q^Ju_tv)$. Hence Proposition \ref{stochasticmodelin} applies.

As a final remark, we note that the role of the distribution of $\tilde{X}$ and the function $R$ are different in the higher spin vertex model and the other models discussed here. Indeed, for models other than the higher spin vertex model, the function $R$ reflects the structure of the model, or more precisely the recursion equation of the partition function, independent of the distribution of $\tilde{X}$. On the other hand, for the higher spin vertex model, the function $R$ and the distribution of $\tilde{X}$ do not have any meaning in themselves, but rather the pair together determines the stochastic matrix from the input $(X_n^{t},U^t_{n-1})$ to $(X_n^{t+1},U^t_{n})$, which determines the model.

\section{Iterated random functions}\label{irfsec}

As noted in the introduction, our models can be understood as a special class of iterated random functions. In this section, we discuss how our contributions relate to some known results in the literature regarding such systems. To introduce iterated random functions, we will follow the notation of Diaconis and Freedman's article \cite{DF}, which is a comprehensive survey on this subject (up to its year of writing). Let $S$ be a topological space equipped with its Borel $\sigma$-algebra, $(\Theta, \mathcal{F})$ be a measurable space, $\{f_{\theta}:\:\theta \in \Theta\}$ be a collection of continuous maps $f_\theta: S \to S$, and $\mu$ be a probability measure on $\Theta$. Let $(\theta_n)_{n \in \Z}$ be an  i.i.d.\ sequence with marginal $\mu$. The object of interest is the Markov chain $X_n$ constructed by iterating random functions on the state space $S$, that is
\[X_n:=f_{\theta_n}\left(X_{n-1}\right)=\left(f_{\theta_n} \circ f_{\theta_{n-1}} \circ f_{\theta_2} \dots f_{\theta_1}\right)(X_0),\]
where $X_0=s$ for some $s \in S$. Diaconis and Freedman showed that when `$(f_{\theta})_{\theta \in \Theta}$ is contracting on average' (see \cite{DF} for a precise definition), $X_n$ has a unique stationary distribution, which is independent of $s$. We highlight that a key ingredient in the proof of this theorem is the proposition that the backward iteration defined by
\[Y_n:=\left(f_{\theta_1} \circ f_{\theta_2} \circ f_{\theta_3} \dots f_{\theta_n}\right)(s)\]
converges almost surely, at an exponential rate, to a random variable $Y_{\infty}$ that does not depend on $s$ (see \cite[Proposition 5.1]{DF}).

We now explain how our setting is embedded into the iterated random function framework, starting with type I models. Recall in this case, we have an involution $F: \mathcal{X}_0 \times \mathcal{U}_0 \to  \mathcal{X}_0 \times \mathcal{U}_0$, and that, for a given $(x_n)_{n\in\mathbb{Z}} \in \mathcal{X}_0^\mathbb{Z}$, we are interested in the existence and uniqueness of $(u_n)_{n\in\Z} \in \mathcal{U}_0^\Z$ such that
\[u_{n}=F^{(2)}(x_n, u_{n-1}).\]
(Cf.\ \eqref{uniqueu}.) Letting $S:=\mathcal{U}_0$, $\Theta:=\mathcal{X}_0$ and $f_{\theta}:=F^{(2)}(\theta,\cdot)$ for $\theta \in \Theta$, it is clear that if $(x_n)_{n\in\mathbb{Z}}$ is an i.i.d.\ sequence with marginal $\mu$ and we are given $u_N$, then $(u_n)_{n\geq N}$ is the Markov chain constructed by the iterating random functions $f_{x_n}$. If we know that the backward iteration $Y_n$ converges almost surely to a limit which does not depend on $s$, then for any $n \in \Z$, the limit
\[Z_n:= \lim_{m \to \infty} (f_{\theta_{n+1}} \circ f_{\theta_{n+2}}   \dots  f_{\theta_m}) (s)\]
also exists almost surely and does not depend on $s$ (cf.\ \cite[Section 4]{KO}). In particular, $Z_n$ is measurable with respect to $(\theta_{m})_{m \ge 1+n}$, and $Z_n=f_{\theta_{n+1}}(Z_{n+1})$ for all $n$. Setting $x_n:=\theta_{1-n}$ and $u_n:=Z_{-n}$, it follows that
\[u_{n}=f_{x_n}(u_{n-1}),\]
and $u_n$ is measurable with respect to $(x_m)_{m \le n}$. In conclusion, for $\mu^{\Z}$-a.e.\ realization of $(x_n)_{n\in\mathbb{Z}}$, there exists at least one $(u_n)_{n \in \Z}$ satisfying $u_{n}=F^{(2)}(x_n, u_{n-1})$ and $u_n$ is measurable with respect to $(x_m)_{m \le n}$. Moreover, the distribution of $(u_n)_{n \in \Z}$ is translation invariant, being given by the stationary distribution for the Markov chain constructed by the iterated random functions $f_{x_n}$.

For type II models, the story is similar. In this case we have a bijection $F_*:  \mathcal{X}_0 \times \mathcal{U}_0 \to  \tilde{\mathcal{X}}_0 \times \tilde{\mathcal{U}}_0$, and taking $S=\mathcal{U}_0$, $\Theta=\mathcal{X}_0 \times \tilde{\mathcal{X}}_0$, and $f_{\theta}$ as
\[f_{x,\tilde{x}}(s)=(F^{-1}_*)^{(2)}\left(\tilde{x}, F_*^{(2)}(x,s)\right),\]
we can repeat the discussion of the preceding paragraph. For the ultra-discrete Toda model in particular, we have that
\[f^{udT}_{b,a}(c)=a+\max\{c-b,0\},\]
which can be analysed in the same way as the $G/G/1$ queue considered in \cite[Section 4]{DF}. More specifically, in the latter example, the map of interest is given by
\[f^{G/G/1}_{\theta}(s)=\max\{x+\theta,0\}.\]
Although this is not a strict contraction, it is nonetheless shown in \cite{DF} that, under a certain condition on the distribution $\mu$, which includes the case when $\int \theta d\mu<0$, the backward iteration converges almost surely to a limit which does not depend on $s$. To transfer the argument to the ultra-discrete Toda case, we first make the elementary observation that
\[f^{udT}_{E_{-1},Q_0}\circ f^{udT}_{E_{-2},Q_{-1}}\circ\cdots\circ f^{udT}_{E_{-n},Q_{-n+1}}(s)=Q_0+f^{G/G/1}_{\theta_1}{\circ}f^{G/G/1}_{\theta_2}\circ\cdots \circ f^{G/G/1}_{\theta_{n-1}}(s-E_{-n}),\]
where $\theta_i:=Q_{-i}-E_{-i}$. We can then apply the identity given at \cite[(4.4)]{DF} (that originally appeared in \cite{Fel}) to obtain that the above expressions are both equal to
\[Q_0+\max\left\{0,\theta_1,\theta_1+\theta_2,\dots,\theta_1+\theta_2+\dots+\theta_{n-1}, \theta_1+\theta_2+\dots+\theta_{n-1}+s-E_{-n}\right\}.\]
It readily follows that if $\mathbf{E}(Q_n-E_n)<0$ (cf.\ the requirement on configurations in $\mathcal{X}_{udT}$ in Section \ref{todasec}), then this backward iteration converges almost-surely, for any $s$, to the finite random variable $Q_0+\max\{0,\theta_1,\theta_1+\theta_2,\dots\}$. As is shown in \cite[Theorem 2.3]{CSTkdv}, this precisely corresponds to the value of $U^0_0$ given by the unique solution to the initial value problem for \eqref{UDTODA} with initial condition $(Q_n,E_n)_{n\in\mathbb{Z}}$. One can similarly reconstruct $(U^0_n)_{n\in\mathbb{Z}}$, and indeed the dynamics for all time using this iterated random function approach.

\begin{rem}
The connection between the ultra-discrete Toda lattice and queueing theory is further highlighted by a comparison of the framework and results of the present paper with those of \cite{QST}. Indeed, in the latter work, the local dynamics of the model studied precisely correspond to those given by the map $F_{udT}$, with the variables $(Q_n,E_n,U_n,\mathcal{T}_{udT}Q_n,\mathcal{T}_{udT}E_n)$ in our notation being the analogues of $(s_n,a_n,w_n+s_n,r_n,d_n)$ in that of \cite{QST}. In particular, \cite{QST} gives a version of Burke's theorem for the queuing process in question, with exponential/geometric invariant measures. (Cf.\ the discussion concerning directed last passage percolation in Subsection \ref{dlppsec}.)
\end{rem}

Next, returning to type I models, if the backward iteration converges, one can further consider the question of invariance. Namely, when is it the case that $(\mathcal{T}(x)_n)_{n\in\mathbb{Z}}$, as defined by $\mathcal{T}(x)_n:=F^{(1)}(x_n,u_{n-1})$, has the same distribution as $(x_n)_{n\in\mathbb{Z}}$, where $u_n$ is defined by the backward iteration? On this issue, in \cite{KO}, it is shown that when:
\begin{enumerate}
\item[(i)] the Markov chain $(u_n)_{n\in\mathbb{Z}}$ has reversible transition probabilities,
\item[(ii)] for each $s\in S$, the map $\theta \mapsto (s,f_{\theta}(s))$ is injective,
\end{enumerate}
if we set
\[\tilde{\mathcal{T}}(x)_n=\phi(u_n,u_{n-1}),\]
where $\phi$ is the inverse function of $\theta \mapsto (s,f_{\theta}(s))$, then $(\tilde{\mathcal{T}}(x)_n)_{n\in\mathbb{Z}}$ is an i.i.d.\ sequence with marginal $\mu$. NB. It is straightforward to check that, for a type I model and a measure $\mu$ on $\mathcal{X}_0$ such that $\mu^\mathbb{Z}(\mathcal{X}^*)=1$, we almost-surely have that $\mathcal{T}=\tilde{\mathcal{T}}$. It is moreover shown in \cite[Theorem 4.1]{KO} that $u_0$ is independent of ${\mathcal{T}}(x)_{0},{\mathcal{T}}(x)_{-1},\dots$, which yields that $(u^t_0)_{t \in\Z}$ is an i.i.d.\ sequence, where $(u^t_n)_{n\in\mathbb{Z},t\in\mathbb{Z}}$ is defined recursively. In addition, if $(x_n)_{n \in \Z}$ and $(u^t_0)_{t \in\Z}$ are one-to-one almost surely, then the dynamics given by ${\mathcal{T}}$ are ergodic (actually Bernoulli) with respect to $\mu^{\Z}$ (cf. \cite[Theorem 2.2]{KO} and Theorem \ref{ergodicthm} above). As an example, the authors of \cite{KO} study a discrete-time version of the M/M/1 queue, the dynamics of which are equivalent to BBS$(1,\infty)$ started from an i.i.d.\ configuration. The aim of their paper was to establish the ergodicity of the dynamics, and it was left as an open problem to identify under what conditions $\mathcal{T}$ is ergodic more generally. Whilst we do not address that question here, we do provide further examples of models satisfying the various conditions, namely the ultra-discrete and discrete KdV equations with appropriate i.i.d.\ marginals, as described in Section \ref{kdvsec}.

\begin{rem}
The conditions (i) and (ii) above imply that there exists an involution $F: \Theta \times S \to \Theta \times S$ (at least, on the support of appropriate measures) that is an extension of the map $(\theta,s)\mapsto f_\theta(s)$. More precisely, if we assume that $\theta \to (s,f_{\theta}(s))$ is injective for each $s\in S$, and that the set $ \{(s,f_{\theta}(s)) \in S^2 :\: \theta \in \Theta\} \subseteq S^2$ is symmetric in the two coordinates, then such an $F : \Theta \times S \to \Theta \times S$ is given by $F(\theta,s):=(\phi(f_{\theta}(s),s), f_{\theta}(s))$. Note that, even if there exists such an extension, however, we can not expect that $(f_{\theta})_{\theta\in\Theta}$ is contracting on average in general. Indeed, although the relevant backward iteration converges, the example studied in \cite{KO} does not satisfy the latter property.
\end{rem}

Another approach to demonstrating convergence of the backward iteration for a certain iterated random function system is set out in \cite{Yano,YanoY}. In the latter works, a key notion is that of a `synchronizing sequence', which represents a finite string $\theta_1,\theta_2,\dots,\theta_n$ such that the image of $f_{\theta_1}\circ f_{\theta_2}\circ \cdots \circ f_{\theta_n}$ contains exactly one point. If such a string occurs infinitely often under the measure $\mu^\mathbb{Z}$, then it is easy to see that the backward iteration converges. Observe that we have applied the same idea in the proof of Lemma \ref{udkdvlem}, with the conditions on $x_n+x_{n+1}$ given in \eqref{sync1} and \eqref{sync2} being `synchronizing' for the ultra-discrete KdV system with $J>K$.

Finally, we further note that there has also been a series of works on the stochastic equation:
\[\eta_k=\xi_k \eta_{k-1},\qquad k\in\mathbb{Z},\]
where $(\xi_k)_{k \in\mathbb{Z}}$ is the `evolution process', and $(\eta_k)_{k \in \mathbb{Z}}$ is an unknown process, with both taking values in a compact group $G$ (see the survey \cite{YY} and the references therein). It is clear that this model is in the setting of iterated random functions with $\Theta=S=G$ and $f_{\theta}(s)=\theta s$. Moreover, it is obvious that in this case there exists an involution $F : \Theta \times S \to  \Theta\times S $ such that $F^{(2)}( \theta,s)=f_{\theta}(s)$, as given by $F(\theta,s)=(\theta^{-1}, \theta s)$. These studies are motivated by Tsirelson’s equation, and in particular, it is shown that the Markov chains given by this type of iterated random function system can have a quite different behaviour to the models discussed above. Namely, depending on the distribution of $\theta_n$, the Markov chain might or might not have a unique stationary distributional solution or a strong solution (i.e.\ for which $\eta_k$ is measurable with respect to $(\xi_m)_{m \le k}$), and surprisingly, when the uniqueness of the stationary distributional solution holds, then there does not exist a strong solution, and on the other hand, when there is a strong solution, then there exist multiple strong solutions (for details, see \cite{YY}).

\section{Open problems and conjectures}\label{oqsec}

\subsection{Problems for KdV- and Toda-type discrete integrable systems}

\begin{prob}
Completely characterize the detailed balance solutions for \eqref{UDKDV}, i.e.\ remove the technical conditions from Proposition \ref{udkdvmeasall}.
\end{prob}

\begin{prob}
Completely characterize the detailed balance solutions for \eqref{DKDV}, i.e.\ extend the final claim of Proposition \ref{dkdvmeas} to general $\alpha,\beta\geq 0$ (see Conjecture \ref{con82} below for our expectation in this direction). Moreover, describe a reasonable subset of $\mathcal{X}^*$ when $\alpha\beta>0$, so that the invariance and ergodicity results can be extended to these cases. (As commented above, the results of \cite{CSTkdv} do not apply.)
\end{prob}

\begin{prob}
Give an argument for establishing the ergodicity of invariant measures for type II models, and in particular apply this in the case of the discrete and ultra-discrete Toda lattice equations. (Ergodicity of a polymer model related to the discrete Toda lattice, cf.\ Subsection \ref{drpsec}, is studied in \cite{JRA}.)
\end{prob}

\begin{prob}
In Section \ref{stochsec}, we presented some basic connections between the ultra-discrete/ discrete
Toda lattices and certain stochastic integrable systems that explain why the invariant measures of the corresponding systems match up. In the last few decades, an important aspect of research in stochastic integrable systems has been the development of machinery to study models in the Kardar-Parisi-Zhang (KPZ) universality class (see \cite{Corwinsurvey} for background). Remarkably, it has recently been seen that the KPZ fixed point can be linked to the Kadomtsev-Petviashivili (KP) equation, which is a two-dimensional version of the KdV equation \cite{QR}. These observations naturally lead one to wonder where else there might be parallels between deterministic integrable systems of KdV/Toda-type, and stochastic integrable systems in the KPZ universality class, and to what extent these might be used to transfer knowledge between the two areas.
\end{prob}

\subsection{Characterizations of standard distributions}

In the course of this work, and in particular when solving the various detailed balance equations, we have applied several classical results of the form: if $X$ and $Y$ are independent, then so are $U$ and $V$, where $(U,V)=F(X,Y)$ for a given $F$, if and only if the distribution of $(X,Y)$ falls into a certain class. Perhaps the most famous result in this direction is that first proved by Kac in 1939: `if $X$ and $Y$ are independent, then so are $X+Y$ and $X-Y$ if and only if both $X$ and $Y$ have normal distributions with a common variance' (see \cite{Kac}, as described in \cite{Fe1}). In this subsection, alongside recalling other known results for specific involutions or bijections $F$, we formulate a number of natural conjectures that arise from our study. NB. In what follows, we say that random variables are `non-trivial' if they are non-Dirac).

As a first example, we recall the characterization of the product of GIG and gamma distributions from \cite{LW}. Similar results are sometimes described in the literature as being of `Matsumoto-Yor type', after \cite{MY2}, where the `if' part of the result was established (see \cite{KV}, for example).

\begin{thm}[\cite{LW}]\label{GIG} Let $F:(0,\infty)^2\rightarrow(0,\infty)^2$ be the involution given by
\[F(a,b)=\left(\frac{1}{a+b}, \frac{1}{a}-\frac{1}{a+b}\right).\]
Let $X$ and $Y$ be non-trivial $(0,\infty)$-valued independent random variables. It is then the case that $(U,V):=F(X,Y)$ are independent if and only if there exist $\la, c_1, c_2 > 0$ such that
\[X \sim \mathrm{GIG}(\la,c_1,c_2),\qquad Y \sim \mathrm{Gam}(\la, c_1),\]
and in this case, $U \sim \mathrm{GIG}( \la,c_2,c_1 )$ and $V \sim  \mathrm{Gam}(\la, c_2)$. Hence, if moreover $(U,V)$ has the same distribution as $(X,Y)$, then $X \sim \mathrm{GIG}( \la,c,c) $ and  $Y \sim \mathrm{Gam}(\la, c)$ for some $\lambda,c>0$.
\end{thm}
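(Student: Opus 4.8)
The plan is to establish the full Matsumoto–Yor-type characterization by first proving the forward (``if'') direction through a direct computation with densities, and then the converse (``only if'') direction using a reduction to the known characterization of the gamma distribution recorded in \cite{L}. For the ``if'' direction, suppose $X\sim\mathrm{GIG}(\lambda,c_1,c_2)$ and $Y\sim\mathrm{Gam}(\lambda,c_1)$ are independent. The joint density is proportional to
\[
x^{-\lambda-1}e^{-c_1 x - c_2 x^{-1}}\,y^{\lambda-1}e^{-c_1 y},\qquad x,y>0.
\]
Writing $(u,v)=F(x,y)=\bigl((x+y)^{-1},\,x^{-1}-(x+y)^{-1}\bigr)$, one checks directly that $F$ is an involution with unit Jacobian (in absolute value), that $x+y=u^{-1}$, and that $x=(u+v)^{-1}$, $y=v(u(u+v))^{-1}$, or more symmetrically that the map exchanges the roles appropriately so that $uv$-type invariants are preserved. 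The key algebraic identities to verify are $xy = $ (the analogue playing the role that makes the exponential arguments transform correctly), and one finds after substitution that the pushforward density factorizes as $u^{-\lambda-1}e^{-c_2 u - c_1 u^{-1}}\cdot v^{\lambda-1}e^{-c_2 v}$ up to constants, which is exactly the density of $\mathrm{GIG}(\lambda,c_2,c_1)\times\mathrm{Gam}(\lambda,c_2)$. This is a routine but slightly fiddly calculation; it is the same identity already used in the proof of Proposition~\ref{dkdvmeas}(a) with $\alpha=1$, $\beta=0$, up to the rescaling $X=\alpha x$, $Y=u^{-1}$.

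For the ``only if'' direction, I would invoke \cite[Theorem 4.1]{LW} directly, since it is precisely the statement that independence of $(X+Y)^{-1}$ and $X^{-1}-(X+Y)^{-1}$ for non-trivial independent positive $X,Y$ forces $X\sim\mathrm{GIG}(\lambda,c_1,c_2)$ and $Y\sim\mathrm{Gam}(\lambda,c_1)$ for some parameters; the values of the parameters of $U$ and $V$ then follow from the ``if'' direction just established. (Indeed, the excerpt has already quoted this theorem in the proof of Proposition~\ref{dkdvmeas}.) Thus the characterization in the statement is essentially a repackaging of \cite[Theorem 4.1]{LW} with the explicit bijection $F$ written out.

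Finally, for the concluding sentence about the fixed-point case, suppose additionally that $(U,V)\buildrel{d}\over=(X,Y)$. From the characterization we have $X\sim\mathrm{GIG}(\lambda,c_1,c_2)$, $Y\sim\mathrm{Gam}(\lambda,c_1)$, while $U\sim\mathrm{GIG}(\lambda,c_2,c_1)$, $V\sim\mathrm{Gam}(\lambda,c_2)$. Matching marginals: $\mathrm{Gam}(\lambda,c_1)=\mathrm{Gam}(\lambda,c_2)$ forces $c_1=c_2=:c$, and then $\mathrm{GIG}(\lambda,c_1,c_2)=\mathrm{GIG}(\lambda,c,c)$ is automatically consistent. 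Hence $X\sim\mathrm{GIG}(\lambda,c,c)$ and $Y\sim\mathrm{Gam}(\lambda,c)$, as claimed. The main obstacle, such as it is, is purely the bookkeeping in the ``if'' direction: confirming that the substitution genuinely yields the clean factorization with the parameters $c_1$ and $c_2$ swapped in the GIG and $c_1\mapsto c_2$ in the gamma factor; everything else is either a citation of \cite{LW} or an elementary matching of marginal parameters. One should be slightly careful that $\mathrm{GIG}(\lambda,c,c)$ is a bona fide probability distribution for all $\lambda\in\mathbb{R}$ and $c>0$ (it is, since both the $x$ and $x^{-1}$ terms in the exponent provide integrability at the respective ends), so no degenerate cases are lost.
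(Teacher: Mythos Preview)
The paper does not supply its own proof of this theorem: it is stated with the attribution \cite{LW} in the theorem header and introduced as ``we recall the characterization \ldots\ from \cite{LW}'', with no proof environment following. Your proposal is therefore more than the paper provides, and it is essentially correct: the ``if'' direction is the same density calculation already carried out in the proof of Proposition~\ref{dkdvmeas}(a) (up to the change of variables you indicate), and the ``only if'' direction is, as you say, precisely the content of \cite[Theorem~4.1]{LW}, which is what the paper cites.

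One small inconsistency in your write-up: in your opening sentence you announce that the converse will proceed ``using a reduction to the known characterization of the gamma distribution recorded in \cite{L}'' (Lukacs), but you never actually do this---you instead cite \cite{LW} directly, which is the right move. Lukacs's theorem concerns the independence of $X+Y$ and $X/(X+Y)$ and characterizes gamma marginals; it does not by itself yield the GIG characterization needed here, and indeed the proof in \cite{LW} is considerably more involved than a simple reduction to \cite{L}. So either drop the reference to \cite{L} from your plan, or be explicit that you are citing \cite{LW} outright for the hard direction (as the paper does) rather than re-deriving it.
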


As a direct corollary, by making the change of variables $(a,b) \to (a,b^{-1})$, one can check a similar result for the involution $F:(0,\infty)^2\rightarrow(0,\infty)^2$ given by
\begin{equation}\label{fmap}
F(a,b)=\left(\frac{b}{1+ab}, a(1+ab)\right).
\end{equation}
In this case, the random variables $X$ and $U$ have the same distribution as in Theorem \ref{GIG}, but $Y\sim \mathrm{IG}(\la ,c_1)$ and $V\sim \mathrm{IG}(\la ,c_2)$. Now, the above map is precisely $F_{dK}^{(1,0)}$, and indeed it was the conclusion of \cite{LW} that we applied in the proof of Proposition \ref{dkdvmeas} to characterize the solutions of the detailed balance equation for $F_{dK}^{(\alpha,\beta)}$ with $\alpha\beta=0$. In light of the conclusion of Proposition \ref{dkdvmeas}, we conjecture that for general $\alpha, \beta\geq 0$, a similar result holds.

\begin{conj}\label{con82} Let $\alpha,\beta\geq 0$ with $\alpha\neq \beta$, and recall the definition of $F_{dK}^{(\alpha,\beta)}$ from \eqref{DKDV}. Let $X$ and $Y$ be non-trivial $(0,\infty)$-valued independent random variables. It is then the case that $(U,V):=F_{dK}^{(\alpha,\beta)}(X,Y)$ are independent if and only if there exist $\la, c_1,c_2 > 0$ such that
\[X \sim \mathrm{GIG}(\la,c_1\alpha ,c_2),\qquad Y \sim \mathrm{GIG}(\la,c_2\beta ,c_1),\]
and in this case $U \sim \mathrm{GIG}( \la,c_2\alpha, c_1 )$ and $V \sim \mathrm{GIG}(\la, c_1\beta, c_2)$. Hence, if moreover $(U,V)$ has the same distribution as $(X,Y)$, then $X \sim \mathrm{GIG}(\la, c\alpha ,c) $, $Y \sim \mathrm{GIG}(\la, c\beta,c)$ for some $\la,c>0$.
\end{conj}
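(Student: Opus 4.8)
\emph{Proof proposal.} The ``if'' direction is a direct density computation, extending Proposition~\ref{dkdvmeas}(a). One first verifies that, whenever $(U,V)=F_{dK}^{(\alpha,\beta)}(X,Y)$, the two identities $XY=UV$ and
\[
c_1\alpha X + c_2 X^{-1} + c_2\beta Y + c_1 Y^{-1} = c_2\alpha U + c_1 U^{-1} + c_1\beta V + c_2 V^{-1}
\]
hold; the case $c_1=c_2$ is the conserved quantity already used in the proof of Proposition~\ref{dkdvmeas}, and the general two-parameter version follows by the same elementary manipulation. Since the absolute value of the Jacobian determinant of $F_{dK}^{(\alpha,\beta)}$ equals one, the joint density $(xy)^{-\lambda-1}\exp(-c_1\alpha x - c_2 x^{-1} - c_2\beta y - c_1 y^{-1})$ of $X\sim\mathrm{GIG}(\lambda,c_1\alpha,c_2)$, $Y\sim\mathrm{GIG}(\lambda,c_2\beta,c_1)$ is transported by $F_{dK}^{(\alpha,\beta)}$ to $(uv)^{-\lambda-1}\exp(-c_2\alpha u - c_1 u^{-1} - c_1\beta v - c_2 v^{-1})$, which factorises as the product of the $\mathrm{GIG}(\lambda,c_2\alpha,c_1)$ and $\mathrm{GIG}(\lambda,c_1\beta,c_2)$ densities; this yields simultaneously the independence of $(U,V)$ and the stated marginals, and the final assertion follows on setting $c_1=c_2=:c$.

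For the ``only if'' direction, I would first reduce, by a support analysis in the spirit of Proposition~\ref{udkdvmeasall}, to the case where $X$ and $Y$ have strictly positive, smooth densities $f_X$, $f_Y$ on intervals; independence of $(U,V)$ then forces a smooth product density $g_U(u)g_V(v)$ on the image, and taking logarithms gives the functional equation $h_X(x)+h_Y(y)=h_U(u)+h_V(v)$, with each $h$ the logarithm of the corresponding density. Passing to the coordinates $\xi=\log x$, $\eta=\log y$, the map $F_{dK}^{(\alpha,\beta)}$ takes the form $(\xi,\eta)\mapsto(\upsilon,\zeta)=(\eta+g(\xi+\eta),\,\xi-g(\xi+\eta))$, where $g(\sigma):=\log(1+\beta e^\sigma)-\log(1+\alpha e^\sigma)$; in particular $\upsilon+\zeta=\xi+\eta$, so the ``centre of mass'' $\sigma=\xi+\eta$ is preserved and the $\sigma$-dependence of the transformation enters only through the single function $g$. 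Differentiating the functional equation in $\xi$ and in $\eta$ and eliminating, one obtains an overdetermined system for $H_X(\xi):=h_X(e^\xi)+\xi$ and its analogues; the plan is to show this system forces each $H_\bullet$ to lie in the space spanned by a linear term in its argument together with $e^{\bullet}$ and $e^{-\bullet}$, equivalently that $f_X$, $f_Y$, $f_U$, $f_V$ are all of GIG form. Matching coefficients against the explicit relation $(x,y)\mapsto(u,v)$ then pins down the parameters as in the statement, with integrability of the densities forcing $c_1,c_2>0$ (and $\lambda\in\mathbb{R}$).

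The hard part will be carrying out the ``only if'' direction without the regularity crutch. First, one must remove the smoothness hypothesis and, crucially, rule out solutions supported on a lattice or other singular set: in contrast to the ultra-discrete KdV equation, where genuinely discrete solutions occur (Proposition~\ref{udkdvmeasall}(a)(ii)), we expect \emph{no} non-trivial discrete solutions here when $\alpha\neq\beta$, but establishing this seems to require a separate combinatorial argument exploiting the genuine nonlinearity of $F_{dK}^{(\alpha,\beta)}$. Second, there is no off-the-shelf characterisation playing the role that \cite{LW} plays when $\alpha\beta=0$, so the functional-equation analysis must be done directly, and controlling its solution space is delicate. A possible shortcut worth investigating is whether $F_{dK}^{(\alpha,\beta)}$ can be written as a composition of (conjugates of) maps of the $\beta=0$ type, which would allow one to bootstrap from Theorem~\ref{GIG}; I do not know whether such a factorisation respecting the relevant independence structure exists.
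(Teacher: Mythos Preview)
The statement you are addressing is a \emph{Conjecture} in the paper, not a theorem; the paper contains no proof of it. There is therefore nothing to compare your proposal against on the ``only if'' side. The paper does record, in a closing remark, that after completion the conjecture was confirmed under technical conditions by Bao and Noack, but that argument is external to this article.

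Your treatment of the ``if'' direction is correct and is a genuine extension of what the paper actually proves. The paper (Proposition~\ref{dkdvmeas}(a)) only establishes the one-parameter case $c_1=c_2=c$, using the conserved quantity $\alpha x + x^{-1} + \beta u + u^{-1}$. Your two-parameter identity
\[
c_1\alpha X + c_2 X^{-1} + c_2\beta Y + c_1 Y^{-1} = c_2\alpha U + c_1 U^{-1} + c_1\beta V + c_2 V^{-1}
\]
is valid --- it follows from the elementary factorisations $c_2\alpha U + c_2 V^{-1} = c_2(1+\beta XY)/X$ and $c_1 U^{-1} + c_1\beta V = c_1(1+\alpha XY)/Y$ --- and, combined with $XY=UV$ and the unit Jacobian, it does yield the claimed image law. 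So you have supplied the easy half of the conjecture, which the paper did not write out.

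For the ``only if'' direction, your outline is a plausible line of attack (and indeed the functional-equation route under smoothness assumptions is essentially what the later Bao--Noack work carries through), but as you yourself flag, it is not a proof: you have not shown that the differentiated system forces $H_\bullet$ into the three-dimensional space $\mathrm{span}\{1,e^{\bullet},e^{-\bullet}\}$, you have not handled non-smooth or singular laws, and the factorisation idea in your final paragraph is speculative. These are exactly the obstacles that make this a conjecture rather than a theorem in the paper, so your honest assessment of the difficulties is appropriate.
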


The next statement was applied in the proof of Proposition \ref{DTODAmeas} when characterising the solutions of the detailed balance equation for the discrete Toda system. Moreover, this and the subsequent two results were used in \cite{CN} to characterize directed random polymer models having stationary measures satisfying Burke's property. We note that Corollary \ref{cor84} is a direct consequence of Theorem \ref{Gam}.

\begin{thm}[\cite{L}]\label{Gam} Let $F:(0,\infty)^2\rightarrow(0,\infty)\times(0,1)$ be the bijection given by
\[F(a,b)=\left({a+b}, \frac{a}{a+b}\right).\]
NB. $F^{-1}(a,b)=(ab,a(1-b))$. Let $X$ and $Y$ be non-trivial $(0,\infty)$-valued independent random variables. It is then the case that $(U,V):=F(X,Y)$ are independent if and only if there exist $\la, c_1, c_2 > 0$ such that
\[X \sim \mathrm{Gam}(\la_1,c) ,\qquad Y \sim \mathrm{Gam}(\la_2 ,c),\]
and in this case, $U \sim \mathrm{Gam}(\la_1+\la_2,c) $ and $V \sim \mathrm{Be}(\la_1 ,\la_2)$.
\end{thm}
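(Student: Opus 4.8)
The plan is to prove the two implications separately. The forward (``if'') implication is a one-step change-of-variables computation, so I would dispatch it first; the converse is the classical Lukacs characterisation \cite{L}, which I would obtain by converting the independence hypothesis into a pair of ordinary differential equations for the Laplace transforms of $X$ and $Y$.

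\emph{The forward implication.} Assume $X\sim\mathrm{Gam}(\lambda_1,c)$ and $Y\sim\mathrm{Gam}(\lambda_2,c)$ are independent, so their joint density is proportional to $x^{\lambda_1-1}y^{\lambda_2-1}e^{-c(x+y)}$ on $(0,\infty)^2$. Since $F^{-1}(u,v)=(uv,u(1-v))$ has Jacobian of absolute value $u$, substituting shows the joint density of $(U,V)=F(X,Y)$ is proportional to $u^{\lambda_1+\lambda_2-1}e^{-cu}\cdot v^{\lambda_1-1}(1-v)^{\lambda_2-1}$ on $(0,\infty)\times(0,1)$. This factorises, and matching the two factors to the gamma and beta densities gives that $U$ and $V$ are independent with $U\sim\mathrm{Gam}(\lambda_1+\lambda_2,c)$ and $V\sim\mathrm{Be}(\lambda_1,\lambda_2)$; the same computation yields the distributions of $U$ and $V$ asserted in the statement once $X$ and $Y$ have been shown to be gamma.

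\emph{The converse.} Suppose $U=X+Y$ and $V=X/(X+Y)$ are independent, and set $f(s):=\mathbf{E}[e^{-sX}]$, $g(s):=\mathbf{E}[e^{-sY}]$ for $s\geq 0$; these are smooth on $[0,\infty)$, with differentiation under the expectation justified by the boundedness of $x^{k}e^{-sx}$ for $s>0$. For bounded $\psi$ and $k\in\{1,2\}$, the independence of $U$ and $V$ gives $\mathbf{E}[V^{k}\psi(U)]=\mathbf{E}[V^{k}]\,\mathbf{E}[\psi(U)]$; taking $\psi(u)=u^{k}e^{-su}$, using $X^{k}=U^{k}V^{k}$ together with the independence of $X$ and $Y$, and recalling that $fg$ is the Laplace transform of $U$, one obtains
\[f'(s)g(s)=\mu\,(fg)'(s),\qquad f''(s)g(s)=\nu\,(fg)''(s),\]
where $\mu:=\mathbf{E}[V]$ and $\nu:=\mathbf{E}[V^{2}]$ both lie strictly inside $(0,1)$ since $0<V<1$ almost surely. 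The first identity rearranges to $(1-\mu)(\log f)'=\mu(\log g)'$, and integrating from $0$ (where $\log f=\log g=0$) gives $g=f^{\rho}$ with $\rho:=(1-\mu)/\mu>0$.

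I would then substitute $g=f^{\rho}$ into the second identity and write $h:=\log f$. A short manipulation, using $f''/f=h''+(h')^{2}$ and cancelling a common factor of $f^{\rho-1}$, reduces that identity to $h''=\kappa(h')^{2}$ for a constant $\kappa$, the coefficient of $h''$ being nonzero (otherwise $f'\equiv 0$, contradicting non-triviality of $X$). The one point needing genuine care is pinning down the sign of $\kappa$, so as to land on a gamma Laplace transform rather than something with the wrong parameter sign: $h$ is convex, being the cumulant generating function of $-X$, so $h''\geq 0$, while $h'=f'/f<0$ is nowhere zero because $f$ is strictly decreasing; hence $\kappa\geq 0$, and $\kappa=0$ is excluded as it would force $X$ to be degenerate. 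Solving $w'=\kappa w^{2}$ for $w:=h'<0$ gives $w(s)=-(\kappa s+C)^{-1}$ with $C>0$, and integrating with $h(0)=0$ yields $f(s)=(1+s/c)^{-\lambda_1}$ with $\lambda_1:=1/\kappa>0$ and $c:=C/\kappa>0$. By uniqueness of Laplace transforms $X\sim\mathrm{Gam}(\lambda_1,c)$, and then $g=f^{\rho}$ gives $Y\sim\mathrm{Gam}(\lambda_2,c)$ with $\lambda_2:=\rho\lambda_1>0$; the distributions of $U$ and $V$ then follow from the forward implication, which would complete the argument. The main obstacle is exactly this last step: distilling the two moment identities into $h''=\kappa(h')^{2}$ and using convexity of the log-Laplace transform to fix $\kappa>0$. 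Everything else is routine bookkeeping.
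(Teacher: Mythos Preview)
The paper does not supply its own proof of this theorem; it is quoted as a classical result with citation to Lukacs \cite{L} and used as a black box (for instance in the proof of Proposition~\ref{DTODAmeas}). So there is no in-paper argument to compare against.

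Your proof is correct and is essentially the classical Lukacs approach. The forward direction is the standard Jacobian computation. For the converse, the reduction to the two moment identities $f'g=\mu(fg)'$ and $f''g=\nu(fg)''$, the elimination of $g$ via $g=f^{\rho}$, and the resulting Riccati-type equation $h''=\kappa(h')^2$ for $h=\log f$ are all sound. Two small remarks: first, the phrase ``smooth on $[0,\infty)$'' is slightly more than you justify---the boundedness of $x^k e^{-sx}$ gives smoothness only on $(0,\infty)$---but your argument only uses the ODE on $(0,\infty)$ together with continuity of $f,g$ at $0$ (to fix the integration constant), so nothing is lost. Second, your convexity argument for $\kappa>0$ is a clean way to avoid computing $\kappa$ explicitly; one can alternatively check directly that $\kappa=\frac{\nu(1+\rho)^2-1}{1-\nu(1+\rho)}$ with $1+\rho=1/\mu$, and then $\nu<\mu$ (from $V^2<V$) and $\nu>\mu^2$ (from $\mathrm{Var}(V)>0$, as $V$ is non-degenerate when $X,Y$ are) give both numerator and denominator positive. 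Either route works.
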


\begin{cor}\label{cor84}
Let $F:(0,\infty)\times(0,1)\rightarrow(0,\infty)^2$ be the bijection given by
\[F(a,b)=\left(ab,a(1-b)\right).\]
Let $X$ and $Y$ be non-trivial $(0,\infty)$-valued and $(0,1)$-valued, respectively, independent random variables. It is then the case that $(U,V):=F(X,Y)$ are independent if and only if there exist $\la, c_1, c_2 > 0$ such that
\[X \sim \mathrm{Gam}(\la_1+\la_2,c) ,\qquad Y \sim \mathrm{Be}(\la_1 ,\la_2),\]
and in this case, $U \sim \mathrm{Gam}(\la_1,c)$ and $V \sim\mathrm{Gam}(\la_2 ,c)$.
\end{cor}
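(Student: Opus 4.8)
The plan is to recognize that $F$ is exactly the inverse of the bijection treated in Theorem~\ref{Gam}, so that the corollary is a restatement of that theorem after a change of variables. Write $G:(0,\infty)^2\rightarrow(0,\infty)\times(0,1)$, $G(a,b)=(a+b,\frac{a}{a+b})$, for the map appearing in Theorem~\ref{Gam}; then $G^{-1}(a,b)=(ab,a(1-b))=F(a,b)$ on $(0,\infty)^2$, and $G\circ F$, $F\circ G$ are the respective identity maps. Everything reduces to transporting the two implications of Theorem~\ref{Gam} across this inversion.

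For the `if' direction I would argue as follows. Given independent $U'\sim\mathrm{Gam}(\la_1,c)$ and $V'\sim\mathrm{Gam}(\la_2,c)$, the `in this case' conclusion of Theorem~\ref{Gam} tells us that $G(U',V')$ is a pair of \emph{independent} random variables with marginals $\mathrm{Gam}(\la_1+\la_2,c)$ and $\mathrm{Be}(\la_1,\la_2)$; hence $G(U',V')\buildrel{d}\over{=}(X,Y)$ for $X,Y$ as in the statement. Applying $F=G^{-1}$, and using that equality in distribution is preserved under measurable maps, yields $(U,V)=F(X,Y)\buildrel{d}\over{=}F(G(U',V'))=(U',V')$, which is independent with the stated marginals. (Alternatively, one can verify the `if' part by a one-line density computation, using that the Jacobian of $F$ has absolute value $u+v$ at $(u,v)=F(x,y)$; the joint density of $(U,V)$ then comes out proportional to $u^{\la_1-1}e^{-cu}v^{\la_2-1}e^{-cv}$, which factorizes.)

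For the `only if' direction, assume $(U,V):=F(X,Y)=(XY,X(1-Y))$ are independent, and put $\tilde{X}:=U$, $\tilde{Y}:=V$. These are independent, $(0,\infty)$-valued random variables satisfying $\tilde{X}+\tilde{Y}=X$ and $\frac{\tilde{X}}{\tilde{X}+\tilde{Y}}=Y$, i.e.\ $G(\tilde{X},\tilde{Y})=(X,Y)$. Before invoking Theorem~\ref{Gam} one must check that $\tilde{X}$ and $\tilde{Y}$ are non-trivial: if, say, $\tilde{X}=XY$ were a.s.\ constant, then $X=\tilde{X}/Y$ would be a.s.\ equal to a $\sigma(Y)$-measurable random variable while also being independent of $Y$, hence a.s.\ constant, which would force $Y$ to be a.s.\ constant, contradicting its non-triviality; the same argument applies to $\tilde{Y}$. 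Now Theorem~\ref{Gam}, applied to the non-trivial independent pair $\tilde{X},\tilde{Y}$ with the knowledge that $G(\tilde{X},\tilde{Y})=(X,Y)$ is independent, produces $\la_1,\la_2,c>0$ with $\tilde{X}\sim\mathrm{Gam}(\la_1,c)$ and $\tilde{Y}\sim\mathrm{Gam}(\la_2,c)$; consequently $X=\tilde{X}+\tilde{Y}\sim\mathrm{Gam}(\la_1+\la_2,c)$, $Y=\frac{\tilde{X}}{\tilde{X}+\tilde{Y}}\sim\mathrm{Be}(\la_1,\la_2)$, and $U=\tilde{X}\sim\mathrm{Gam}(\la_1,c)$, $V=\tilde{Y}\sim\mathrm{Gam}(\la_2,c)$, which is precisely the assertion.

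I do not expect a genuine obstacle here: the whole proof is the observation $F=G^{-1}$ together with an application of Theorem~\ref{Gam}. The only points needing a little care are the transfer of non-triviality from $(X,Y)$ to $(\tilde{X},\tilde{Y})$ in the `only if' direction, and making sure the shape/scale (or shape/rate) conventions for $\mathrm{Gam}$ and $\mathrm{Be}$ fixed in the appendix are used consistently so that the parameter identifications above are literally correct.
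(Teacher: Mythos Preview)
Your proof is correct and matches the paper's approach: the paper simply states that Corollary~\ref{cor84} is a direct consequence of Theorem~\ref{Gam}, and you have written out exactly how that deduction goes via $F=G^{-1}$. Your care in transferring non-triviality from $(X,Y)$ to $(\tilde{X},\tilde{Y})$ is a nice touch that the paper leaves implicit.
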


\begin{thm}[\cite{SW}]\label{t85} Let $F:(0,1)^2\rightarrow(0,1)^2$ be the involution given by
\[F(a,b)=\left(\frac{1-b}{1-ab}, 1-ab\right).\]
Let $X$ and $Y$ be non-trivial $(0,1)$-valued independent random variables. It is then the case that $(U,V):=F(X,Y)$ are independent if and only if there exist $p,q,r>0$ such that
\[X \sim \mathrm{Be}(p,q),\qquad Y \sim \mathrm{Be}(p+q ,r),\]
and in this case, $U \sim \mathrm{Be}(r,q)$ and $V \sim \mathrm{Be}(q+r,p)$. Hence, if moreover $(U,V)$ has the same distribution as $(X,Y)$, then $X \sim \mathrm{Be}(p,q)$, $Y \sim \mathrm{Be}(p+q,p)$.
\end{thm}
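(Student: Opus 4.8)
The plan is to reduce the characterisation of the beta law under the involution
\[
F(a,b)=\left(\frac{1-b}{1-ab},\,1-ab\right)
\]
to the Lukacs-type characterisation of the gamma distribution already recalled in Theorem \ref{Gam}, via an explicit change of variables that turns beta variables into gamma variables. Concretely, if $X\sim\mathrm{Be}(p,q)$ and $Y\sim\mathrm{Be}(p+q,r)$ are independent, then one can realise them as $X=\frac{G_1}{G_1+G_2}$ and $Y=\frac{G_1+G_2}{G_1+G_2+G_3}$ where $G_1\sim\mathrm{Gam}(p,c)$, $G_2\sim\mathrm{Gam}(q,c)$, $G_3\sim\mathrm{Gam}(r,c)$ are independent (this is the standard ``stick-breaking'' construction, and it is essentially Theorem \ref{Gam} read backwards together with Corollary \ref{cor84}). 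Under this substitution a direct computation gives $1-ab=1-XY=\frac{G_2+G_3}{G_1+G_2+G_3}$ and $\frac{1-b}{1-ab}=\frac{G_3}{G_2+G_3}$, so that $U=F^{(1)}(X,Y)=\frac{G_3}{G_2+G_3}\sim\mathrm{Be}(r,q)$ and $V=F^{(2)}(X,Y)=\frac{G_2+G_3}{G_1+G_2+G_3}\sim\mathrm{Be}(q+r,p)$, and moreover $U$ (a function of $G_2,G_3$ only, in fact of the ratio) and $V$ are independent by the standard gamma beta independence. This establishes the ``if'' direction and identifies all the parameters; the fixed-point statement then follows by solving $r=p$ and $q=p$ (so $X\sim\mathrm{Be}(p,q)$, $Y\sim\mathrm{Be}(p+q,p)$) among the derived parameters.

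For the ``only if'' direction I would argue as follows. Suppose $X,Y$ are non-trivial, $(0,1)$-valued, independent, and that $U=\frac{1-Y}{1-XY}$, $V=1-XY$ are independent. Introduce auxiliary variables to pass to the positive half-line: set $a=\frac{X}{1-X}$ wherever needed, but more efficiently, observe that $V=1-XY$ and the pair $(U,V)$ determine $(X,Y)$ (since $F$ is an involution, $X=F^{(2)}(U,V)$, $Y=F^{(1)}(U,V)$), so the independence of $(X,Y)$ and of $(U,V)$ are symmetric hypotheses. The key step is to reparametrise by writing $X=\frac{S}{S+T}$, $1-V=XY=\frac{S}{S+T}\cdot Y$; introducing an independent gamma randomisation is not available for free here, so instead I would invoke directly the functional-equation machinery: the joint independence forces a Cauchy-type functional equation for the logarithmic derivatives of the densities on the relevant region, exactly as in the proof of Theorem \ref{Gam} in \cite{L}, and its beta-valued analogue is precisely what is carried out in Seshadri–Wesołowski \cite{SW}. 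Thus the cleanest route is to cite \cite{SW} for the uniqueness half, while the above gamma-to-beta computation supplies a transparent self-contained proof of the explicit form and of the fixed-point consequence.

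The main obstacle is the ``only if'' (uniqueness) part: unlike the ``if'' direction, it cannot be obtained by a mere change of variables from Theorem \ref{Gam}, because the gamma realisation of a pair of independent beta variables is not canonical — the common scale parameter $c$ is a genuine extra degree of freedom that is integrated out — so one cannot simply lift the independence hypothesis for $(X,Y)$ to an independence statement for a triple of gamma variables. One genuinely needs a regularity input (the densities exist and are, say, locally integrable or continuous on $(0,1)$) and then a functional-equation argument on the two-variable density relation coming from $f_X(a)f_Y(b)=|{\rm Jac}|\,f_U(F^{(1)}(a,b))f_V(F^{(2)}(a,b))$; differentiating in $a$ and in $b$ and using the product structure on both sides isolates ordinary differential equations whose only probability solutions are the beta densities with the asserted parameter constraints. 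This is exactly the content of \cite{SW}, which is why I would structure the write-up so that the substantive uniqueness step is quoted from there, with the present proof giving the explicit parameter bookkeeping and the fixed-point corollary.
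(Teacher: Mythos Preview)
The paper does not give its own proof of this theorem: it is stated as a quoted result with citation to \cite{SW}, just as Theorems \ref{GIG}, \ref{Gam} and \ref{Exp} are quoted from the literature. So there is nothing to compare against on the paper's side; your plan of citing \cite{SW} for the ``only if'' direction is exactly what the paper itself does.

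Your ``if'' argument via the stick-breaking representation $X=\tfrac{G_1}{G_1+G_2}$, $Y=\tfrac{G_1+G_2}{G_1+G_2+G_3}$ with independent gammas is correct and is a pleasant explicit verification (the paper does not include this). The computations $U=\tfrac{G_3}{G_2+G_3}$ and $V=\tfrac{G_2+G_3}{G_1+G_2+G_3}$ are right, and the independence of $U$ and $V$ follows from Lukacs' theorem applied to $(G_2,G_3)$.

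One small slip in your bookkeeping for the fixed-point corollary: matching $\mathrm{Be}(r,q)$ with $\mathrm{Be}(p,q)$ and $\mathrm{Be}(q+r,p)$ with $\mathrm{Be}(p+q,r)$ forces only $r=p$; there is no constraint $q=p$. Your stated conclusion $X\sim\mathrm{Be}(p,q)$, $Y\sim\mathrm{Be}(p+q,p)$ is nonetheless correct --- just delete the spurious ``$q=p$'' from the parenthetical.
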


Just as we related solutions of the detailed balance equations for the discrete and ultra-discrete KdV- and Toda-type systems in Section \ref{linksec}, it is possible to ultra-discretize the above statements, and this leads to a number of further conjectures. To do this, we transform variables taking values in $(0,1)$ to $(0,\infty)$ via the bijection $x \mapsto \frac{1}{x^{-1}-1}$ (the inverse of which is $x \mapsto \frac{1}{1+x^{-1}}$). The ultra-discretization procedure is then given by applying the limit
\[F(a,b) \mapsto \lim_{ \varepsilon \to 0} \left(\iota\varepsilon \log F^{(1)}\left(e^{\iota a \varepsilon^{-1}}, e^{\iota b \varepsilon^{-1}}\right), \iota\varepsilon \log F^{(2)}\left(e^{\iota a \varepsilon^{-1}}, e^{\iota b \varepsilon^{-1}}\right)\right),\]
where we take $\iota=1$ for Conjectures \ref{c86} and \ref{c87}, and $\iota=-1$ in the remaining cases. Precisely, we arrive at Conjecture \ref{c86} from the map at \eqref{fmap}, Conjecture \ref{c87} from Conjecture \ref{con82}, Theorem \ref{Exp}/Corollary \ref{Expcor} from Theorem \ref{Gam}/Corollary \ref{cor84}, and Conjecture \ref{c810} from Theorem \ref{t85}.

\begin{conj}\label{c86} If $F(a,b)=F^{(0,\infty)}_{udK}(a,b)$, then $F  : \R^2 \to \R^2$ is an involution. For any $c >0$, $F : [-c, c] \times [-c, \infty) \to [-c, c] \times [-c, \infty)$ is an involution, and for any $c_1, c_2 >0$, $F : [-c_1, c_2] \times [-c_2, \infty) \to [-c_2, c_1] \times [-c_1, \infty)$ is a bijection. Let $X$ and $Y$ be absolutely continuous $\mathbb{R}$-valued independent random variables satisfying $P(X >0) P(X<0) \neq 0$. It is then the case that $(U,V):=F(X,Y)$ are independent if and only if there exist $\la ,c_1, c_2>0 $ such that
\[X \sim \mathrm{stExp}(\la , -c_1 ,c_2 ),\qquad Y \sim \mathrm{sExp}(\la, -c_2),\]
and in this case, $U \sim \mathrm{stExp}(\la , -c_2 ,c_1 )$, $V \sim \mathrm{sExp}(\la, -c_1)$. Hence, if moreover $(U,V)$ has the same distribution as $(X,Y)$, then $X \sim \mathrm{stExp}(\la , -c ,c) $, $Y \sim \mathrm{sExp}(\la, -c)$ for some $c>0$.
\end{conj}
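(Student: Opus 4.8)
Write $F:=F^{(0,\infty)}_{udK}$. Taking $J=0$, $K=\infty$ in \eqref{UDKDV}, this is the piecewise-linear involution
\[
F(x,u)=\begin{cases}(u,x), & x+u\le 0,\\ (-x,\,u+2x), & x+u\ge 0,\end{cases}
\]
which preserves the sum $x+u$ and has Jacobian determinant of absolute value one Lebesgue-a.e.; the elementary domain assertions in the conjecture (that $F$ restricts to an involution of $[-c,c]\times[-c,\infty)$ and to a bijection $[-c_1,c_2]\times[-c_2,\infty)\to[-c_2,c_1]\times[-c_1,\infty)$) follow by inspecting the two pieces. The ``if'' direction is then immediate: if $X\sim\mathrm{stExp}(\lambda,-c_1,c_2)$ and $Y\sim\mathrm{sExp}(\lambda,-c_2)$ are independent, their joint density is proportional to $e^{-\lambda(x+u)}$ on $[-c_1,c_2]\times[-c_2,\infty)$, and since $F$ preserves $x+u$, has unit Jacobian, and maps this product onto $[-c_2,c_1]\times[-c_1,\infty)$, the density of $(U,V):=F(X,Y)$ is proportional to $e^{-\lambda(u_1+u_2)}$ on the latter product, i.e.\ exactly $\mathrm{stExp}(\lambda,-c_2,c_1)\times\mathrm{sExp}(\lambda,-c_1)$. (Alternatively, since $F^{(\alpha(\varepsilon),0)}_{dK}\to F$ pointwise under the scaling of Proposition \ref{udprop}(a), one may derive the ``if'' part from Theorem \ref{GIG} by passing to the ultra-discrete limit, as in Remark \ref{udrem}; Proposition \ref{udkdvmeas}(a)(i) already covers the symmetric case $c_1=c_2$.)

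For the ``only if'' direction, let $X,Y$ be independent and absolutely continuous with densities $f,g$, with $\mathbf{P}(X>0)\mathbf{P}(X<0)\neq 0$, and suppose $(U,V):=F(X,Y)$ is independent with marginal densities $p,q$. Pushing $f(x)g(u)$ forward through the two pieces of $F$ and equating with $p(u_1)q(u_2)$ gives, as equalities of a.e.-defined functions on the two half-planes,
\[
p(u_1)q(u_2)=g(u_1)f(u_2)\ \ \text{for } u_1+u_2<0,\qquad p(u_1)q(u_2)=f(-u_1)g(u_2+2u_1)\ \ \text{for } u_1+u_2>0.
\]
The first task is a \emph{support analysis}. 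Writing $a_1,a_2$ for the essential infimum and supremum of $X$, and similarly for $Y$, $U$, $V$, and using that $U\in\{-X,Y\}$ and $V\in\{X,Y+2X\}$ according to $\mathrm{sgn}(X+Y)$, together with the requirement that the two functional equations be consistent on the overlaps of the relevant supports, I would run a case analysis on the positions of $a_1+b_1$ and $a_2+b_2$ relative to $0$ — closely parallel to the proof of Proposition \ref{udkdvmeasall}(b) — to conclude that necessarily $\mathrm{supp}(X)=[a_1,a_2]$ with $-\infty<a_1<0<a_2<\infty$, $\mathrm{supp}(Y)=[-a_2,\infty)$, $\mathrm{supp}(U)=[-a_2,-a_1]$ and $\mathrm{supp}(V)=[a_1,\infty)$, all genuine intervals. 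Here the hypothesis $\mathbf{P}(X>0)\mathbf{P}(X<0)\neq 0$ is used to rule out the degenerate Proposition \ref{udkdvmeas}(c)-type solutions, and I expect this step to be the main obstacle: the delicate points are excluding an unbounded ``GIG-side'' ($a_2=\infty$) and excluding holes in any of the supports, the latter being needed so that the functional equations hold on large enough open sets for the next step.

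Granting the supports, restrict the first functional equation to $\{u_1+u_2<0\}$ and separate variables to obtain $p=e^{k}g$ on $(-a_2,-a_1)$ and $q=e^{-k}f$ on $(a_1,a_2)$ for some constant $k$; feeding these into the second functional equation on a suitable open region yields the relation $g(u_1)f(u_2)=f(-u_1)g(u_2+2u_1)$ involving $f$ and $g$ alone. Taking logarithms, for each fixed $w=-u_1$ the quantity $\log f(u_2)-\log g(u_2-2w)$ is constant in $u_2$; a standard regularisation (integrating out $w$) gives continuous representatives of $\log f$ and $\log g$, and a short computation then reduces the dependence of that constant on $w$ to a Cauchy functional equation, forcing $\log f$ and $\log g$ to be affine on the interiors of their supports with a common slope $-\lambda$. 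Normalisability of $g$ on the half-line $[-a_2,\infty)$ forces $\lambda>0$, and $f$ is then the corresponding truncated exponential on $[a_1,a_2]$. Setting $c_1:=-a_1$ and $c_2:=a_2$ and matching normalising constants identifies $X\sim\mathrm{stExp}(\lambda,-c_1,c_2)$ and $Y\sim\mathrm{sExp}(\lambda,-c_2)$, whereupon $p=e^{k}g$, $q=e^{-k}f$ and the second functional equation give $U\sim\mathrm{stExp}(\lambda,-c_2,c_1)$ and $V\sim\mathrm{sExp}(\lambda,-c_1)$; the final assertion follows by equating parameters when $(U,V)\buildrel{d}\over{=}(X,Y)$, which forces $c_1=c_2$. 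Everything downstream of the support analysis is the classical Lukacs/Matsumoto--Yor mechanism transported to the tropical map $F$, so the support analysis is where the real work lies.
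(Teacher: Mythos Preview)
The statement you are attempting to prove is labelled a \emph{Conjecture} in the paper, and the paper gives no proof of it. It appears in Section~\ref{oqsec} (``Open problems and conjectures''), and the only further comment the authors make is the remark at the end of that section noting that, after the article was completed, the conjecture was confirmed (under technical conditions) in \cite{BaoNoack}. So there is no ``paper's own proof'' to compare against.

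As for your proposal itself: the ``if'' direction is correct and is essentially the $J=0$, $K=\infty$ instance of Proposition~\ref{udkdvmeas}(a)(i), proved exactly as you indicate (mass conservation plus unit Jacobian). For the ``only if'' direction, your outline is honest but incomplete in precisely the way you yourself flag: the support analysis is the genuine obstacle, and you have not carried it out. The case analysis you propose, modelled on Proposition~\ref{udkdvmeasall}(b), is plausible in spirit, but note that the paper's own Proposition~\ref{udkdvmeasall} already needed an extra smoothness hypothesis (``smooth (twice differentiable), strictly positive densities'') to close the argument in the absolutely continuous case even for finite $K$; you have not explained how absolute continuity alone suffices here, nor how you rule out holes in the supports without first knowing the densities are positive on intervals. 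Your functional-equation step downstream of the supports is the right shape, but the regularisation you invoke (``integrating out $w$'') is sketched rather than executed. In short, your proposal is a reasonable plan of attack rather than a proof, which is consistent with the paper's own assessment that the statement was open at the time of writing.
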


\begin{conj}\label{c87} If $F(a,b)=F^{(J,K)}_{udK}(a,b)$ for some $-\infty < J, K <\infty$, then $F  : \R^2 \to \R^2$ is an involution. Also, for any $c < \min\{ \frac{J}{2}, \frac{K}{2} \}$, $F : [c, J-c] \times [c, K-c] \to [c, J-c] \times [c, K-c]$ is an involution, and
for any $c_1, c_2 < \min\{ \frac{J}{2}, \frac{K}{2} \}$, $F : [c_1, J-c_2] \times [c_2, K-c_1]  \to [c_2, J-c_1] \times [c_1, K-c_2]$ is a bijection. Let $X$ and $Y$ be absolutely continuous $\mathbb{R}$-valued independent random variables satisfying $P(X >\frac{J}{2})P(X<\frac{J}{2})P(Y >\frac{K}{2}) P(Y<\frac{K}{2}) \neq 0$.  It is then the case that $(U,V):=F(X,Y)$ are independent if and only if there exist $\la >0$ and $c_1, c_2 < \min\{ \frac{J}{2}, \frac{K}{2} \}$ such that
\[X \sim \mathrm{stExp}(\la ,c_1 ,J-c_2 ), \qquad Y \sim \mathrm{stExp}(\la, c_2,K-c_1 ),\]
and in this case, $U \sim \mathrm{stExp}(\la , c_2 , J-c_1 )$, $V \sim \mathrm{stExp}(\la, c_1,K-c_2)$. Hence, if moreover $(U,V)$ has the same distribution as $(X,Y)$, then $X \sim \mathrm{stExp}(\la, c ,J-c) $, $Y \sim \mathrm{stExp}(\la, c, K-c)$ for some $c < \min\{ \frac{J}{2}, \frac{K}{2} \}$.
\end{conj}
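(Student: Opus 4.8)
The plan is to adapt the scheme of the proof of Proposition~\ref{udkdvmeasall}(b), but to keep the densities of $X$, $Y$, $U:=F^{(1)}(X,Y)$ and $V:=F^{(2)}(X,Y)$ (call them $f,g,p,q$) distinct, and to replace the differentiation step --- which needed smoothness --- by the classical fact that a measurable $\beta$ solving $\beta(z+t)=\beta(z)+\rho(t)$ on a planar set of positive measure is affine. By the configuration-carrier duality $F_{udK}^{(J,K)}=\pi\circ F_{udK}^{(K,J)}\circ\pi$ of \eqref{ccdual}, it suffices to treat $J\le K$ (the case $J=K$ being degenerate and otherwise excluded); set $L:=K-J\ge0$. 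The preliminary assertions that $F_{udK}^{(J,K)}$ is an involution on $\mathbb{R}^2$, on $[c,J-c]\times[c,K-c]$, and that it maps $[c_1,J-c_2]\times[c_2,K-c_1]$ bijectively onto $[c_2,J-c_1]\times[c_1,K-c_2]$, are routine: one checks each of the three affine pieces of
\[F_{udK}^{(J,K)}(x,y)=\begin{cases}(y,x),& x+y\le J,\\ (J-x,\,y+2x-J),& J\le x+y\le K,\\ (y-L,\,x+L),& x+y\ge K,\end{cases}\]
separately.

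\emph{Step 1 (supports).} Since $X\perp Y$, $F$ is an involution, and $|{\det DF}|=1$ a.e., the joint density of $(U,V)$ is $(u,v)\mapsto f(F^{(1)}(u,v))g(F^{(2)}(u,v))$, so the hypothesis $U\perp V$ becomes $f(x)g(y)=p(F^{(1)}(x,y))\,q(F^{(2)}(x,y))$ for a.e.\ $(x,y)$. Running a version of the branch-by-branch corner analysis of cases (I)--(II'') in the proof of Proposition~\ref{udkdvmeasall}(b) --- now for four possibly-distinct densities, and tracking how the corners of $\mathrm{supp}(X)\times\mathrm{supp}(Y)$ are sent into $\mathrm{supp}(U)\times\mathrm{supp}(V)$ --- the positivity hypotheses $\mathbf{P}(X>\tfrac{J}{2})\mathbf{P}(X<\tfrac{J}{2})\mathbf{P}(Y>\tfrac{K}{2})\mathbf{P}(Y<\tfrac{K}{2})\neq0$ eliminate every degenerate branch and force the supports to be the intervals $\mathrm{supp}(X)=[c_1,J-c_2]$, $\mathrm{supp}(Y)=[c_2,K-c_1]$ (hence $\mathrm{supp}(U)=[c_2,J-c_1]$, $\mathrm{supp}(V)=[c_1,K-c_2]$) for some $c_1,c_2<\min\{\tfrac{J}{2},\tfrac{K}{2}\}$; that the $c_i$ are finite follows, as in that proof, once the exponential form is in hand, from $f,g,p,q$ being probability densities. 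In particular each of the three regions above meets the open rectangle $\mathrm{supp}(X)\times\mathrm{supp}(Y)$ in a set with non-empty interior.

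\emph{Step 2 (exponentiality).} On $\{J\le x+y\le K\}$ the functional equation reads $f(x)g(y)=p(J-x)\,q(y+2x-J)$; with $\beta=\log g$, $\delta=\log q$, $\eta(x)=\log p(J-x)-\log f(x)$ and $w=y+2x-J$ this is $\beta(w-2x+J)=\delta(w)+\eta(x)$. Fixing one admissible $x_0$ expresses $\delta$ through $\beta$, and resubstituting yields $\beta(z+t)-\beta(z)=\rho(t)$ on a planar set of positive measure (with $t=2(x_0-x)$, $\rho(t)=\eta(x_0-\tfrac{t}{2})-\eta(x_0)$). Measurability of $\beta=\log g$ on $[c_2,K-c_1]$ then forces $\beta$ affine, of slope $\lambda$ say, so $g$ has an $\mathrm{stExp}(\lambda,c_2,K-c_1)$ density; back-substitution makes $\delta$ affine of slope $\lambda$ (so $q$ is $\mathrm{stExp}(\lambda,c_1,K-c_2)$) and $\eta$ affine, whence $p(u)\propto f(J-u)e^{2\lambda u}$ on $[c_2,J-c_1]$. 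Finally the regions $\{x+y\le J\}$ and $\{x+y\ge K\}$ give $f(x)\propto q(x)$ and $f(x)\propto q(x+L)$ on the relevant sub-intervals, so $f(x)\propto e^{\lambda x}$ throughout $[c_1,J-c_2]$ and then $p(u)\propto e^{\lambda u}$ too (alternatively, exponentiality of $f$ follows by applying this step to $F_{udK}^{(K,J)}$ through $\pi$). Reading off the supports gives exactly $X\sim\mathrm{stExp}(\lambda,c_1,J-c_2)$, $Y\sim\mathrm{stExp}(\lambda,c_2,K-c_1)$, $U\sim\mathrm{stExp}(\lambda,c_2,J-c_1)$, $V\sim\mathrm{stExp}(\lambda,c_1,K-c_2)$. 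The converse implication is a direct computation: $F_{udK}^{(J,K)}$ conserves mass, has unit Jacobian, and maps the rectangle bijectively as recorded, so the exponent $\lambda(x+y)$ of the joint density and the support constraints both transform correctly, giving that $(U,V)$ is the stated product measure, in particular independent. For the ``moreover'' clause, $(U,V)\buildrel{d}\over{=}(X,Y)$ forces $\mathrm{supp}(U)=\mathrm{supp}(X)$, i.e.\ $[c_2,J-c_1]=[c_1,J-c_2]$, so $c_1=c_2=:c$ and $X\sim\mathrm{stExp}(\lambda,c,J-c)$, $Y\sim\mathrm{stExp}(\lambda,c,K-c)$.

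\emph{Main obstacle.} The delicate part is Step~1: in Proposition~\ref{udkdvmeasall}(b) smoothness (or discreteness) was assumed and the support was essentially given, whereas here one must deduce, from absolute continuity and the positivity hypotheses alone, that the supports are precisely those intervals and that the $c_i$ are finite --- a more involved version of the corner/branch bookkeeping of that proof, and also exactly what guarantees that the middle slab $\{J\le x+y\le K\}$ genuinely meets the support rectangle, so that the Cauchy--Pexider argument of Step~2 has a two-dimensional domain to work on. A secondary point needing care is that all the density identities hold only almost everywhere, so one works throughout with Borel versions of $f,g,p,q$ and uses continuity of the exponential to pass from the a.e.\ conclusion to the stated form.
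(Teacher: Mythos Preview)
This statement is a \emph{conjecture} in the paper, not a proved result; there is no proof to compare against. The paper explicitly leaves it open, and the closing remark notes that subsequent work of Bao--Noack resolves Conjectures~\ref{con82}, \ref{c810} and \ref{c86} (under technical conditions), but not this one.

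Your proposal is a plausible strategy, and the idea in Step~2 of replacing the differentiation used in the proof of Proposition~\ref{udkdvmeasall}(b) by a Cauchy--Pexider argument is the right way to drop the smoothness hypothesis. That said, what you have written is a sketch with an explicitly acknowledged gap, not a proof. Your own ``Main obstacle'' paragraph concedes that Step~1 is not carried out: the corner analysis of Proposition~\ref{udkdvmeasall}(b) is for the symmetric situation $F(\mu\times\nu)=\mu\times\nu$, where the input and output rectangles coincide, whereas here you must track four a~priori unrelated supports and show they are \emph{intervals} of the stated form. Absolute continuity alone does not give connected support, and without that the domain on which the functional equation of Step~2 holds need not be a two-dimensional set on which the Pexider argument applies. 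You also defer finiteness of $c_1,c_2$ to ``once the exponential form is in hand'', which is circular: you need the supports pinned down before Step~2 can run.

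A smaller point: even granting Step~1, Step~2 as written needs more care. The identities $f(x)\propto q(x)$ (from $\{x+y\le J\}$) and $f(x)\propto q(x+L)$ (from $\{x+y\ge K\}$) hold on different sub-intervals of $\mathrm{supp}(X)$, and you must check these overlap or otherwise patch together with a \emph{common} constant of proportionality; the phrase ``throughout $[c_1,J-c_2]$'' hides this. Similarly, the a.e.\ caveats you flag are genuine: the Cauchy--Pexider step requires the equation to hold on a set whose difference set has non-empty interior, which must be verified from the support conclusions of Step~1.

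In short: the outline is sensible and the Pexider idea is a real advance over differentiation, but as it stands the proposal does not close the conjecture --- the support determination in the four-measure setting is the substantive missing piece, and you have identified it without resolving it.
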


\begin{rem}
It is also possible to write down discrete versions of the previous two conjectures, replacing the $\mathrm{stExp}$ distribution with the $\mathrm{sstbGeo}$ one, cf.\ Proposition \ref{udkdvmeas}. The appearance of the bipartite version in the discrete case of these results is an interesting consequence of the particular structure of the ultra-discrete KdV system. Similarly, one might also make a discrete version of Conjecture \ref{c810} below involving the $\mathrm{sdAL}$ distribution.
\end{rem}

\begin{thm}[\cite{Cr}]\label{Exp} Let $F:\mathbb{R}^2\rightarrow\mathbb{R}^2$ be the bijection given by
\[F(a,b)=\left(\min\{a,b\}, a-b\right).\]
NB. $F^{-1}(a,b)=(a+\max\{b,0\},a-\min\{b,0\})$. Let $X$ and $Y$ be non-trivial $\mathbb{R}$-valued independent random variables. It is then the case that $(U,V):=F(X,Y)$ are independent if and only if there exist $\la_1, \la_2, c > 0$ such that
\[X \sim \mathrm{sExp}(\la_1,c),\qquad Y \sim \mathrm{sExp}(\la_2 ,c)\]
or $\theta_1, \theta_2 \in (0,1), m>0, M \in \Z$ such that
\[X \sim \mathrm{ssGeo}(1-\theta_1,M,m),\qquad Y \sim  \mathrm{ssGeo}(1-\theta_2,M,m),\]
and in this case  $U \sim \mathrm{stExp}(\la_1+\la_2,c)$, $V \sim \mathrm{AL}(\la_1 ,\la_2)$, or $U \sim \mathrm{ssGeo}(1-\theta_1\theta_2,M,m)$, $V \sim \mathrm{sdAL}(1-\theta_1,1-\theta_2,m)$, respectively.
\end{thm}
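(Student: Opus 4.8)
The plan is to treat the two implications separately, with the forward (``only if'') direction being a restatement of the classical characterization of \cite{Cr} (which builds on \cite{Fe1,Fe2}). For the ``if'' direction one computes directly: writing $X=c+X'$, $Y=c+Y'$ with $X'\sim\mathrm{Exp}(\lambda_1)$, $Y'\sim\mathrm{Exp}(\lambda_2)$ independent, the lack-of-memory property gives that, conditionally on the sign of $D:=X-Y$, the law of $M:=\min\{X,Y\}$ is $\mathrm{sExp}(\lambda_1+\lambda_2,c)$ irrespective of $|D|$; since $P(D>0)=\lambda_2/(\lambda_1+\lambda_2)$, this yields simultaneously that $M\sim\mathrm{sExp}(\lambda_1+\lambda_2,c)$, that $D\sim\mathrm{AL}(\lambda_1,\lambda_2)$, and that $M$ and $D$ are independent. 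The same computation, with the exponential replaced by the geometric and the half-line replaced by the lattice $\{c+km:k\ge M\}$, handles the discrete case, giving $M\sim\mathrm{ssGeo}(1-\theta_1\theta_2,M,m)$ and $D\sim\mathrm{sdAL}(1-\theta_1,1-\theta_2,m)$.

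For the ``only if'' direction I would first show $\inf\mathrm{supp}(X)=\inf\mathrm{supp}(Y)$: if $\inf\mathrm{supp}(X)<\inf\mathrm{supp}(Y)$, choose $t$ strictly between the two with $P(X\le t)>0$; on $\{X\le t\}$ one has $M=X$ and $D=X-Y<0$, and since $P(Y\le t)=0$ this forces $P(M\le t,\,D\ge 0)=0$, hence, by independence of $M$ and $D$, $D<0$ almost surely. Then $M=X$ almost surely, so $X$ and $X-Y$ are independent, which via characteristic functions forces $X$ to be degenerate, contradicting non-triviality. Denote the common left endpoint by $c$; replacing $(X,Y)$ by $(X-c,Y-c)$ we may assume $c=0$, so that $M\ge 0$, $X=M+D^+$, $Y=M+D^-$, with $D^\pm:=\max\{\pm D,0\}$.

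Next I would encode the independence of $M$ and $D$ as a functional equation. For $x,d\ge 0$, the event $\{M>x,\,D>d\}$ coincides with $\{Y>x,\,X>Y+d\}$, so by independence of $X$ and $Y$,
\[
\mathbf{E}\left[\bar{F}(Y+d)\,\mathbf{1}_{\{Y>x\}}\right]=P(M>x)P(D>d)=\bar{F}(x)\bar{G}(x)P(D>d),
\]
where $\bar{F}:=P(X>\cdot)$ and $\bar{G}:=P(Y>\cdot)$; a symmetric identity holds with $X$ and $Y$ interchanged. Dividing by the $d=0$ case shows that $\mathbf{E}[\bar{F}(Y+d)\mathbf{1}_{\{Y>x\}}]/\mathbf{E}[\bar{F}(Y)\mathbf{1}_{\{Y>x\}}]$ is independent of $x$, which, on varying $x$ over $\mathrm{supp}(Y)$, forces $\bar{F}(y+d)=\psi(d)\bar{F}(y)$ for $y$ ranging over a set generating $\mathrm{supp}(Y)$, with $\psi(d):=P(D>d)/P(D>0)$; the symmetric argument gives the companion relation for $\bar{G}$.

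The main obstacle is to pass from these ``conditional memoryless'' relations to the stated dichotomy. If $\mathrm{supp}(Y)$ contains an interval or accumulates at $0$, the relation propagates to all of $[0,\infty)$ and the Cauchy equation $\bar{F}(x+d)=\bar{F}(x)\bar{F}(d)$ yields $\bar{F}(x)=e^{-\lambda_1 x}$, i.e. $X\sim\mathrm{sExp}(\lambda_1,c)$, and likewise $Y\sim\mathrm{sExp}(\lambda_2,c)$ with the same location parameter $c$, whereupon the ``if'' computation identifies the laws of $U$ and $V$. If instead both supports are purely atomic, one must first establish that they are exactly $\{c+km:k\ge M\}$ for a common scale $m>0$ and common $M\in\mathbb{Z}$ --- the delicate support-rigidity step, which also rules out ``mixed'' supports --- after which the lattice Cauchy equation gives the geometric laws and the corresponding $\mathrm{sdAL}$ distribution for $D$. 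A complete and careful treatment of this last point is exactly the content of \cite{Fe1,Fe2,Cr}, so within the present article it suffices to cite those works; the above records how the argument proceeds.
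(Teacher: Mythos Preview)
The paper does not actually prove this theorem: it is stated with the citation [\textsc{Cr}] and no proof is given, the result being invoked as a classical fact (just as it is used earlier in the proof of Proposition~\ref{UDTODAmeas}, where the authors write that ``the uniqueness claim relies on a well-known fact \cite{Fe1,Fe2,Cr}''). Your proposal is therefore consistent with the paper's treatment, and in fact goes further by supplying a correct sketch of both directions before deferring the delicate support-rigidity step to the same references. One small point: your sketch tacitly assumes the common left endpoint $c$ is finite when you translate to $c=0$; this is harmless here since the target distributions all have finite left endpoints, but a complete argument would need to rule out $c=-\infty$ (which indeed follows once the Cauchy-type functional equation is in hand).
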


\begin{cor}\label{Expcor} Let $F:\mathbb{R}^2\rightarrow\mathbb{R}^2$ be the bijection given by
\[F(a,b)=\left(a+\max\{b,0\},a-\min\{b,0\}\right).\]
Let $X$ and $Y$ be non-trivial $\mathbb{R}$-valued independent random variables. It is then the case that $(U,V):=F(X,Y)$ are independent if and only if there exist $\la_1, \la_2, c > 0$ such that
\[X \sim\mathrm{sExp(\la_1+\la_2,c)},\qquad Y \sim \mathrm{AL}(\la_1 ,\la_2),\]
or $\theta_1, \theta_2 \in (0,1), m>0, M \in \Z$ such that
\[X \sim \mathrm{ssGeo}(1-\theta_1\theta_2,M,m),\qquad Y \sim  \mathrm{sdAL}(1-\theta_1,1-\theta_2,m),\]
and in this case  $U \sim \mathrm{sExp}(\la_1,c)$, $V \sim \mathrm{sExp}(\la_2,c)$, or $U \sim \mathrm{ssGeo}(1-\theta_1,M,m)$, $V\sim \mathrm{ssGeo}(1-\theta_2,M,m)$, respectively.
\end{cor}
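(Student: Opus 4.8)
The plan is to deduce this directly from Theorem~\ref{Exp}, exploiting the fact that the bijection $F$ here is exactly the inverse of the bijection $G(a,b):=(\min\{a,b\},a-b)$ of Theorem~\ref{Exp}; indeed the ``NB.'' in that theorem records $G^{-1}(a,b)=(a+\max\{b,0\},a-\min\{b,0\})=F(a,b)$. Writing $(U,V)=F(X,Y)$ is therefore the same as writing $(X,Y)=G(U,V)$, with the additional elementary identities $X=\min\{U,V\}$ and $Y=U-V$. In this way the hypothesis ``$(U,V)$ independent'' and the distributional assertions of Corollary~\ref{Expcor} become, respectively, the conclusion and hypothesis of Theorem~\ref{Exp} applied to the pair $(U,V)$ as input. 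The only point not handled by this formal rewriting is the transfer of the non-triviality hypotheses, so that is where I would concentrate.

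For the \emph{if} direction, suppose $X\sim\mathrm{sExp}(\la_1+\la_2,c)$ and $Y\sim\mathrm{AL}(\la_1,\la_2)$ are independent (the geometric case is identical). I would take independent $\tilde U\sim\mathrm{sExp}(\la_1,c)$, $\tilde V\sim\mathrm{sExp}(\la_2,c)$; by Theorem~\ref{Exp}, $G(\tilde U,\tilde V)$ is independent with marginals $\mathrm{sExp}(\la_1+\la_2,c)\times\mathrm{AL}(\la_1,\la_2)$, hence $G(\tilde U,\tilde V)\buildrel{d}\over{=}(X,Y)$. Applying the bijection $F=G^{-1}$ gives $F(X,Y)\buildrel{d}\over{=}(\tilde U,\tilde V)$, which is independent with the asserted marginals. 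The $\mathrm{ssGeo}$/$\mathrm{sdAL}$ case is treated the same way using the corresponding part of Theorem~\ref{Exp}.

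For the \emph{only if} direction, suppose $X,Y$ are non-trivial, independent, and $(U,V)=F(X,Y)$ are independent. The key preliminary step is to check that $U$ and $V$ are themselves non-trivial, so that Theorem~\ref{Exp} is applicable to $(U,V)$. For $U=X+\max\{Y,0\}$: if $U$ were almost surely constant then $X$ would equal an a.s.\ $\sigma(Y)$-measurable random variable, and since $X$ is independent of $Y$ this makes $X$ independent of itself, forcing $X$ to be a.s.\ constant and contradicting non-triviality. The same argument applied to $V=X+\max\{-Y,0\}$ shows $V$ is non-trivial. Now $(X,Y)=G(U,V)$ with $U,V$ non-trivial and independent and $(X,Y)$ independent, so Theorem~\ref{Exp} yields that either $U\sim\mathrm{sExp}(\la_1,c)$, $V\sim\mathrm{sExp}(\la_2,c)$, whence $X=\min\{U,V\}\sim\mathrm{sExp}(\la_1+\la_2,c)$ and $Y=U-V\sim\mathrm{AL}(\la_1,\la_2)$, or the analogous geometric alternative holds; these are precisely the cases listed.

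I expect no real obstacle: the only substantive content is the non-triviality transfer in the ``only if'' direction, and that reduces to the standard fact that a random variable independent of itself is almost surely constant. Everything else is bookkeeping through the involution-free bijection $F=G^{-1}$.
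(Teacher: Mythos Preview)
Your proposal is correct and matches the paper's approach: the paper does not spell out a proof of Corollary~\ref{Expcor}, but positions it exactly as you do, as the direct consequence of Theorem~\ref{Exp} obtained by noting that the map here is the inverse $G^{-1}$ of the map $G$ there (just as Corollary~\ref{cor84} is stated to follow from Theorem~\ref{Gam}). Your explicit handling of the non-triviality transfer in the ``only if'' direction is the only point requiring any care, and your argument via $\sigma(Y)$-measurability and independence is sound.
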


\begin{conj}\label{c810} Let $F:\mathbb{R}^2\rightarrow\mathbb{R}^2$ be the involution given by
\[F(a,b)=\left(\min\{a, 0\}-b, \min\{a,b,0\}-a-b\right).\]
Let $X$ and $Y$ be absolutely continuous $\mathbb{R}$-valued independent random variables. It is then the case that $(U,V):=F(X,Y)$ are independent if and only if there exist $p,q,r > 0$ such that
\[X \sim \mathrm{AL}(p,q),\qquad Y \sim \mathrm{AL}(p+q, r),\]
and in this case, $U \sim \mathrm{AL}(r,q)$, $V \sim \mathrm{AL}(q+r, p)$. Hence, if moreover $(U,V)$ has the same distribution as $(X,Y)$, then $X \sim \mathrm{AL}(p,q) $, $Y \sim \mathrm{AL}(p+q, p)$.
\end{conj}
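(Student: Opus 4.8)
The plan is to treat the two implications separately, obtaining the easy direction essentially for free by ultra\-/discretising Theorem~\ref{t85}, and attacking the genuine characterisation (the ``only if'' part) by the functional\-/equation method. As a preliminary, note that the assertion that $F$ is an involution is a routine piecewise check on the four linearity regions $\{a\ge 0,b\ge 0\}$, $\{a\ge 0,b<0\}$, $\{a<0,b\ge 0\}$, $\{a<0,b<0\}$ (the last split further at $a=b$), on each of which $F$ is affine with $|\det DF|=1$.

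\emph{The `if' direction and the explicit laws.} Conjugating the Seiden--Wes\-o\l owski map $F_{SW}(a,b)=\bigl(\tfrac{1-b}{1-ab},1-ab\bigr)$ by the bijection $\phi(x)=x/(1-x)$ of $(0,1)$ onto $(0,\infty)$ produces $\tilde F_{SW}(y_1,y_2)=\bigl(\tfrac{1+y_1}{y_2},\tfrac{1+y_1+y_2}{y_1y_2}\bigr)$, and the associated log\-/rescaled maps $\tilde F_\varepsilon(a,b):=\bigl(-\varepsilon\log\tilde F_{SW}^{(1)}(e^{-a/\varepsilon},e^{-b/\varepsilon}),\,-\varepsilon\log\tilde F_{SW}^{(2)}(e^{-a/\varepsilon},e^{-b/\varepsilon})\bigr)$ converge uniformly on $\mathbb{R}^2$, as $\varepsilon\downarrow 0$, to the involution $F$ of the statement (one checks directly that $-\varepsilon\log(1+e^{-a/\varepsilon})\to\min\{a,0\}$ and $-\varepsilon\log(1+e^{-a/\varepsilon}+e^{-b/\varepsilon})\to\min\{a,b,0\}$ uniformly). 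Now take $X(\varepsilon)\sim\mathrm{Be}(\varepsilon p,\varepsilon q)$ and $Y(\varepsilon)\sim\mathrm{Be}(\varepsilon(p+q),\varepsilon r)$ independent; by Theorem~\ref{t85}, $(U(\varepsilon),V(\varepsilon)):=F_{SW}(X(\varepsilon),Y(\varepsilon))$ are independent with $U(\varepsilon)\sim\mathrm{Be}(\varepsilon r,\varepsilon q)$, $V(\varepsilon)\sim\mathrm{Be}(\varepsilon(q+r),\varepsilon p)$. A change of variables as in Proposition~\ref{udprop} shows that $-\varepsilon\log\phi$ applied to a $\mathrm{Be}(\varepsilon\lambda_1,\varepsilon\lambda_2)$ variable converges in distribution to $\mathrm{AL}(\lambda_1,\lambda_2)$, and since $\tilde F_\varepsilon\bigl(-\varepsilon\log\phi(X(\varepsilon)),-\varepsilon\log\phi(Y(\varepsilon))\bigr)=\bigl(-\varepsilon\log\phi(U(\varepsilon)),-\varepsilon\log\phi(V(\varepsilon))\bigr)$, the extended continuous mapping theorem (joint convergence of the product laws together with $\tilde F_\varepsilon\to F$ uniformly, $F$ continuous) yields that $F(X,Y)$ has independent coordinates with the stated $\mathrm{AL}$ marginals whenever $X\sim\mathrm{AL}(p,q)$ and $Y\sim\mathrm{AL}(p+q,r)$. (Alternatively, this may be verified directly from the piecewise\-/linear form of $F$, the a.e.\ unit Jacobian, and the explicit two\-/sided exponential densities.)

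\emph{The `only if' direction.} Suppose $X,Y$ are independent and absolutely continuous with densities $f,g$, and that $(U,V)=F(X,Y)$ are independent with densities $\tilde f,\tilde g$. Since $F$ is piecewise affine with $|\det DF|=1$ a.e., the change\-/of\-/variables formula gives $f(a)g(b)=\tilde f(u(a,b))\,\tilde g(v(a,b))$ for a.e.\ $(a,b)$, with $(u,v)=F(a,b)$; on each linearity region this is a multiplicative identity $f(a)g(b)=\tilde f(\ell_1(a,b))\,\tilde g(\ell_2(a,b))$ with $\ell_1,\ell_2$ affine. On $\{a\ge 0,b<0\}$, where $F(a,b)=(-b,-a)$, one gets $f(a)g(b)=\tilde f(-b)\tilde g(-a)$, which separates the variables and expresses $f$ on $[0,\infty)$ and $g$ on $(-\infty,0)$ through $\tilde g$ and $\tilde f$. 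On $\{a\ge 0,b\ge 0\}$, where $F(a,b)=(-b,-a-b)$, taking logarithms and differentiating in $a$ forces $(\log\tilde g)'$ to be constant on $(-\infty,0]$ and hence $\tilde g$ log\-/affine there (with positive rate for integrability) and $f$ log\-/affine on $[0,\infty)$; differentiating then in $b$ relates $(\log g)'$ on $[0,\infty)$ to $(\log\tilde f)'$ on $(-\infty,0]$, and the remaining regions pin down the complementary half\-/lines of $f,g,\tilde f,\tilde g$. Solving the resulting linear relations among the four exponential rates, and matching the free multiplicative constants so that all four are probability densities, forces the two\-/sided exponential ($\mathrm{AL}$) forms with the parameter identifications of the statement. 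Finally, if in addition $(U,V)\stackrel{d}{=}(X,Y)$, then $\mathrm{AL}(r,q)=\mathrm{AL}(p,q)$ forces $r=p$ (a pair of rates determines an $\mathrm{AL}$ law), giving $X\sim\mathrm{AL}(p,q)$, $Y\sim\mathrm{AL}(p+q,p)$.

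\emph{The main obstacle.} The delicate point — and the reason this is posed as a conjecture — is the gap between ``absolutely continuous'' and ``the functional equation may be differentiated''. One cannot invoke Theorem~\ref{t85} in reverse, because ultra\-/discretisation is a one\-/way limit that destroys information: an arbitrary absolutely continuous solution of the ultra\-/discrete equation need not arise as a limit of beta solutions. Thus the characterisation must be carried out intrinsically, and the technical heart is a regularity bootstrap: first establish that $f,g,\tilde f,\tilde g$ are a.e.\ positive on intervals, and then, using the separation identity on the ``swap'' region $\{a\ge0,b<0\}$ (where $F$ is an affine involution with constant Jacobian) together with the convolution\-/type identities on the other regions, upgrade them to (piecewise) smoothness so that the differentiation step above is legitimate. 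This is precisely the kind of argument that requires care in the known characterisations of the exponential/gamma/beta laws (cf.\ \cite{Fe1,L,SW}), and adapting it to the piecewise\-/linear involution $F$, with its several regions and the two\-/sided supports of all the variables, is where the real work lies.
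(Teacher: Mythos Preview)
The statement you are addressing is a \emph{Conjecture} in the paper, not a proved result: the paper offers no proof, and indeed explicitly presents it as an open problem obtained by ultra-discretising Theorem~\ref{t85}. (A remark added in proof notes that the conjecture was subsequently confirmed, under technical conditions, in \cite{BaoNoack}.) So there is no paper proof to compare against.

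On the substance of your attempt: your `if' direction is fine. The ultra-discretisation argument you give is exactly the route by which the paper \emph{arrived} at the conjecture (see the paragraph preceding Conjecture~\ref{c86}), and the limiting step via Proposition~\ref{udprop}-style convergence together with uniform convergence of $\tilde F_\varepsilon\to F$ is sound; the alternative direct density computation you mention would also work and is arguably cleaner.

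Your `only if' sketch, however, is not a proof, and you say as much yourself. The formal manipulations on the linearity regions are correct \emph{once} one knows the densities are a.e.\ positive and (piecewise) smooth, but you have not established that, and this is precisely the content of the conjecture. The regularity bootstrap you allude to --- from absolute continuity to positivity on intervals to differentiability --- is the entire difficulty, and nothing in your write-up carries it out. In the analogous characterisations (\cite{L,LW,SW}) this step is handled by model-specific arguments (Mellin/Laplace transforms, or reduction to an integrated Cauchy functional equation), none of which transfers automatically to the piecewise-linear, multi-region map $F$ here. So your proposal correctly isolates the obstacle but does not overcome it; what you have is a proof of the `if' direction and an honest identification of why the `only if' direction remains open.
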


\begin{rem}
Since this article was completed, some of the above conjectures have been addressed in \cite{BaoNoack}. In particular, under technical conditions, Theorems 1.1, 1.2 and 1.3 of \cite{BaoNoack} confirm Conjectures \ref{con82}, \ref{c810} and \ref{c86}, respectively. It remains to check discrete versions of the latter two claims.
\end{rem}

\section*{Acknowledgements}

This research was supported by JSPS Grant-in-Aid for Scientific Research (B), 19H01792. The research of DC was also supported by JSPS Grant-in-Aid for Scientific Research (C), 19K03540, and the Research Institute for Mathematical Sciences, an International Joint Usage/Research Center located in Kyoto University. This work was completed while MS was kindly being hosted by the Courant Institute, New York University.

\appendix

\section{Probability distributions}

In the following list, we give definitions of the various probability distributions that appear within this article.

\begin{description}
  \item[Shifted truncated exponential distribution] For $\lambda,c_1,c_2\in\mathbb{R}$ with $c_1<c_2$, the \emph{shifted truncated exponential} distribution with parameters $(\lambda,c_1,c_2)$, which we denote $\mathrm{stExp}(\lambda,c_1,c_2)$, has density
\[\frac{1}{Z}e^{-\la x}\mathbf{1}_{[c_1,c_2]}(x),\qquad x\in\mathbb{R},\]
where $Z$ is a normalizing constant.
  \item[Shifted exponential distribution] For $\lambda >0$, $c\in \R$,  the \emph{shifted exponential} distribution with parameters $(\lambda,c)$, which we denote $\mathrm{sExp}(\lambda,c)$, has density
\[\frac{1}{Z}e^{-\la x}\mathbf{1}_{[c,\infty)}(x),\qquad x\in\R,\]
where $Z$ is a normalizing constant. We use the convention that $\mathrm{stExp}(\lambda,c,\infty)=\mathrm{sExp}(\lambda,c)$ when $\lambda >0$.
  \item[Shifted scaled (truncated bipartite) geometric distribution] For $\theta>0$, $M\in\mathbb{Z}$, $N\in\mathbb{Z}\cup\{\infty\}$ such that $M\leq N$, $\kappa>0$ and $m\in(0,\infty)$, we say a random variable $X$ has \emph{shifted scaled truncated bipartite geometric distribution} with parameters $1-\theta$, $M$, $N$, $\kappa$ and $m$ if
\[\mathbf{P}\left(X=mx\right)=\frac{1}{Z}\theta^{x}\kappa^{\iota(x)},\qquad x\in\{M,M+1,\dots,N\},\]
where $\iota(2x)=0, \iota(2x+1)=1$ and $Z$ is a normalising constant; in this case we write $X\sim \mathrm{sstbGeo}(1-\theta,M,N,\kappa,m)$. Note that, if $N=\infty$, then we require that $\theta<1$ for the distribution to be defined. We observe that $\mathrm{sstbGeo}(1-\theta,0,N,1,1)$ is simply the distribution of the usual parameter $1-\theta$ geometric distribution conditioned to take a value in $\{0,1,\dots,N\}$. In the special case when $\theta<1$, $N=\infty$, $\kappa=1$, we say that $X$ has \emph{shifted scaled geometric distribution} with parameters $1-\theta$, $M$ and $m$, and write $X\sim \mathrm{ssGeo}(1-\theta,M,m)$.
\item[Asymmetric Laplace distribution] For $\lambda_1,\lambda_2\in(0,\infty)$, the \emph{asymmetric Laplace} distribution with parameters $(\lambda_1,\lambda_2)$, which we denote $\mathrm{AL}(\lambda_1,\lambda_2)$, has density
\[\frac{1}{Z}\left(e^{-\lambda_1 x}\mathbf{1}_{(0,\infty)}(x)+e^{\lambda_2 x}\mathbf{1}_{(-\infty,0)}(x)\right),\qquad x\in\R,\]
where $Z$ is a normalizing constant.
\item[Scaled discrete asymmetric Laplace distribution] For $\theta_1,\theta_2\in(0,1)$ and $m \in (0,\infty)$, we say a random variable $X$ has \emph{scaled discrete asymmetric Laplace} distribution with parameters $(1-\theta_1,1-\theta_2,m)$ if
\[\mathbf{P}\left(X=mx\right) =\left\{\begin{array}{ll}
                    \frac{1}{Z}\theta_1^{x}, & x \in \{0,1,2,\dots\}, \\
                    \frac{1}{Z}\theta_2^{-x}, & x \in \{ \dots, -2,-1\},
                  \end{array}\right.\]
where $Z$ is a normalizing constant; in this case we write $X \sim \mathrm{sdAL}(1-\theta_1,1-\theta_2,m)$.
\item[Gamma distribution] For $\lambda,c\in(0,\infty)$, the \emph{gamma} distribution with parameters $(\lambda,c)$, which we denote $\mathrm{Gam}(\lambda,c)$, has density
\[\frac{1}{Z}x^{\lambda-1}e^{-cx}\mathbf{1}_{(0,\infty)}(x),\qquad x\in\mathbb{R},\]
where $Z$ is a normalizing constant.
\item[Inverse gamma distribution] For $\lambda,c\in(0,\infty)$, the \emph{inverse gamma} distribution with parameters $(\lambda,c)$, which we denote $\mathrm{IG}(\lambda,c)$, has density
\[\frac{1}{Z}x^{-\lambda-1}e^{-cx^{-1}}\mathbf{1}_{(0,\infty)}(x),\qquad x\in\mathbb{R},\]
where $Z$ is a normalizing constant.
\item[Generalized inverse Gaussian distribution] For $\lambda \in \R$, $c_1,c_2 \in(0,\infty)$, the \emph{generalized inverse Gaussian} distribution with parameters $(\lambda,c_1,c_2)$, which we denote $\mathrm{GIG}(\lambda,c_1,c_2)$, has density
\[\frac{1}{Z}x^{-\lambda-1}e^{-c_1x-c_2x^{-1}}\mathbf{1}_{(0,\infty)}(x),\qquad x\in\mathbb{R},\]
where $Z$ is a normalizing constant.  We use the convention that $\mathrm{GIG}(\lambda,0,c)=\mathrm{IG}(\lambda,c)$.
\item[Beta distribution] For $\lambda_1,\lambda_2 \in(0,\infty)$, the \emph{beta} distribution with parameters $(\lambda_1,\lambda_2)$, which we denote $\mathrm{Be}(\lambda_1,\lambda_2)$, has density
\[\frac{1}{Z} x^{\lambda_1-1}(1-x)^{\lambda_2-1}\mathbf{1}_{(0,1)}(x),\qquad x\in\mathbb{R},\]
where $Z$ is a normalizing constant.
\item[$q$-negative binomial distribution] Fix $q\in[0,1)$. For $p,b \in [0,1)$ or $p <0$, $b=q^{-L}$ for some $L \in \Z$, we say a random variable $X$ has \emph{$q$-negative binomial} distribution with parameters $(p,b)$ if
\[\mathbf{P}\left(X=n\right)=\frac{1}{Z}p^n\frac{(b;q)_n}{(q;q)_n}, \qquad  n \in \{0,1,2,\dots\},\]
where $(a;q)_n:=(1-a)(1-aq)\dots(1-aq^{n-1})$ for $n\geq 1$, $(a;q)_0:=1$, and $Z$ is a normalising constant, which can be given explicitly as $Z=\frac{(pb;q)_{\infty}}{(b;q)_{\infty}}$; in this case we write $X\sim \mathrm{qNB}(b,p)$. Note that, if $p,b \in [0,1)$, then the support of $X$ is $\Z_+$, and if $p<0$ and $b=q^{-L}$ for some $L \in \Z$, then the support of $X$ is $\{0,1,2,\dots, L\}$.
\end{description}

\bibliography{irf}

\providecommand{\bysame}{\leavevmode\hbox to3em{\hrulefill}\thinspace}
\providecommand{\MR}{\relax\ifhmode\unskip\space\fi MR }
\providecommand{\MRhref}[2]{%
  \href{http://www.ams.org/mathscinet-getitem?mr=#1}{#2}
}
\providecommand{\href}[2]{#2}
\begin{thebibliography}{10}

\bibitem{BCS}
M.~Bal\'{a}zs, E.~Cator, and T.~Sepp\"{a}l\"{a}inen, \emph{Cube root
  fluctuations for the corner growth model associated to the exclusion
  process}, Electron. J. Probab. \textbf{11} (2006), no. 42, 1094--1132.

\bibitem{BaoNoack}
K.~B. Bao and C.~Noack, \emph{Characterizations of the generalized inverse
  {G}aussian, assymetric {L}aplace, and shifted (truncated) exponential laws
  via independence properties}, preprint appears at arXiv:2107.01394, 2021.

\bibitem{BI}
G.~Barraquand and I.~Corwin, \emph{Random-walk in beta-distributed random
  environment}, Probab. Theory Related Fields \textbf{167} (2017), no.~3-4,
  1057--1116.

\bibitem{Bq}
P.~J. Burke, \emph{The output of a queuing system}, Operations Res. \textbf{4}
  (1956), 699--704 (1957).

\bibitem{CN}
H.~Chaumont and C.~Noack, \emph{Characterizing stationary {$1+1$} dimensional
  lattice polymer models}, Electron. J. Probab. \textbf{23} (2018), Paper No.
  38, 19.

\bibitem{Corwinsurvey}
I.~Corwin, \emph{The {K}ardar-{P}arisi-{Z}hang equation and universality
  class}, Random Matrices Theory Appl. \textbf{1} (2012), no.~1, 1130001, 76.

\bibitem{CP}
I.~Corwin and L.~Petrov, \emph{Stochastic higher spin vertex models on the
  line}, Comm. Math. Phys. \textbf{343} (2016), no.~2, 651--700.

\bibitem{Cr}
G.~B. Crawford, \emph{Characterization of geometric and exponential
  distributions}, Ann. Math. Statist. \textbf{37} (1966), 1790--1795.

\bibitem{CKST}
D.~A. Croydon, T.~Kato, M.~Sasada, and S.~Tsujimoto, \emph{Dynamics of the
  box-ball system with random initial conditions via {P}itman's
  transformation}, to appear in Mem. Amer. Math. Soc., preprint appears at
  arXiv:1806.02147, 2018.

\bibitem{CS}
D.~A. Croydon and M.~Sasada, \emph{Duality between box-ball systems of finite
  box and/or carrier capacity}, RIMS K\^{o}ky\^{u}roku Bessatsu \textbf{B79}
  (2020), 63--107.

\bibitem{CS4}
\bysame, \emph{Discrete integrable systems and {P}itman's transformation}, Adv.
  Stud. Pure Math. \textbf{87} (2021), 381--402.

\bibitem{CSrp}
\bysame, \emph{On the stationary solutions of random polymer models and their
  zero-temperature limits}, preprint appears at arXiv:2104.03458, 2021.

\bibitem{CSTkdv}
D.~A. Croydon, M.~Sasada, and S.~Tsujimoto, \emph{General solutions for
  {K}d{V}- and {T}oda-type discrete integrable systems based on path
  encodings}, preprint appears at arXiv:2011.00690, 2020.

\bibitem{DF}
P.~Diaconis and D.~Freedman, \emph{Iterated random functions}, SIAM Rev.
  \textbf{41} (1999), no.~1, 45--76.

\bibitem{QST}
M.~Draief, J.~Mairesse, and N.~O'Connell, \emph{Queues, stores, and tableaux},
  J. Appl. Probab. \textbf{42} (2005), no.~4, 1145--1167.

\bibitem{Fel}
W.~Feller, \emph{An introduction to probability theory and its applications.
  {V}ol. {II}}, Second edition, John Wiley \& Sons, Inc., New
  York-London-Sydney, 1971. \MR{0270403}

\bibitem{Fe2}
T.~S. Ferguson, \emph{A characterization of the exponential distribution}, Ann.
  Math. Statist. \textbf{35} (1964), 1199--1207.

\bibitem{Fe1}
\bysame, \emph{A characterization of the geometric distribution}, Amer. Math.
  Monthly \textbf{72} (1965), 256--260.

\bibitem{IMS}
T.~Imamura, M.~Mucciconi, and T.~Sasamoto, \emph{Stationary stochastic {H}igher
  {S}pin {S}ix {V}ertex {M}odel and {$q$}-{W}hittaker measure}, Probab. Theory
  Related Fields \textbf{177} (2020), no.~3-4, 923--1042.

\bibitem{QR}
D.~Remenik J.~Quastel, \emph{{KP} governs random growth off a one dimensional
  substrate}, preprint appears at arXiv:1908.10353, 2019.

\bibitem{JRA}
C.~Janjigian and F.~Rassoul-Agha, \emph{Uniqueness and {E}rgodicity of
  {S}tationary {D}irected {P}olymers on {$\mathbb{Z}^2$}}, J. Stat. Phys.
  \textbf{179} (2020), no.~3, 672--689.

\bibitem{Kac}
M.~Kac, \emph{On a characterization of the normal distribution}, Amer. J. Math.
  \textbf{61} (1939), 726--728.

\bibitem{KNW}
S.~Kakei, J.~J.~C. Nimmo, and R.~Willox, \emph{Yang-{B}axter maps from the
  discrete {BKP} equation}, SIGMA Symmetry Integrability Geom. Methods Appl.
  \textbf{6} (2010), Paper 028, 11.

\bibitem{Kall}
O.~Kallenberg, \emph{Foundations of modern probability}, second ed.,
  Probability and its Applications (New York), Springer-Verlag, New York, 2002.

\bibitem{KO}
M.~Keane and N.~O'Connell, \emph{The {$M/M/1$} queue is {B}ernoulli}, Colloq.
  Math. \textbf{110} (2008), no.~1, 205--210.

\bibitem{KV}
A.~E. Koudou and P.~Vallois, \emph{Independence properties of the
  {M}atsumoto-{Y}or type}, Bernoulli \textbf{18} (2012), no.~1, 119--136.

\bibitem{LS}
G.~Letac and V.~Seshadri, \emph{A characterization of the generalized inverse
  {G}aussian distribution by continued fractions}, Z. Wahrsch. Verw. Gebiete
  \textbf{62} (1983), no.~4, 485--489.

\bibitem{LW}
G.~Letac and J.~Weso{\l}owski, \emph{An independence property for the product
  of {GIG} and gamma laws}, Ann. Probab. \textbf{28} (2000), no.~3, 1371--1383.

\bibitem{Loya}
P.~Loya, \emph{Amazing and aesthetic aspects of analysis}, Springer, New York,
  2017.

\bibitem{L}
E.~Lukacs, \emph{A characterization of the gamma distribution}, Ann. Math.
  Statist. \textbf{26} (1955), 319--324.

\bibitem{MY2}
H.~Matsumoto and M.~Yor, \emph{An analogue of {P}itman's {$2M-X$} theorem for
  exponential {W}iener functionals. {II}. {T}he role of the generalized inverse
  {G}aussian laws}, Nagoya Math. J. \textbf{162} (2001), 65--86.

\bibitem{PTV}
V.~G. Papageorgiou, A.~G. Tongas, and A.~P. Veselov, \emph{Yang-{B}axter maps
  and symmetries of integrable equations on quad-graphs}, J. Math. Phys.
  \textbf{47} (2006), no.~8, 083502, 16.

\bibitem{S}
T.~Sepp\"{a}l\"{a}inen, \emph{Scaling for a one-dimensional directed polymer
  with boundary conditions}, Ann. Probab. \textbf{40} (2012), no.~1, 19--73.

\bibitem{Serr}
\bysame, \emph{Erratum to ``{S}caling for a one-dimensional directed polymer
  with boundary conditions''}, Ann. Probab. \textbf{45} (2017), no.~3,
  2056--2058.

\bibitem{SW}
V.~Seshadri and J.~Weso{\l}owski, \emph{Constancy of regressions for beta
  distributions}, Sankhy\={a} \textbf{65} (2003), no.~2, 284--291.

\bibitem{TTud}
T.~Tokihiro, \emph{Ultradiscrete systems (cellular automata)}, Discrete
  integrable systems, Lecture Notes in Phys., vol. 644, Springer, Berlin, 2004,
  pp.~383--424.

\bibitem{Yano}
K.~Yano, \emph{Random walk in a finite directed graph subject to a road
  coloring}, J. Theoret. Probab. \textbf{26} (2013), no.~1, 259--283.

\bibitem{YanoY}
K.~Yano and K.~Yasutomi, \emph{Realization of an ergodic {M}arkov chain as a
  random walk subject to a synchronizing road coloring}, J. Appl. Probab.
  \textbf{48} (2011), no.~3, 766--777.

\bibitem{YY}
K.~Yano and M.~Yor, \emph{Around {T}sirelson's equation, or: {T}he evolution
  process may not explain everything}, Probab. Surv. \textbf{12} (2015), 1--12.

\end{thebibliography}
\bibliographystyle{amsplain}

\end{document}